\begin{filecontents}{diagxy.tex}
 \def\dated#1{\def\thedate{#1}}
 \dated{2003-05-10}

\newcount\atcode \atcode=\catcode`\@%
\catcode`\@=12
\input xy
\xyoption{arrow}

\newdir{ >}{{ }*!/-.9em/@{>}}
\newdir{ (}{{ }*!/-.5em/@{(}}
\newdir^{ (}{{ }*!/-.5em/@^{(}}
\newdir{< }{!/.9em/@{<}*{ }}

\newdimen\high%
\newdimen\ul%
\newcount\deltax%
\newcount\deltay%
\newcount\deltaX%
\newcount\deltaY%

\newdimen\wdth
\newcount\xend%
\newcount\yend%
\newcount\Xend
\newcount\Yend
\newcount\xpos%
\newcount\ypos%
\newcount\default \default=500%
\newcount\defaultmargin \defaultmargin=150
\newcount\topw%
\newcount\botw%
\newcount\Xpos%
\newcount\Ypos%
\def\ratchet#1#2{\ifnum#1<#2\global #1=#2\fi}%

\catcode`\@=11 \expandafter\ifx\csname @ifnextchar\endcsname\relax
\def\ifnextchar#1#2#3{\let\@tempe
#1\def\@tempa{#2}\def\@tempb{#3}\futurelet
    \@tempc\@ifnch}%
\def\@ifnch{\ifx \@tempc \@sptoken \let\@tempd\@xifnch
      \else \ifx \@tempc \@tempe\let\@tempd\@tempa\else\let\@tempd\@tempb\fi
      \fi \@tempd}%
\def\:{\let\@sptoken= } \:  
\def\:{\@xifnch} \expandafter\def\: {\futurelet\@tempc\@ifnch}%
\else
\let\ifnextchar\@ifnextchar
\fi
\ul=.01em%
\X@xbase =.01em%
\Y@ybase =.01em%
\def\scalefactor#1{\ul=#1\ul \X@xbase=#1\X@xbase \Y@ybase=#1\Y@ybase}%
\catcode`\@=12%

\def\fontscale#1{%
\if#1h\relax \font\xydashfont=xydash10 scaled \magstephalf
\font\xyatipfont=xyatip10 scaled \magstephalf \font\xybtipfont=xybtip10
scaled \magstephalf \font\xybsqlfont=xybsql10 scaled \magstephalf
\font\xycircfont=xycirc10 scaled \magstephalf \else
\font\xydashfont=xydash10 scaled \magstep#1%
\font\xyatipfont=xyatip10 scaled \magstep#1%
\font\xybtipfont=xybtip10 scaled \magstep#1%
\font\xybsqlfont=xybsql10 scaled \magstep#1%
\font\xycircfont=xycirc10 scaled \magstep#1%
\fi}

\def\bfig{\vcenter\bgroup\xy}
\def\efig{\endxy\egroup}

\def\car#1#2\nil{#1}%

\def\morphism{\ifnextchar({\morphismp}{\morphismp(0,0)}}%
\def\morphismp(#1){\ifnextchar|{\morphismpp(#1)}{\morphismpp(#1)|a|}}%
\def\morphismpp(#1)|#2|{\ifnextchar/{\morphismppp(#1)|#2|}%
    {\morphismppp(#1)|#2|/>/}}%
\def\morphismppp(#1)|#2|/#3/{%
    \ifnextchar<{\morphismpppp(#1)|#2|/#3/}%
    {\morphismpppp(#1)|#2|/#3/<\default,0>}}%

\def\morphismpppp(#1,#2)|#3|/#4/<#5,#6>[#7`#8;#9]{%
\xend#1\advance \xend by #5%
\yend#2\advance \yend by #6%
\domorphism(#1,#2)|#3|/#4/<#5,#6>[{#7}`{#8};{#9}]}

\def\domorphism(#1,#2)|#3|/#4/<#5,#6>[#7`#8;#9]{%
\def\next{\car#4.\nil}%
\if@\next\relax
 \if#3l%
  \ifnum #6>0%
   \POS(#1,#2)*+!!<0ex,.75ex>{#7}\ar#4^-{#9} (\xend,\yend)*+!!<0ex,.75ex>{#8}%
  \else%
   \POS(#1,#2)*+!!<0ex,.75ex>{#7}\ar#4_-{#9} (\xend,\yend)*+!!<0ex,.75ex>{#8}%
  \fi%
 \else \if#3m%
    \setbox0\hbox{$#9$}%
   \ifdim \wd0=0pt
     \POS(#1,#2)*+!!<0ex,.75ex>{#7}\ar#4 (\xend,\yend)*+!!<0ex,.75ex>{#8}%
   \else
     \POS(#1,#2)*+!!<0ex,.75ex>{#7}\ar#4|-*+<1pt,4pt>{\labelstyle#9}
       (\xend,\yend)*+!!<0ex,.75ex>{#8}%
   \fi
 \else \if#3r%
  \ifnum #6<0%
   \POS(#1,#2)*+!!<0ex,.75ex>{#7}\ar#4^-{#9} (\xend,\yend)*+!!<0ex,.75ex>{#8}%
  \else%
   \POS(#1,#2)*+!!<0ex,.75ex>{#7}\ar#4_-{#9} (\xend,\yend)*+!!<0ex,.75ex>{#8}%
  \fi%
 \else \if#3a%
  \ifnum #5>0%
   \POS(#1,#2)*+!!<0ex,.75ex>{#7}\ar#4^-{#9} (\xend,\yend)*+!!<0ex,.75ex>{#8}%
  \else%
   \POS(#1,#2)*+!!<0ex,.75ex>{#7}\ar#4_-{#9} (\xend,\yend)*+!!<0ex,.75ex>{#8}%
  \fi%
 \else \if#3b%
  \ifnum #5<0%
   \POS(#1,#2)*+!!<0ex,.75ex>{#7}\ar#4^-{#9} (\xend,\yend)*+!!<0ex,.75ex>{#8}%
  \else%
   \POS(#1,#2)*+!!<0ex,.75ex>{#7}\ar#4_-{#9} (\xend,\yend)*+!!<0ex,.75ex>{#8}%
  \fi%
 \else
   \POS(#1,#2)*+!!<0ex,.75ex>{#7}\ar#4 (\xend,\yend)*+!!<0ex,.75ex>{#8}%
 \fi\fi\fi\fi\fi%
\else%
 \if#3l%
  \ifnum #6>0%
   \POS(#1,#2)*+!!<0ex,.75ex>{#7}\ar@{#4}^-{#9} (\xend,\yend)*+!!<0ex,.75ex>{#8}%
  \else%
   \POS(#1,#2)*+!!<0ex,.75ex>{#7}\ar@{#4}_-{#9} (\xend,\yend)*+!!<0ex,.75ex>{#8}%
  \fi%
 \else \if#3m%
    \setbox0\hbox{$#9$}%
   \ifdim \wd0=0pt
     \POS(#1,#2)*+!!<0ex,.75ex>{#7}\ar@{#4} (\xend,\yend)*+!!<0ex,.75ex>{#8}%
   \else
     \POS(#1,#2)*+!!<0ex,.75ex>{#7}\ar@{#4}|-*+<1pt,4pt>{\labelstyle#9}
         (\xend,\yend)*+!!<0ex,.75ex>{#8}%
   \fi
 \else \if#3r%
  \ifnum #6<0%
   \POS(#1,#2)*+!!<0ex,.75ex>{#7}\ar@{#4}^-{#9} (\xend,\yend)*+!!<0ex,.75ex>{#8}%
  \else%
   \POS(#1,#2)*+!!<0ex,.75ex>{#7}\ar@{#4}_-{#9} (\xend,\yend)*+!!<0ex,.75ex>{#8}%
  \fi%
 \else \if#3a%
  \ifnum #5>0%
   \POS(#1,#2)*+!!<0ex,.75ex>{#7}\ar@{#4}^-{#9} (\xend,\yend)*+!!<0ex,.75ex>{#8}%
  \else%
   \POS(#1,#2)*+!!<0ex,.75ex>{#7}\ar@{#4}_-{#9} (\xend,\yend)*+!!<0ex,.75ex>{#8}%
  \fi%
 \else \if#3b%
  \ifnum #5<0%
   \POS(#1,#2)*+!!<0ex,.75ex>{#7}\ar@{#4}^-{#9} (\xend,\yend)*+!!<0ex,.75ex>{#8}%
  \else%
   \POS(#1,#2)*+!!<0ex,.75ex>{#7}\ar@{#4}_-{#9} (\xend,\yend)*+!!<0ex,.75ex>{#8}%
  \fi%
 \else
   \POS(#1,#2)*+!!<0ex,.75ex>{#7}\ar@{#4} (\xend,\yend)*+!!<0ex,.75ex>{#8}%
 \fi\fi\fi\fi\fi
\fi\ignorespaces}%

\def\vector(#1,#2)/#3/<#4,#5>{%
 \xend#1 \yend#2 \advance\xend by #4 \advance\yend by #5
     \POS(#1,#2)\ar#3 (\xend,\yend)}

\def\squarepppp(#1,#2)|#3|/#4`#5`#6`#7/<#8>[#9]{%
\xpos#1\ypos#2%
\def\next|##1##2##3##4|{%
 \def\xa{##1}\def\xb{##2}\def\xc{##3}\def\xd{##4}\ignorespaces}%
\next|#3|%
\def\next<##1,##2>{\deltax=##1\deltay=##2\ignorespaces}%
\next<#8>%
\def\next[##1`##2`##3`##4;##5`##6`##7`##8]{%
    \def\nodea{##1}\def\nodeb{##2}\def\nodec{##3}\def\noded{##4}%
    \def\labela{##5}\def\labelb{##6}\def\labelc{##7}\def\labeld{##8}\ignorespaces}%
\next[#9]%
\morphism(\xpos,\ypos)|\xd|/{#7}/<\deltax,0>[\nodec`\noded;\labeld]%
\advance \ypos by \deltay%
\morphism(\xpos,\ypos)|\xb|/{#5}/<0,-\deltay>[\nodea`\nodec;\labelb]%
\morphism(\xpos,\ypos)|\xa|/{#4}/<\deltax,0>[\nodea`\nodeb;\labela]%
 \advance \xpos by \deltax%
\morphism(\xpos,\ypos)|\xc|/{#6}/<0,-\deltay>[\nodeb`\noded;\labelc]%
\ignorespaces}%

\def\square{\ifnextchar({\squarep}{\squarep(0,0)}}%
\def\squarep(#1){\ifnextchar|{\squarepp(#1)}{\squarepp(#1)|alrb|}}%
\def\squarepp(#1)|#2|{\ifnextchar/{\squareppp(#1)|#2|}%
    {\squareppp(#1)|#2|/>`>`>`>/}}%
\def\squareppp(#1)|#2|/#3`#4`#5`#6/{%
    \ifnextchar<{\squarepppp(#1)|#2|/#3`#4`#5`#6/}%
    {\squarepppp(#1)|#2|/#3`#4`#5`#6/<\default,\default>}}%

\def\ptrianglepppp(#1,#2)|#3|/#4`#5`#6/<#7>[#8]{%
\xpos#1\ypos#2%
\def\next|##1##2##3|{\def\xa{##1}\def\xb{##2}\def\xc{##3}}%
\next|#3|%
\def\next<##1,##2>{\deltax=##1\deltay=##2\ignorespaces}%
\next<#7>%
\def\next[##1`##2`##3;##4`##5`##6]{%
    \def\nodea{##1}\def\nodeb{##2}\def\nodec{##3}%
    \def\labela{##4}\def\labelb{##5}\def\labelc{##6}}%
\next[#8]%
\advance\ypos by \deltay%
\morphism(\xpos,\ypos)|\xa|/{#4}/<\deltax,0>[\nodea`\nodeb;\labela]%
\morphism(\xpos,\ypos)|\xb|/{#5}/<0,-\deltay>[\nodea`\nodec;\labelb]%
\advance\xpos by \deltax%
\morphism(\xpos,\ypos)|\xc|/{#6}/<-\deltax,-\deltay>[\nodeb`\nodec;\labelc]%
\ignorespaces}%

\def\qtrianglepppp(#1,#2)|#3|/#4`#5`#6/<#7>[#8]{%
\xpos#1\ypos#2%
\def\next|##1##2##3|{\def\xa{##1}\def\xb{##2}\def\xc{##3}}%
\next|#3|%
\def\next<##1,##2>{\deltax=##1\deltay=##2\ignorespaces}%
\next<#7>%
\def\next[##1`##2`##3;##4`##5`##6]{%
    \def\nodea{##1}\def\nodeb{##2}\def\nodec{##3}%
    \def\labela{##4}\def\labelb{##5}\def\labelc{##6}}%
\next[#8]%
\advance\ypos by \deltay%
\morphism(\xpos,\ypos)|\xa|/{#4}/<\deltax,0>[\nodea`\nodeb;\labela]%
\morphism(\xpos,\ypos)|\xb|/{#5}/<\deltax,-\deltay>[\nodea`\nodec;\labelb]%
\advance\xpos by \deltax%
\morphism(\xpos,\ypos)|\xc|/{#6}/<0,-\deltay>[\nodeb`\nodec;\labelc]%
\ignorespaces}%

\def\dtrianglepppp(#1,#2)|#3|/#4`#5`#6/<#7>[#8]{%
\xpos#1\ypos#2%
\def\next|##1##2##3|{\def\xa{##1}\def\xb{##2}\def\xc{##3}}%
\next|#3|%
\def\next<##1,##2>{\deltax=##1\deltay=##2\ignorespaces}%
\next<#7>%
\def\next[##1`##2`##3;##4`##5`##6]{%
    \def\nodea{##1}\def\nodeb{##2}\def\nodec{##3}%
    \def\labela{##4}\def\labelb{##5}\def\labelc{##6}}%
\next[#8]%
\morphism(\xpos,\ypos)|\xc|/{#6}/<\deltax,0>[\nodeb`\nodec;\labelc]%
\advance\ypos by \deltay\advance \xpos by \deltax%
\morphism(\xpos,\ypos)|\xa|/{#4}/<-\deltax,-\deltay>[\nodea`\nodeb;\labela]%
\morphism(\xpos,\ypos)|\xb|/{#5}/<0,-\deltay>[\nodea`\nodec;\labelb]%
\ignorespaces}%

\def\btrianglepppp(#1,#2)|#3|/#4`#5`#6/<#7>[#8]{%
\xpos#1\ypos#2%
\def\next|##1##2##3|{\def\xa{##1}\def\xb{##2}\def\xc{##3}}%
\next|#3|%
\def\next<##1,##2>{\deltax=##1\deltay=##2\ignorespaces}%
\next<#7>%
\def\next[##1`##2`##3;##4`##5`##6]{%
    \def\nodea{##1}\def\nodeb{##2}\def\nodec{##3}%
    \def\labela{##4}\def\labelb{##5}\def\labelc{##6}}%
\next[#8]%
\morphism(\xpos,\ypos)|\xc|/{#6}/<\deltax,0>[\nodeb`\nodec;\labelc]%
\advance\ypos by \deltay%
\morphism(\xpos,\ypos)|\xa|/{#4}/<0,-\deltay>[\nodea`\nodeb;\labela]%
\morphism(\xpos,\ypos)|\xb|/{#5}/<\deltax,-\deltay>[\nodea`\nodec;\labelb]%
\ignorespaces}%

\def\Atrianglepppp(#1,#2)|#3|/#4`#5`#6/<#7>[#8]{%
\xpos#1\ypos#2%
\def\next|##1##2##3|{\def\xa{##1}\def\xb{##2}\def\xc{##3}}%
\next|#3|%
\def\next<##1,##2>{\deltax=##1\deltay=##2\ignorespaces}%
\next<#7>%
\def\next[##1`##2`##3;##4`##5`##6]{%
    \def\nodea{##1}\def\nodeb{##2}\def\nodec{##3}%
    \def\labela{##4}\def\labelb{##5}\def\labelc{##6}}%
\next[#8]%
\multiply\deltax by 2%
\morphism(\xpos,\ypos)|\xc|/{#6}/<\deltax,0>[\nodeb`\nodec;\labelc]%
\divide\deltax by 2
\advance\ypos by \deltay\advance\xpos by \deltax%
\morphism(\xpos,\ypos)|\xa|/{#4}/<-\deltax,-\deltay>[\nodea`\nodeb;\labela]%
\morphism(\xpos,\ypos)|\xb|/{#5}/<\deltax,-\deltay>[\nodea`\nodec;\labelb]%
\ignorespaces}%

\def\Vtrianglepppp(#1,#2)|#3|/#4`#5`#6/<#7>[#8]{%
\xpos#1\ypos#2%
\def\next|##1##2##3|{\def\xa{##1}\def\xb{##2}\def\xc{##3}}%
\next|#3|%
\def\next<##1,##2>{\deltax=##1\deltay=##2\ignorespaces}%
\next<#7>%
\def\next[##1`##2`##3;##4`##5`##6]{%
    \def\nodea{##1}\def\nodeb{##2}\def\nodec{##3}%
    \def\labela{##4}\def\labelb{##5}\def\labelc{##6}}%
\next[#8]%
\advance\ypos by \deltay%
\morphism(\xpos,\ypos)|\xb|/{#5}/<\deltax,-\deltay>[\nodea`\nodec;\labelb]%
\multiply\deltax by 2%
\morphism(\xpos,\ypos)|\xa|/{#4}/<\deltax,0>[\nodea`\nodeb;\labela]%
\advance\xpos by \deltax \divide \deltax by 2
\morphism(\xpos,\ypos)|\xc|/{#6}/<-\deltax,-\deltay>[\nodeb`\nodec;\labelc]%
\ignorespaces}%

\def\Ctrianglepppp(#1,#2)|#3|/#4`#5`#6/<#7>[#8]{%
\xpos#1\ypos#2%
\def\next|##1##2##3|{\def\xa{##1}\def\xb{##2}\def\xc{##3}}%
\next|#3|%
\def\next<##1,##2>{\deltax=##1\deltay=##2\ignorespaces}%
\next<#7>%
\def\next[##1`##2`##3;##4`##5`##6]{%
    \def\nodea{##1}\def\nodeb{##2}\def\nodec{##3}%
    \def\labela{##4}\def\labelb{##5}\def\labelc{##6}}%
\next[#8]%
\advance \ypos by \deltay%
\morphism(\xpos,\ypos)|\xc|/{#6}/<\deltax,-\deltay>[\nodeb`\nodec;\labelc]%
\advance\ypos by \deltay \advance \xpos by \deltax%
\morphism(\xpos,\ypos)|\xa|/{#4}/<-\deltax,-\deltay>[\nodea`\nodeb;\labela]%
\multiply\deltay by 2%
\morphism(\xpos,\ypos)|\xb|/{#5}/<0,-\deltay>[\nodea`\nodec;\labelb]%
\ignorespaces}%

\def\Dtrianglepppp(#1,#2)|#3|/#4`#5`#6/<#7>[#8]{%
\xpos#1\ypos#2%
\def\next|##1##2##3|{\def\xa{##1}\def\xb{##2}\def\xc{##3}}%
\next|#3|%
\def\next<##1,##2>{\deltax=##1\deltay=##2\ignorespaces}%
\next<#7>%
\def\next[##1`##2`##3;##4`##5`##6]{%
    \def\nodea{##1}\def\nodeb{##2}\def\nodec{##3}%
    \def\labela{##4}\def\labelb{##5}\def\labelc{##6}}%
\next[#8]%
\advance\xpos by \deltax \advance\ypos by \deltay%
\morphism(\xpos,\ypos)|\xc|/{#6}/<-\deltax,-\deltay>[\nodeb`\nodec;\labelc]%
\advance\xpos by -\deltax \advance\ypos by \deltay%
\morphism(\xpos,\ypos)|\xb|/{#5}/<\deltax,-\deltay>[\nodea`\nodeb;\labelb]%
\multiply \deltay by 2%
\morphism(\xpos,\ypos)|\xa|/{#4}/<0,-\deltay>[\nodea`\nodec;\labela]%
\ignorespaces}%

\def\ptrianglep(#1){\ifnextchar|{\ptrianglepp(#1)}{\ptrianglepp(#1)|alr|}}%
\def\ptrianglepp(#1)|#2|{\ifnextchar/{\ptriangleppp(#1)|#2|}%
    {\ptriangleppp(#1)|#2|/>`>`>/}}%
\def\ptriangleppp(#1)|#2|/#3`#4`#5/{%
    \ifnextchar<{\ptrianglepppp(#1)|#2|/#3`#4`#5/}%
    {\ptrianglepppp(#1)|#2|/#3`#4`#5/<\default,\default>}}%

\def\qtrianglep(#1){\ifnextchar|{\qtrianglepp(#1)}{\qtrianglepp(#1)|alr|}}%
\def\qtrianglepp(#1)|#2|{\ifnextchar/{\qtriangleppp(#1)|#2|}%
    {\qtriangleppp(#1)|#2|/>`>`>/}}%
\def\qtriangleppp(#1)|#2|/#3`#4`#5/{%
    \ifnextchar<{\qtrianglepppp(#1)|#2|/#3`#4`#5/}%
    {\qtrianglepppp(#1)|#2|/#3`#4`#5/<\default,\default>}}%

\def\dtrianglep(#1){\ifnextchar|{\dtrianglepp(#1)}{\dtrianglepp(#1)|lrb|}}%
\def\dtrianglepp(#1)|#2|{\ifnextchar/{\dtriangleppp(#1)|#2|}%
    {\dtriangleppp(#1)|#2|/>`>`>/}}%
\def\dtriangleppp(#1)|#2|/#3`#4`#5/{%
    \ifnextchar<{\dtrianglepppp(#1)|#2|/#3`#4`#5/}%
    {\dtrianglepppp(#1)|#2|/#3`#4`#5/<\default,\default>}}%

\def\btrianglep(#1){\ifnextchar|{\btrianglepp(#1)}{\btrianglepp(#1)|lrb|}}%
\def\btrianglepp(#1)|#2|{\ifnextchar/{\btriangleppp(#1)|#2|}%
    {\btriangleppp(#1)|#2|/>`>`>/}}%
\def\btriangleppp(#1)|#2|/#3`#4`#5/{%
    \ifnextchar<{\btrianglepppp(#1)|#2|/#3`#4`#5/}%
    {\btrianglepppp(#1)|#2|/#3`#4`#5/<\default,\default>}}%

\def\Atrianglep(#1){\ifnextchar|{\Atrianglepp(#1)}{\Atrianglepp(#1)|lrb|}}%
\def\Atrianglepp(#1)|#2|{\ifnextchar/{\Atriangleppp(#1)|#2|}%
    {\Atriangleppp(#1)|#2|/>`>`>/}}%
\def\Atriangleppp(#1)|#2|/#3`#4`#5/{%
    \ifnextchar<{\Atrianglepppp(#1)|#2|/#3`#4`#5/}%
    {\Atrianglepppp(#1)|#2|/#3`#4`#5/<\default,\default>}}%

\def\Vtriangle{\ifnextchar({\Vtrianglep}{\Vtrianglep(0,0)}}%
\def\Vtrianglep(#1){\ifnextchar|{\Vtrianglepp(#1)}{\Vtrianglepp(#1)|alb|}}%
\def\Vtrianglepp(#1)|#2|{\ifnextchar/{\Vtriangleppp(#1)|#2|}%
    {\Vtriangleppp(#1)|#2|/>`>`>/}}%
\def\Vtriangleppp(#1)|#2|/#3`#4`#5/{%
    \ifnextchar<{\Vtrianglepppp(#1)|#2|/#3`#4`#5/}%
    {\Vtrianglepppp(#1)|#2|/#3`#4`#5/<\default,\default>}}%

\def\Ctrianglep(#1){\ifnextchar|{\Ctrianglepp(#1)}{\Ctrianglepp(#1)|arb|}}%
\def\Ctrianglepp(#1)|#2|{\ifnextchar/{\Ctriangleppp(#1)|#2|}%
    {\Ctriangleppp(#1)|#2|/>`>`>/}}%
\def\Ctriangleppp(#1)|#2|/#3`#4`#5/{%
    \ifnextchar<{\Ctrianglepppp(#1)|#2|/#3`#4`#5/}%
    {\Ctrianglepppp(#1)|#2|/#3`#4`#5/<\default,\default>}}%

\def\Dtrianglep(#1){\ifnextchar|{\Dtrianglepp(#1)}{\Dtrianglepp(#1)|alb|}}%
\def\Dtrianglepp(#1)|#2|{\ifnextchar/{\Dtriangleppp(#1)|#2|}%
    {\Dtriangleppp(#1)|#2|/>`>`>/}}%
\def\Dtriangleppp(#1)|#2|/#3`#4`#5/{%
    \ifnextchar<{\Dtrianglepppp(#1)|#2|/#3`#4`#5/}%
    {\Dtrianglepppp(#1)|#2|/#3`#4`#5/<\default,\default>}}%

\def\Atrianglepairpppp(#1)|#2|/#3`#4`#5`#6`#7/<#8>[#9]{%
\def\next(##1,##2){\xpos##1\ypos##2}%
\next(#1)%
\def\next|##1##2##3##4##5|{\def\xa{##1}\def\xb{##2}%
\def\xc{##3}\def\xd{##4}\def\xe{##5}}%
\next|#2|%
\def\next<##1,##2>{\deltax=##1\deltay=##2\ignorespaces}%
\next<#8>%
\def\next[##1`##2`##3`##4;##5`##6`##7`##8`##9]{%
 \def\nodea{##1}\def\nodeb{##2}\def\nodec{##3}\def\noded{##4}%
 \def\labela{##5}\def\labelb{##6}\def\labelc{##7}\def\labeld{##8}\def\labele{##9}}%
\next[#9]%
\morphism(\xpos,\ypos)|\xd|/{#6}/<\deltax,0>[\nodeb`\nodec;\labeld]%
\advance\xpos by \deltax%
\morphism(\xpos,\ypos)|\xe|/{#7}/<\deltax,0>[\nodec`\noded;\labele]%
\advance\ypos by \deltay%
\morphism(\xpos,\ypos)|\xa|/{#3}/<-\deltax,-\deltay>[\nodea`\nodeb;\labela]%
\morphism(\xpos,\ypos)|\xb|/{#4}/<0,-\deltay>[\nodea`\nodec;\labelb]%
\morphism(\xpos,\ypos)|\xc|/{#5}/<\deltax,-\deltay>[\nodea`\noded;\labelc]%
\ignorespaces}%

\def\Vtrianglepairpppp(#1)|#2|/#3`#4`#5`#6`#7/<#8>[#9]{%
\def\next(##1,##2){\xpos##1\ypos##2}%
\next(#1)%
\def\next|##1##2##3##4##5|{\def\xa{##1}\def\xb{##2}%
\def\xc{##3}\def\xd{##4}\def\xe{##5}}%
\next|#2|%
\def\next<##1,##2>{\deltax=##1\deltay=##2\ignorespaces}%
\next<#8>%
\def\next[##1`##2`##3`##4;##5`##6`##7`##8`##9]{%
 \def\nodea{##1}\def\nodeb{##2}\def\nodec{##3}\def\noded{##4}%
 \def\labela{##5}\def\labelb{##6}\def\labelc{##7}\def\labeld{##8}\def\labele{##9}}%
\next[#9]%
\advance\ypos by \deltay%
\morphism(\xpos,\ypos)|\xa|/{#3}/<\deltax,0>[\nodea`\nodeb;\labela]%
\morphism(\xpos,\ypos)|\xc|/{#5}/<\deltax,-\deltay>[\nodea`\noded;\labelc]%
\advance\xpos by \deltax%
\morphism(\xpos,\ypos)|\xb|/{#4}/<\deltax,0>[\nodeb`\nodec;\labelb]%
\morphism(\xpos,\ypos)|\xd|/{#6}/<0,-\deltay>[\nodeb`\noded;\labeld]%
\advance\xpos by \deltax%
\morphism(\xpos,\ypos)|\xe|/{#7}/<-\deltax,-\deltay>[\nodec`\noded;\labele]%
\ignorespaces}%

\def\Ctrianglepairpppp(#1)|#2|/#3`#4`#5`#6`#7/<#8>[#9]{%
\def\next(##1,##2){\xpos##1\ypos##2}%
\next(#1)%
\def\next|##1##2##3##4##5|{\def\xa{##1}\def\xb{##2}%
\def\xc{##3}\def\xd{##4}\def\xe{##5}}%
\next|#2|%
\def\next<##1,##2>{\deltax=##1\deltay=##2\ignorespaces}%
\next<#8>%
\def\next[##1`##2`##3`##4;##5`##6`##7`##8`##9]{%
 \def\nodea{##1}\def\nodeb{##2}\def\nodec{##3}\def\noded{##4}%
 \def\labela{##5}\def\labelb{##6}\def\labelc{##7}\def\labeld{##8}\def\labele{##9}}%
\next[#9]%
\advance\ypos by \deltay%
\morphism(\xpos,\ypos)|\xe|/{#7}/<0,-\deltay>[\nodec`\noded;\labele]%
\advance\xpos by -\deltax%
\morphism(\xpos,\ypos)|\xc|/{#5}/<\deltax,0>[\nodeb`\nodec;\labelc]%
\morphism(\xpos,\ypos)|\xd|/{#6}/<\deltax,-\deltay>[\nodeb`\noded;\labeld]%
\advance\ypos by \deltay%
\advance\xpos by \deltax%
\morphism(\xpos,\ypos)|\xa|/{#3}/<-\deltax,-\deltay>[\nodea`\nodeb;\labela]%
\morphism(\xpos,\ypos)|\xb|/{#4}/<0,-\deltay>[\nodea`\nodec;\labelb]%
\ignorespaces}%

\def\Dtrianglepairpppp(#1)|#2|/#3`#4`#5`#6`#7/<#8>[#9]{%
\def\next(##1,##2){\xpos##1\ypos##2}%
\next(#1)%
\def\next|##1##2##3##4##5|{\def\xa{##1}\def\xb{##2}%
\def\xc{##3}\def\xd{##4}\def\xe{##5}}%
\next|#2|%
\def\next<##1,##2>{\deltax=##1\deltay=##2\ignorespaces}%
\next<#8>%
\def\next[##1`##2`##3`##4;##5`##6`##7`##8`##9]{%
 \def\nodea{##1}\def\nodeb{##2}\def\nodec{##3}\def\noded{##4}%
 \def\labela{##5}\def\labelb{##6}\def\labelc{##7}\def\labeld{##8}\def\labele{##9}}%
\next[#9]%
\advance\ypos by \deltay%
\morphism(\xpos,\ypos)|\xc|/{#5}/<\deltax,0>[\nodeb`\nodec;\labelc]%
\morphism(\xpos,\ypos)|\xd|/{#6}/<0,-\deltay>[\nodeb`\noded;\labeld]%
\advance\ypos by \deltay%
\morphism(\xpos,\ypos)|\xa|/{#3}/<0,-\deltay>[\nodea`\nodeb;\labela]%
\morphism(\xpos,\ypos)|\xb|/{#4}/<\deltax,-\deltay>[\nodea`\nodec;\labelb]%
\advance\ypos by -\deltay%
\advance\xpos by \deltax%
\morphism(\xpos,\ypos)|\xe|/{#7}/<-\deltax,-\deltay>[\nodec`\noded;\labele]%
\ignorespaces}%

\def\Atrianglepairp(#1){\ifnextchar|{\Atrianglepairpp(#1)}%
{\Atrianglepairpp(#1)|lmrbb|}}%
\def\Atrianglepairpp(#1)|#2|{\ifnextchar/{\Atrianglepairppp(#1)|#2|}%
    {\Atrianglepairppp(#1)|#2|/>`>`>`>`>/}}%
\def\Atrianglepairppp(#1)|#2|/#3`#4`#5`#6`#7/{%
    \ifnextchar<{\Atrianglepairpppp(#1)|#2|/#3`#4`#5`#6`#7/}%
    {\Atrianglepairpppp(#1)|#2|/#3`#4`#5`#6`#7/<\default,\default>}}%

\def\Vtrianglepairp(#1){\ifnextchar|{\Vtrianglepairpp(#1)}%
{\Vtrianglepairpp(#1)|aalmr|}}%
\def\Vtrianglepairpp(#1)|#2|{\ifnextchar/{\Vtrianglepairppp(#1)|#2|}%
    {\Vtrianglepairppp(#1)|#2|/>`>`>`>`>/}}%
\def\Vtrianglepairppp(#1)|#2|/#3`#4`#5`#6`#7/{%
    \ifnextchar<{\Vtrianglepairpppp(#1)|#2|/#3`#4`#5`#6`#7/}%
    {\Vtrianglepairpppp(#1)|#2|/#3`#4`#5`#6`#7/<\default,\default>}}%

\def\Ctrianglepairp(#1){\ifnextchar|{\Ctrianglepairpp(#1)}%
{\Ctrianglepairpp(#1)|lrmlr|}}%
\def\Ctrianglepairpp(#1)|#2|{\ifnextchar/{\Ctrianglepairppp(#1)|#2|}%
    {\Ctrianglepairppp(#1)|#2|/>`>`>`>`>/}}%
\def\Ctrianglepairppp(#1)|#2|/#3`#4`#5`#6`#7/{%
    \ifnextchar<{\Ctrianglepairpppp(#1)|#2|/#3`#4`#5`#6`#7/}%
    {\Ctrianglepairpppp(#1)|#2|/#3`#4`#5`#6`#7/<\default,\default>}}%

\def\Dtrianglepairp(#1){\ifnextchar|{\Dtrianglepairpp(#1)}%
{\Dtrianglepairpp(#1)|lrmlr|}}%
\def\Dtrianglepairpp(#1)|#2|{\ifnextchar/{\Dtrianglepairppp(#1)|#2|}%
    {\Dtrianglepairppp(#1)|#2|/>`>`>`>`>/}}%
\def\Dtrianglepairppp(#1)|#2|/#3`#4`#5`#6`#7/{%
    \ifnextchar<{\Dtrianglepairpppp(#1)|#2|/#3`#4`#5`#6`#7/}%
    {\Dtrianglepairpppp(#1)|#2|/#3`#4`#5`#6`#7/<\default,\default>}}%

\def\pplace[#1](#2,#3)[#4]{\POS(#2,#3)*+!!<0ex,.75ex>!#1{#4}\ignorespaces}%
\def\cplace(#1,#2)[#3]{\POS(#1,#2)*+!!<0ex,.75ex>{#3}\ignorespaces}%

\def\pullback#1]#2]{\square#1]\trident#2]\ignorespaces}%

\def\tridentppp|#1#2#3|/#4`#5`#6/<#7,#8>[#9]{%
\def\next[##1;##2`##3`##4]{\def\nodee{##1}\def\labele{##2}%
   \def\labelf{##3}\def\labelg{##4}}%
\next[#9]%
\advance \xpos by -\deltax%
\advance \xpos by -#7\advance \ypos by #8%
\advance\deltax by #7%
\morphism(\xpos,\ypos)|#1|/{#4}/<\deltax,-#8>[\nodee`\nodeb;\labele]%
\advance\deltax by -#7%
\morphism(\xpos,\ypos)|#2|/{#5}/<#7,-#8>[\nodee`\nodea;\labelf]%
\advance\deltay by #8%
\morphism(\xpos,\ypos)|#3|/{#6}/<#7,-\deltay>[\nodee`\nodec;\labelg]%
\ignorespaces}%

\def\trident{\ifnextchar|{\tridentp}{\tridentp|amb|}}%
\def\tridentp|#1|{\ifnextchar/{\tridentpp|#1|}{\tridentpp|#1|/{>}`{>}`{>}/}}%
\def\tridentpp|#1|/#2/{\ifnextchar<{\tridentppp|#1|/#2/}%
  {\tridentppp|#1|/#2/<500,500>}}%

\def\setmorphismwidth#1#2#3#4{%
 \setbox0=\hbox{$#1{\labelstyle#3#3}#2$}#4=\wd0%
 \divide #4 by 2 \divide #4 by \ul%
 \advance #4 by 350 \ratchet{#4}{500}}%

\def\setSquarewidth[#1`#2`#3`#4;#5`#6`#7`#8]{%
 \setmorphismwidth{#1}{#2}{#5}{\topw}%
 \setmorphismwidth{#3}{#4}{#8}{\botw}%
\ratchet{\topw}{\botw}}%

\def\Squarepppp(#1)|#2|/#3/<#4>[#5]{%
 \setSquarewidth[#5]%
 \squarepppp(#1)|#2|/#3/<\topw,#4>[#5]%
\ignorespaces}%

\def\Squarep(#1){\ifnextchar|{\Squarepp(#1)}{\Squarepp(#1)|alrb|}}%
\def\Squarepp(#1)|#2|{\ifnextchar/{\Squareppp(#1)|#2|}%
    {\Squareppp(#1)|#2|/>`>`>`>/}}%
\def\Squareppp(#1)|#2|/#3`#4`#5`#6/{%
    \ifnextchar<{\Squarepppp(#1)|#2|/#3`#4`#5`#6/}%
    {\Squarepppp(#1)|#2|/#3`#4`#5`#6/<\default>}}%

\def\hSquarespppp(#1,#2)|#3|/#4/<#5>[#6;#7]{%
\Xpos=#1\Ypos=#2%
\def\next|##1##2##3##4##5##6##7|{%
 \def\Xa{##1}\def\Xb{##2}\def\Xc{##3}\def\Xd{##4}%
 \def\Xe{##5}\def\Xf{##6}\def\Xg{##7}}%
\next|#3|%
\deltaY=#5%
\def\next[##1`##2`##3`##4`##5`##6]{%
 \def\Nodea{##1}\def\Nodeb{##2}\def\Nodec{##3}%
 \def\Noded{##4}\def\Nodee{##5}\def\Nodef{##6}}%
\next[#6]%
\def\next[##1`##2`##3`##4`##5`##6`##7]{%
 \def\Labela{##1}\def\Labelb{##2}\def\Labelc{##3}\def\Labeld{##4}%
 \def\Labele{##5}\def\Labelf{##6}\def\Labelg{##7}}%
\next[#7]%
\dohSquares/#4/}%

\def\dohSquares/#1`#2`#3`#4`#5`#6`#7/{%
\Squarepppp(\Xpos,\Ypos)|\Xa\Xc\Xd\Xf|/#1`#3`#4`#6/<\deltaY>%
 [\Nodea`\Nodeb`\Noded`\Nodee;\Labela`\Labelc`\Labeld`\Labelf]%
 \advance \Xpos by \topw
\Squarepppp(\Xpos,\Ypos)|\Xb\Xd\Xe\Xg|/#2``#5`#7/<\deltaY>%
[\Nodeb`\Nodec`\Nodee`\Nodef;\Labelb``\Labele`\Labelg]%
\ignorespaces}%

\def\hSquaresp(#1){\ifnextchar|{\hSquarespp(#1)}{\hSquarespp%
(#1)|aalmrbb|}}%
\def\hSquarespp(#1)|#2|{\ifnextchar/{\hSquaresppp(#1)|#2|}%
    {\hSquaresppp(#1)|#2|/>`>`>`>`>`>`>/}}%
\def\hSquaresppp(#1)|#2|/#3/{%
    \ifnextchar<{\hSquarespppp(#1)|#2|/#3/}%
    {\hSquarespppp(#1)|#2|/#3/<\default>}}%

\def\vSquarespppp(#1,#2)|#3|/#4/<#5,#6>[#7;#8]{%
\Xpos=#1\Ypos=#2%
\def\next|##1##2##3##4##5##6##7|{%
 \def\Xa{##1}\def\Xb{##2}\def\Xc{##3}\def\Xd{##4}%
 \def\Xe{##5}\def\Xf{##6}\def\Xg{##7}}%
\next|#3|%
\deltaX=#5%
\deltaY=#6%
\def\next[##1`##2`##3`##4`##5`##6]{%
 \def\Nodea{##1}\def\Nodeb{##2}\def\Nodec{##3}%
 \def\Noded{##4}\def\Nodee{##5}\def\Nodef{##6}}%
\next[#7]%
\def\next[##1`##2`##3`##4`##5`##6`##7]{%
 \def\Labela{##1}\def\Labelb{##2}\def\Labelc{##3}\def\Labeld{##4}%
 \def\Labele{##5}\def\Labelf{##6}\def\Labelg{##7}}%
\next[#8]%
\dovSquares/#4/\ignorespaces}%

\def\dovSquares/#1`#2`#3`#4`#5`#6`#7/{%
\setmorphismwidth{\Nodea}{\Nodeb}{\Labela}{\topw}%
\setmorphismwidth{\Nodec}{\Noded}{\Labeld}{\botw}%
\ratchet{\topw}{\botw}%
\setmorphismwidth{\Nodee}{\Nodef}{\Labelg}{\botw}%
\ratchet{\topw}{\botw}%
\square(\Xpos,\Ypos)|\Xd\Xe\Xf\Xg|/`#5`#6`#7/<\topw,\deltaX>%
 [\Nodec`\Noded`\Nodee`\Nodef;`\Labele`\Labelf`\Labelg]%
\advance \Ypos by \deltaX%
\square(\Xpos,\Ypos)|\Xa\Xb\Xc\Xd|/#1`#2`#3`#4/<\topw,\deltaY>%
 [\Nodea`\Nodeb`\Nodec`\Noded;\Labela`\Labelb`\Labelc`\Labeld]%
}%

\def\vSquaresp(#1){\ifnextchar|{\vSquarespp(#1)}{\vSquarespp%
(#1)|alrmlrb|}}%
\def\vSquarespp(#1)|#2|{\ifnextchar/{\vSquaresppp(#1)|#2|}%
    {\vSquaresppp(#1)|#2|/>`>`>`>`>`>`>/}}%
\def\vSquaresppp(#1)|#2|/#3/{%
    \ifnextchar<{\vSquarespppp(#1)|#2|/#3/}%
    {\vSquarespppp(#1)|#2|/#3/<\default,\default>}}%

\def\osquarepppp(#1)|#2|/#3`#4`#5`#6/<#7>[#8]{\squarepppp%
 (#1)|#2|/#3`#4`#5`#6/<#7>[#8]%
 \let\Nodea\nodea\let\Nodeb\nodeb%
\let\Nodec\nodec\let\Noded\noded\Xpos=\xpos\Ypos=\ypos%
\deltaX=\deltax \deltaY=\deltay \isquare}

\def\osquarep(#1){\ifnextchar|{\osquarepp(#1)}{\osquarepp(#1)|alrb|}}%
\def\osquarepp(#1)|#2|{\ifnextchar/{\osquareppp(#1)|#2|}%
    {\osquareppp(#1)|#2|/>`>`>`>/}}%
\def\osquareppp(#1)|#2|/#3`#4`#5`#6/{%
    \ifnextchar<{\osquarepppp(#1)|#2|/#3`#4`#5`#6/}%
    {\osquarepppp(#1)|#2|/#3`#4`#5`#6/<1500,1500>}}%

\def\isquarepppp(#1)|#2|/#3`#4`#5`#6/<#7>[#8]{%
 \squarepppp(#1)|#2|/#3`#4`#5`#6/<#7>[#8]%
\ifnextchar|{\cubep}{\cubep|mmmm|}}%
\def\cubep|#1|{\ifnextchar/{\cubepp|#1|}{\cubepp|#1|/>`>`>`>/}}%

\def\isquare{\ifnextchar({\isquarep}{\isquarep(\default,\default)}}%
\def\isquarep(#1){\ifnextchar|{\isquarepp(#1)}{\isquarepp(#1)|alrb|}}
\def\isquarepp(#1)|#2|{\ifnextchar/{\isquareppp(#1)|#2|}%
    {\isquareppp(#1)|#2|/>`>`>`>/}}%
\def\isquareppp(#1)|#2|/#3`#4`#5`#6/{%
    \ifnextchar<{\isquarepppp(#1)|#2|/#3`#4`#5`#6/}%
    {\isquarepppp(#1)|#2|/#3`#4`#5`#6/<500,500>}}%

\def\cubepp|#1#2#3#4|/#5`#6`#7`#8/[#9]{%
\def\next[##1`##2`##3`##4]{\gdef\Labela{##1}%
\gdef\Labelb{##2}\gdef\Labelc{##3}\gdef\Labeld{##4}}\next[#9]%
\xend\xpos \yend\ypos \Xend\xend\advance\Xend by -\Xpos
\Yend\yend\advance\Yend by -\Ypos
\domorphism(\Xpos,\Ypos)|#2|/#6/<\Xend,\Yend>[\Nodeb`\nodeb;\Labelb]%
\advance\Xpos by-\deltaX \advance\xend by-\deltax \Xend\xend\advance\Xend by
-\Xpos
\domorphism(\Xpos,\Ypos)|#1|/#5/<\Xend,\Yend>[\Nodea`\nodea;\Labela]%
\advance\Ypos by-\deltaY \advance\yend by-\deltay \Yend\yend\advance\Yend by
-\Ypos
\domorphism(\Xpos,\Ypos)|#3|/#7/<\Xend,\Yend>[\Nodec`\nodec;\Labelc]%
\advance\Xpos by\deltaX \advance\xend by\deltax \Xend\xend\advance\Xend by
-\Xpos
\domorphism(\Xpos,\Ypos)|#4|/#8/<\Xend,\Yend>[\Noded`\noded;\Labeld]%
\ignorespaces}

\def\setwdth#1#2{\setbox0\hbox{$\labelstyle#1$}\wdth=\wd0
\setbox0\hbox{$\labelstyle#2$}\ifnum\wdth<\wd0 \wdth=\wd0 \fi}

\def\topppp/#1/<#2>^#3_#4{\allowbreak\mathrel{%
\ifnum#2=0
   \setwdth{#3}{#4}\deltax=\wdth \divide \deltax by \ul
   \advance \deltax by \defaultmargin  \ratchet{\deltax}{200}%
\else \deltax #2 \fi \xy\ar@{#1}^{#3}_{#4}(\deltax,0) \endxy \ignorespaces}}

\def\toppp/#1/<#2>^#3{\ifnextchar_{\topppp/#1/<#2>^{#3}}{\topppp/#1/<#2>^{#3}_{}}}
\def\topp/#1/<#2>{\ifnextchar^{\toppp/#1/<#2>}{\toppp/#1/<#2>^{}}}
\def\toop/#1/{\ifnextchar<{\topp/#1/}{\topp/#1/<0>}}
\def\to{\ifnextchar/{\toop}{\toop/>/}}

\def\twopppp/#1`#2/<#3>^#4_#5{\allowbreak\mathrel{%
\ifnum0=#3
  \setwdth{#4}{#5}\deltax=\wdth \divide \deltax by \ul \advance \deltax
  by \defaultmargin \ratchet{\deltax}{200}%
\else \deltax#3 \fi
\xy\ar@{#1}@<2.5pt>^{#4}(\deltax,0)%
\ar@{#2}@<-2.5pt>_{#5}(\deltax,0)\endxy\ignorespaces}}

\def\twoppp/#1`#2/<#3>^#4{\ifnextchar_{\twopppp/#1`#2/<#3>^{#4}}%
  {\twopppp/#1`#2/<#3>^{#4}_{}}}
\def\twopp/#1`#2/<#3>{\ifnextchar^{\twoppp/#1`#2/<#3>}{\twoppp/#1`#2/<#3>^{}}}
\def\twop/#1`#2/{\ifnextchar<{\twopp/#1`#2/}{\twopp/#1`#2/<0>}}

\def\threeppppp/#1`#2`#3/<#4>^#5|#6_#7{\allowbreak\mathrel{%
\ifnum0=#4 \setbox0\hbox{$\labelstyle#5$}\wdth=\wd0
\setbox0\hbox{$\labelstyle#6$}\ifnum\wdth<\wd0 \wdth=\wd0 \fi
\setbox0\hbox{$\labelstyle#7$}\ifnum\wdth<\wd0 \wdth=\wd0 \fi \deltax=\wdth
\divide \deltax by \ul \advance \deltax by
\defaultmargin \ratchet{\deltax}{300}%
\else\deltax#4 \fi
    \xy \ifnum\wd0=0 \ar@{#2}(\deltax,0)
    \else \ar@{#2}|{#6}(\deltax,0)\fi
\ar@{#1}@<4.5pt>^{#5}(\deltax,0)
\ar@{#3}@<-4.5pt>_{#7}(\deltax,0)\endxy\ignorespaces}}

\def\threepppp/#1`#2`#3/<#4>^#5|#6{\ifnextchar_{\threeppppp
  /#1`#2`#3/<#4>^{#5}|{#6}}{\threeppppp/#1`#2`#3/<#4>^{#5}|{#6}_{}}}
\def\threeppp/#1`#2`#3/<#4>^#5{\ifnextchar|{\threepppp
  /#1`#2`#3/<#4>^{#5}}{\threepppp/#1`#2`#3/<#4>^{#5}|{}}}
\def\threepp/#1`#2`#3/<#4>{\ifnextchar^{\threeppp/#1`#2`#3/<#4>}%
  {\threeppp/#1`#2`#3/<#4>^{}}}
\def\threep/#1`#2`#3/{\ifnextchar<{\threepp/#1`#2`#3/}%
  {\threepp/#1`#2`#3/<0>}}

\def\twoar(#1,#2){{%
 \scalefactor{0.1}
 \deltax#1\deltay#2%
 \deltaX=\ifnum\deltax<0-\fi\deltax
 \deltaY=\ifnum\deltay<0-\fi\deltay
 \Xend\deltax \multiply \Xend by \deltax
 \Yend\deltay \multiply \Yend by \deltay
 \advance\Xend by \Yend \multiply \Xend by 3
 \ifnum \deltaX > \deltaY
    \multiply \deltaX by 3 \advance \deltaX by \deltaY
 \else
    \multiply \deltaY by 3 \advance \deltaX by \deltaY
 \fi
 \multiply\deltax by 500
 \multiply\deltay by 500
 \xpos\deltax \multiply \xpos by 3 \divide\xpos by \deltaX
 \Xpos\deltax \multiply \Xpos by \deltaX \divide \Xpos by \Xend
 \advance \xpos by \Xpos
 \ypos\deltay \multiply \ypos by 3 \divide\ypos by \deltaX
 \Ypos\deltay \multiply \Ypos by \deltaX \divide \Ypos by \Xend
 \advance \ypos by \Ypos
 \xy \ar@{=>}(\xpos,\ypos) \endxy
}\ignorespaces}

\def\iiixiiipppppp(#1,#2)|#3|/#4/<#5>#6<#7>[#8;#9]{%
 \xpos#1\ypos#2\relax
 \def\next|##1##2##3##4##5##6##7|{\def\xa{##1}\def\xb{##2}%
 \def\xc{##3}\def\xd{##4}\def\xe{##5}\def\xf{##6}\nextt|##7|}%
 \def\nextt|##1##2##3##4##5##6|{\def\xg{##1}\def\xh{##2}%
 \def\xi{##3}\def\xj{##4}\def\xk{##5}\def\xl{##6}}%
 \next|#3|%
 \def\next<##1,##2>{\deltax##1\deltay##2}%
 \next<#5>%
 \def\next<##1,##2>{\deltaX##1\deltaY##2}%
 \next<#7>%
 \def\next##1{\topw##1\relax
 \ifodd\topw \def\zl{}\else\def\zl{\relax}\fi \divide\topw by 2
 \ifodd\topw \def\zk{}\else\def\zk{\relax}\fi \divide\topw by 2
 \ifodd\topw \def\zj{}\else\def\zj{\relax}\fi \divide\topw by 2
 \ifodd\topw \def\zi{}\else\def\zi{\relax}\fi \divide\topw by 2
 \ifodd\topw \def\zh{}\else\def\zh{\relax}\fi \divide\topw by 2
 \ifodd\topw \def\zg{}\else\def\zg{\relax}\fi \divide\topw by 2
 \ifodd\topw \def\zf{}\else\def\zf{\relax}\fi \divide\topw by 2
 \ifodd\topw \def\ze{}\else\def\ze{\relax}\fi \divide\topw by 2
 \ifodd\topw \def\zd{}\else\def\zd{\relax}\fi \divide\topw by 2
 \ifodd\topw \def\zc{}\else\def\zc{\relax}\fi \divide\topw by 2
 \ifodd\topw \def\zb{}\else\def\zb{\relax}\fi \divide\topw by 2
 \ifodd\topw \def\za{}\else\def\za{\relax}\fi}%
 \next{#6}%
 \def\next[##1`##2`##3`##4`##5`##6`##7`##8`##9]{%
 \def\nodea{##1}\def\nodeb{##2}\def\nodec{##3}%
 \def\noded{##4}\def\nodee{##5}\def\nodef{##6}%
 \def\nodeg{##7}\def\nodeh{##8}\def\nodei{##9}}%
 \next[#8]%
 \def\next[##1`##2`##3`##4`##5`##6`##7]{%
 \def\labela{##1}\def\labelb{##2}\def\labelc{##3}%
 \def\labeld{##4}\def\labele{##5}\def\labelf{##6}\nextt[##7]}%
 \def\nextt[##1`##2`##3`##4`##5`##6]{%
 \def\labelg{##1}\def\labelh{##2}\def\labeli{##3}%
 \def\labelj{##4}\def\labelk{##5}\def\labell{##6}}%
 \next[#9]%
 \def\next/##1`##2`##3`##4`##5`##6`##7/{%
\morphism(\xpos,\ypos)|\xe|/{##5}/<\deltax,0>[\nodeg`\nodeh;\labele]%
 \ifx\zi\empty\relax \morphism(\xpos,\ypos)||/<-/<-\deltaX,0>[\nodeg`0;]\fi
 \ifx\zd\empty\relax \morphism(\xpos,\ypos)||<0,-\deltaY>[\nodeg`0;]\fi
 \advance\xpos by \deltax
 \morphism(\xpos,\ypos)|\xf|/{##6}/<\deltax,0>[\nodeh`\nodei;\labelf]%
 \ifx\ze\empty\relax \morphism(\xpos,\ypos)||<0,-\deltaY>[\nodeh`0;]\fi
 \advance\xpos by \deltax
 \ifx\zf\empty\relax \morphism(\xpos,\ypos)||<0,-\deltaY>[\nodei`0;]\fi
 \ifx\zl\empty\relax \morphism(\xpos,\ypos)||<\deltaX,0>[\nodei`0;]\fi
 \advance\ypos by \deltay
 \ifx\zk\empty\relax \morphism(\xpos,\ypos)||<\deltaX,0>[\nodef`0;]\fi
 \advance\xpos by -\deltax
 \morphism(\xpos,\ypos)|\xd|/{##4}/<\deltax,0>[\nodee`\nodef;\labeld]%
 \advance\xpos by -\deltax
 \morphism(\xpos,\ypos)|\xc|/{##3}/<\deltax,0>[\noded`\nodee;\labelc]%
 \ifx\zh\empty\relax \morphism(\xpos,\ypos)||/<-/<-\deltaX,0>[\noded`0;]\fi
 \advance\ypos by \deltay
 \morphism(\xpos,\ypos)|\xa|/{##1}/<\deltax,0>[\nodea`\nodeb;\labela]%
 \ifx\zg\empty\relax \morphism(\xpos,\ypos)||/<-/<-\deltaX,0>[\nodea`0;]\fi
 \ifx\za\empty\relax \morphism(\xpos,\ypos)||/<-/<0,\deltaY>[\nodea`0;]\fi
 \advance\xpos by \deltax
 \morphism(\xpos,\ypos)|\xb|/{##2}/<\deltax,0>[\nodeb`\nodec;\labelb]%
 \ifx\zb\empty\relax \morphism(\xpos,\ypos)||/<-/<0,\deltaY>[\nodeb`0;]\fi
 \advance\xpos by \deltax
 \ifx\zc\empty\relax \morphism(\xpos,\ypos)||/<-/<0,\deltaY>[\nodec`0;]\fi
 \ifx\zj\empty\relax \morphism(\xpos,\ypos)||<\deltaX,0>[\nodec`0;]\fi
 \nextt/##7/}%
 \def\nextt/##1`##2`##3`##4`##5`##6/{%
 \morphism(\xpos,\ypos)|\xi|/{##3}/<0,-\deltay>[\nodec`\nodef;\labeli]%
 \advance\xpos by -\deltax
 \morphism(\xpos,\ypos)|\xh|/{##2}/<0,-\deltay>[\nodeb`\nodee;\labelh]%
 \advance\xpos by -\deltax
 \morphism(\xpos,\ypos)|\xg|/{##1}/<0,-\deltay>[\nodea`\noded;\labelg]%
 \advance\ypos by -\deltay
 \morphism(\xpos,\ypos)|\xj|/{##4}/<0,-\deltay>[\noded`\nodeg;\labelj]%
 \advance\xpos by \deltax
 \morphism(\xpos,\ypos)|\xk|/{##5}/<0,-\deltay>[\nodee`\nodeh;\labelk]%
 \advance\xpos by \deltax
 \morphism(\xpos,\ypos)|\xl|/{##6}/<0,-\deltay>[\nodef`\nodei;\labell]}%
 \next/#4/\ignorespaces}

\def\iiixiiip(#1){\ifnextchar|{\iiixiiipp(#1)}%
  {\iiixiiipp(#1)|aammbblmrlmr|}}%
\def\iiixiiipp(#1)|#2|{\ifnextchar/{\iiixiiippp(#1)|#2|}%
    {\iiixiiippp(#1)|#2|/>`>`>`>`>`>`>`>`>`>`>`>/}}%
\def\iiixiiippp(#1)|#2|/#3/{%
    \ifnextchar<{\iiixiiipppp(#1)|#2|/#3/}%
    {\iiixiiipppp(#1)|#2|/#3/<\default,\default>}}%
\def\iiixiiipppp(#1)|#2|/#3/<#4>{\ifnextchar[{\iiixiiippppp(#1)|#2|/#3/%
   <#4>0<0,0>}{\iiixiiippppp(#1)|#2|/#3/<#4>}}%
\def\iiixiiippppp(#1)|#2|/#3/<#4>#5{\ifnextchar<%
   {\iiixiiipppppp(#1)|#2|/#3/<#4>{#5}}%
   {\iiixiiipppppp(#1)|#2|/#3/<#4>{#5}<400,400>}}%

\def\iiixiipppppp(#1,#2)|#3|/#4/<#5>#6<#7>[#8;#9]{%
 \xpos#1\ypos#2\relax
 \def\next|##1##2##3##4##5##6##7|{\def\xa{##1}\def\xb{##2}%
 \def\xc{##3}\def\xd{##4}\def\xe{##5}\def\xf{##6}\def\xg{##7}}%
 \next|#3|%
 \def\next<##1,##2>{\deltax##1\deltay##2}%
 \next<#5>%
 \deltaX#7
 \topw#6
 \def\next{%
 \ifodd\topw \def\za{}\else\def\za{\relax}\fi \divide\topw by 2
 \ifodd\topw \def\zb{}\else\def\zb{\relax}\fi \divide\topw by 2
 \ifodd\topw \def\zc{}\else\def\zc{\relax}\fi \divide\topw by 2
 \ifodd\topw \def\zd{}\else\def\zd{\relax}\fi}%
 \next
 \def\next[##1`##2`##3`##4`##5`##6]{%
 \def\nodea{##1}\def\nodeb{##2}\def\nodec{##3}%
 \def\noded{##4}\def\nodee{##5}\def\nodef{##6}}%
 \next[#8]%
 \def\next[##1`##2`##3`##4`##5`##6`##7]{%
 \def\labela{##1}\def\labelb{##2}\def\labelc{##3}%
 \def\labeld{##4}\def\labele{##5}\def\labelf{##6}\def\labelg{##7}}%
 \next[#9]%
 \def\next/##1`##2`##3`##4`##5`##6`##7/{%
 \ifx\zc\empty\relax\morphism(\xpos,\ypos)<\deltaX,0>[0`\noded;]\fi
 \advance\xpos by\deltaX
 \morphism(\xpos,\ypos)|\xc|/##3/<\deltax,0>[\noded`\nodee;\labelc]%
 \advance\xpos by \deltax
 \morphism(\xpos,\ypos)|\xd|/##4/<\deltax,0>[\nodee`\nodef;\labeld]%
 \advance\xpos by \deltax
 \ifx\zd\empty\relax  \morphism(\xpos,\ypos)<\deltaX,0>[\nodef`0;]\fi
 \advance\xpos by -\deltaX  \advance\xpos by -\deltax
 \advance\xpos by -\deltax  \advance\ypos by \deltay
 \ifx\za\empty\relax\morphism(\xpos,\ypos)<\deltaX,0>[0`\nodea;]\fi
 \advance\xpos by\deltaX
 \morphism(\xpos,\ypos)|\xa|/##1/<\deltax,0>[\nodea`\nodeb;\labela]%
 \morphism(\xpos,\ypos)|\xe|/##5/<0,-\deltay>[\nodea`\noded;\labele]%
 \advance\xpos by \deltax
 \morphism(\xpos,\ypos)|\xb|/##2/<\deltax,0>[\nodeb`\nodec;\labelb]%
 \morphism(\xpos,\ypos)|\xf|/##6/<0,-\deltay>[\nodeb`\nodee;\labelf]%
 \advance\xpos by \deltax
 \morphism(\xpos,\ypos)|\xg|/##7/<0,-\deltay>[\nodec`\nodef;\labelg]%
 \ifx\zb\empty\relax \morphism(\xpos,\ypos)<\deltaX,0>[\nodec`0;]\fi}%
 \next/#4/\ignorespaces}

\def\iiixiip(#1){\ifnextchar|{\iiixiipp(#1)}%
  {\iiixiipp(#1)|aabblmr|}}%
\def\iiixiipp(#1)|#2|{\ifnextchar/{\iiixiippp(#1)|#2|}%
    {\iiixiippp(#1)|#2|/>`>`>`>`>`>`>/}}%
\def\iiixiippp(#1)|#2|/#3/{%
    \ifnextchar<{\iiixiipppp(#1)|#2|/#3/}%
    {\iiixiipppp(#1)|#2|/#3/<\default,\default>}}%
\def\iiixiipppp(#1)|#2|/#3/<#4>{\ifnextchar[{\iiixiippppp(#1)|#2|/#3/%
   <#4>{0}<0>}{\iiixiippppp(#1)|#2|/#3/<#4>}}%
\def\iiixiippppp(#1)|#2|/#3/<#4>#5{\ifnextchar<%
   {\iiixiipppppp(#1)|#2|/#3/<#4>{#5}}%
   {\iiixiipppppp(#1)|#2|/#3/<#4>{#5}<0>}}%

\catcode`\@=\atcode%
\endinput
\end{filecontents}

\documentclass[11pt]{article}
\usepackage{amsmath,amssymb,latexsym,graphics,amsthm}
 \input diagxy
 \input xy
 \xyoption{all}
 \textwidth 6in
 \oddsidemargin 0pt
 \makeindex

\newtheorem{lemma}{Lemma}[section]
\newtheorem{theorem}[lemma]{Theorem}
\newtheorem{corollary}[lemma]{Corollary}
\newtheorem{claim}[lemma]{Claim}
\newtheorem{proposition}[lemma]{Proposition}

\theoremstyle{definition}
\newtheorem{definition}[lemma]{Definition}

\theoremstyle{remark}
\newtheorem*{remark}{Remark}
\newtheorem*{remarks}{Remarks}
\newtheorem*{notation}{Notation}

\newcommand{\s}{[\![}
\newcommand{\y}{]\!]}
\newcommand{\val}[1]{\s #1 \y}
\newcommand{\fr}[1]{\ensuremath{\langle #1 \rangle}}
\newcommand{\ff}[1]{\ensuremath{\fr{#1}^{*}}}
\newcommand{\el}{\rightarrow}

\newcommand{\bx}{\ensuremath{\mathbf{X}}}
\newcommand{\by}{\ensuremath{\mathbf{Y}}}
\newcommand{\bz}{\ensuremath{\mathbf{Z}}}
\newcommand{\ba}{\ensuremath{\mathbf{A}}}
\newcommand{\bb}{\ensuremath{\mathbf{B}}}
\newcommand{\bc}{\ensuremath{\mathbf{C}}}

\newcommand{\bs}{\ensuremath{\mathbf{S}}}
\newcommand{\bw}{\ensuremath{\mathbf{W}}}

\newcommand{\rep}{\raisebox{.6ex}{\ensuremath{\,\framebox[5pt]{\rule{0pt}{.5pt}}_{\,}}}}
\newcommand{\repq}{\raisebox{.6ex}{\ensuremath{\,\framebox[5pt]{\rule{0pt}{.5pt}}_{\,q}}}}
\newcommand{\repr}{\raisebox{.6ex}{\ensuremath{\,\framebox[5pt]{\rule{0pt}{.5pt}}_{\,r}}}}
\newcommand{\drep}{\raisebox{.6ex}{\ensuremath{\,\framebox[5pt]{\rule{0pt}{.5pt}.}_{\,}}}}

\newcommand{\sbs}[1]{_{\text{\tiny{\ensuremath{#1}}}}}
\newcommand{\sps}[1]{^{\text{\tiny{\ensuremath{#1}}}}}
\newcommand{\tc}[1]{\text{\tiny{\ensuremath{#1}}}}

\newcommand{\eqv}[1]{#1/\!\!\approx}
\newcommand{\oc}[1]{\langle #1 \rangle}
\newcommand{\doc}[1]{|\langle #1 \rangle|}

\newcommand{\stc}{\mathcal{C}}
\newcommand{\stm}{\mathcal{M}}
\newcommand{\stt}{\mathcal{T}}

\newcommand{\stn}{\mathcal{N}}
\newcommand{\sta}{\mathcal{A}}

\newcommand{\dcc}{\mathbb{C}}
\newcommand{\dcd}{\mathbb{D}}

\newcommand{\dcr}{\mathbb{R}}
\newcommand{\dcw}{\mathbb{W}}
\newcommand{\dcz}{\mathbb{Z}}

\newcommand{\eps}{\varepsilon}
\newcommand{\p}{\sps{\prime}}
\newcommand{\pp}{\sps{\prime\prime}}

\newcommand{\hd}{\hat{d}}
\newcommand{\hc}{\hat{c}}
\newcommand{\hb}{\hat{\bullet}}
\newcommand{\hrep}{\widehat{\rep}}
\newcommand{\hrepr}{\widehat{\rep}_r}

\begin{document}
\def\xypic{\hbox{\rm\Xy-pic}}

\bibliographystyle{amsplain}

\title{Computads and Multitopic Sets}
\author{Victor Harnik\setcounter{footnote}{0} \footnote{corresponding author}
\\
University of Haifa
\\harnik@math.haifa.ac.il
\and Michael Makkai
\\
 McGill University
 \\makkai@math.mcgill.ca
\and Marek Zawadowski
\\
Warsaw University
\\zawado@mimuw.edu.pl
}

\maketitle

\begin{abstract} We compare \emph{computads} (as defined
in~\cite{S1},~\cite{S2},~\cite{B}) with \emph{multitopic sets}
(cf.~\cite{HMP1}-~\cite{HMP3}). Both these kinds of structures have
$n$-dimensional objects (called \emph{$n$-cells} for computads and
\emph{n-pasting diagrams} for multitopic sets), for each natural number $n$.
In both cases, the set of $n$-dimensional objects is freely generated by one
of its subsets. The computads form a subclass of the more familiar collection
of \emph{$\omega$-categories} while multitopic sets are of a more novel
nature, being based on an iteration of free \emph{multicategories}.
Multitopic sets have been devised as a vehicle for a definition of the
concept of \emph{weak $\omega$-category}. Our main result states that the
category of multitopic sets is equivalent to that of \emph{many-to-one}
computads, which is a certain full subcategory of the category of all
computads.
\end{abstract}

\section*{Introduction and preliminaries}\label{S:introduction} The notion
of \emph{free structure} has penetrated all parts of modern algebra. It has
the following abstract generalization. Given categories $\bc$ and $\bs$ and a
functor $U: \bc \el \bs$, we say that an object $A$ of $\bc$ is \emph{free}
with respect to $U$ iff for some object $I$ and arrow $\iota: I \el UA$ in
$\bs$, the following \emph{universal property} holds: for every object $B$ of
$\bc$ and arrow $\phi: I \el UB$ of $\bs$, there is a \emph{unique}
$\bc$-arrow $f: A \el B$ such that the following diagram commutes:
$$\bfig \Vtriangle/>`>`.>/[{I}`{UA}`{UB};{\iota}`{\phi}`{Uf}] \efig$$
We say that $I$ \emph{generates} the free object $A$ (\emph{via} the arrow
$\iota$). In the familiar cases, the objects of $\bc$ and $\bs$ are
mathematical structures, $I$ is a substructure of $UA$ with $\iota$ being the
\emph{inclusion} map, and the elements of (the universe of) $I$ are called
\emph{generators} of the free structure $A$.

For example, if $\bc$ is the \emph{category of commutative rings}, $\bs$ the
\emph{category of sets} and $U$ the \emph{forgetful functor}, the free ring
generated by a set $X$ is nothing but the ring $\dcz[X]$ of polynomials with
integral coefficients and \emph{indeterminates} from the set $X$. Borrowing
terminology from this example, we will usually refer to the generators of a
free structure as \emph{indeterminates} or, in short, \emph{indets}.

Another familiar example is that of a \emph{free category} generated by a
\emph{directed graph} (see, e.g.~\cite{CWM}, $\S7$ of Chapter II). In this
case, $\bc$ is the category $Cat$ of (small) categories, $\bs$ is the
category $Grph$ of directed graphs and $U$ is, again, the forgetful functor.

The notion of free category has been generalized to higher dimensional
categories by Street, leading to the concept of \emph{computad} which is
central for the present work (cf.~\cite{S1} and~\cite{S2} for the
$2$-dimensional case and~\cite{B} for the general definition).

To fix our notations, we now recall the structure of higher dimensional
categories. An $n$-dimensional category or, in short, an \emph{$n$-category}
$\bc$ has a set of \emph{$k$-cells} $C_k$, for each $k \leqslant n$. For $k >
0$, it has domain and codomain functions $d,c: C_k \el C_{k-1}$; thus, a
$k$-cell $u$ is envisaged as an arrow $du \to^u cu$, linking its domain $du
\in C_{k-1}$ to its codomain $cu \in C_{k-1}$. For $k \geqslant 2$, we also
require $du$ and $cu$ to be \emph{parallel}, meaning that $ddu=dcu$ and
$cdu=ccu$, i.e. $du$ and $cu$ have the same domain and the same codomain. For
the sake of uniformity, we say that any two $0$-cells are parallel, so that
we can say that $du \parallel cu$ whenever $u \in C_k$, with $k>0$. If $u$ is
an $l$-cell and $k < l$ we let $d^{(k)}u$ be the $k$-cell obtained from $u$
by $l-k$ successive applications of the domain function $d$; the $k$-cell
$c^{(k)}u$ is defined similarly. The $n$-category $\bc$ is also equipped with
\emph{partial composition operations} $\bullet_k$ for $k<n$. If $u,v \in C_l$
and $k < l$, then $u \bullet_k v$ is an $l$-cell that is defined iff
$d^{(k)}u= c^{(k)}v$. Finally, with each $l$-cell $u$, $l < n$, $\bc$ has an
\emph{identity $(l+1)$-cell} $u \to^{1_u} u$. The concepts that we mentioned,
satisfy certain axioms. For a precise definition, see~\cite{Leinster}, as
well as section~\ref{S:clanguage} below.

An $\omega$-category is one that has $n$-cells for each natural number $n <
\omega$ (as customary in set theory, $\omega$ is the first infinite ordinal
number). An $n$-category can be seen as an $\omega$-category in which all
cells of dimension $>n$ are identities. An $\omega$-functor $F: \bc \el
\bc\p$ between $\omega$-categories is a map from the cells of $\bc$ to those
of $\bc\p$ that preserves the $\omega$-categorical structure. The category
$\omega Cat$ of $\omega$-categories is the one that has the small
$\omega$-categories as objects and the $\omega$-functors as arrows.

For an $N$-category $\bc$, with $N \leqslant \omega$, and $n < N$, let
$\bc_n$ be the $n$-category whose $k$-cells are the same as those of $\bc$,
for all $k \leqslant n$. $\bc_n$ is called the $n$th truncation of $\bc$. If
$\ba$ is an $n$-category, we say that $\bc$ \emph{extends} $\ba$ iff
$\bc_n=\ba$.

Fix an $(n-1)$-category $\ba$. An \emph{extension} of $\ba$ will be any
$n$-category that extends $\ba$. A \emph{pre-extension} $(I,d,c)$ of $\ba$
will be a set $I$ together with functions $d,c: I \el A_{n-1}$, such that $dx
\parallel cx$ for $x \in I$ (remember that $A_{n-1}$ is the set of $(n-1)$-cells of $\ba$).
For the sake of this preliminary discussion, let us introduce the categories
$Ext(\ba)$ and $Preext(\ba)$; the former has the extensions of $\ba$ as
objects while its arrows are the $\omega$-functors that extend the identity
functor on $\ba$. The latter category has the pre-extensions of $\ba$ as
objects and the structure preserving maps as arrows. There is an obvious
\emph{forgetful} functor $U: Ext(\ba) \el Preext(\ba)$. An extension $\bb$ of
$\ba$ is called \emph{free} if it is a free object of $Ext(\ba)$ with respect
to $U$, in the sense of the definition that opened this introduction. This
concept is at the heart of Street's definition. An $\omega$-category $\ba$ is
called a \emph{computad} iff $\ba_{n+1}$ is a free extension of $\ba_n$, for
each $n < \omega$.

In the first part of this paper
(sections~\ref{S:clanguage}-\ref{S:comparing}), we concentrate on the study
of free extensions of finite dimensional categories. We start by presenting a
construction of a free extension, using a  method familiar from universal
algebra (cf. e.g.~\cite{Gr}): given a pre-extension $(I,d,c)$ of an
$(n-1)$-category $\ba$, we set up a formal \emph{equational language} $\stc$
which has \emph{terms} denoting all the cells that can be constructed from
the elements of $I$ by applications of the partial operations defined among
$n$-cells in an extension of $\ba$. The language $\stc$ has also \emph{a
deductive system} that allows us to prove equalities among terms. Two terms
are called \emph{equivalent} if their equality is provable in $\stc$. The
elements of the free extension constructed by this method, will be the
equivalence classes of $\stc$-terms.

The same method could be used to construct the free ring generated by a set
of indeterminates $X$. However, a simplifying circumstance occurs in this
case. The terms of the corresponding formal language are algebraic
expressions that use indeterminates, constants for $0$ and $1$, binary
operation symbols $+,\,\cdot,\,-$ as well as parentheses. Each such term $t$
can be proven to be equal to a polynomial, which is \emph{unique} (assuming
that the monomials that are the terms of the polynomial occur in a canonical
ordering induced by a given ordering of the set of indeterminates). We shall
call this polynomial the \emph{reduced} form of the term $t$. This situation
allows us to replace the equivalence class of $t$ by the unique polynomial
which is the common reduced form of the members of this class. The free
structure generated by $X$ becomes, in this way, a \emph{term model}, i.e. a
structure whose elements are individual terms, rather than equivalence
classes. This is how the polynomial ring $\dcz[X]$ is obtained.

Can the free extension $\bx$ of an $n$-category $\bb$, generated by a given
pre-extension, be also construed as a term model? In other words, can we
substitute each equivalence class of terms by a "canonical" representative, a
common "reduced form" of its elements? \emph{Under certain conditions}, the
answer to this question is positive. This result is just one corollary, a
side benefit, of the study that we conduct in
sections~\ref{S:placedcomp}-\ref{S:comparing}. We are now going to describe
the content of these sections, in rough terms.

Assume that $\bb$ itself is a \emph{free} extension of an $(n-1)$-category
$\ba$, generated by a set $I$ of $n$-dimensional indets (i.e., generated by a
pre-extension of the form $(I,d,c)$) and let $\bx$ be any extension of $\bb$.
Call an $(n+1)$-cell of $\bx$, $u \in X_{n+1}$,  \emph{many-to-one} iff its
codomain $cu$ is an indet, i.e. $cu \in I$. We define, in
section~\ref{S:placedcomp}, a partial binary operation, called \emph{placed
composition}, between the many-to-one cells of $\bx$. As it turns out, the
many-to-one cells of $\bx$, together with the operation of partial
composition yield a structure $\dcc_{\bx}$ which is a \emph{multicategory}.
The abstract notion of multicategory, described in section~\ref{S:multicats},
is a generalization, introduced in~\cite{HMP2}, of a notion due to Lambek
(cf.~\cite{Lambek}). Free multicategories do have \emph{term models}, as
shown in~\cite{HMP2} and briefly sketched in section~\ref{S:planguage}. The
main technical result of this paper, theorem~\ref{L:main}, states that if
$\bx$ is a free extension of $\bb$, generated by a set $J$ of many-to one
indets, then the multicategory $\dcc_{\bx}$ is also free (and, actually,
generated by the same set of indets $J$).

As we stated already, a computad is obtained by starting with a barren set
and iterating the free extension construction indefinitely. If, at each
stage, the generating indets are many-to-one cells, then we get a
\emph{many-to-one} computad. These are the objects of a category $m/1Comp$
described in section~\ref{S:computads}. The many-to-one cells of a
many-to-one computad $\ba$ together with the (partial) operations of placed
composition and the domain/codomain functions, form a structure $S_{\ba}$.
This structure is a \emph{multitopic set}, an abstract notion introduced
in~\cite{HMP3}. Roughly speaking, a multitopic set is a structure obtained by
iterating indefinitely the construction of free multicategory. The precise
setup, as well as the description of the category $mltSet$ of multitopic
sets, are presented also in section~\ref{S:computads}. In
section~\ref{S:equivalence} we show, using the results of
section~\ref{S:comparing}, that actually, \emph{all} multitopic sets are of
the form $S_{\ba}$ for some many-to-one computad $\ba$. We then infer that
the categories $m/1Comp$ and $mltSet$ are equivalent. More colorfully said,
\emph{multitopic sets are the same as many-to-one computads}. This is the
main result of our paper.

Multitopic sets have been introduced in the sequence of papers \cite{HMP1},
\cite{HMP2}, \cite{HMP3} as a vehicle for producing the "right" definition
for the notion of \emph{weak higher dimensional category}. This approach was
inspired by an earlier attempt of Baez and Dolan (cf. \cite{BD1},
\cite{BD2}). See~\cite{Leinster} for a survey of the competing definitions of
weak higher dimensional categories, including the one of~\cite{M1}, based on
multitopic sets. Our main result shows that the definition of~\cite{M1} could
be rephrased using the more familiar notion of many-to-one computad.

An alternative approach for defining weak higher dimensional categories,
based on a concept called \emph{dendrotopic sets}, has been devised by Palm
in~\cite{P}. In addition, Palm shows that the category of dendrotopic sets is
equivalent to that of many-to-one computads, thus concluding that the
categories of multitopic sets and of dendrotopic sets are also equivalent.

\medskip

We conclude the preliminaries by recalling one more notation. If $u$ is a
$k$-cell of an $\omega$-category $\ba$ and $k<n$, then we let $1_u^{(n)}$ be
the $n$-cell obtained from $u$ by $n-k$ successive applications of the $x
\mapsto 1_x$ operation.

\section{Free extensions}\label{S:clanguage}

Let $(I,d,c)$ be a pre-extension of an $(n-1)$-category $\ba$, meaning, as we
recall, that $I$ is a set and $d,c: I \rightarrow A_{n-1}$ are functions such
that $dx\parallel cx$ for each $x \in I$. As in the introduction, the
elements of $I$ will be called \emph{ $n$-indets}. We should think of an
$n$-indet $x$ as denoting an arbitrary $n$-cell belonging to an
$\omega$-category \emph{extending} $\ba$ (i.e. an $\omega$-category whose
$(n-1)$th truncation is $\ba$), having domain and codomain $dx$ and $cx$. We
now define an equational language $\mathcal{C}=\mathcal{C}(\ba,I,d,c)$,
dealing with the $n$-cells obtained from the (cells denoted by) $n$-indets,
by repeated compositions. The symbols of $\stc$ will be the $n$-indets, the
\emph{composition symbols} $\bullet_k$, for $k<n$, as well as the
\emph{identity symbols} $1_a$, for each $(n-1)$-cell $a \in A_{n-1}$. Besides
these, $\stc$ will employ left and right parentheses as \emph{auxiliary}
symbols.

\begin{definition}\label{D:cterms} The set $\stt (\stc)$ of
$\stc$-terms and the domain and codomain functions \newline \mbox{$d$,$c:\stt
(\stc) \rightarrow A_{n-1}$} are defined as follows:
\begin{enumerate}
\item Every $n$-indet $x$ is a $\stc$-term with $dx$, $cx$ as
specified by the given functions \mbox{$d$,$c:I \rightarrow A_{n-1}$}.

\item For each $a \in A_{n-1}$, $1_a$ is a $\stc$-term with
$d1_a=c1_a=a$.

\item If $t$, $s$ are $\stc$-terms and $d^{(k)}t=c^{(k)}s$, then
$(t)\bullet_k (s)$ is a $\stc$-term (the parentheses around $t$ and $s$
insure unique readability; usually, we just write $t\bullet_k s$) and we have

\[d(t\bullet_k s)=\begin{cases}ds & \text{if }k=n-1\\dt \bullet_k
ds& \text{if } k<n-1 \end{cases}
\]

\[c(t\bullet_k s)=\begin{cases}ct & \text{if }k=n-1\\ct \bullet_k
cs& \text{if } k<n-1 \end{cases}
\]

\item There are no $\stc$-terms besides those mentioned in 1-3.
\end{enumerate}
\end{definition}

The meaning of the $\stc$-terms should be clear. If $\bx$ is an
$\omega$-category extending $\ba$ and if \mbox{$\varphi: I \rightarrow X_n$}
is an \emph{assignment} which is \emph{correct}, meaning that $d\varphi x=dx$
and $c\varphi x=cx$ for all $x\in I$, then we can \emph{evaluate} any
$\stc$-term t \emph{under} the said assignment and get the value
$val_{\varphi}(t)\in X_n$. Remember that when saying that $\bx$ extends
$\ba$, we mean that $\ba=\bx_{n-1}$ (the $(n-1)$th  truncation of $\bx$).
More generally, if $\bx$ is \emph{any} $\omega$-category,
\mbox{$F:\ba\rightarrow \bx$} an $\omega$-functor and
\mbox{$\varphi:I\rightarrow X_n$} an assignment that is \emph{consistent}
with $F$, in the sense that $d\varphi x=Fdx$, $c\varphi x=Fcx$ for $x\in I$,
we can evaluate $t$ \emph{under} $F,\varphi$ and get the value
$val_{F,\varphi}(t)\in X_n$ . The formal definition runs as follows.

\begin{definition}\label{D:val} Under the assumptions that we just
mentioned, we define the function \mbox{$val=val_{F,\varphi}:\stt (\stc)
\rightarrow X_n$} , by induction on $\stc$-terms:
\begin{enumerate} \item $val(x)=\varphi x$, for $x\in I$.
\item $val(1_a)=1_{Fa}$, for $a\in A_{n-1}$.
\item $val(t\bullet_k s)=val(t)\bullet_k val(s)$.
\end{enumerate}

If $\ba=\bx_{n-1}$ and $\varphi$ is a correct assignment, we let
$val_{\varphi}=val_{i_{\ba},\varphi}$, where $i_{\ba}$ is the inclusion
$\omega$-functor of $\ba$ into $\bx$.
\end{definition}

It may so happen, that for terms $t$ and $s$ we have
$val_{F,\varphi}(t)=val_{F,\varphi}(s)$ for \emph{all} $F$ and $\varphi$.
This occurs whenever $t$ and $s$ \emph{must} be equal in virtue of the axioms
of $\omega$-category. We can describe this situation precisely, by setting up
a deductive system for proving equality of terms. This is done in the
definition below, which completes the presentation of the equational logical
system $\stc=\stc(\ba,I,d,c)$. Let us mention that the axioms of the notion
of $\omega$-category are the associativity, exchange and identity axioms of
this definition.

\begin{definition}\label{D:axioms} We define the deductive system
$\stc$ as follows, where, in the axioms and rules below, $t,s,w,t_1,s_1$ are
arbitrary $\stc$-terms and all compositions are supposed to be well defined
(according to definition~\ref{D:cterms}).

\vspace{3pt}

\emph{\textbf{Axioms.}}

\vspace{-9pt}

\begin{enumerate}
\item $t=t$ (equality axioms).
\item $(t\bullet_k s)\bullet_k w=t\bullet_k(s\bullet_k w)$
(associativity axioms).
\item $(t\bullet_k t_1)\bullet_l(s\bullet_k s_1)=(t\bullet_l s) \bullet_k
(t_1 \bullet_l s_1)$, where $l<k<n$ (exchange axioms).
\item $1^{(n)}_b \bullet_k t=t=t \bullet_k 1^{(n)}_a$, where $k <
n$, $c^{(k)}t=b$ and $d^{(k)}t=a$.

Also, $1_a\bullet_k 1_b=1_{a\bullet_k b}$, where $a,b \in A_{n-1},\,
d^{(k)}a=c^{(k)}b$ (identity axioms). \end{enumerate}

\vspace{-9pt}

 \emph{\textbf{Rules.}}

\vspace{-3pt}

\begin{enumerate}
\item $\dfrac{t=s}{s=t} \qquad \dfrac{t=s \quad s=w}{t=w}$
(equality rules)
\item $\dfrac{t=s}{t\bullet_k w=s\bullet_k w} \qquad
\dfrac{t=s}{w\bullet_k t=w\bullet_k s}$ (congruence rules)
\end{enumerate}
\end{definition}

We will write `$\vdash t=s$' or, sometimes, $\vdash_{\stc} t=s$ to indicate
that $t=s$ is provable in this system.

\medskip

Is this system complete? In other words are we sure that, whenever $\nvdash
t=s$, there are $\bx$,$F$ and $\varphi$ for which $val(t) \neq val(s)$? The
\emph{positive} answer to this question, follows from the existence of
\emph{free} extensions.

\begin{theorem}\label{L:freeext}
Given $\ba$, $I,d,c$ as above, there exists an $n$-category $\ba[I]$
satisfying:

\begin{enumerate}
\item $\ba[I]$ is an extension of $\ba$, i.e its $(n-1)$-th truncation is $\ba$,
$\ba[I]_{n-1}=\ba$.
\item Each $x \in I$ is an $n$-cell of $\ba[I]$ with domain
$dx$ and codomain $cx$.
\item $\ba[I]$ has the following \emph{universal property}: if $\bx$ is any
$\omega$-category extending $\ba$ and $\varphi: I \el X_n$ a function
satisfying that $d\varphi x=dx$ and $c\varphi x=cx$ for $x \in I$, then there
is a \emph{unique} $\omega$-functor $G: \ba[I] \el \bx$ such that $Ga=a$ when
$a$ is a cell of $\ba$ and $Gx=\varphi x$ for $x \in I$.

\medskip

Moreover, $\ba[I]$ has the following \emph{strong universal property}:
whenever $\bx$ is an \mbox{$\omega$-category}, $F:\ba \rightarrow \bx$ an
$\omega$-functor and $\varphi:I \rightarrow X_n$ a function such that
$d\varphi x=Fdx$, $c\varphi x=Fcx$ for $x \in I$, there is a \emph{unique}
\mbox{$\omega$-functor $G:\ba[I]\! \rightarrow \!\bx$} such that $Ga=Fa$
whenever $a$ is a cell of $\ba$ and $Gx=\varphi x$ for \mbox{$x \in I$}.
\end{enumerate}
\end{theorem}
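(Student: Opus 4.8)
The plan is to realize $\ba[I]$ as a \emph{term model}, carrying out the Lindenbaum--Tarski construction for which Definitions~\ref{D:cterms}--\ref{D:axioms} are the preparation. Write $\approx$ for the relation on $\stt(\stc)$ given by $t\approx s$ iff $\vdash_{\stc} t=s$; by the equality axioms and rules this is an equivalence relation. The $n$-category $\ba[I]$ will have, for each $k\le n-1$, the $k$-cells of $\ba$ as its $k$-cells with all structure inherited from $\ba$, and the quotient $\stt(\stc)/\!\approx$ as its set of $n$-cells. For this to be meaningful I would first check, by induction on derivations, that $\vdash_{\stc} t=s$ forces $dt=ds$ and $ct=cs$ (every axiom and every rule respects the domain/codomain assignment of Definition~\ref{D:cterms}), so that $d$ and $c$ descend to maps from the $n$-cells to $A_{n-1}$; an induction on terms, using the category axioms of $\ba$, shows moreover that $dt\parallel ct$ for every $t$, which is the globular condition. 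The operations $\bullet_k$ ($k<n$) on $n$-cells are induced by the corresponding term-forming operations --- well defined on $\approx$-classes exactly by the congruence rules --- their boundaries being read off the clauses of Definition~\ref{D:cterms}, and the identity $n$-cell on an $(n-1)$-cell $a$ is the class of $1_a$.

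That this structure is an $n$-category is then essentially automatic, and this is where whatever content the construction has resides. The associativity, exchange and identity axioms of Definition~\ref{D:axioms} are, as the text notes, exactly the laws an $n$-category imposes on the $\bullet_k$ in top dimension; since $\approx$ has been arranged to be a congruence, these hold in the quotient, while the remaining requirements (source/target compatibilities, and the laws relating the top-dimensional operations to the fixed lower structure of $\ba$) are built into Definition~\ref{D:cterms}. Clauses (1) and (2) of the theorem are then read off the construction: the $(n-1)$-st truncation of $\ba[I]$ is literally $\ba$, and each $x\in I$, as the $\approx$-class of the term $x$, is an $n$-cell with domain $dx$ and codomain $cx$.

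It remains to establish the strong universal property; the plain one is the case $F=i_{\ba}$ with $\bx$ an extension of $\ba$. Given $\bx$, an $\omega$-functor $F:\ba\el\bx$ and an assignment $\varphi:I\el X_n$ consistent with $F$, I would define $G:\ba[I]\el\bx$ by $Gu=Fu$ on the cells of $\ba$, $G[t]=val_{F,\varphi}(t)$ on $n$-cells (Definition~\ref{D:val}), and on the identity cells of dimension $>n$ in the only way compatible with functoriality. Well-definedness of $G$ on $n$-cells is the \emph{soundness} of $\stc$: $\vdash_{\stc} t=s$ implies $val_{F,\varphi}(t)=val_{F,\varphi}(s)$, by induction on the derivation --- the three families of axioms hold in $\bx$ because $\bx$ is an $\omega$-category, and the rules hold because the operations of $\bx$ are single-valued. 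A parallel induction on terms gives $d(val_{F,\varphi}(t))=F(dt)$ and $c(val_{F,\varphi}(t))=F(ct)$, which simultaneously makes every composite legal in $\ba[I]$ legal in $\bx$ and shows that $G$ preserves $d$ and $c$; preservation of $\bullet_k$ and of identities is immediate from the recursion clauses of Definition~\ref{D:val}. Uniqueness is the observation that any $\omega$-functor $G'$ extending $F$ and sending $x\mapsto\varphi x$ must satisfy $G'[t]=val_{F,\varphi}(t)$ --- by induction on $t$, since $G'$ preserves composites and identities --- hence $G'=G$.

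The one point that genuinely needs care, rather than a deep idea, is the claim that the finite list of identities in Definition~\ref{D:axioms} is a \emph{complete} axiomatization of $\omega$-categories in top dimension: this is what licenses the assertion that the congruence quotient is an $n$-category with nothing further to verify. Checking it honestly against the official definition (\cite{Leinster}, and the axiom list recalled in this section) --- accounting for every source/target identity, for the middle-four interchange, and for the identity laws that involve cells of $\ba$ --- is the place where the bookkeeping must actually be carried out; the rest of the proof is routine induction.
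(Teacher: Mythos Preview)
Your proposal is correct and follows essentially the same route as the paper: form the Lindenbaum--Tarski quotient $\stt(\stc)/\!\approx$, verify that $d,c$ and the $\bullet_k$ descend, and define $G$ via $val_{F,\varphi}$. The one point you pass over that the paper makes explicit is the verification that distinct indets $x\neq y$ in $I$ yield distinct equivalence classes (so that $I$ genuinely embeds in the set of $n$-cells, as clause~(2) requires); the paper handles this by observing that $\vdash t=s$ preserves the set of indets occurring in a term, hence $\not\vdash x=y$ for $x\neq y$.
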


\begin{remark} The universal property means that $\ba[I]$ is a free extension
of $\ba$ in the sense explained in the introduction. The \emph{strong}
universal property means that $\ba[I]$ is free with respect to a forgetful
functor $U_1: \bc_1 \el \bs_1$, where $\bc_1$ is $\omega Cat$ while $\bs_1$
is a category whose objects are pairs $\langle \bb, (Z,d,c) \rangle$ with
$\bb$ an $(n-1)$-category and $(Z,d,c)$ a pre-extension of $\bb$ (the
interested reader should have no problems in identifying the arrows of
$\bs_1$ and the definition of $U_1$).
\end{remark}

\begin{proof} As outlined in the introduction,
the $n$-cells of $\ba$ will be \emph{equivalence classes} of $\stc$-terms,
under a suitable equivalence relation.

\begin{claim}\label{L:vdash} \begin{enumerate}
\item[\textup{(a)}] If we define, for $\stc$-terms $t$ and $s$, $t\approx s
\text{ iff } \vdash{t=s}$, then $\approx$ is an equivalence relation that is
a congruence with respect to $\bullet_k,\ k < n$.
\item[\textup{(b)}] If $\vdash t=s$ then $dt=ds,\ ct=cs$.
\item[\textup{(c)}] If $\vdash t=s$ then
$val_{F,\varphi}(t)=val_{F,\varphi}(s)$, for all $F$ and $\varphi$.
\end{enumerate} \end{claim}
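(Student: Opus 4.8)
The plan is to handle the three parts in increasing order of effort. Part~(a) is essentially a restatement of the shape of the deductive system of Definition~\ref{D:axioms}: reflexivity of $\approx$ is the equality axiom $t=t$, symmetry and transitivity are the two equality rules, and closure of $\approx$ under each $\bullet_k$ ($k<n$) — the congruence property — is exactly what the two congruence rules assert. So no real argument is needed for~(a).

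For part~(b) I would argue by induction on the length of a derivation of $t=s$ in $\stc$. The base cases are the axioms. The axiom $t=t$ is trivial. For the associativity, exchange and identity axioms one simply computes $d$ and $c$ of each side using the clauses of Definition~\ref{D:cterms}, splitting into the cases $k=n-1$ and $k<n-1$ (and, for the identity axioms, unwinding $1^{(n)}_b$ as an iterated identity on an $(n-1)$-cell of $\ba$); in every case both sides receive the same domain and codomain. For the inductive step, the equality rules are immediate, and for the congruence rules one reuses the same domain/codomain formulas: from $dt=ds$ and $ct=cs$ one gets $d(t\bullet_k w)=d(s\bullet_k w)$ and $c(t\bullet_k w)=c(s\bullet_k w)$ (case $k=n-1$: both sides equal $dw$, resp.\ $cw$; case $k<n-1$: both equal $dt\bullet_k dw=ds\bullet_k dw$, resp.\ $ct\bullet_k cw=cs\bullet_k cw$), and symmetrically for $w\bullet_k t$. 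Throughout, one uses the standing assumption of Definition~\ref{D:axioms} that every composite written down is well defined, so that all these expressions make sense.

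Part~(c) is the soundness statement, again proved by induction on derivations, but it first needs the auxiliary fact that $val_{F,\varphi}$ is correctly typed: for every $\stc$-term $t$ one has $d\bigl(val_{F,\varphi}(t)\bigr)=F(dt)$ and $c\bigl(val_{F,\varphi}(t)\bigr)=F(ct)$. This follows by a routine induction on $t$, using that $F$ is an $\omega$-functor (so it commutes with $d$, $c$ and with identities) and that composites in the $\omega$-category $\bx$ obey the same case formulas for $d,c$ as in Definition~\ref{D:cterms}; it also guarantees that every composite $val(t)\bullet_k val(s)$ for which $t\bullet_k s$ is a well-formed $\stc$-term is actually defined in $\bx$, so that clause~3 of Definition~\ref{D:val} is legitimate. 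With this in hand, the base cases of the induction for~(c) are precisely the observation made in the text just before Definition~\ref{D:axioms}: after applying $val$ and pushing it inside via Definition~\ref{D:val}, the associativity, exchange and identity axioms of $\stc$ become exactly instances of the corresponding $\omega$-category axioms holding in $\bx$ (for the identity axioms one also uses $val(1_a)=1_{Fa}$ and that $F$ preserves iterated identities). The inductive step is trivial: the equality rules go over to symmetry and transitivity of $=$ in $X_n$, and a congruence rule follows by applying $(-)\bullet_k val(w)$, resp.\ $val(w)\bullet_k(-)$, to both sides of $val(t)=val(s)$ and invoking Definition~\ref{D:val}(3) together with the typing lemma to see the relevant composite is defined.

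There is no deep obstacle here — the claim is a standard soundness argument. The only points requiring care are bookkeeping: keeping the $k=n-1$ versus $k<n-1$ case split straight in the domain/codomain formulas, correctly reading $1^{(n)}_b$ as a $\stc$-term when $b$ has dimension $<n-1$, and explicitly isolating the "correct typing" lemma for $val$ so that clause~3 of Definition~\ref{D:val} and all the composites appearing in part~(c) are actually well defined. Everything else is mechanical induction on derivations.
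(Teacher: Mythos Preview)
Your proposal is correct and follows exactly the approach the paper takes: part~(a) is immediate from the form of the deductive system, and parts~(b) and~(c) are checked by induction on proofs. The paper's own proof is a two-line sketch (``(a) is immediate \ldots\ (b) and (c) are easily checked by induction on proofs''), so you have simply filled in the routine details the authors omitted, including the useful observation that $val_{F,\varphi}$ is correctly typed.
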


\begin{proof} (a) is immediate (congruence with respect to
$\bullet_k$ means that $t\approx s$ implies \mbox{$t\bullet_kw\approx
s\bullet_kw$} and $w\bullet_kt\approx w\bullet_ks$).

(b) and (c) are easily checked by induction on proofs. \end{proof}

We can now describe the $n$-category $\ba[I]$. The cells of $\ba[I]$ of
dimension $\leqslant n-1$ are those of $\ba$, while the $n$-cells are the
equivalence classes $t/\!\!\approx$ for $t \in \stt (\stc)$, where
$d(t/\!\!\approx)=dt$, $c(t/\!\!\approx)=ct$ and
\mbox{$(t/\!\!\approx)\bullet_k(s/\!\!\approx)=t\bullet_k s/\!\!\approx$}.

\medskip

Claim~\ref{L:vdash}(a)(b), insures that the definitions of $c, d \text{ and }
\bullet_k$  are correct, and the axioms of our deductive system insure that
we defined, indeed, an $n$-category. However, we wanted the elements of $I$
to be $n$-cells of $\ba[I]$ and what we have, instead, is that
$x/\!\!\approx$ is a such, for every $x \in I$. To correct this, we only have
to \emph{identify} $x$ with $x/\!\!\approx$. To be sure that we do not make
unwanted identifications in this way, we have to check that $x\not\approx y$,
whenever $x \neq y$ for $x,y \in I$. This is easily seen, however. It should
be clear when do we say that an indet $x$ \emph{occurs} in a term $t \in \stt
(\stc)$. A straightforward verification shows that the following is true.

\begin{claim}\label{L:xydistinct} If $\vdash t=s$ then any
indet $x \in I$ occurs in $t$ iff it occurs in $s$. Hence, if $x \text{ and }
y$ are \emph{distinct} indets, then $\not\vdash x=y$, which means that $x
\not\approx y$. \end{claim}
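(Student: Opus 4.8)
The plan is to prove the first assertion by a routine induction on derivations in the deductive system $\stc$ (Definition~\ref{D:axioms}) and then to deduce the second assertion in one line. To make the induction transparent, I would first give a precise meaning to ``occurs in'' by defining, for each $\stc$-term, the finite set $\mathrm{ind}(t) \subseteq I$ of indets occurring in $t$, by induction on $\stc$-terms (Definition~\ref{D:cterms}): $\mathrm{ind}(x) = \{x\}$ for $x \in I$, $\mathrm{ind}(1_a) = \emptyset$ for $a \in A_{n-1}$, and $\mathrm{ind}(t \bullet_k s) = \mathrm{ind}(t) \cup \mathrm{ind}(s)$. Then ``$x$ occurs in $t$'' means $x \in \mathrm{ind}(t)$, and the first assertion is precisely the statement that $\vdash t = s$ implies $\mathrm{ind}(t) = \mathrm{ind}(s)$.

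Next I would run the induction on the length of a $\stc$-derivation of $t = s$. In the base step one inspects the axiom schemes: the equality axiom $t = t$ is trivial; for each associativity axiom both sides have $\mathrm{ind}$-value $\mathrm{ind}(t) \cup \mathrm{ind}(s) \cup \mathrm{ind}(w)$, and for each exchange axiom both sides have $\mathrm{ind}$-value $\mathrm{ind}(t) \cup \mathrm{ind}(t_1) \cup \mathrm{ind}(s) \cup \mathrm{ind}(s_1)$, since union is associative and commutative; and for the identity axioms one uses that an identity term contributes nothing, so that $\mathrm{ind}(1_b^{(n)} \bullet_k t) = \mathrm{ind}(t)$ and $\mathrm{ind}(1_a \bullet_k 1_b) = \emptyset = \mathrm{ind}(1_{a \bullet_k b})$. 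In the inductive step, the two equality rules are immediate from symmetry and transitivity of equality of sets, and the two congruence rules follow at once from the inductive hypothesis together with $\mathrm{ind}(t \bullet_k w) = \mathrm{ind}(t) \cup \mathrm{ind}(w)$ (and its mirror image).

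For the ``hence'' clause, suppose $x$ and $y$ are distinct indets but $\vdash x = y$. By the first assertion $\mathrm{ind}(x) = \mathrm{ind}(y)$, i.e. $\{x\} = \{y\}$, forcing $x = y$, a contradiction; hence $\not\vdash x = y$, which by the definition of $\approx$ in Claim~\ref{L:vdash}(a) says exactly $x \not\approx y$.

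I do not expect any genuine obstacle: the argument is a textbook induction on proofs, of the same shape as the verification of Claim~\ref{L:vdash}(b),(c). The only place that demands a moment's attention is the identity-axiom case, where one must first unwind the abbreviation $1_b^{(n)}$ to an honest identity term $1_{a'}$ with $a' \in A_{n-1}$ (obtained by $n-1-k$ applications of $x \mapsto 1_x$ inside $\ba$ followed by one more) before applying the definition of $\mathrm{ind}$; once that is noted, everything else reduces to the fact that $\mathrm{ind}$ of a composite is the union of the $\mathrm{ind}$'s of its immediate parts and that an identity term has empty $\mathrm{ind}$.
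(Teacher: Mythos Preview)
Your proposal is correct and is exactly the ``straightforward verification'' the paper alludes to without spelling out: an induction on $\stc$-derivations showing that both sides of every axiom have the same set of occurring indets, and that the rules preserve this property. The paper gives no further detail, so there is nothing to compare beyond noting that you have faithfully expanded what the authors left implicit.
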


This shows that we can, indeed, identify $x$ with $x/\!\!\approx$ and assume
that the elements of $I$ are $n$-cells of $\ba[I]$.

To conclude the proof, it is enough to show that $\ba[I]$ has the
\emph{strong universal property} stated in part \emph{3} of~\ref{L:freeext}.
Given an $\omega$-functor $\ba \to^F \bx$ and a function $\varphi:I
\rightarrow X_n$ such that $d\varphi x=Fdx, \ c\varphi x=Fcx$, we define
$G:\ba[I] \rightarrow \bx$ by

\vspace{-9pt}

 \[ Ga=Fa \text{ for $a$ a cell of $\ba$ and }
G(t/\!\!\approx)=val_{F,\varphi}(t) \text{ for } t \in \stt (\stc) .\]
Claim~\ref{L:vdash}(c) implies that this definition is correct and
definition~\ref{D:val} of $val_{F,\varphi}(-)$ insures that $G$ is an
$\omega$-functor. It follows immediately that $G$ extends both, $F$ and
$\varphi$ and that any such $G$ \emph{has} to be defined as above. Thus, $G$
is unique and we have proved~\ref{L:freeext}.
\end{proof}

If we now let $i_{\ba}:\ba \rightarrow \ba[I]$ be the inclusion functor and
$\varphi$ be the inclusion function from $I$ into the $n$-cells of $\ba[I]$,
then an easy induction on terms shows that
\mbox{$val_{i_{\ba},\varphi}(t)=\eqv{t}$.} This fact yields immediately the
following.

\begin{corollary}\label{L:completeness} The deductive system $\stc$
is complete, namely, if $\nvdash t=s$, then for some $\bx$, $F$ and $\varphi$
we have $val_{F,\varphi}(t) \neq val_{F,\varphi}(s)$.
\end{corollary}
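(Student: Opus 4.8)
The plan is to use the free extension $\ba[I]$ built in Theorem~\ref{L:freeext} as the very structure that separates $t$ and $s$. Concretely, I would take $\bx=\ba[I]$, let $F=i_{\ba}:\ba\to\ba[I]$ be the inclusion $\omega$-functor (this is available because $\ba[I]_{n-1}=\ba$, by part~1 of Theorem~\ref{L:freeext}), and let $\varphi$ be the inclusion of the set $I$ of $n$-indets into the set of $n$-cells of $\ba[I]$. This $\varphi$ is an admissible assignment in the sense of Definition~\ref{D:val}: after the identification of each $x\in I$ with $\eqv{x}$ carried out in the proof of Theorem~\ref{L:freeext}, one has $d\varphi x=dx=Fdx$ and $c\varphi x=cx=Fcx$.

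The heart of the matter is the identity $val_{i_{\ba},\varphi}(t)=\eqv{t}$ for every $\stc$-term $t$, which I would prove by induction on the formation of $t$ following Definition~\ref{D:cterms}. For $t=x$ an indet, $val(x)=\varphi x=x=\eqv{x}$ by the identification just recalled. For $t=1_a$, $val(1_a)=1_{Fa}=1_a$, and the identity $n$-cell on $a$ in $\ba[I]$ is, by the construction, precisely $\eqv{1_a}$. For $t=t_1\bullet_k t_2$, Definition~\ref{D:val} gives $val(t_1\bullet_k t_2)=val(t_1)\bullet_k val(t_2)$, which by the inductive hypothesis equals $\eqv{t_1}\bullet_k\eqv{t_2}$, and this in turn equals $\eqv{t_1\bullet_k t_2}$ because that is how $\bullet_k$ acts on equivalence classes in $\ba[I]$. (Claim~\ref{L:vdash}(c) is what guarantees that $val_{i_{\ba},\varphi}$ is a well-defined function on $\stc$-terms, so that the statement even makes sense.)

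Granting this identity, the corollary is immediate: if $\nvdash t=s$, then $t\not\approx s$ by the definition of $\approx$, hence $\eqv{t}\neq\eqv{s}$ as $n$-cells of $\ba[I]$ (two $\approx$-classes coincide exactly when their representatives are $\approx$-related), and therefore $val_{i_{\ba},\varphi}(t)=\eqv{t}\neq\eqv{s}=val_{i_{\ba},\varphi}(s)$; so the triple $\langle\ba[I],i_{\ba},\varphi\rangle$ witnesses completeness. I do not expect any real obstacle here: all the substantive work already lies in the construction of $\ba[I]$ and in Claims~\ref{L:vdash} and~\ref{L:xydistinct}. The only point demanding a little care is the structural induction above, and within it the bookkeeping around the identification $x=\eqv{x}$ and around the fact that the formal symbol $1_a$ evaluates to the genuine identity cell, which in $\ba[I]$ happens to be $\eqv{1_a}$.
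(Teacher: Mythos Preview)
Your proposal is correct and follows exactly the paper's approach: take $\bx=\ba[I]$, $F=i_{\ba}$, $\varphi$ the inclusion, prove $val_{i_{\ba},\varphi}(t)=\eqv{t}$ by induction on terms, and conclude. The only quibble is your parenthetical invoking Claim~\ref{L:vdash}(c): $val_{i_{\ba},\varphi}$ is defined directly on \emph{terms} by Definition~\ref{D:val}, so no well-definedness issue arises there (Claim~\ref{L:vdash}(c) is needed only when passing to equivalence classes, which you are not doing in the induction).
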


\begin{remark} As easily seen, the \emph{universal property}
of~\ref{L:freeext}, part \emph{3}, determines $\ba[I]$ uniquely up to an
isomorphism (actually, up to a \emph{unique} isomorphism that is the identity
for the cells of $\ba$ and for the elements ($n$-indets) of $I$). It follows
that the universal property actually implies the \emph{strong} universal
property. \end{remark}

An $n$-category $\bb$ will be called a \emph{free extension} of $\ba$ iff it
extends $\ba$ and for some $I \subset B_n$, $\bb$ has the universal property
of $\ba[I]$ (and hence, it is isomorphic to $\ba[I]$, as just remarked). We
also say, in this situation, that $\bb$ is freely generated by the set $I$
(an abbreviated terminology that suppresses $\ba$).

\medskip

\noindent\textbf{An important convention.} A $0$-\emph{category} $\bb$
consists of the set $B\sbs{0}$ of its $0$-cells, and nothing more. Thus, a
$0$-category is just a barren set (this is a customary point of view). An
$\omega$-functor from such a $\bb$ to any $\omega$-category $\bx$ is just a
function from $B\sbs{0}$ to the set $X\sbs{0}$ of $0$-cells of $\bx$. We will
say that \emph{any} $0$-category $\bb$ is \emph{freely generated} by the set
$B\sbs{0}$ of its $0$-cells. This is justified because the obvious universal
property holds trivially. Also, we will sometimes refer to the $0$-cells of
\emph{any} $\omega$-category as $0$-\emph{indets}.

This terminology will turn out to be convenient in the sequel, as it will
allow the inclusion of the case $n=0$ in several statements.

\medskip

We conclude this section with a remarkable property of free extensions. As
the statement and, even more so, the proof, involve some technical details,
the reader may wish to skip this on first reading and return to it when it is
invoked in later sections.

In analogy with the notion of free group, one might expect that the same free
extension of $\ba$ might be generated by several distinct sets of $n$-indets.
In many important instances, this is not so, however. As it turns out, under
certain conditions, the set of $n$-indets of a free extension is uniquely
determined.

\begin{definition}\label{D:indec} An $n$-cell $u$ of an
$\omega$-category $\bx$ is \emph{indecomposable} if whenever $u=v \bullet_k
w$, with $k<n$, then either $u=1^{(n)}_a$ or $v=1^{(n)}_a$, where
$a=d^{(k)}v=c^{(k)}w$.
\end{definition}

Identity cells are, in general, decomposable in many obvious ways. For
example, if $u,v$ are \emph{non-identity} $m$-cells such that $u\bullet_{m-1}
v=a$ is defined, then \mbox{$1_a=1_u \bullet_{m-1} 1_v$}, showing that $1_a$
is \emph{decomposable}. More generally, if $l<k<m$ and \mbox{$a=u\bullet_l
v$}, where $u,v$ are $k$-cells, then it is easy to see that \mbox{$1^{(m)}_a=
1^{(m)}_u\bullet_l 1^{(m)}_v$}. We will consider this kind of decompositions
of $k$-identities to be trivial. A formal definition, which is wider in a
certain respect, will now be given. A cell of the form $1^{(m)}_a$, with $a$
a $k$-cell and $m>k$, will be called a \emph{$k$-identity of dimension $m$}.

\begin{definition}\label{D:indecid} A $k$-identity $e$ of
dimension $m>k$ is called \emph{essentially indecomposable} if whenever
$e=u\bullet_l v$ with $l\leqslant k$, then both $u$ and $v$ are $k$-identity
cells of dimension $m$. \end{definition}

\begin{remark} In the case of $l=k<m$, the condition of essential
indecomposability just means that if $e=1^{(m)}_a=s\bullet_k w$ with $a$ of
dimension $k$, then $s=w=1_a^{(m)}$. \end{remark}

\begin{definition}\label{D:wellbehaved} An $n$-category $\bx$ is
\emph{well-behaved} if, for all $k<m \leqslant n$, all $k$-identities of
dimension $m$ are essentially indecomposable. \end{definition}

Notice that any $0$-category is trivially well-behaved. Also, as \emph{free
categories}, i.e. free extensions of $0$-categories, have a very simple
structure (cf. e.g. section 7 of chapter I in \cite{CWM}) and are easily seen
to be well-behaved. The remarkable result that we want to prove is the
following.

\begin{theorem}\label{L:indetisindec} If $\ba$ is a well-behaved
$(n-1)$-category and $I$ is a set of $n$-indets over it, then for any
$n$-cell $x$ of $\ba[I]$, $x \in I$ iff $x$ is indecomposable and is
\emph{not} an identity cell. Furthermore, $\ba[I]$ is also well
behaved.\end{theorem}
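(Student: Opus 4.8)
The plan is to work in the term model of Theorem~\ref{L:freeext}: the $n$-cells of $\ba[I]$ are the classes $\eqv t$ of $\stc$-terms, with $\eqv t=\eqv s$ iff $\vdash t=s$, and $x\in I$ identified with $\eqv x$. The first step is a \emph{degree}: for an $\stc$-term $t$ let $\deg t\in\mathbb N$ count the occurrences of $I$-indets in $t$. Every axiom and rule of Definition~\ref{D:axioms} leaves this count unchanged — the quantitative form of Claim~\ref{L:xydistinct}; for instance $1_b^{(n)}\bullet_k t$ and $t$ have equal degree because $1_b^{(n)}$ has none, and both sides of $1_a\bullet_k1_b=1_{a\bullet_kb}$ have degree $0$ — so $\deg$ descends to a function $\deg:\ba[I]_n\el\mathbb N$ with $\deg x=1$ for $x\in I$, $\deg(u\bullet_kv)=\deg u+\deg v$, and $\deg 1_a^{(n)}=0$. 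From this one reads off that \emph{the identity $n$-cells of $\ba[I]$ are exactly its degree-$0$ cells}: if $\deg u=0$, any representative $t$ is, by induction on its length using the identity axioms, provably equal to some $1_a$ with $a\in A_{n-1}$ (the composite case collapsing via $1_{b_1}\bullet_k1_{b_2}=1_{b_1\bullet_kb_2}$), so $u=\eqv{1_a}=1_a^{(n)}$; by Claim~\ref{L:vdash}(b) that $a$ is unique, and $\eqv{1_b}=\eqv{1_a^{(n)}}$ iff $b=1_a^{(n-1)}$. I will also use repeatedly that $\eqv{1_b}=\eqv{1_{b'}}$ forces $b=b'$ (Claim~\ref{L:vdash}(b)).

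For ``$x$ indecomposable and not an identity $\Rightarrow x\in I$'': let $x$ be such a cell, so $\deg x\geqslant1$, and let $t$ be a representative of minimal length. Either $t$ is a single indet — and then $x\in I$ — or $t=t_1\bullet_kt_2$ with $k<n$ ($t$ cannot be of the form $1_a$, since $\deg x\geqslant1$). In the latter case $\eqv{t_1}\bullet_k\eqv{t_2}=x$ is a decomposition; since $x$ is not an identity, Definition~\ref{D:indec} forces one factor, say $\eqv{t_1}$, to equal $1_a^{(n)}$ with $a=d^{(k)}t_1=c^{(k)}t_2$. But then the identity axiom gives $\vdash t=1_a^{(n)}\bullet_kt_2=t_2$, so the shorter term $t_2$ represents $x$, contradicting minimality. (Symmetrically if $\eqv{t_2}=1_a^{(n)}$, using $t_1\bullet_k1_a^{(n)}=t_1$.) Hence $t$ is a single indet and $x\in I$.

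For the converse: if $x\in I$ then $\deg x=1\neq0$, so $x$ is not an identity. Suppose $x=v\bullet_kw$ with $k<n$ and put $a=d^{(k)}v=c^{(k)}w$; by Definition~\ref{D:indec} (and since $x$ is not an identity) it suffices to show $v=1_a^{(n)}$ or $w=1_a^{(n)}$. Choosing representatives, $\vdash x=v'\bullet_kw'$ with $\deg v'+\deg w'=1$, so, up to the symmetry between $v,w$, assume $\deg v'=0$; then $v'\approx1_b$ with $b\in A_{n-1}$ and $d^{(k)}b=a$, and the task is to prove $b=1_a^{(n-1)}$ (equivalently $v=\eqv{1_b}=1_a^{(n)}$), after which $\vdash x=1_a^{(n)}\bullet_kw'$ and the identity axiom give $w=x$ too. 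For $k=n-1$ this is bookkeeping: by Claim~\ref{L:vdash}(b) and Definition~\ref{D:cterms}, applying the codomain function to $\vdash x=1_b\bullet_{n-1}w'$ yields $cx=c(1_b\bullet_{n-1}w')=c(1_b)=b$, while $a=d^{(n-1)}v=d(1_b)=b$; hence $b=a=1_a^{(n-1)}$ (as $a\in A_{n-1}$). The case $k<n-1$ is the heart of the matter and the step I expect to be the main obstacle: one must analyse $\stc$-derivations to show that every $\stc$-term provably equal to a single indet $x$ is obtained from $x$ by finitely many trivial whiskerings with identity $n$-cells (those erasable via the identity axiom) together with merges $1_c\bullet_l1_{c'}=1_{c\bullet_lc'}$ of adjacent identities; since $1_b\bullet_kw'$ has this form with outermost operation $1_b\bullet_k(-)$, the factor $1_b$ must itself be such an erasable identity, which forces $b=1_a^{(n-1)}$. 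Well-behavedness of $\ba$ (Definition~\ref{D:wellbehaved}) enters precisely at the merge step: from $1_c\bullet_l1_{c'}=1_{c\bullet_lc'}$ one needs that if $c\bullet_lc'$ is an iterated identity on a cell of dimension $\leqslant k$ then so are $c$ and $c'$, which is exactly essential indecomposability inside $\ba$.

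Finally, $\ba[I]$ is well-behaved. For $k<m\leqslant n-1$, both a $k$-identity $e$ of dimension $m$ and every decomposition $e=u\bullet_lv$ of it involve only cells of dimension $\leqslant n-1$, i.e. cells of $\ba$, so essential indecomposability of $e$ is inherited from $\ba$. The one new case is $m=n$: if $1_a^{(n)}=u\bullet_lv$ with $l\leqslant k<n$, then $\deg u=\deg v=0$, so $u=\eqv{1_b}$ and $v=\eqv{1_c}$ with $b,c\in A_{n-1}$; composability gives $d^{(l)}b=c^{(l)}c$, hence $u\bullet_lv=\eqv{1_{b\bullet_lc}}$ and so $1_a^{(n-1)}=b\bullet_lc$ in $\ba$. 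If $k=n-1$, then $u$ and $v$ are already $(n-1)$-identities of dimension $n$ (every $\eqv{1_e}$ with $e\in A_{n-1}$ is one), as required. If $k<n-1$, then $1_a^{(n-1)}$ is a genuine $k$-identity of dimension $n-1$ in $\ba$, so by well-behavedness of $\ba$ its factors $b,c$ are $k$-identities of dimension $n-1$, whence $u,v$ are $k$-identities of dimension $n$; and when $l=k$, well-behavedness of $\ba$ (in the form recorded after Definition~\ref{D:indecid}) even gives $b=c=1_a^{(n-1)}$, i.e. $u=v=1_a^{(n)}$. This completes the argument apart from the combinatorial lemma of the previous paragraph, which carries the real weight.
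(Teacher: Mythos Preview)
Your overall plan coincides with the paper's: the degree function is a clean packaging of what the paper calls ``constant term'' (Claim~\ref{L:constantterm}: constant iff identity, i.e.\ degree~$0$), your minimal-length argument for ``indecomposable non-identity $\Rightarrow$ indet'' is essentially Proposition~\ref{L:indecisindet}, and your well-behavedness argument matches Claim~\ref{L:extiswb}. These parts are correct.

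The genuine gap is exactly the one you flag: the ``combinatorial lemma'' for the case $k<n-1$ of ``indet $\Rightarrow$ indecomposable'' is asserted but not proved, and this is where the real content lies. The paper's device is the notion of an \emph{inessential} occurrence of $\bullet_k$ in a term (Definition~\ref{D:inesscomp}): an occurrence with scope $s\bullet_k w$ is inessential if one of $s,w$ is a $k$-identity or both are identities. The key Lemma~\ref{L:mainlemma} shows, by induction on $\stc$-proofs, that the property ``every composition occurrence is inessential'' is invariant under provable equality. Since the bare term $x$ has no compositions, any $t\bullet_k s$ with $\vdash x=t\bullet_k s$ has its outermost $\bullet_k$ inessential; and since $t\bullet_k s$ is not constant (it contains $x$), one of $t,s$ must be a $k$-identity. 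This handles all $k<n$ uniformly --- your separate $k=n-1$ argument via codomain bookkeeping is correct but does not extend, because for $k<n-1$ the (co)domain of $x$ only yields $b\bullet_k(\text{something})=cx$ in $\ba$, which does not force $b$ to be a $k$-identity.

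Your intuition about where well-behavedness of $\ba$ enters is right, but the precise place is the induction step for the \emph{exchange} axiom in Lemma~\ref{L:mainlemma}: passing inessentiality across $(t\bullet_k t_1)\bullet_l(s\bullet_k s_1)=(t\bullet_l s)\bullet_k(t_1\bullet_l s_1)$ requires knowing that if, say, $t\bullet_k t_1$ is an $l$-identity (hence a $k$-identity, since $l<k$), then both $t$ and $t_1$ are $k$-identities --- which is essential indecomposability in $\ba[I]$, already established as Claim~\ref{L:extiswb}. Without this lemma your proof is incomplete; with it, your outline becomes the paper's proof.
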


Thus, an $n$-dimensional extension $\bb$ of a well-behaved $(n-1)$-category
$\ba$ is free iff it is freely generated by the set of its non-identity
indecomposable cells.

For $n=1$, this theorem is easily checked, due to the above mentioned simple
structure of free categories. For $n>1$, the proof involves a deeper analysis
of the deductive system $\stc$. We begin with a definition.

\begin{definition}\label{D:terms} \begin{enumerate}
\item A term $t \in \stt (\stc)$ is called \emph{constant} iff no
variable occurs in $t$.
\item $t$ is called an \emph{identity} iff for some $(n-1)$-cell $a$
of $\ba$, $\vdash t=1_a$. An identity $t$ is called a \emph{$k$-identity}
(where $k < n$) iff $\vdash t=1^{(n)}_a$ for some $k$-cell $a$ of $\ba$.
\item A term $t$ is called \emph{indecomposable} iff whenever $\vdash
t=s\bullet_k w$ (with $k<n$) then one of $s,\,w$ is a $k$-identity.
\end{enumerate} \end{definition}

Thus, a term $t$ is indecomposable iff $t/\!\!\approx$ is an indecomposable
$n$-cell of $\ba[I]$.

The following simple statement implies immediately the ``if'' direction
of~\ref{L:indetisindec}.

\begin{proposition}\label{L:indecisindet} If $t \in \stt (\stc)$ is an
indecomposable term, then  $t$ is an identity or $\vdash t=x$ for some
variable $x \in I$. \end{proposition}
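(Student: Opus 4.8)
The plan is to argue by structural induction on the term $t$, using the observation recorded right after Definition~\ref{D:terms} that indecomposability of a term depends only on its $\approx$-class: if $t\approx t'$ then $t$ is indecomposable iff $t'$ is. The base cases are immediate. If $t$ is an $n$-indet $x$, then $\vdash t=x$ by the equality axiom and we are done; if $t=1_a$ for some $a\in A_{n-1}$, then $t$ is an identity by Definition~\ref{D:terms}(2) and there is nothing more to prove.

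For the inductive step, suppose $t=t_1\bullet_k t_2$ with $k<n$, where the proposition is already known for the proper subterms $t_1,t_2$. Since $\vdash t=t_1\bullet_k t_2$ holds by the equality axiom, the assumption that $t$ is indecomposable forces one of $t_1,t_2$ to be a $k$-identity; say $t_1$ is (the case where $t_2$ is a $k$-identity is symmetric, using the right-hand clause of the identity axiom~\ref{D:axioms}(4)). Then $\vdash t_1=1^{(n)}_b$ for some $k$-cell $b$ of $\ba$. I claim $\vdash t=t_2$: by the congruence rule $\vdash t_1\bullet_k t_2 = 1^{(n)}_b\bullet_k t_2$, and by the identity axiom~\ref{D:axioms}(4) the right-hand side provably equals $t_2$ --- \emph{provided} $1^{(n)}_b\bullet_k t_2$ is a legal term, i.e. provided $c^{(k)}t_2=b$. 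This last matching is the one point that needs care: well-definedness of the composite $t_1\bullet_k t_2$ (Definition~\ref{D:cterms}(3)) gives $d^{(k)}t_1=c^{(k)}t_2$, while $\vdash t_1=1^{(n)}_b$ gives, upon iterating Claim~\ref{L:vdash}(b), $d^{(k)}t_1=d^{(k)}(1^{(n)}_b)=b$; combining the two, $c^{(k)}t_2=b$, as required. Hence $\vdash t=t_2$, that is, $t\approx t_2$.

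Since $t\approx t_2$ and $t$ is indecomposable, so is $t_2$, so the induction hypothesis applies to $t_2$ and yields that $t_2$ is an identity or $\vdash t_2=x$ for some $x\in I$; transitivity of $\vdash$ together with $\vdash t=t_2$ transports this dichotomy to $t$, completing the induction. The whole argument is a straightforward unwinding of the definitions together with the closure of indecomposability under provable equality; the only step requiring genuine attention is the identity-axiom reduction in the inductive step, where one must check that the cell $b$ witnessing that $t_1$ is a $k$-identity really agrees with the $k$-dimensional codomain of $t_2$, so that axiom~\ref{D:axioms}(4) is literally applicable.
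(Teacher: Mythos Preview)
Your proof is correct and follows essentially the same approach as the paper's: structural induction on $t$, with the inductive step using indecomposability to conclude that one factor is a $k$-identity, reducing $t$ provably to the other factor, and then applying the induction hypothesis. You have simply spelled out in more detail the steps the paper leaves implicit (the applicability of the identity axiom and the $\approx$-invariance of indecomposability).
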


\begin{proof} By induction on $t$. If $t$ is
an identity or a variable then we have nothing to prove. If $t=s\bullet_k w$,
then either $s$ is a $k$-identity and then $\vdash t=w$ or $w$ is an identity
and $\vdash t=s$; in either case, the claim follows by the induction
hypothesis.\end{proof}

Next, we point out a very simple fact.

\begin{claim}\label{L:constantterm} A term $t$ is constant iff it
is an identity. \end{claim}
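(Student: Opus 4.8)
The plan is to prove the two directions separately, using the occurrence-preservation fact already established as Claim~\ref{L:xydistinct} together with a straightforward induction on the structure of terms.

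For the ``identity $\Rightarrow$ constant'' direction, suppose $\vdash t=1_a$ for some $(n-1)$-cell $a$. The term $1_a$ contains no indet, so Claim~\ref{L:xydistinct} (applied to the provable equality $t=1_a$) tells us that no indet $x\in I$ occurs in $t$; that is exactly the assertion that $t$ is constant. This direction needs no induction.

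For the ``constant $\Rightarrow$ identity'' direction I would argue by induction on the formation of $t$, following Definition~\ref{D:cterms}. If $t=x$ is an $n$-indet, then $x$ occurs in $t$, so this case cannot arise when $t$ is constant. If $t=1_a$ with $a\in A_{n-1}$, then $\vdash t=1_a$ by the equality axiom, so $t$ is an identity. If $t=s\bullet_k w$ with $k<n$, then since the indets occurring in $t$ are precisely those occurring in $s$ together with those occurring in $w$, both $s$ and $w$ are constant; by the induction hypothesis $\vdash s=1_b$ and $\vdash w=1_{b'}$ for suitable $(n-1)$-cells $b,b'$, and the congruence rules give $\vdash t=1_b\bullet_k 1_{b'}$. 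It remains to see that $1_b\bullet_k 1_{b'}$ is provably an identity. When $k<n-1$, the side condition $d^{(k)}b=c^{(k)}b'$ holds (it follows from well-formedness of $s\bullet_k w$ via Claim~\ref{L:vdash}(b)), so the identity axiom $1_b\bullet_k 1_{b'}=1_{b\bullet_k b'}$ applies and $b\bullet_k b'$ is an $(n-1)$-cell of $\ba$; hence $\vdash t=1_{b\bullet_k b'}$. When $k=n-1$, well-formedness of $s\bullet_{n-1}w$ forces $ds=cw$, and since $\vdash s=1_b$, $\vdash w=1_{b'}$ we get $b=ds=cw=b'$ by Claim~\ref{L:vdash}(b); then the unit axiom (first identity axiom, with the term $1_b$ in the role of $t$) gives $\vdash 1_b\bullet_{n-1}1_b=1_b$, so $\vdash t=1_b$. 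In every case $t$ is an identity, completing the induction.

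The only point requiring a little care---and the closest thing to an obstacle here---is the split into the cases $k<n-1$ and $k=n-1$ in the inductive step: because $\ba$ is only an $(n-1)$-category it has no composition operation $\bullet_{n-1}$ on its cells, so the reduction of $1_b\bullet_{n-1}1_{b'}$ to a single identity symbol cannot go through the axiom $1_b\bullet_k 1_{b'}=1_{b\bullet_k b'}$ and must instead be obtained by first observing $b=b'$ and then invoking the unit law. Everything else is a routine check, and no deeper analysis of the deductive system is needed for this claim.
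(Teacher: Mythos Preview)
Your proof is correct and follows essentially the same approach as the paper: the ``identity $\Rightarrow$ constant'' direction via Claim~\ref{L:xydistinct}, and the ``constant $\Rightarrow$ identity'' direction by structural induction on $t$, with the composition case split into $k<n-1$ (using the axiom $1_b\bullet_k 1_{b'}=1_{b\bullet_k b'}$) and $k=n-1$ (first forcing $b=b'$, then using the unit law). Your write-up is in fact slightly more explicit than the paper's in invoking Claim~\ref{L:vdash}(b) and the congruence rules, but the argument is the same.
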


\begin{proof} By induction on $t$. If $t$ is an identity or an $n$-indet, this is
immediate. If $t=s\bullet_k w$ and $t$ is constant then so are $s,\,w$,
hence, by the induction hypothesis, we can find $(n-1)$-cells $a,\,b$ such
that $\vdash s=1_a,\ \vdash w=1_b$.

If $k=n-1$, then we have $a=d1_a=ds=cw=c1_b=b$, hence $\vdash t=s\bullet_n
w=1_a\bullet_n 1_a=1_a$, so $t$ is an identity. If $k<n-1$, then
$d^{(k)}s=d^{(k)}1_a=d^{(k)}a$ and likewise, $c^{(k)}w=c^{(k)}b$. As
$s\bullet_k w$ is defined, we have that $d^{(k)}a=c^{(k)}b$, hence
$a\bullet_k b$ is defined and we have, by one of the identity axioms, $\vdash
t=1_a \bullet_k 1_b=1_{a\bullet_k b}$. this completes the proof of the ``only
if'' direction of the claim. The ``if'' direction follows immediately by
claim~\ref{L:xydistinct}. \end{proof}

This allows us to infer the "Furthermore" part of \ref{L:indetisindec}.

\begin{claim}\label{L:extiswb} If $\ba$ is well-behaved then
so is $\ba[I]$.\end{claim}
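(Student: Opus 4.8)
The plan is to check the condition of Definition~\ref{D:wellbehaved} for $\ba[I]$ directly, organized by the dimension $m$ of the $k$-identity under consideration. For $m\leqslant n-1$ there is nothing to do: the cells of $\ba[I]$ of dimension $\leqslant n-1$, together with all their domain, codomain and composition operations, are literally those of $\ba$, so a $k$-identity of dimension $m\leqslant n-1$ in $\ba[I]$ is the same cell in $\ba$, and any putative decomposition $e=u\bullet_l v$ with $l\leqslant k$ takes place entirely among cells of $\ba$; hence this case follows at once from the assumption that $\ba$ is well-behaved. The whole content is therefore the case $m=n$.

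For that case, fix $k<n$ and let $e=t/\!\!\approx$ be a $k$-identity of $\ba[I]$ of dimension $n$, so $t$ is a $k$-identity term, $\vdash t=1^{(n)}_a=1_{1^{(n-1)}_a}$ for some $k$-cell $a$ of $\ba$, and $t$ is constant by Claim~\ref{L:constantterm}. Given a decomposition $e=u'\bullet_l v'$ in $\ba[I]$ with $l\leqslant k$, I would pick representatives $u,v$ (so $\vdash t=u\bullet_l v$); since $t$ is constant, Claim~\ref{L:xydistinct} forces $u\bullet_l v$, hence $u$ and $v$, to be constant, and Claim~\ref{L:constantterm} then supplies $(n-1)$-cells $b,c$ of $\ba$ with $\vdash u=1_b$ and $\vdash v=1_c$. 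Since $u\bullet_l v$ is now a constant composite of the identities $1_b$ and $1_c$, the identity axioms give, exactly as in the proof of Claim~\ref{L:constantterm}, either ($l=n-1$, which forces $k=n-1$) that $b=c$ and $\vdash u\bullet_l v=1_b$, or ($l<n-1$) that $b\bullet_l c$ is defined in $\ba$ (its definedness being inherited from that of $u\bullet_l v$ via Claim~\ref{L:vdash}(b)) and $\vdash u\bullet_l v=1_{b\bullet_l c}$. Either way, comparing with $\vdash t=1_{1^{(n-1)}_a}$ and stripping off the outer $1_{(-)}$ by Claim~\ref{L:vdash}(b) yields $b=1^{(n-1)}_a$ in the first sub-case and $b\bullet_l c=1^{(n-1)}_a$ in the second.

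It remains to deduce that $u'$ and $v'$ are $k$-identity cells of dimension $n$. When $k=n-1$ this is automatic: $\vdash u=1_b$ and $\vdash v=1_c$ already exhibit $u'$ and $v'$ as $(n-1)$-identities of dimension $n$. When $k<n-1$ we are necessarily in the sub-case $l<n-1$, and $b\bullet_l c=1^{(n-1)}_a$ is a decomposition, at level $l\leqslant k$, of a $k$-identity of dimension $n-1>k$ in $\ba$; well-behavedness of $\ba$ says this identity is essentially indecomposable, so $b=1^{(n-1)}_{a'}$ and $c=1^{(n-1)}_{a''}$ for $k$-cells $a',a''$ of $\ba$, whence $u'=1_b=1^{(n)}_{a'}$ and $v'=1_c=1^{(n)}_{a''}$ are $k$-identities of dimension $n$. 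The one step I expect to require care — and the sole place where the hypothesis on $\ba$ is used — is this reduction of a decomposition of the $n$-cell $t/\!\!\approx$ in $\ba[I]$ to a genuine decomposition of the $(n-1)$-cell $1^{(n-1)}_a$ in $\ba$; it hinges on combining Claims~\ref{L:constantterm} and~\ref{L:xydistinct} (constant composites split into identities) with the identity axioms of Definition~\ref{D:axioms} (collapsing $1_b\bullet_l 1_c$ to $1_{b\bullet_l c}$) and Claim~\ref{L:vdash}(b) (reading off the underlying $(n-1)$-cells). The remainder is routine.
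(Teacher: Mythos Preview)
Your proof is correct and follows essentially the same route as the paper: reduce to constant terms via Claims~\ref{L:xydistinct} and~\ref{L:constantterm}, collapse $1_b\bullet_l 1_c$ using the identity axioms, read off the underlying $(n-1)$-cells with Claim~\ref{L:vdash}(b), and invoke the well-behavedness of $\ba$ when $k<n-1$. If anything you are more careful than the paper, which writes the decomposition as $s\bullet_k w$ (i.e.\ only the case $l=k$), whereas you correctly treat all $l\leqslant k$ as required by Definition~\ref{D:indecid}; the paper's argument extends verbatim to that generality, but you make it explicit.
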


\begin{proof} We have to show that
if $t \in \stt(\stc)$ is a $k$-identity and $\vdash t=s\bullet_k w$, for $k <
n$, then both $s$ and $w$ are k-identities. Indeed, in this case, $t,s,w$
must all be constant, hence identities, by~\ref{L:constantterm}. So, assume
that $\vdash t=1^{(n)}_a$ and $\vdash s=1_u, w=1_v$ with $a$ being a $k$-cell
and $u,v$ being $(n-1)$-cells of $\ba$. If $k=n-1$, we immediately infer that
$a=u=v$. If $k<n-1$, then $dt=ds\bullet_k dw$ which means that
$1^{(n-1)}_a=u\bullet_k v$ and as $1^{(n-1)}_a$ is a $k$-identity in $\ba$,
it is essentially indecomposable, which means that $u,v$ are also
$k$-identities hence, so are $s$ and $w$. \end{proof}

Each occurrence of a composition symbol $\bullet_k$ in a term $t \in \stt
(\stc)$ has a definite \emph{scope} which is a subterm of $t$ of the form
$s\bullet_k w$.

\begin{definition}\label{D:inesscomp} An occurrence of $\bullet_k$
with scope $s\bullet_k w$ in a term $t$ is called \emph{inessential} iff
\emph{either} one of $s,w$ is a $k$-identity \emph{or} both, $s$ and $w$ are
identities.
\end{definition}

To put it more colorfully, a composition occurrence in $t$ is inessential iff
it can be ``wiped out'' by the use of one of the \emph{identity} axioms of
our deductive system. The next lemma, which is crucial for the proof
of~\ref{L:indetisindec}, says, in effect, that this is the only way in which
a composition symbol can be made to disappear from a term of $\mathcal{T}$.

\begin{lemma}\label{L:mainlemma} Under the assumption of~\ref{L:indetisindec},
if $\vdash t=s$ and one of $\ t,s$ has only inessential occurrences of
composition, then so does the other .\end {lemma}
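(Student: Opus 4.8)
The plan is to prove the (equivalent) biconditional: if $\vdash t=s$, then $t$ has only inessential occurrences of composition if and only if $s$ does. I would argue by induction on a derivation of $\vdash t=s$ in the system $\stc$. The observation that makes most of the argument routine is that the three conditions in Definition~\ref{D:inesscomp} which govern whether an occurrence of $\bullet_k$ with scope $u\bullet_k v$ is inessential --- ``$u$ a $k$-identity'', ``$v$ a $k$-identity'', ``$u,v$ both identities'' --- are by Definition~\ref{D:terms} defined via provability (e.g.\ $\vdash u=1^{(n)}_a$ for a $k$-cell $a$, respectively $\vdash u=1_a$ for an $(n-1)$-cell $a$), and hence are invariant under $\approx$. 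Thus the inessentiality of a composition occurrence depends only on the $\approx$-classes of its two immediate sub-scopes, and ``$u\bullet_k v$ has only inessential occurrences'' unfolds to: $u$ has only inessential occurrences, $v$ has only inessential occurrences, and the top $\bullet_k$ is inessential.

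The easy cases of the induction: reflexivity is trivial; symmetry is immediate because the inductive statement is already a biconditional; transitivity chains the two biconditionals supplied by the hypothesis. For the congruence rule passing from $\vdash t=s$ to $\vdash t\bullet_k w=s\bullet_k w$, the occurrences of composition in $t\bullet_k w$ are the top $\bullet_k$ together with those of $t$ and those of $w$; the $w$-part is untouched, the top $\bullet_k$ is inessential for $t\bullet_k w$ exactly when it is for $s\bullet_k w$ (by the $\approx$-invariance above, since $t\approx s$), and ``$t$ has only inessential occurrences'' is equivalent to ``$s$ has only inessential occurrences'' by the inductive hypothesis on the subderivation $\vdash t=s$; the right-multiplication congruence is symmetric. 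The identity axioms are also quick: in $1^{(n)}_b\bullet_k t=t$ and $t\bullet_k 1^{(n)}_a=t$ the cells $b=c^{(k)}t$ and $a=d^{(k)}t$ are $k$-cells, so the displayed $\bullet_k$ is inessential and the left side has only inessential occurrences precisely when $t$ does; and $1_a\bullet_k 1_b=1_{a\bullet_k b}$ is an equality of two \emph{constant} terms, and every constant term has only inessential occurrences, since each of its sub-scopes $u\bullet_j v$ has $u,v$ constant, hence identities by Claim~\ref{L:constantterm}.

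The substance of the proof is the associativity axiom $(t\bullet_k s)\bullet_k w=t\bullet_k(s\bullet_k w)$ and, harder, the exchange axiom $(t\bullet_k t_1)\bullet_l(s\bullet_k s_1)=(t\bullet_l s)\bullet_k(t_1\bullet_l s_1)$ with $l<k<n$. For associativity, suppose the left side has only inessential occurrences; then $t,s,w$ do, the inner $\bullet_k$ forces one of ``$t$ a $k$-identity'', ``$s$ a $k$-identity'', ``$t,s$ both identities'', and the outer $\bullet_k$ forces one of ``$t\bullet_k s$ a $k$-identity'', ``$w$ a $k$-identity'', ``$t\bullet_k s$ and $w$ both identities''. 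I would run through the resulting cases, repeatedly using the \emph{identity axioms} to compute $\approx$-classes of sub-scopes --- for instance if $t$ is a $k$-identity then $\vdash t\bullet_k s=s$, so $t\bullet_k s\approx s$, and dually on the right --- and show that in each case both $\bullet_k$'s of the right side come out inessential; the converse implication is dual. The exchange axiom is handled the same way, with a larger bookkeeping of how being an $l$-identity, a $k$-identity, or an identity is transported through the two nested layers of composition (again rewriting a sub-scope whenever one of its factors is a $k$- or $l$-identity).

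The step I expect to be the main obstacle --- and the only place the standing hypothesis of Theorem~\ref{L:indetisindec}, well-behavedness of $\ba$, is used --- is the subcase in which a composite sub-scope $u\bullet_j v$ is known to be an \emph{identity} and $u,v$ are individually known to be identities, but one needs the stronger conclusion that $u$ and $v$ are $j$-identities. (When $j=n-1$ this is automatic, since an identity term $\approx 1_a$ with $a\in A_{n-1}$ is by definition an $(n-1)$-identity, so the issue is genuine only for $j<n-1$.) Here I would write $\vdash u=1_a$, $\vdash v=1_b$ with $a,b\in A_{n-1}$, so $\vdash u\bullet_j v=1_a\bullet_j 1_b=1_{a\bullet_j b}$; if moreover $\vdash u\bullet_j v=1^{(n)}_c=1_{1^{(n-1)}_c}$ for a $j$-cell $c$, then Claim~\ref{L:vdash}(b) applied to the domain functions gives $a\bullet_j b=1^{(n-1)}_c$ in $\ba$, i.e.\ $a\bullet_j b$ is a $j$-identity of dimension $n-1$; since $\ba$ is well-behaved it is essentially indecomposable, so $a=1^{(n-1)}_{c_1}$ and $b=1^{(n-1)}_{c_2}$ for $j$-cells $c_1,c_2$, whence $\vdash u=1_a=1^{(n)}_{c_1}$ exhibits $u$ (and likewise $v$) as a $j$-identity. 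Apart from this maneuver, and the sheer number of subcases in the exchange axiom, the remaining verifications are routine. (Together with Proposition~\ref{L:indecisindet} this lemma then yields the ``only if'' direction of Theorem~\ref{L:indetisindec}: applied with $t$ an indet $x$, which vacuously has only inessential occurrences, every term provably equal to $x$ has only inessential occurrences, so its outermost composition, if any, is inessential --- forcing one factor to be a $k$-identity or the whole term to be constant, the latter excluded by Claim~\ref{L:xydistinct}.)
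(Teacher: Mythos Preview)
Your approach---induction on derivations, with the exchange axiom as the crux and well-behavedness of $\ba$ supplying the missing step---is exactly the paper's. The treatment of the rules, the identity axioms, and associativity is fine.

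There is one imprecision in how you formulate the ``main obstacle'' and its resolution. You phrase it as: given a sub-scope $u\bullet_j v$ that is an identity with $u,v$ identities, conclude that $u,v$ are $j$-identities; and your argument assumes ``moreover $\vdash u\bullet_j v=1^{(n)}_c$ for a $j$-cell $c$''. That hypothesis is available in the associativity axiom and in the left-to-right direction of exchange (where the composite $t\bullet_k t_1$ is an $l$-identity, hence in particular a $k$-identity, so your argument with $j=k$ applies). But in the right-to-left direction of exchange, the relevant sub-scope is $t\bullet_l s$, and all you learn from the outer $\bullet_k$ being inessential is that $t\bullet_l s$ is a \emph{$k$-identity}, not an $l$-identity. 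So your ``$j$-cell $c$'' clause fails with $j=l$, and the conclusion you actually need is that $t,s$ are $k$-identities, not $l$-identities.

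The fix is immediate: run your argument with a $p$-cell $c$ for any $p\geqslant j$. Then $a\bullet_j b=1^{(n-1)}_c$ is a $p$-identity of dimension $n-1$ in $\ba$; well-behavedness gives that $a,b$ are $p$-identities, hence $u,v$ are $p$-identities---which is exactly what is required (with $p=k$, $j=l$). The paper packages this general statement as Claim~\ref{L:extiswb} ($\ba[I]$ is itself well-behaved) and simply invokes it, rather than inlining the computation each time; this also lets the paper do a coarser case split (only on the outermost composition) than the inner-plus-outer split you sketch.
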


\medskip

\noindent \emph{Proof.} By induction on proofs. We have to show, first, that
the statement of the lemma is true for all the axioms and, second, that if
the statement is true for the premise, or the premises, of a rule then it is
true for its conclusion as well.

We start with the associativity axioms. Let $(t\bullet_k s)\bullet_k
w=t\bullet_k (s\bullet_k w)$ be a such and assume, e.g., that the left hand
side only inessential compositions. This means that the same is true for
$t,s,w$, so all we have to show is that the two $\bullet_k$-occurrences
indicated on the right are inessential. As the rightmost indicated occurrence
of $\bullet_k$ on the left hand side is inessential, we have three cases, and
we examine each of them separately.

Assume first that $t\bullet_k s$ is a $k$-identity. If so, then so are $t$
and $s$ and this implies that the two indicated occurrences of $\bullet_k$ on
the right are inessential. Second, assume that $w$ is a $k$-identity. But
then, the left hand side of the axiom is provably equal to $t\bullet_k s$ and
by assumption, this occurrence of $\bullet_k$ is also inessential and we
easily conclude that the compositions on the right hand side are also
inessential. Finally, if both $t\bullet_k s$ and $w$ are identities, then so
are $t$ and $s$ and all compositions on the right are inessential. This
completes the examination of the associativity axiom.

The case of exchange axioms is more complex. The argumentation is not hard,
but is somewhat tedious. Consider the instance
\[ (t\bullet_k t_1)\bullet_l(s\bullet_k s_1)=(t\bullet_l s) \bullet_k (t_1
\bullet_l s_1) \] where $l<k\leqslant n$.

Assume first that the \emph{left} side has no essential composition
occurrences. Then, certainly, $t,t_1,s,s_1$ have no such occurrences and so,
all we have to show this that, on the \emph{right} side, the three indicated
composition occurrences are inessential. As $\bullet_l$ on the left is
inessential, \emph{either} both terms that it binds are identities \emph{or}
one of these terms that is an $l$-identity. In the first case, $t, t_1,s,s_1$
are all identities and hence, all compositions on the right are inessential.
In the second case assume, e.g., that $t\bullet_k t_1$ is an $l$-identity. As
$l<k$, any $l$-identity is also a $k$-identity hence, by~\ref{L:extiswb}, is
essentially indecomposable and both $t$ and $t_1$ are $k$-identities. But
then, $\vdash t\bullet_k t_1=t=t_1$ and so, $t$ and $t_1$ are actually
$l$-identities. Taking into consideration that the second $\bullet_k$ on the
left is also inessential, we now easily conclude that all compositions on the
right are inessential.

Now assume that the right side of the exchange axiom has no essential
composition occurrences and let's show that the three compositions indicated
on the left are also inessential. \emph{Either} both terms bound by
$\bullet_k$ on the right are identities \emph{or} one of these is a
$k$-identity. The first case is, again, trivial, so let us consider the
second. Assume, e.g., that $t\bullet_l s$ is a $k$-identity and hence, is
essentially indecomposable. Then $t$ and $s$ are both $k$-identities and, as
the second $\bullet_k$ on the right is inessential, we conclude immediately
that all compositions on the left are inessential.

\smallskip

Checking the statement of the lemma for the other axioms is trivial and so is
for the rules.  \qed

\medskip

\noindent \emph{Proof of~\ref{L:indetisindec}.} It remains to show that any
indeterminate $x \in I$ is indecomposable. assume that $\vdash x=t\bullet_k
s$. As the basic term $x$ has no essential compositions, it follows
by~\ref{L:mainlemma} that $\bullet_k$ on the right is inessential.
By~\ref{L:xydistinct}, $x$ must occur in $t\bullet_k s$, which means that $t$
and $s$ cannot be both constant, i.e., by~\ref{L:constantterm} cannot be both
identities. We conclude that one of $t,s$ must be a $k$-identity. \qed

\section{Indet occurrences}\label{S:occurrence}
The notion of occurrence of an indet in (a $\stc$-term denoting) an $n$-cell
$u$ of $\ba[I]$ is surprisingly complex and will be discussed in the present
section.

We start by pointing out that the same indet may occur several times in a
term denoting $u$. A simple example: assuming that $a=dx=cy$ and $b=dy=cx$,
the $\stc$-term \mbox{$t=(x\bullet_{n-1}y)\bullet_{n-1}x$} denotes an
$n$-cell $a \to^u b$, the composite of the diagram
\[a\to^x b\to^y a\to^x b ,\] and the indet $x$ has two distinct
occurrences in $t$. As we shall see, in certain situations we will be
interested in replacing \emph{one} of these occurrences of $x$ by a cell $v$
of dimension $n$ or \emph{higher}~(!), such that $d^{(n-1)}v=a$ and
$c^{(n-1)}v=b$. Therefore, we must have a mean of indicating a particular
occurrence of an indet $x$ in an $n$-cell $u$. One solution could be to
arrange the occurrences of the indets in a sequence, in the order in which
they occur in $t$. In the example that we just considered, we are speaking of
the sequence $\langle x,y,x \rangle$. \emph{Unfortunately}, the same cell $u$
is denoted by several terms and the order of indet occurrences may vary from
one such term to the other. For instance, the terms \mbox{$t=(x\bullet_k
x_{\sbs{1}})\bullet_l (y\bullet_k y_{\sbs{1}})$} and \mbox{$s=(x\bullet_l
y)\bullet_k (x_{\sbs{1}}\bullet_l y_{\sbs{1}})$} denote the same $n$-cell,
where $l<k<n$. \emph{Fortunately}, whenever $t$ and $s$ denote the same cell,
i.e. whenever $\vdash t=s$, the same indets occur in both, each occurring the
same number of times in $t$ and in $s$ and, \emph{moreover}, each proof of
$t=s$ yields, in an obvious way, a one-to-one correspondence between the
indet occurrences in $t$ and those in $s$.

To deal with this situation, we start by attaching to each $n$-cell $u$ an
\emph{indexed set} $\oc{u}$ of indet occurrences; this is a function
\mbox{$\oc{u}:\doc{u} \el I$} whose domain is a finite set $\doc{u}$. An
indet $x \in I$ has an occurrence in $u$ iff it is in the range of $\langle
u\rangle$ and if this is the case, then  the number of occurrences of $x$ in
$u$ is the cardinality of the set \mbox{$\{r \in \doc{u}: \oc{u}(r)=x \}$}.

It will be useful to assume that all the domains $\doc{u}$ are subsets of a
given infinite set $\mathcal{N}$. Following \cite{HMP2}, we let $I^{\#}$ be
the category whose objects are the finite indexed subsets of $I$ (i.e. the
functions from finite subsets of $\mathcal{N}$ into $I$) and arrows are
defined in the obvious way. Let us mention, for further use, that in
$I^{\#}$, $\oc{u_{\sbs{1}}\bullet_k u_{\sbs{2}}}$ is a \emph{coproduct} of
$\oc{u_{\sbs{1}}}$ and $\oc{u_{\sbs{2}}}$, with several possible pairs of
coprojections \mbox{$\kappa_i:\doc{u_i} \el
\doc{u_{\sbs{1}}\bullet_k u_{\sbs{2}}}$}, $i=1,2$.

\medskip

\begin{remark} The choice of the finite set $\doc{u}$ is totally arbitrary,
apart from the fact that the number of its elements should equal that of
\emph{distinct} occurrences of indets in $u$ and we may, if we wish so,
\emph{reparametrize} $\oc{u}$, meaning that we replace its domain $\doc{u}$
by any subset of $\stn$ of the same cardinality.
\end{remark}

$\oc{u}$ is just an abstract object that carries the basic information about
the indets occurring in $u$ and the number of occurrences of each. We still
have to attach every $r \in \doc{u}$ to a particular occurrence of
$x=\oc{u}(r)$ in $u$. This is done with the help of an \emph{indet-occurrence
specification} or, in short, a \emph{specification} for $u$. Such a
specification is given by a $\stc$-term $t$ denoting $u$ (i.e. such that
$u=\eqv{t}$) together with a one-to-one function $\theta$ whose domain is
$\doc{u}$ and such that for each $r \in\doc{u}$, $\theta(r)$ will be a place
in the string of symbols $t$, in which $x=\oc{u}(r)$ occurs. This occurrence
will be referred to as the \emph{r-occurrence} of $x$ in $u$, \emph{as
specified by} $\theta$. We will denote $\theta:\doc{u} \el t$, to indicate
that $\theta$ is a specification as described.

As mentioned above, every $\stc$-proof $\pi$ of $t=s$ generates a bijection
between the indet occurrences in $t$ and those in $s$. This is a one-to-one
function $\chi=\chi_{\pi}$ which maps every location in the string of symbols
$t$ at which a certain indet occurs to a location in $s$ occupied by the same
indet.  We denote this situation by $\chi:t\el s$. We let the reader figure
out the obvious definition of $\chi_{\pi}$. With the help of this notion, we
can now define when two specifications for $u$ are the same.

\begin{definition}\label{D:speceqv} Two specifications $\theta_i:
\doc{u} \el t_i$, $i=1,2$, are called \emph{equivalent} if there exists a
$\stc$-proof $\pi$ of $t_{\sbs{1}}=t_{\sbs{2}}$ such that
$\theta_{\sbs{2}}=\chi_{\pi} \theta_{\sbs{1}}$.
\end{definition}

\begin{remarks} 1. If every indet that occurs in $u$, occurs there precisely
\emph{once}, then we have a unique specification $\theta:\doc{u} \el t$, for
every $t$ denoting $u$. If this is the case, then any two specifications for
$u$ are equivalent. If, however, there are indets with multiple occurrences
in $u$, then there are several possible specifications for $u$ into the same
$t$. In this case, $u$ may have \emph{inequivalent} specifications. To show
how delicate the issue of indet occurrences may be, let us also mention that
we might have two \emph{distinct} specifications $\theta_i:\doc{u} \el t$
into the same $t$ that are \emph{equivalent}! Indeed, as remarked by Eckmann
and Hilton, if $a$ is a $0$-cell and $u,v:1_a \el 1_a$ are $2$-cells, then
one can prove that \[u\bullet_0 v=v\bullet_0 u=u\bullet_{\sbs{1}}
v=v\bullet_{\sbs{1}} u.\] Thus, if we let $x$ be a 2-indet with $dx=cx=1_a$,
then substituting $x$ for $u$ and $v$ in, e.g. the proof of the first
equality, we get a non-trivial $\stc$-proof $\pi$ of $x\bullet_0 x=x\bullet_0
x$ which yields a $\chi_{\pi}$ that interchanges the two occurrences of $x$.

2. An alternative, more picturesque and less formal, point of view is this. A
specification $\theta:\doc{u} \el t$ actually \emph{relabels} the distinct
occurrences of any indet $x$ by different symbols $x',x'',...$. In this way,
we transform $t$ into a term $t^{\star}$, all of whose indets have unique
occurrences. Any $\stc$-proof of $t=s$ yields a suitable relabelling
$s^{\star}$ and a $\stc$-proof of $t^{\star}=s^{\star}$. In this way, by
looking at the proof, we can follow the rearrangement in $s$ of the indets
that occur in $t$. \end{remarks}

\medskip

We now \emph{choose}, for every $n$-cell $u$ of $\ba$, a \emph{preferred}
specification $\theta_u:\doc{u} \el t_u$. From this point on, when speaking
of \emph{the} $r$-occurrence of $x=\oc{u}(r)$ in $u$, we will mean the
occurrence specified by $\theta_u$. Occasionally, we might have to use
another term $t$ denoting $u$, and in such a case, it should always be
considered together with a proof $\pi$ of $t_u=t$. Then, the above mentioned
$r$-occurrence in $u$ is also the one specified by the equivalent
specification $\theta=\chi_{\pi}\theta_u:\doc{u} \el t$. One typical context
in which such a situation occurs naturally, will now be described.

If $u_{\sbs{1}}$, $u_{\sbs{2}}$ are $k$-composable $n$-cells of $\ba[I]$,
then $u_{\sbs{1}}\bullet_k u_{\sbs{2}}$ is denoted by both
$t_{u_{\sbs{1}}\bullet_k u_{\sbs{2}}}$ and $t_{u_{\sbs{1}}}\bullet_k
t_{u_{\sbs{2}}}$. Let's write, for simplicity, $u_{\sbs{1}}\bullet_k
u_{\sbs{2}}=u$, $t_u=t$, $t_{u_i}=t_i$, $i=1,2$. Select a proof $\pi$ of the
equality $t=t_{\sbs{1}}\bullet_k t_{\sbs{2}}$. It will yield a map
$\chi_{\pi}: t \el t_{\sbs{1}}\bullet_k t_{\sbs{2}}$. Let $\iota_i$ be the
``embedding'' of the term $t_i$ into $t_{\sbs{1}}\bullet_k t_{\sbs{2}}$,
$i=1,2$. By this we mean that $\iota_i$ maps every location in the string of
symbols $t_i$ into the corresponding location in the the larger string
$t_{\sbs{1}}\bullet_k t_{\sbs{2}}$. Remember that
$\oc{u}=\oc{u_{\sbs{1}}\bullet_k u_{\sbs{2}}}$ is a coproduct of
$u_{\sbs{1}}$ and $u_{\sbs{2}}$ in the category $I^{\#}$. A pair of
coprojections $\kappa_i$, $i=1,2$, will be called \emph{appropriate} (with
respect to the selected proof $\pi$), if the following diagrams commute:
$$\bfig \morphism(0,0)|a|/>/<1200,0>[{\doc{u_{\sbs{1}}}}`{t_{\sbs{1}}};{\theta_{\sbs{1}}}]
\morphism(0,0)|l|/>/<0,-400>[{\doc{u_{\sbs{1}}}}`{\doc{u}};{\kappa_{\sbs{1}}}]
\morphism(1200,0)|r|/>/<0,-400>[{t_{\sbs{1}}}`{t_{\sbs{1}}\bullet_kt_{\sbs{2}}};{\iota_{\sbs{1}}}]
\morphism(0,-400)|a|/>/<600,0>[{\doc{u}}`{t};{\theta}]
\morphism(600,-400)|a|/>/<600,0>[{t}`{t_{\sbs{1}}\bullet_kt_{\sbs{2}}};{\chi_{\pi}}]
\morphism(0,-800)|l|/>/<0,400>[{\doc{u_{\sbs{2}}}}`{\doc{u}};{\kappa_{\sbs{2}}}]
\morphism(1200,-800)|r|/>/<0,400>[{t_{\sbs{2}}}`{t_{\sbs{1}}\bullet_kt_{\sbs{2}}};{\iota_{\sbs{2}}}]
\morphism(0,-800)|b|/>/<1200,0>[{\doc{u_{\sbs{2}}}}`{t_{\sbs{2}}};{\theta_{\sbs{2}}}]
\efig$$ where $\theta, \,\theta_{\sbs{1}},\,\theta_{\sbs{2}}$, are the
\emph{preferred} specifications for $u,\,u_{\sbs{1}},\,u_{\sbs{2}}$. As all
maps in this diagram are one-to-one, we immediately conclude that, given
$u_{\sbs{1}}$, $u_{\sbs{2}}$ and $\pi$, there is a unique pair of appropriate
coprojections $\kappa_{\sbs{1}}$, $\kappa_{\sbs{2}}$. These coprojections
will relate each indet occurrence in $u_i$ to the corresponding one in
$u=u_{\sbs{1}}\bullet_ku_{\sbs{2}}$.

\medskip

\textbf{An important convention.} As we remarked already, we can
reparametrize any given $\do{u}$ by changing its domain at will. We will use
this flexibility and \emph{always assume} that, whenever we are considering
the cell $u\bullet_k v$, the index sets $\doc{u},\,\doc{v}$ were so chosen as
to be \emph{disjoint} and to have $\doc{u\bullet_kv}=\doc{u} \dot{\cup}
\doc{v}$ (where the customary notation ``$\dot{\cup}$'' comes to emphasize
that the two terms of the union are disjoint sets), with the \emph{inclusion}
maps of $\doc{u},\,\doc{v}$ in $\doc{u\bullet_kv}$ being \emph{appropriate}
coprojections. This convention will simplify notations in the sequel.

\section{Placed composition}\label{S:placedcomp}

In this section, we will assume that $\bb$ is an $n$-category freely
generated by a set $I$ of indets. This means that $n>0$ and $\bb=\ba[I]$,
where $\ba=\bb\sbs{n-1}$, the $(n-1)$th truncation of $\bb$ or else, $n=0$
and $I=B\sbs{0}$. Let $\bx$ be an $\omega$-category extending $\bb$.

We are going to describe several operations involving cells of dimension
$\geqslant n$ of the $\omega$-category $\bx$. The most important, for the
present article, is the operation of \emph{placed composition}, that will be
presented later in this section.

\medskip

The first operation to be described is the \emph{$n$-cell replacement}
operation. If $u$ is an $n$-cell of $\bx$, $r \in
\doc{u}$, $\oc{u}(r)=x \in I$ and if $v$ is any $n$-cell of $\bx$
parallel to $x$, then we can \emph{replace} the $r$-occurrence of the
$n$-indet $x$ in $u$ by the $n$-cell $v$, producing an $n$-cell $u\repr v$,
as result. Notice that $u\repr v$ is parallel to $u$. Let us recall that an
$n$-cell $v$ is parallel to $x$ iff $n>0$ and $dv=dx,\,cv=cx$ or else, $n=0$
(as any two $0$-cells are considered to be parallel).

We can generalize this operation by allowing $v$ to be any cell of
dimension$\geqslant n$, provided that, if $n>0$ then $d^{(n-1)}v=dx$ and
$c^{(n-1)}v=cx$. Indeed, if $u$ is an $n$-cell and $v$ an $m$-cell, where
$k<n<m$, such that $d^{(k)}u=c^{(k)}v$ it is customary to define $u\bullet_k
v=1^{(m)}_u\bullet_kv$. Similarly, $u\bullet_k v= u\bullet_k 1^{(m)}_v$, if
$u$ is of dimension $m$ and $v$ of dimension $n$. These operations that yield
an $m$-cell when applied to cells of dimensions $n$ and $m$, are called
\emph{whiskerings}. As any $n$-cell $u$ is obtained from indets by means of
compositions, we conclude that it makes sense to replace the $r$-occurrence
of $x$ in $u$, by any cell $v$ of $\bx$ of dimension $m\geqslant n$, provided
that $d^{(n-1)}v=dx$ and $c^{(n-1)}v=cx$. The result is an $m$-cell $u\repr
v$ and this kind of replacement will be called a \emph{generalized
whiskering} operation. The $n$-cell replacement operation is just the
generalized whiskering, restricted to $n$-cells.

For $n=0$, the generalized whiskering operations is trivial: if $u$ is a
$0$-cell, then it is an indet, and $u \repr v = v$, hence $u \repr -$ is the
identity function on the set of all cells of $\bx$.

For $n>0$, given \emph{parallel} $(n-1)$-cells $a,b \in X_{n-1}$ of $\bx$, we
let $\bx(a,b)$ be the $\omega$-category whose $k$-cells are those
$(n+k)$-cells $v \in X_{n+k}$ that satisfy $d^{(n-1)} v=a,\,c^{(n-1)} v=b$.
With this notation, we see that, for $u \in X_n$ and $r \in \doc{u},\,x=
\oc{u}(r)$, $u \repr{-}$ is a function from the set of cells of $\bx(dx,cx)$
to the cells of $\bx(du,cu)$. As a clue to a precise definition of this
function, we note that the following three conditions should be met.

\begin{enumerate} \item If $u$ is an $n$-cell of $\bx$, $r \in \doc{u}$ and
$\oc{u}(r)=x$ then $u\repr v$ is defined iff $d^{(n-1)}v=dx$ and
$c^{(n-1)}v=cx$. If this is the case, then $d^{(n-1)}(u\repr v)=du$ and
$c^{(n-1)}(u\repr v)=cu$.
\item If $u=x\in I$ (remember that we identified
$x$ with the $n$-cell $\eqv{x}$) then $u\repr v=v$ (where, of course,
$\doc{u}=\{r\}$).
\item $(u\sps{\prime}\bullet_ku\sps{\prime\prime})\repr v=\begin{cases}
(u\sps{\prime}\repr v)\bullet_k u\sps{\prime\prime}& \text{if } r\in
\doc{u\sps{\prime}}\\u\sps{\prime}\bullet_k (u\sps{\prime\prime}\repr v)&
\text{if } r\in
\doc{u\sps{\prime\prime}}\end{cases}$

(remember that, by the convention established at the end of
section~\ref{S:occurrence}, we have
\mbox{$\doc{u\sps{\prime}\bullet_ku\sps{\prime\prime}}=\doc{u\sps{\prime}}
\dot{\cup}
\doc{u\sps{\prime\prime}}$}).\end{enumerate}

We want to associate with every $n$-cell $u$ an indexed set of partial
functions $\langle u\repr - \rangle_{r\in\doc{u}}$ so as to have conditions
1-3 met. One might think that these conditions can be used to define the
partial function $u\repr-$ by recursion on the $n$-cell $u$. However, the
same composite $u$ might be represented in more than one way as a composition
of two other cells. Conditions 1-3 allow us to define, by recursion on the
\emph{$\stc$-term} $t$, a partial function $t\repr-$ and we still have to
show that all terms denoting a given $u$ yield the same function. This can be
done by induction on proofs. However, we prefer another route.

We will use the universal property of $\ba[I]$ (cf. theorem~\ref{L:freeext})
and construe the mapping \mbox{$u \mapsto \langle u\repr -
\rangle_{r\in\doc{u}}$} as a functor into an $n$-category $\bw$.

\begin{definition}\label{D:W} $\bw$ is the $n$-category satisfying
the following requirements: \begin{enumerate} \item $\bw_{n-1}=\ba$, i.e. the
$k$-cells of $\bw$ are those of $\ba$ for $k<n$. \item The $n$-cells of $\bw$
are pairs \mbox{$U=(u,\, \langle H_r\rangle_{r\in\doc{u}})$}, with $u$ an
$n$-cell of $\bx$ and $H_r$ a function from the set of cells of $\bx
(dx_r,cx_r)$ to the set of cells of $\bx(du,cu)$, where $x_r= \oc{u}(r)$. The
domain and codomain are $dU=du,\ cU=cu$. \item For $a \in A_{n-1}=W_{n-1}$,
the identity over $a$ in $\bw$ will be $(1_a,\langle \, \rangle)$ (where
$\langle\,\rangle$ is, of course, the empty indexed set of functions).
\item if $U$ is as above and $V=(v,\langle K_r\rangle_{r\in\doc{v}})$ is such
that \mbox{$d^{(k)}U=d^{(k)}u=c^{(k)}v=c^{(k)}V$}, then we have
\[U\bullet_k V= (u\bullet_k v,\langle L_r
\rangle_{r\in\doc{u\bullet_k v}}),\]where \[L_r(-)=\begin{cases}
H_r(-)\bullet_k v & \text{if } r\in \doc{u}\\u\bullet_k K_r(-)& \text{if }
r\in \doc{v} \end{cases}\] \end{enumerate}
\end{definition}

A straightforward verification shows that $\bw$ is, indeed, an $n$-category.

\medskip

For $x\in I$, seen as an $n$-cell of $\ba[I]$ with $\doc{x}=\{r\}$, we have
that $\varphi x =_{def} (x,\langle H_r \rangle)$ is an $n$-cell of $\bw$,
where $H_r=id\sbs{\bx(dx,cx)}$ is the identity function from $\bx(dx,cx)$ to
itself. Thus we defined a function $\varphi :I \el W_n$ and we have $d\varphi
x=dx,\, c\varphi x=cx$. By theorem~\ref{L:freeext}, there is a unique
$\omega$-functor $G:\ba[I] \el \bw$ such that $Ga=a$, for $a$ a cell of $\ba$
and $Gx=\varphi x$, for $x\in I$.

\begin{claim}\label{L:uingu} For every $n$-cell $u$ of $\ba[I]$, the first component of $Gu
\in W_n$ is $u$ itself. \end{claim}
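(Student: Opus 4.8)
The plan is to exhibit the obvious \emph{projection} $n$-functor $\pi:\bw\el\ba[I]$ and to read off the claim from the \emph{uniqueness} clause of theorem~\ref{L:freeext}.

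First I would define $\pi$ as follows. On cells of dimension ${<}\,n$ let $\pi$ be the identity; this makes sense since $\bw\sbs{n-1}=\ba=\ba[I]\sbs{n-1}$. On an $n$-cell $U=(u,\langle H_r\rangle_{r\in\doc{u}})$ of $\bw$ put $\pi U=u$. By definition~\ref{D:W}, $u$ is an $n$-cell of $\bx$; since $\bx$ extends $\bb=\ba[I]$ we have $X_n=B_n$, so $u$ really is an $n$-cell of $\ba[I]$ and $\pi$ does land in $\ba[I]$. It is then a routine check that $\pi$ is an $\omega$-functor: domains and codomains are preserved because $dU=du$, $cU=cu$ and $\pi$ is the identity in dimension $n-1$; the designated identity $(1_a,\langle\,\rangle)$ of $\bw$ is sent to $1_a$; and the composition clause of definition~\ref{D:W} gives $\pi(U\bullet_k V)=u\bullet_k v=\pi U\bullet_k\pi V$ directly, the bookkeeping component $\langle L_r\rangle$ simply being discarded.

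Next I would compose with $G$. The map $\pi\circ G:\ba[I]\el\ba[I]$ is an $\omega$-functor; it satisfies $(\pi\circ G)a=\pi a=a$ for every cell $a$ of $\ba$, and $(\pi\circ G)x=\pi(\varphi x)=\pi(x,\langle\,\mathrm{id}\,\rangle)=x$ for every $x\in I$, since the first component of $\varphi x$ is $x$. But the identity functor $\mathrm{id}_{\ba[I]}$ has exactly these same two properties. Applying the uniqueness half of the universal property in theorem~\ref{L:freeext}(3), with the target $\omega$-category taken to be $\ba[I]$ itself and the assignment taken to be the inclusion of $I$ into the $n$-cells of $\ba[I]$, we conclude $\pi\circ G=\mathrm{id}_{\ba[I]}$. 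In particular, for every $n$-cell $u$ of $\ba[I]$ we get $\pi(Gu)=u$, i.e. the first component of $Gu$ is $u$, which is the claim.

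There is essentially no hard step here; the one point that must be handled with a little care is the well-definedness of $\pi$, namely the identification $X_n=B_n$ coming from ``$\bx$ extends $\bb$'', together with the observation that the $n$-category $\bw$ of definition~\ref{D:W} has been arranged precisely so that projection onto the first coordinate is functorial (in particular that its chosen identity cells and its composition formula match those of $\ba[I]$ on the nose). Once $\pi$ is in hand, the proof is a one-line appeal to uniqueness, in the same spirit as the earlier remark that the universal property pins $\ba[I]$ down up to a unique structure-preserving map fixing $\ba$ and $I$.
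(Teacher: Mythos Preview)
Your proof is correct and follows exactly the same route as the paper: define the projection $\pi:\bw\to\ba[I]$ onto the first coordinate, verify it is an $\omega$-functor, and then use the uniqueness clause of theorem~\ref{L:freeext} to conclude $\pi\circ G=\mathrm{id}_{\ba[I]}$. The paper's proof is terser (it calls the projection $\Pi$ and does not spell out the check that $X_n=B_n$ or the functoriality of $\Pi$), but the argument is identical.
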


\begin{proof} Let $\Pi:\bw \el \ba[I]$ be defined as $\Pi a=a$ for $a$ a cell
of $\ba=\bw_{n-1}$ and $\Pi U=u$ for $U= (u,\langle H_r \rangle_{r \in
\doc{u}})=u$. Then $\Pi$ is an $\omega$-functor, hence so is the composite
\mbox{$\Pi G:\ba[I] \el \ba[I]$} and we must have that $\Pi
G=\mathbf{1}_{\ba[I]}$, the identity $\omega$-functor on $\ba[I]$, because,
by~\ref{L:freeext} there is a unique functor $\ba[I] \el \ba[I]$ which is the
identity for the cells of $\ba$ and for the indets $x\in I$. \end{proof}

\begin{definition}\label{D:rep} For $u$ an $n$-cell of $\bx$, if $Gu= (u,\langle H_r \rangle_{r \in
\doc{u}})$, then we define $u\repr -=H_r(-)$, for $r \in \doc{u}$.
\end{definition}

It follows immediately that the partial functions $u\repr -$ satisfy
conditions 1-3 stipulated just before definition~\ref{D:W}. Actually,
conditions 1-3 determine these functions uniquely, as summed up in the
following statement.

\begin{theorem}\label{L:reprexists} Given $\bx_n=\ba[I]$, there exists a unique
system of partial functions $\{u\repr -: u\in X_n,\, r \in \doc{u} \}$
satisfying conditions 1-3.\end{theorem}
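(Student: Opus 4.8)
The plan is to dispose of existence in a couple of sentences---it is already essentially in hand---and to put the work into uniqueness, which we obtain by reading off \emph{any} rival system as an $\omega$-functor $\ba[I]\el\bw$ and invoking the uniqueness clause of Theorem~\ref{L:freeext}. For existence: by Claim~\ref{L:uingu} the first component of $Gu$ is $u$ itself, so writing $Gu=(u,\langle u\repr-\rangle_{r\in\doc{u}})$ makes sense and Definition~\ref{D:rep} is legitimate. That these partial functions satisfy conditions 1--3 is then immediate: condition~1 is the typing of $n$-cells of $\bw$ built into clause~2 of Definition~\ref{D:W} (together with Claim~\ref{L:uingu}); condition~2 holds because $\varphi x=(x,\langle id_{\bx(dx,cx)}\rangle)$ and $Gx=\varphi x$; and condition~3 is obtained by applying $G$ to $u'\bullet_k u''$, using that $G$ preserves $\bullet_k$ and that (again by Claim~\ref{L:uingu}) the first components of $Gu'$ and $Gu''$ are $u'$ and $u''$, so that clause~4 of Definition~\ref{D:W} reads exactly as condition~3.

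For uniqueness, let $F=\{F_{u,r}:u\in X_n,\ r\in\doc{u}\}$ be \emph{any} system meeting conditions 1--3. Define $G'\colon\ba[I]\el\bw$ to be the identity on the cells of $\ba$ and, on $n$-cells, $G'u=(u,\langle F_{u,r}\rangle_{r\in\doc{u}})$. Condition~1 says precisely that, with $x_r=\oc{u}(r)$, each $F_{u,r}$ is a function from the cells of $\bx(dx_r,cx_r)$ to those of $\bx(du,cu)$, which is exactly the data of an $n$-cell of $\bw$ lying over $(du,cu)$; hence $G'$ is well defined and commutes with $d$ and $c$. It preserves identities since, for each $a\in A_{n-1}$, the identity $n$-cell $1_a$ of $\ba[I]$ has $\doc{1_a}=\emptyset$---no indet occurs in a constant term, by Claims~\ref{L:constantterm} and~\ref{L:xydistinct}---so $G'(1_a)=(1_a,\langle\,\rangle)$, the identity of $\bw$ over $a$. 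It preserves $\bullet_k$ because, under the standing convention $\doc{u_1\bullet_k u_2}=\doc{u_1}\,\dot{\cup}\,\doc{u_2}$, condition~3 for $F$ is literally the formula for the function components of $U\bullet_kV$ in clause~4 of Definition~\ref{D:W}. Finally, on each indet $x\in I$ condition~2 forces $F_{x,r}=id_{\bx(dx,cx)}$, so $G'x=\varphi x=Gx$. Thus $G'$ is an $\omega$-functor that is the identity on $\ba$ and agrees with $\varphi$ on $I$; by the uniqueness in Theorem~\ref{L:freeext}, $G'=G$. Comparing second components of $G'u=Gu$ (and using Claim~\ref{L:uingu} for the first) yields $F_{u,r}=u\repr-$ for all $u$ and $r$, the desired uniqueness. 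The degenerate case $n=0$ needs no separate treatment: there every $0$-cell is an indet and condition~2 alone pins the system down, and the argument above applies verbatim with ``cell of $\ba$'' read as vacuous.

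The point to get right---and the temptation to resist---is \emph{not} to try to define $u\repr-$ by a direct recursion on the $n$-cell $u$: a cell of $\ba[I]$ has many inequivalent decompositions $u=u'\bullet_k u''$ (including trivial ones such as $u=u\bullet_k 1^{(n)}_a$), so conditions 1--3 do not constitute a well-founded recursion on cells, and checking independence of the chosen decomposition would require an induction on $\stc$-proofs. Channeling everything through $\bw$ and the universal property of $\ba[I]$ is exactly what avoids this, and it leaves only the bookkeeping verifications listed above---matching the typing in condition~1 with the $n$-cells of $\bw$, emptiness of $\doc{1_a}$, and the coincidence of condition~3 with clause~4 of Definition~\ref{D:W} under the disjoint-union convention---each of which is routine.
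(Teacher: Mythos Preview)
Your proof is correct, but the uniqueness argument takes a different route from the paper's. The paper proves uniqueness by a direct induction on $n$-cells: given a rival system $\{u\ast_r-\}$, one shows $u\ast_r-=u\repr-$ by induction on $u$, the point being that while \emph{recursion} on $n$-cells is problematic (the same $u$ has many decompositions), \emph{induction} is unproblematic because the $n$-cells of $\ba[I]$ form the least set containing the indets and identities and closed under composition. Your approach instead packages the rival system into a second $\omega$-functor $G'\colon\ba[I]\el\bw$ and appeals to the uniqueness clause of Theorem~\ref{L:freeext} to get $G'=G$. Both are valid; your argument is pleasantly uniform in that it reuses the same universal-property machinery for uniqueness as for existence, while the paper's argument is more hands-on and serves the expository purpose of introducing induction on $n$-cells---a technique it relies on repeatedly in the sequel (Theorems~\ref{L:reprfunctor}, \ref{L:rep}, Lemma~\ref{L:repqrepr}, etc.). One minor remark: to see $\doc{1_a}=\emptyset$ you need only Claim~\ref{L:xydistinct}, not Claim~\ref{L:constantterm}; the paper itself treats this as evident.
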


\begin{proof} The \emph{existence} of a system as stipulated has been just
proven, so we have only to prove \emph{uniqueness}. This done by
\emph{induction on $n$-cells}. Let us emphasize that, while \emph{definitions
by recursion} on $n$-cells cells require special caution, as we just saw,
\emph{proofs by induction} are unproblematic, as the set of $n$-cells of
$\bx$, being the same as the set of $n$-cells of $\ba[I]$, is the least that
contains the indets and the identity $n$-cells and is closed under
composition. We are now going to see a first instance of such a proof.

Assuming that $\{u\ast_r -: u \in X_n,\, r \in \doc{u} \}$ is another system
of functions satisfying 1-3, an induction on $u$ shows that $u\ast_r- =
u\repr-$. We leave the straightforward argument to the reader. Many more
instances of proofs by induction on cells will be met soon. \end{proof}

\begin{remark} All these involved statements are relevant for the case $n>0$
only. If $n=0$ then every $n$-cell is an indet and $u \repr -$ is always the
identity function. \end{remark}

\medskip

It is well known and easily seen that the whiskering operations are
\emph{functorial} in the following sense: if $x,\,v,\,u$ are $n$-cells such
that $u=x\bullet_k v$ for some $k<n$, then the function $-\bullet_k v:
\bx(dx,cx) \el \bx(du,cu)$ is an $\omega$-functor (and, of course, a similar
statement holds for $v\bullet_k-$). The same is true for \emph{generalized}
whiskering.

\begin{theorem}\label{L:reprfunctor} If $\bx_n=\ba[I]$, $u\in X_n$, $r
\in \doc{u}$ and $\oc{u}(r)=x$ then the function \mbox{$u \repr-: \bx(dx,cx)
\el \bx(du,cu)$} is an $\omega$-functor. \end{theorem}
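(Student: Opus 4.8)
The plan is to argue by induction on the $n$-cell $u$, following the inductive generation of $X_n$: since $\bx_n=\ba[I]$, the set $X_n$ is the least collection of cells containing the $n$-indets of $I$ and the identity cells $1_a$ ($a\in A_{n-1}$) and closed under the compositions $\bullet_k$, $k<n$. As emphasized in the proof of Theorem~\ref{L:reprexists}, such inductions on cells are legitimate even though definitions by recursion are not. The statement to be proved for each $u$ is: for every $r\in\doc{u}$, writing $x=\oc{u}(r)$, the function $u\repr-:\bx(dx,cx)\el\bx(du,cu)$ --- which is everywhere defined on $\bx(dx,cx)$ by condition~1 together with Theorem~\ref{L:reprexists} --- is an $\omega$-functor; that is, it is dimension-preserving, commutes with $d$ and $c$, and preserves the composition operations and the identity cells of these two $\omega$-categories.

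The base cases are immediate. If $u=1_a$ then $\doc{u}=\emptyset$ and there is nothing to check. If $u=x\in I$, then $\doc{u}=\{r\}$, $du=dx$, $cu=cx$, so $\bx(du,cu)=\bx(dx,cx)$, and by condition~2 the map $x\repr-$ is the identity function of $\bx(dx,cx)$, which is trivially an $\omega$-functor.

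For the inductive step, write $u=u'\bullet_k u''$ with $k<n$, so that $\doc{u}=\doc{u'}\dot{\cup}\doc{u''}$, and fix $r\in\doc{u}$. By symmetry we may assume $r\in\doc{u'}$ (the case $r\in\doc{u''}$ is handled identically, with $u'\bullet_k-$ in place of $-\bullet_k u''$). Then $x=\oc{u}(r)=\oc{u'}(r)$, and condition~3 --- taken in the generalized-whiskering form licensed by the uniqueness clause of Theorem~\ref{L:reprexists} --- gives $u\repr v=(u'\repr v)\bullet_k u''$ for every cell $v$ of $\bx(dx,cx)$; in other words, $u\repr- = (-\bullet_k u'')\circ(u'\repr-)$. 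By the induction hypothesis $u'\repr-:\bx(dx,cx)\el\bx(du',cu')$ is an $\omega$-functor, and by the functoriality of generalized whiskering recalled just before the statement, $-\bullet_k u'':\bx(du',cu')\el\bx(du,cu)$ is an $\omega$-functor. Since a composite of $\omega$-functors is an $\omega$-functor, this closes the induction.

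I do not anticipate a genuine obstacle; the two points that require care are both bookkeeping. First, one must verify that the generalized whiskering $(u'\repr v)\bullet_k u''$ written above is actually defined and has $(n-1)$-dimensional domain $du$ and codomain $cu$: this is precisely condition~1 applied to $u$, combined with the routine observation that, since $k\leqslant n-1$, any cell $w$ with $d^{(n-1)}w=du'$ satisfies $d^{(k)}w=d^{(k)}u'=c^{(k)}u''$, so that $w\bullet_k u''$ is a legitimate whiskering lying in $\bx(du,cu)$. Second, condition~3 as displayed before Definition~\ref{D:W} is phrased for $n$-cell replacement, so its use here for arguments $v$ of dimension $>n$ must be justified by appeal to Theorem~\ref{L:reprexists}, which characterizes the system $\{u\repr-\}$ by conditions~1--3 in their generalized-whiskering form, rather than by the bare statement of the condition.
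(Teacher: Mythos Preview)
Your proof is correct and follows essentially the same route as the paper's: induction on the $n$-cell $u$, with the indet case giving the identity functor, the identity-cell case vacuous since $\doc{u}=\emptyset$, and the composite case handled by writing $u\repr- = (-\bullet_k u'')\circ(u'\repr-)$ and invoking the induction hypothesis together with the functoriality of whiskering. Your extra bookkeeping remarks are sound, though your second caveat is slightly over-cautious: conditions~1--3 as stated in the paper already govern the generalized whiskering (the domain of $H_r$ in Definition~\ref{D:W} is all of $\bx(dx_r,cx_r)$), so no further justification is needed to apply condition~3 to higher-dimensional $v$.
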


\begin{proof} By induction on $u$.

If $u$ is an indet $x\in I$, then $u\repr-$ is an identity map and there is
nothing to prove.

$u$ cannot be an identity, as $\doc{u} \neq \emptyset$.

If $u=u\p \bullet_k u\pp$ and, say, $r \in \doc{u\p}$, then  $u\p \repr-$ is
an $\omega$-functor by the induction hypothesis, hence so is the composition
$u\repr- = (u\p \repr-) \bullet_k u\pp$ of the $\omega$-functors $-\bullet_k
u\pp$ and $u\p \repr-$.
\end{proof}

If $u,v$ are $n$-cells of $\bx$, then so is $u\repr v$, if defined. Again, an
easy proof by induction on $u$, will show that $\oc{u\repr v}$ is a coproduct
of $\oc{u}\setminus r$ (i.e. $\oc{u}$ restricted to $\doc{u}-\{r\}$) and~
$\oc{v}$. The coprojections of this coproduct are induced by those of the
$\bullet_k$ operations involved, and if we stick to our convention of
choosing disjoint index sets for the arguments of these composition
operations, we will always have that $\doc{u\repr v}=(\doc{u}-\{r\})
\dot{\cup} \doc{v}$, with the \emph{inclusion} maps being the \emph{induced}
coprojections. Again, this will greatly simplify notations in the sequel.

\begin{theorem}\label{L:rep} If $u$ is an
$n$-cell then:
\begin{enumerate}\item (``Commutativity'') If $r,\,q \in \doc{u}$, $r\neq q$
 such that $u\repr v,\,u\repq w$ are defined where $v$, $w$ are also $n$-cells,
 then $(u\repr v)\repq w=(u\repq w)\repr v$.
\item (``Associativity'') If $r\in\doc{u}$, and $u\repr v$ is defined, $v$ an $n$-cell,  $q\in \doc{v}$ and $v\repq
w$ is defined with $w$ a cell of dimension $\geqslant n$, then
\[(u\repr v)\repq w= u\repr (v\repq w).\]
\item (Identity rule) If $r\in\doc{u}$ and $\oc{u}(r)=x$ then $u\repr x=u$.
\end{enumerate} \end{theorem}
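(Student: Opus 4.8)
The plan is to prove all three parts by induction on the $n$-cell $u$, using the recursive characterization of the partial functions $u\repr-$ furnished by conditions~1--3 (cf. Theorem~\ref{L:reprexists}), together with the standing conventions that $\doc{u\p\bullet_k u\pp}=\doc{u\p}\dot{\cup}\doc{u\pp}$ and $\doc{u\repr v}=(\doc{u}-\{r\})\dot{\cup}\doc{v}$. Recall from the remark after Theorem~\ref{L:reprexists} that when $n=0$ every $u\repr-$ is the identity function, so all three statements are trivial; thus we assume $n>0$. The induction on $n$-cells proceeds by treating the cases \emph{$u$ an $n$-indet}, \emph{$u$ an identity $n$-cell}, and \emph{$u=u\p\bullet_k u\pp$} (with the inductive hypothesis available for $u\p$ and $u\pp$). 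In each part, identity cells and $n$-indets will dispose of the base case at once, and all the real content sits in the composite case, where the appropriate clause of condition~3 lets us push the replacement operations into one of the two factors and apply the inductive hypothesis there.

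For part~3, if $u=x\in I$ then $\doc{u}=\{r\}$ and $\oc{u}(r)=x$, so $u\repr x=x=u$ directly by condition~2; $u$ cannot be an identity since $\doc{u}\neq\emptyset$; and if $u=u\p\bullet_k u\pp$ with, say, $r\in\doc{u\p}$, then $\oc{u\p}(r)=\oc{u}(r)=x$, whence condition~3 and the inductive hypothesis give $u\repr x=(u\p\repr x)\bullet_k u\pp=u\p\bullet_k u\pp=u$. For part~1, the hypotheses force $\doc{u}$ to contain two distinct elements, so $u$ is neither an indet nor an identity and $u=u\p\bullet_k u\pp$. If $r$ and $q$ lie in the same factor, say both in $\doc{u\p}$ (note $q\in\doc{u\p}-\{r\}\subseteq\doc{u\p\repr v}$), then two applications of condition~3 on each side reduce the claim to $(u\p\repr v)\repq w=(u\p\repq w)\repr v$, which is the inductive hypothesis for $u\p$; the subcase $r,q\in\doc{u\pp}$ is symmetric. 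If $r$ and $q$ lie in different factors, say $r\in\doc{u\p}$ and $q\in\doc{u\pp}$, then condition~3 shows both $(u\repr v)\repq w$ and $(u\repq w)\repr v$ equal $(u\p\repr v)\bullet_k(u\pp\repq w)$, so no inductive hypothesis is even needed.

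Part~2 is the one requiring care, because $w$ may have dimension strictly greater than $n$: then $v\repq w$, and hence $u\p\repr(v\repq w)$, is a higher-dimensional cell and the compositions $\bullet_k u\pp$ appearing below are generalized whiskerings. Conditions~1--3 are to be read in this generalized form, which is legitimate precisely because in the $n$-category $\bw$ of Definition~\ref{D:W} the component functions $H_r$ are defined on the cells of $\bx(dx,cx)$ of \emph{all} dimensions, not just dimension $n$. Granting this, if $u=x\in I$ both sides equal $v\repq w$ by condition~2; the identity case is vacuous since $\doc{u}\neq\emptyset$; and if $u=u\p\bullet_k u\pp$ with, say, $r\in\doc{u\p}$, then since $q\in\doc{v}\subseteq\doc{u\p\repr v}$ condition~3 gives $(u\repr v)\repq w=\big((u\p\repr v)\repq w\big)\bullet_k u\pp$, the inductive hypothesis for $u\p$ rewrites the inner term as $u\p\repr(v\repq w)$, and one more application of condition~3 identifies the result with $u\repr(v\repq w)$.

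In all three parts the remaining work is the routine bookkeeping of definedness: one checks that whenever one side of an equation is defined so is the other, and that at each stage the index to which a replacement is applied does lie in the index set of the cell in question. These verifications use only the domain/codomain clauses of condition~1 and the disjoint-union conventions for $\doc{u\p\bullet_k u\pp}$ and $\doc{u\repr v}$, and are straightforward; I expect the only point genuinely worth spelling out to be the passage, in part~2, from the naive recursion on $\stc$-terms to the statement of condition~3 for a replacing cell $v\repq w$ of dimension $>n$, which is why the argument is phrased through the $n$-category $\bw$.
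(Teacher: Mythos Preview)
Your proof is correct and follows essentially the same approach as the paper: induction on the $n$-cell $u$ using the recursive clauses of Theorem~\ref{L:reprexists}, with the same case split in part~\emph{1} (same factor versus different factors) and the same reduction in part~\emph{2}. You supply more detail than the paper does---in particular the explicit treatment of part~\emph{3} and the remark about higher-dimensional $w$ in part~\emph{2}---but the argument is the same.
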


\begin{proof} By induction on $u$. We sketch the proofs of parts
\emph{1,2} and leave the proof of~\emph{3} to the reader.

Proof of part \emph{1}: As $\doc{u}$ is assumed to have at least two distinct
elements, $u$ is neither an indet nor an identity. Assume that
$u=u\sps{\prime}\bullet_k u\sps{\prime\prime}$. Then
$\doc{u}=\doc{u\sps{\prime}}\dot{\cup}\doc{u\sps{\prime\prime}}$. If $r,q$
belong to different summands, e.g. if $r\in
\doc{u\sps{\prime}},\,q\in\doc{u\sps{\prime\prime}}$, then both sides of the stipulated equality are
seen to be equal to $(u\sps{\prime}\repr v)\bullet_k(u\sps{\prime\prime}\repq
w)$ (this case doesn't require any induction hypothesis). If both $r$ and $q$
belong to the same summand, e.g. $r,q\in
\doc{u\sps{\prime}}$ then the statement follows from the induction hypothesis for $u\sps{\prime}$.

Proof of \emph{2}: If $u$ is an indet, then both sides equal $v\repq w$. If
$u=u\sps{\prime}\bullet_k u\sps{\prime\prime}$ and, say, $r\in
\doc{u\sps{\prime}}$ then the left side equals $((u\sps{\prime}\repr v)\repq
w)\bullet_k u\sps{\prime\prime}$, while the right one equals
$(u\sps{\prime}\repr (v\repq w))\bullet_ku\sps{\prime\prime}$ and the
statement follows from the induction hypothesis for $u\sps{\prime}$.
\end{proof}

Assume that, not only is $\bx_n=\bb$ a free extension of $\bb_{n-1}=\ba$, but
also $\bx_{n+1}$ is a  free extension of $\bx_n$. Let's say that $\bx_{n+1} =
\bx_n[J] =\bb[J]$, for a set $J$ of $(n+1)$-indets. This situation will be
encountered from section~\ref{S:comparing} on. If so, then we can define
generalized whiskering functors for $n$-cells, as well as for $(n+1)$-cells.
The following simple technical lemma, linking these two kinds of operations,
will be useful later.

\begin{lemma}\label{L:repqrepr} Assume that $\bx$ is an $\omega$-category as
just described. If we have $w \in X_n$, $q \in \doc{w}$, $u \in X_{n+1}$, $r
\in \doc{u}$ and $v$ is any cell of $\bx$ of dimension $m \geqslant n+1$ then
the following equality holds, provided that the expressions involved are
defined: \[ (w \repq u) \repr v = w \repq (u \repr v).\] \end{lemma}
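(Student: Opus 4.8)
The plan is to prove the identity by induction on the $n$-cell $w$, exactly in the style of the proofs of Theorems~\ref{L:reprfunctor} and~\ref{L:rep}. The only machinery I will invoke is the recursive characterization of generalized whiskering by conditions 1--3 preceding Definition~\ref{D:W}: once at level $n$ for the operation $w \repq -$, and once at level $n+1$ for the operations $(\,\cdot\,)\repr v$ — the latter being legitimate because $\bx_{n+1} = \bb[J]$ is a free extension of $\bx_n$, so Theorem~\ref{L:reprexists} supplies the level-$(n+1)$ analogues of conditions 1--3. I will also record, from the level-$(n+1)$ analogue of condition 1, that $d^{(n-1)}(u\repr v) = d^{(n-1)} u$ and $c^{(n-1)}(u\repr v) = c^{(n-1)} u$; in particular $u\repr v$ has the same $(n-1)$st iterated domain and codomain as $u$, so that $w\repq(u\repr v)$ is defined precisely when $w\repq u$ is.

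For the base of the induction, $w$ cannot be an identity, since $q \in \doc w$ forces $\doc w \neq \emptyset$; and if $w = y$ is an $n$-indet with $\doc w = \{q\}$, then $w\repq u = u$ and $w\repq(u\repr v) = u\repr v$ by condition 2 at level $n$, so both sides reduce to $u\repr v$. For the inductive step, write $w = w'\bullet_k w''$ with $k<n$, so that $\doc w = \doc{w'}\mathbin{\dot{\cup}}\doc{w''}$, and suppose first $q \in \doc{w'}$. Condition 3 at level $n$ gives $w\repq u = (w'\repq u)\bullet_k w''$, the right-hand side being the whiskering $(w'\repq u)\bullet_k 1^{(n+1)}_{w''}$ of the $(n+1)$-cell $w'\repq u$ by $w''$. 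Since $1^{(n+1)}_{w''}$ carries no $(n+1)$-indet occurrences, the occurrence $r$ sits in the $w'\repq u$ summand, and condition 3 at level $n+1$ yields
\[ (w\repq u)\repr v \;=\; \bigl((w'\repq u)\repr v\bigr)\bullet_k w''. \]
Applying the induction hypothesis to $w'$ (with the same $q$, $u$, $r$, $v$) gives $(w'\repq u)\repr v = w'\repq(u\repr v)$, and then condition 3 at level $n$ applied to $w\repq(u\repr v)$ — valid since $q\in\doc{w'}$ and $u\repr v$ may replace the $q$-occurrence of $y$ in $w$ — gives
\[ \bigl(w'\repq(u\repr v)\bigr)\bullet_k w'' \;=\; (w'\bullet_k w'')\repq(u\repr v) \;=\; w\repq(u\repr v). \]
Chaining these identities settles this case; the case $q\in\doc{w''}$ is entirely symmetric, with $w\repq u = w'\bullet_k(w''\repq u)$ and $r$ lying in the $w''\repq u$ summand.

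I expect the one delicate point — and the main thing to be careful about — to be the index-set bookkeeping in the inductive step: one must check that, under the conventions of Section~\ref{S:occurrence} on how the finite index sets and their coprojections are chosen, the canonical copy of $\doc u$ inside $\doc{w\repq u}$ is compatible with the decomposition $w\repq u = (w'\repq u)\bullet_k w''$, so that the occurrence named $r$ is unambiguously an element of the $w'\repq u$ (respectively $w''\repq u$) summand. This is precisely the kind of routine verification the paper performs "by convention" (cf.\ the coproduct discussion of Section~\ref{S:occurrence} and the paragraph preceding Theorem~\ref{L:rep}); with it in hand, the proof is a mechanical unwinding of conditions 1--3 at the two adjacent levels.
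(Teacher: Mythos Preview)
Your proof is correct and follows essentially the same route as the paper's: induction on the $n$-cell $w$, dismissing the identity case via $\doc{w}\neq\emptyset$, handling the indet case trivially, and for $w=w'\bullet_k w''$ reducing to the induction hypothesis for the factor containing $q$. You are in fact slightly more explicit than the paper in naming the step $((w'\repq u)\bullet_k w'')\repr v = ((w'\repq u)\repr v)\bullet_k w''$ as an instance of condition~3 at level $n+1$ (with $w''$ standing for $1^{(n+1)}_{w''}$), which the paper passes over silently, and in flagging the index-set bookkeeping needed to identify $r\in\doc{u}$ with an element of $\doc{w'\repq u}$.
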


\begin{proof} By induction on $w$. If $w$ is an $n$-indet, then $w \repq -$
is an identity functor, and there is nothing to prove. $w$ cannot be an
identity, as $\doc{w} \neq \emptyset$. Assume that $w = w\p \bullet_k w\pp$
and , e.g., $q \in \doc{w\p}$. Then \[ (w \repq u) \repr v = ((w\p \repq u)
\bullet_k w\pp) \repr v .\] By the induction hypothesis, $(w\p \repq u) \repr
v = w\p \repq (u \repr v)$, and we conclude \[=(w\p \repq (u \repr v))
\bullet_k w\pp = (w\p \bullet_k w\pp) \repq (u\repr v) = w \repq (u \repr
v).\] \end{proof}

We now go \emph{one dimension higher} and define the operations of
\emph{placed composition} that involve $(n+1)$-cells of $\bx$. Let $u$ be
such a cell. Its domain $du$ is an $n$-cell of $\bx$, hence of $\ba[I]$.
Assume that $r\in \doc{du}$ and $\oc{du}(r)=x\in I$. Schematically, the
situation may be represented as in the figure below, where we indicated the
$r$-occurrence of $x$ in $du$.

$$\bfig \morphism(0,0)|a|/>/<1200,0>[{}`{};{cu}]
\morphism(0,0)|b|/{@{}@/^-1em/_(0.8){du}}/<450,-400>[{}`{};{}]
\morphism(450,-400)|a|/@{{*}->}^(0.5){x}/<300,0>[{}`{};{}]
\morphism(750,-400)|b|/{@{>}@/^-1em/}/<470,455>[{}`{};{}]
\morphism(600,-300)|r|/=>/<0,250>[{}`{};{\,\,u}] \efig$$

Let, in addition, $v$ be another $(n+1)$-cell of $\bx$ with codomain $cv=x$.
The two cells can be represented as in the figure at left below and it is a
natural thought to combine the two cells into a single one, $u\circ_r v$,
whose domain will be $du\repr dv$, the result of replacing the $r$-occurrence
of $x$ in $du$ by $dv$. The new cell is represented schematically in the
figure at right and is called the \emph{placed composition of $u$ and $v$ at
$r$}.

$$\bfig \morphism(0,0)|a|/>/<1200,0>[{}`{};{cu}]
\morphism(0,0)|b|/{@{}@/^-1em/_(0.8){du}}/<450,-400>[{}`{};{}]
\morphism(450,-400)|a|/@{{*}->}^(0.5){x}/<300,0>[{}`{};{}]
\morphism(750,-400)|b|/{@{>}@/^-1em/}/<470,455>[{}`{};{}]
\morphism(600,-300)|r|/=>/<0,250>[{}`{};{\,\,u}]
\morphism(450,-400)|l|/{@{}@/^-0.8em/}/<0,-300>[{}`{};{}]
\morphism(450,-700)|b|/{@{}@/^-0.6em/}/<325,15>[{}`{};{dv}]
\morphism(750,-700)|r|/{@{>}@/^-0.8em/}/<-18,325>[{}`{};{}]
\morphism(600,-650)|r|/=>/<0,200>[{}`{};{\,\,v}]
\morphism(1700,0)|a|/>/<1200,0>[{}`{};{cu}]
\morphism(1700,0)|b|/{@{}@/^-1em/}/<450,-400>[{}`{};{}]
\morphism(2150,-400)|a|/@{{*}.>}^(0.3){}/<300,0>[{}`{};{}]
\morphism(2450,-400)|b|/{@{>}@/^-1em/}/<470,455>[{}`{};{}]
\morphism(2300,-650)|r|/@{=>}_(0.7){\,\,u\circ_r v}/<0,600>[{}`{};{}]
\morphism(2150,-400)|l|/{@{}@/^-0.8em/}/<0,-300>[{}`{};{du\repr dv}]
\morphism(2150,-700)|b|/{@{}@/^-0.6em/}/<325,15>[{}`{};{}]
\morphism(2450,-700)|r|/{@{>}@/^-0.8em/}/<-18,325>[{}`{};{}] \efig$$

What is the precise definition of placed composition? The cells $u$ and $v$
cannot be composed as they are, because the domain of $u$ doesn't match the
codomain of $v$. This, however, can be corrected with the help of the
generalized whiskering functor $du\repr -$. Indeed, as we have $dv \to^v
cv=x=\oc{du}(r)$, we get, after applying $du\repr -$, \[ du\repr dv
\to^{du\repr v} du\repr cv=du\repr x=du \] and thus, $du\repr v$ is an
$(n+1)$-cell with codomain $du$, matching the domain of $u$. This motivates
the following definition:

\begin{definition}\label{D:placedcomp} For $u,\,v\in X_{n+1}$ with $r\in
\doc{du}$ and $\oc{du}(r)=x=cv$, we define the \emph{placed composition of
$u$ and $v$ at $r$} to be the $(n+1)$-cell \[u\circ_r v=u\bullet_n (du\repr
v) \] with domain $du\repr dv$ and codomain $cu$. \end{definition}

Again, we can generalize this operation further, by allowing $v$ to be any
$\bx$-cell of dimension $\geqslant n+1$ such that $c^{(n)}v=x$.
Definition~\ref{D:placedcomp} makes sense for such a $v$, with $\bullet_n$
indicating a whiskering, and produces a cell $u\circ_r v$, of dimension equal
to that of $v$, which will be called the placed \emph{whiskering} of $u$ and
$v$ at $r$.

\medskip

\noindent\emph{Remark concerning the case $n=0$.} In this situation, $du$ is
a $0$-cell, i.e. an indet, so that $du\repr -$ is the identity function,
$\doc{u}$ is a singleton, say $\{r\}$, and the placed composition is defined
only when $c\sps{(0)}v=du$ and we have, therefore, $u\repr v=u \bullet\sbs{0}
v$.

\medskip

The placed whiskering operations in general, and placed compositions in
particular, have properties similar to those of the operations of replacement
and generalized whiskering.

\begin{theorem}\label{L:pcomp}
If $u$ is an $(n+1)$-cell then:
\begin{enumerate}
\item(``Commutativity'') If $r,q\in
\doc{du}$, $r\neq q$ and $v,\,w$ are $(n+1)$-cells for which \mbox{$u\circ_r
v$}, $u\circ_q w$ are defined, then $(u\circ_r v)\circ_q w=(u\circ_q
w)\circ_r v$. \item (``Associativity'') If $r\in \doc{du}$, $v$ is an
$(n+1)$-cell such that $u\circ_r v$ is defined, $q\in \doc{dv}$ and $w$ is
any $\bx$-cell of dimension $\geqslant n+1$ with $v\circ_q w$ defined, then
$(u\circ_r v)\circ_q w=u\circ_r(v\circ_q w)$.
\item (Identity rules) If $\oc{du}(r)=x$ then $u\circ_r 1_x =u$. If $cv=x$ and
$\doc{x}=\{r\}$, then $1_x\circ_r v=v$.
\end{enumerate}
\end{theorem}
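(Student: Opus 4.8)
The plan is to reduce everything to Definition~\ref{D:placedcomp}, $u\circ_r v = u\bullet_n(du\repr v)$, together with facts already established: the associativity, interchange and identity axioms of the $\omega$-category $\bx$ (in particular the whiskering instances of $\bullet_n$), the functoriality of generalized whiskering (Theorem~\ref{L:reprfunctor}), the commutativity, associativity and identity clauses for replacement (Theorem~\ref{L:rep}), and our convention that the index sets of composable cells are disjoint and add up.

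\emph{Parts~2 and~3.} For part~3, since $du\repr-$ is an $\omega$-functor it carries $1_x$ to $1_{du\repr x}=1_{du}$ (identity clause of Theorem~\ref{L:rep}), so $u\circ_r 1_x = u\bullet_n 1_{du}=u$ by an identity axiom; and when $\doc{x}=\{r\}$ we have $x\repr v=v$ (condition~2 preceding Definition~\ref{D:W}) and $d(1_x)=x$, so $1_x\circ_r v = 1_x\bullet_n(x\repr v)=1_x\bullet_n v=v$. For part~2, expand the right side to $u\bullet_n\bigl(du\repr(v\bullet_n(dv\repq w))\bigr)$; functoriality of $du\repr-$ makes this $u\bullet_n\bigl((du\repr v)\bullet_n(du\repr(dv\repq w))\bigr)$, the associativity clause of Theorem~\ref{L:rep} rewrites $du\repr(dv\repq w)$ as $(du\repr dv)\repq w$, and one use of associativity of $\bullet_n$ gives $\bigl(u\bullet_n(du\repr v)\bigr)\bullet_n\bigl((du\repr dv)\repq w\bigr)=(u\circ_r v)\circ_q w$. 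The case in which $w$ has dimension larger than $n+1$ is identical, with generalized whiskerings replacing compositions.

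\emph{Part~1, the main point.} Unfolding both sides and using associativity of $\bullet_n$ reduces the claim to the identity
\[(du\repr v)\bullet_n\bigl((du\repr dv)\repq w\bigr)\;=\;(du\repq w)\bullet_n\bigl((du\repq dw)\repr v\bigr).\]
As $\doc{du}$ has at least two elements, $du$ is neither an indet nor an identity, so $du=a\bullet_k b$ for some $k<n$, and I would prove this identity by induction on $du$, mirroring the proof of Theorem~\ref{L:rep}(1). If $r$ and $q$ lie in different summands of $a\bullet_k b$, the recursive clauses for $\repr$ and $\repq$ push the two occurrences into separate factors and the identity becomes a single application of the interchange law for $\bullet_k$ and $\bullet_n$, followed by the unit laws that collapse the degenerate whiskerings produced; if $r$ and $q$ lie in the same summand, the identity follows from the induction hypothesis applied to that summand. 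At several points one also uses the commutativity clause of Theorem~\ref{L:rep} — which holds verbatim for generalized whiskerings, by the same induction — to interchange the two replacements while matching the two sides. I expect the only real obstacle to be bookkeeping: keeping domains, codomains and the coprojection-induced identifications of index sets straight through all the whiskerings, and checking at each step that the interchange and unit laws are being invoked on well-formed composites. When $n=0$ the whole statement degenerates ($du$ is an indet, $du\repr-$ is the identity, part~1 is vacuous, and parts~2,~3 are just the associativity and unit laws for $\bullet_0$).
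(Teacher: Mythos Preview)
Your proposal is correct and follows essentially the same route as the paper: parts~2 and~3 are handled exactly as in the text, and for part~1 you reduce to precisely the identity the paper isolates as Lemma~\ref{L:vdw-wdv} and prove it by the same induction on $du$, with the same two cases (same summand via the induction hypothesis, different summands via interchange and unit laws). The only superfluous ingredient is your invocation of the commutativity clause of Theorem~\ref{L:rep}: the paper's argument never needs it, as the two cases of the induction are settled entirely by the recursive clause for $\repr$, the exchange axiom, and the identity axioms.
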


\begin{proof}
Proof of part \emph{1}: We have \begin{multline}\notag (u\circ_r v)\circ_q w
= (u\bullet_n (du\repr v)) \circ_q w= u\bullet_n (du\repr v)\bullet_n
(d(u\bullet_n(du\repr v))\repq w) =\\=u\bullet_n(du\repr
v)\bullet_n(d(du\repr v)\repq w) = u\bullet_n (du\repr v) \bullet_n ((du\repr
dv)\repq w) \end{multline} (remember that $du\repr-$ is functorial, therefore
$d(du\repr-)=du\repr d-$)

In the same way, $(u\circ_q w)\repr v=u\bullet_n(du\repq w) \bullet_n
((du\repq dw)\repr v)$, hence the desired conclusion will follow from the
following:

\begin{lemma}\label{L:vdw-wdv} If $u\sps{\prime}$ is an $n$-cell, $r,q\in \doc{u\sps{\prime}}$,
$r \neq q$ and $v,\,w$ are $(n+1)$-cells satisfying
$cv=\oc{u\sps{\prime}}(r)$, $cw=\oc{u\sps{\prime}}(q)$ then
\[ (u\sps{\prime}\repr v)\bullet_n((u\sps{\prime}\repr dv)\repq w)=(u\sps{\prime}\repq w)\bullet_n ((u\sps{\prime}\repq
dw)\repr v)
\]
\end{lemma}

\begin{proof} By induction on $u\sps{\prime}$. $u\sps{\prime}$ can be neither an indet nor an
identity, so assume that $u\sps{\prime}=u\sps{\prime}\sbs{1}\bullet_k
u\sps{\prime}\sbs{2}$, $k<n$.

Case 1: $r,\,q$ belong to the same one of
$\doc{u\sps{\prime}\sbs{1}},\,\doc{u\sps{\prime}\sbs{2}}$, e.g. $r,q\in
\doc{u\sps{\prime}\sbs{1}}$. Then
\begin{multline}\notag
(u\sps{\prime}\repr v)\bullet_n((u\sps{\prime}\repr dv)\repq w)=
((u\sps{\prime}\sbs{1}\repr v) \bullet_k
u\sps{\prime}\sbs{2})\bullet_n (((u\sps{\prime}\sbs{1}\repr dv)\repq w)\bullet_k u\sbs{2}\sps{\prime})=\\
=((u\sps{\prime}\sbs{1}\repr v)\bullet_n ((u\sps{\prime}\sbs{1}\repr dv)\repq
w))\bullet_k u\sps{\prime}\sbs{2}, \end{multline}
 where the second equality is just an instance of the exchange
axiom (axiom 3 of definition~\ref{D:axioms}). To see this, one should notice
that the $\bullet_k$ compositions stand for whiskerings and, therefore,
$u\sps{\prime}\sbs{2}$ is just short for $1_{u\sps{\prime}\sbs{2}}$.

Similarly, $(u\sps{\prime}\repq w)\bullet_n((u\sps{\prime}\repq dw)\repr v)=
((u\sps{\prime}\sbs{1}\repq w)\bullet_n((u\sps{\prime}\sbs{1}\repq dw)\repr
v))\bullet_k u\sps{\prime}\sbs{2}$ and the equality follows from the
induction assumption for $u\sps{\prime}\sbs{1}$.

Case 2: $r\in \doc{u\sps{\prime}\sbs{1}}$, $q\in\doc{u\sps{\prime}\sbs{2}}$.
Then,
\begin{multline} \notag (u\sps{\prime}\repr v)\bullet_n((u\sps{\prime}\repr dv)\repq w)=
((u\sps{\prime}\sbs{1}\bullet_k u\sps{\prime}\sbs{2})\repr
v)\bullet_n(((u\sps{\prime}\sbs{1}\bullet_ku\sps{\prime}\sbs{2})\repr
dv)\repq w)=\\=((u\sps{\prime}\sbs{1}\repr v)\bullet_k
u\sps{\prime}\sbs{2})\bullet_n ((u\sps{\prime}\sbs{1}\repr
dv)\bullet_k(u\sps{\prime}\sbs{2}\repq w)) = ((u\sps{\prime}\sbs{1}\repr
v)\bullet_n(u\sps{\prime}\sbs{1} \repr dv))\bullet_k
(u\sps{\prime}\sbs{2}\bullet_n (u\sps{\prime}\sbs{2}\repq w))=\\ =
(u\sps{\prime}\sbs{1} \repr v)\bullet_k (u\sps{\prime}\sbs{2} \repq w),
\end{multline} where, again, the equality before the last is an instance of
the exchange axiom, while the last equality follows by identity axioms (the
first line of axiom 4, definition~\ref{D:axioms}), taking into consideration
that $u\sps{\prime}\sbs{1} \repr dv,\, u\sps{\prime}\sbs{2}$ are just short
for $1_{u\sps{\prime}\sbs{1} \repr dv},\,1_{u\sps{\prime}\sbs{2}}$,
respectively .

A similar computation shows that $(u\sps{\prime}\repq w)\bullet_n
((u\sps{\prime}\repq dw)\repr v)$ equals $(u\sps{\prime}\sbs{1} \repr
v)\bullet_k (u\sps{\prime}\sbs{2} \repq w)$ as well. No need for any
induction hypothesis for this case.
\end{proof}

The proof of part \emph{1} is now complete.

\medskip

Proof of part \emph{2}: A computation shows that \[ u\sps{\prime}\circ_r(v
\circ_q w)= u\sps{\prime}\bullet_n (du\repr v) \bullet_n (du\repr (dv\repq
w)), \text{ while}
\]
\[ (u\sps{\prime}\repr v)\repq w= u\sps{\prime}\bullet_n (du\repr v) \bullet_n ((du\repr dv)
\repq w)
\] and the desired equality follows by part \emph{2} of theorem~\ref{L:rep}.

\medskip

The proof of \emph{3} is easy (for the first statement, one should only
notice that \\$du\repr 1_x=1_{du}$).
\end{proof}

\medskip

Theorem~\ref{L:reprexists} and definition~\ref{D:placedcomp} show that the
operations of placed composition are uniquely determined by the
$\omega$-categorical composition operations $\bullet_k$. The next statement
describes the behavior of a $\bullet_k$ composition operation when one of its
arguments is a placed composition. It will allow us to show, in
section~\ref{S:comparing}, that under certain conditions a \emph{converse}
also holds, namely, the placed compositions determine uniquely the
$\omega$-categorical ones.

\begin{proposition}\label{L:bulletviacirc}  If $\bx_n=\ba[I]$, then the
following identities hold, where $u, u\p, u\pp, v, v\p, v\pp$ are
$(n+1)$-cells of $\bx$ such that the left hand side expressions are defined,
then:
\begin{enumerate}
\item $u\bullet_k (v\p \circ_r v\pp)= (u \bullet_k v\p) \circ_r v\pp$, for
$k \leqslant n$. \item $(u\p \circ_r u\pp) \bullet_k v = (u\p \bullet_k v)
\circ_r u\pp$, for $k<n$.
\end{enumerate}\end{proposition}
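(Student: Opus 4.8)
The plan is to prove both identities by a direct computation, unfolding placed composition via its definition $u\circ_r v = u\bullet_n(du\repr v)$ (Definition~\ref{D:placedcomp}) so that each side becomes an expression built from the $\bullet_k$ and the generalized whiskering $\repr$; the two sides are then matched using the associativity, exchange and identity axioms of $\omega$-categories (Definition~\ref{D:axioms}) together with the defining recursion for $\repr$ (conditions~1--3 stated just before Definition~\ref{D:W}). No induction on cells is needed. The two structural facts I will lean on repeatedly are: condition~3 of that recursion, which says that $(u\p\bullet_k u\pp)\repr v$ equals $(u\p\repr v)\bullet_k u\pp$ when $r\in\doc{u\p}$ and $u\p\bullet_k(u\pp\repr v)$ when $r\in\doc{u\pp}$ (recall $\doc{u\p\bullet_k u\pp}=\doc{u\p}\,\dot\cup\,\doc{u\pp}$); and the domain rules $d(u\bullet_k v)=du\bullet_k dv$ for $k<n$ and $d(u\bullet_n v)=dv$. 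Well-definedness of every intermediate expression is automatic from the hypothesis that the left-hand sides are defined, so I will not dwell on it.

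For part~1, the case $k=n$ is just associativity of $\bullet_n$: unfolding gives $u\bullet_n(v\p\circ_r v\pp)=u\bullet_n\bigl(v\p\bullet_n(dv\p\repr v\pp)\bigr)=(u\bullet_n v\p)\bullet_n(dv\p\repr v\pp)$, and since $d(u\bullet_n v\p)=dv\p$ this last expression is exactly $(u\bullet_n v\p)\circ_r v\pp$. For $k<n$ I would first unfold the right-hand side: here $r$ lies in $\doc{dv\p}$, the second summand of $\doc{d(u\bullet_k v\p)}=\doc{du}\,\dot\cup\,\doc{dv\p}$, so condition~3 gives $(u\bullet_k v\p)\circ_r v\pp=(u\bullet_k v\p)\bullet_n\bigl(du\bullet_k(dv\p\repr v\pp)\bigr)$. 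On the left, $u\bullet_k(v\p\circ_r v\pp)=u\bullet_k\bigl(v\p\bullet_n(dv\p\repr v\pp)\bigr)$; rewriting $u$ as $u\bullet_n 1_{du}$ by an identity axiom and then applying the exchange axiom (legitimate since $k<n$) transforms this into $(u\bullet_k v\p)\bullet_n\bigl(1_{du}\bullet_k(dv\p\repr v\pp)\bigr)$, and $1_{du}\bullet_k(-)$ is by definition the whiskering $du\bullet_k(-)$, so the two sides coincide.

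Part~2 follows the same pattern, with the roles of the two summands of $\doc{-}$ interchanged. Unfolding the right-hand side, $r$ now lies in $\doc{du\p}$, the \emph{first} summand of $\doc{d(u\p\bullet_k v)}=\doc{du\p}\,\dot\cup\,\doc{dv}$, so condition~3 gives $(u\p\bullet_k v)\circ_r u\pp=(u\p\bullet_k v)\bullet_n\bigl((du\p\repr u\pp)\bullet_k dv\bigr)$. On the left, $(u\p\circ_r u\pp)\bullet_k v=\bigl(u\p\bullet_n(du\p\repr u\pp)\bigr)\bullet_k v$; rewriting $v$ as $v\bullet_n 1_{dv}$ and applying exchange (again $k<n$) yields $(u\p\bullet_k v)\bullet_n\bigl((du\p\repr u\pp)\bullet_k 1_{dv}\bigr)$, which, reading $(-)\bullet_k 1_{dv}$ as the whiskering $(-)\bullet_k dv$, is the right-hand side.

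The computation is not deep; its only delicate points are bookkeeping. First, after each application of $\bullet_k$ one has to keep track of which summand of the coproduct $\doc{-}$ the occurrence index $r$ falls into, so that the correct branch of condition~3 is invoked --- this is where our standing convention on disjoint index sets pays off. Second, and this is the real crux, the expressions delivered by the definition of $\circ_r$ are not literally instances of the exchange axiom: one must first pad the appropriate factor with an identity $(n+1)$-cell ($u=u\bullet_n 1_{du}$ in part~1, $v=v\bullet_n 1_{dv}$ in part~2) to create the two nested $\bullet_n$/$\bullet_k$ pairs that exchange requires, and afterwards reabsorb the surviving identity factor as a whiskering. Once one sees that these two moves are what is needed, the rest is routine manipulation.
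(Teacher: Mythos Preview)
Your proof is correct and follows essentially the same route as the paper's: for $k=n$ it is associativity of $\bullet_n$ together with $d(u\bullet_n v\p)=dv\p$, and for $k<n$ (and for part~2) the paper likewise pads one factor with an identity $(n+1)$-cell, applies the exchange axiom, reinterprets the surviving $1_{-}\bullet_k(-)$ as a whiskering, and then uses condition~3 of the $\repr$ recursion on the appropriate summand of $\doc{du\bullet_k dv\p}$. The only cosmetic difference is that you unfold both sides and meet in the middle, whereas the paper transforms the left-hand side directly into the right.
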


\begin{proof} Part \emph{1}: As $c(v\p \circ_r v\pp)= cv\p$, we see that
$u\bullet_k v\p$ is defined and, as $\doc{u \bullet_k v} = \doc{u} \dot{\cup}
\doc{v\p}$, the right hand side expression is defined, whenever the left is.

If $k=n$, then we have $u\bullet_n (v\p \circ_r v\pp) = u\bullet_n (v\p
\bullet_n (d v\p \repr v\pp))= (u\bullet_n v\p) \bullet_n (dv\p \repr v\pp)$
and the desired identity follows once we notice that $dv\p = d(u\bullet_n
v\p)$.

If $k< n$, then $u \bullet_k (v\p \circ_r v\pp) = u\bullet_k (v\p \bullet_n
(dv\p \repr v)= (u\bullet_n 1\sbs{du}) \bullet_k (v\p \bullet_n (dv\p \repr
v)$. We can now use an instance of the exchange axiom and conclude that $u
\bullet_k (v\p \circ_r v\pp)= (u\bullet_k v\p) \bullet_n (1\sbs{du} \bullet_k
(dv\p \repr v\pp))= (u\bullet_k v\p) \bullet_n (du \bullet_k (dv\p \repr
v\pp))= (u\bullet_k v\p) \bullet_n ((du\bullet_k dv\p) \repr v\pp)$ (notice
that the second $\bullet_k$ in the third expression represents a whiskering)
and the desired identity follows if we notice that $du\bullet_k dv\p=
d(u\bullet_k v\p)$.

Part \emph{2}: To see that the right hand side is defined if the left is,
notice that $d(u\p \circ_r u\pp)= du\p \repr du\pp \parallel du\p$, hence
$d^{(k)}(u\p \circ_r u\pp)= d^{(k)} u\p$. The proof of the identity is
similar to that of the case $k<n$ of part \emph{1}. \end{proof}

\medskip

Theorems~\ref{L:pcomp} and~\ref{L:rep} point out common properties of the
placed composition operations on one hand, and the $n$-cell replacement ones,
on the other. Actually, these two families of operations are particular
instances of a general concept that forms the subject of the next section.

\section{Multicategories}\label{S:multicats} The notion of multicategory that
we are about to present, has been introduced in \cite{HMP2} and extends a
notion defined previously, under the same name, by Lambek (cf.
\cite{Lambek}). It is an abstract concept that, as we just hinted, displays
the common features of the placed composition operations, on one hand, and
the $n$-cell replacement ones, on the other.

A multicategory has a set of \emph{objects} and a set of \emph{arrows}. Each
arrow $u$ has a \emph{source} $Su$ and a \emph{target} $Tu$. $Su$ is an
indexed set of objects, a function from a finite set of indices $|Su|$ into
the set of objects. The multicategory has also partial
\emph{multicomposition} operations, which we denote $\odot_r$, $r$ being any
index. If $u$, $v$ are arrows then $u\odot_r v$ is defined whenever $r\in
|Su|$ and the target of $v$ is ``appropriate'' (in a sense to be made precise
shortly) for the object $Su(r)$ that occurs in the $r$-position in the source
of $u$. If such is the case, we will say that $v$ is \emph{multicomposable}
(or, $r$-\emph{multicomposable}) \emph{into $u$}.

One kind of examples of multicategories is based on the operations of placed
compositions playing the role of multicompositions. In this context, the
objects are the $n$-indets while the arrows are certain $(n+1)$-cells. The
source of an arrow $u$ will be $\oc{du}$ and its target will be $cu$. Thus,
the target of $v$ is ``appropriate'' for the object $Su(r)=\oc{du}(r)$ iff it
\emph{equals} it.

The situation is a bit different in a multicategory based on the $n$-cell
replacements. The objects are, again, the $n$-indets and the arrows are the
$n$-cells, the source of $u$ being $\oc{u}$. This time, the
$r$-multicomposition of $v$ into $u$ will be defined iff we have the equality
of ordered pairs $(dv,cv)=(dx,cx)$ where $x=Su(r)$. We will call $(dx,cx)$
the \emph{type} of the object $x$ and let the target of $v$ be $Tv=(dv,cv)$.
Hence, in this case, the target of $v$ is ``appropriate'' for  the object
$Su(r)=\oc{u}(r)$ iff it equals its type.

In preparation for a formal definition, let us specify a few conventions and
notations. As we mentioned already, given a set $O$, we let $O^{\#}$ be the
category whose objects are finite indexed sets of elements of $O$, i.e.
functions from finite subsets of a given infinite set of indices $\stn$, and
arrows defined in the obvious way (see also \cite{HMP2}). Recall that, given
an object $f$ of $O^{\#}$, $f:|f| \el O$, we allow ourselves to
\emph{reparametrize} $f$ replacing, at will, the domain $|f|$ by any subset
of $\stn$ of equal cardinality. To be more precise, if $s \subset \stn$ and
$\sigma: s \el |f|$ is a bijection, then we regard $f'=f\sigma: s \el O$ as
being \emph{the same} as $f$. Of course, when we do this, we also identify
the $O^{\#}$-arrows from and to $f$ with the corresponding maps (e.g.
$\gamma:g \el f$ should be identified with $\gamma'=\sigma^{\tc{-1}}\gamma:g
\el f'$). Finally, if $x\in O$, we let $\oc{x}$ be the object of $O^{\#}$
whose domain is a singleton and whose range is $\{x\}$.

\begin{definition}\label{D:multcat} A multicategory $\dcc$ consists of;
\begin{enumerate}\item An \emph{object system}, which is a triple
$\Omega=\Omega(\dcc)=(O,\,\dot{O},\,(-)^{\cdot})$ where $O$ is a set of
\emph{objects}, $\dot{O}$ a set of \emph{object types} and $(-)^{\cdot}: O
\el \dot{O}$ a map that associates with every $x \in O$ its \emph{type}
$\dot{x} \in \dot{O}$. We say that $\dcc$ is \emph{based} on $\Omega$. If $O
=\dot{O}$ and $(-)^{\cdot}$ is the identity, then $\Omega$ is called a
\emph{simple} object system and is denoted $\Omega = (O)$.
\item A set $A=A(\dcc)$ of \emph{arrows} together with \emph{source} and
\emph{target} functions \mbox{$S:A \el Ob(O^{\#})$}  and \mbox{$T:A \el
\dot{O}$}. \item Partial \emph{multicomposition operations} that associate
with each pair of arrows $u,\,v\in A$ and each $r\in |Su|$ such that
$Tv=(Su(r))^{\cdot}$, an arrow $u\odot_r v$ such that $S(u\odot_r v)$ is a
coproduct of $Su \setminus r$ and $Sv$ with \emph{specified coprojections}
and $T(u\odot_r v)=Tu$ (following our practice, we will always assume that
$Su,\,Sv$ have been so reparametrized as to have $|S(u\odot_r v)|=
(|Su|\setminus \{r\})\dot{\cup} |Sv|$ with the inclusion maps being the
specified coprojections).

$u\odot_r v$ will be referred to as the \emph{multicomposition of $v$ into
$u$ at place $r$}.
\item An \emph{identity} arrow $1_x$, for each $x\in O$, such that
$S(1_x)=\oc{x}$, $T(1_x)=\dot{x}$. \end{enumerate}

These components are subject to the following conditions:
\begin{description} \item[(a)] (Identity rules) If $Tu=\dot{x}$ then $1_x
\odot_r u=u$, where, of course, $|S1_x|=\{r\}$. If $Su(r)=x$, then $u\odot_r
1_x=u$. \item[(b)] (``Commutativity'') If $r,\,q\in |Su|$, $r\neq q$,
$Tv=(Su(r))^{\cdot}$ and $Tw=(Su(q))^{\cdot}$ then $(u\odot_r v)\odot_q
w=(u\odot_q w)\odot_r v$. \item[(c)] (``Associativity'') If $r\in |Su|$,
$Tv=(Su(r))^{\cdot}$, $q\in |Sv|$ and $Tw=(Sv(q))^{\cdot}$ then
\mbox{$(u\odot_r v)\odot_q w=u\odot_r (v \odot_q w)$}.
\end{description} \end{definition}

\medskip

We now reexamine the examples that motivated this definition.

As it turns out, there are \emph{two} important examples based on placed
composition.

\medskip

The \emph{first (and} main) \emph{example}: If $\bb$ is an $n$-category
generated by a set $I$ of indets, as we considered in
section~\ref{S:placedcomp}, and $\bx$ is an $(n+1)$-category extending $\bb$,
i.e. $\bx_n=\bb=\ba[I]$, then we define the multicategory $\dcc=\dcc_{\bx}$
of \emph{placed-composition}, whose object system is simple, with set of
objects $I$. The set of arrows will be $A=\{u:\,u\in X_{n+1},\,cu\in I\}$,
i.e. the set of those $(n+1)$-cells of $\bx$ that were called
\emph{many-to-one} in the introduction. For $u\in A$, $Su=\oc{du}$ and
$Tu=cu$. The multicomposition operation at place $r$ will be, of course,
$\circ_r$. Finally, for $x\in I$, the identity arrow will be the identity
\emph{cell} $1_x$.

\medskip

\emph{Remark concerning the terminology.} An arbitrary $(n+1)$-cell $u \in
X_{n+1}$ can be seen as \emph{linking} between the finite indexed sets of
$n$-indets $\oc{du}$ and $\oc{cu}$. In general, both these indexed sets have
(finitely) \emph{many} components. If it so happens that $cu \in I$, i.e.
$\oc{cu}$ contains just \emph{one} component, then it is only natural to say
that $u$ is a \emph{many-to-one} cell.

\medskip

A moment of thought will show that we do not \emph{have} to take the arrows
to be just the many-to-one $(n+1)$-cells. By deciding that \emph{all}
$(n+1)$-cells of $\bx$ are arrows we get another example of multicategory
based on placed composition.

\medskip

The  \emph{second example} of multicategory: We enlarge the
placed-composition multicategory $\dcc_{\bx}$ into an \emph{extended}
placed-composition multicategory $\dcc^+=\dcc^+_{\bx}$ whose set of objects
$O$ is still the set of $n$-indets $I$, but the set of arrows $A$ equals
$X_{n+1}$, the set of \emph{all} $(n+1)$-cells of $\bx$. To accommodate this
situation, the object system of $\dcc_{\bx}^+$ is \emph{not} simple anymore.
The set of object types is $\dot{O}=B_n$, the set of all $n$-cells of
$\bb=\ba[I]$ and $(-)^{\cdot}$ is the inclusion map. The source and the
target of $u$ are $Su=\oc{du}$ and $Tu=cu$. The multicomposition operations
and the identity arrows are defined as in the case of $\dcc_{\bx}$.

\medskip

The definition of $\dcc^+_{\bx}$ is made possible by the fact that, in the
abstract concept of multicategory, the map $(-)^{\cdot}:O \el \dot{O}$ is not
necessarily onto $\dot{O}$. Hence, we might have arrows whose target is not
the type of any object; such arrows cannot be multicomposed into any other
arrow (but, of course, other arrows can be multicomposed \emph{into} it).
This possibility was not ruled out in \cite{HMP2}, but it seems that it had
no relevance in that paper. It is, however, useful in the present work as the
notion of extended placed-composition multicategory will turn out to be
valuable in section~\ref{S:comparing} below.

\medskip

\noindent\emph{Remark concerning the case $n=0$.} In this case, $\bx$ is a
$1$-category (i.e., just an ordinary category) and all its $1$-cells are
many-to-one. Furthermore, as we remarked after definition~\ref{D:placedcomp},
placed composition is the same as categorical composition and so, we have in
this case that $\dcc_{\bx}=\dcc_{\bx}^+=\bx$. Hence, an ordinary category,
can be seen at the same time as a multicategory of a very particular kind.
Actually, the ordinary categories are precisely those multicategories whose
object system is simple and the source of any arrow is a singleton.

\medskip

We now turn to the replacement context.

\medskip

\emph{Third example}: Given $\bb=\ba[I]$ of dimension $n>0$, we construct a
multicategory $\dcr=\dcr_{\bb}$ of \emph{cell replacement} as follows. The
set of objects of $\dcr$ will be $O=I$, the set of $n$-indets. The set of
types $\dot{O}=\{(du,cu):\, u \text{ an } n\text{-cell of } \ba\}$ and for
$x\in O=I$, $\dot{x}=(dx,cx)$. The set of arrows $A$ will be $B_n$, the set
of $n$-cells of $\bb=\ba[I]$ and for $u\in A$, $Su=\oc{u}$, $Tu=(du,cu)$. The
placed multicomposition operation at $r$ will be $\repr$ and for $x\in O=I$,
the identity arrow $1_x$ will be $x$ itself.

\medskip

We now define the obvious notions of morphisms of object systems and of
multicategories.

\begin{definition}\label{D:mcmorph}\begin{enumerate} \item A \emph{morphism}
$\gamma: \Omega \el \Lambda$ between object systems
$\Omega=(O,\dot{O},(-)^{\cdot})$ and $\Lambda=(L,\dot{L},(-)^{\cdot})$ is a
pair of functions $\gamma=(\gamma\sbs{o}, \gamma\sbs{t})$, where
$\gamma\sbs{o}: O \el L$, $\gamma\sbs{t}: \dot{O} \el \dot{L}$ and we have,
for $x \in O$, $(\gamma\sbs{o} x)^{\cdot}=\gamma\sbs{t} \dot{x}$. Thus, if
$\Omega$ is simple, then $\gamma\sbs{o}=\gamma\sbs{t}$ and, if such is the
case, we denote $\gamma=\gamma\sbs{o}$. \item A \emph{morphism} $\chi :\dcc
\el \dcd$, where $\dcc,\,\dcd$ are multicategories, is a pair $\chi
=(\chi\sbs{\Omega}, \chi_a)$ such that:
\begin{description}
\item[i.] $\chi\sbs{\Omega}: \Omega(\dcc) \el \Omega(\dcd)$ is a morphism of
object systems. \item[ii.] $\chi\sbs{a}: A(\dcc) \el A(\dcd)$ and for each
$u\in A(\dcc)$, $\chi\sbs{a} Tu=T\chi\sbs{a} u$ and there is a bijection
$\theta_u: |Su| \el |S\chi\sbs{a} u|$ such that $Su=(S\chi\sbs{a} u)\theta_u$
(and we will usually assume that an appropriate reparametrization has been
made, so that $\theta_u$ is an identity map).
\item[iii.] If $u,v\in A(\dcc)$ and $u\odot_r v$ is defined, then
$\chi\sbs{a} (u\odot_r v)=(\chi\sbs{a} u)\odot_r(\chi\sbs{a} v)$ \item[iv.]
$\chi\sbs{a} 1_x = 1_{\chi\sbs{o}x}$, for $x\in O$.
\end{description}
\end{enumerate}
\end{definition}

\begin{remark} Stipulation \textbf{iii} has been made under the assumption that
the $\theta$ bijections of \textbf{ii} are identity maps. Otherwise, we have
to say that $\chi (u\odot_r v)=(\chi u)\odot_{r'} (\chi v)$, where
$r'=\theta_u r$ and must add obvious requirements concerning the links
between $\theta_u$, $\theta_v$, $\theta_{u\odot_r v}$ and the coprojections
related to the sources $S(u\odot_r v)$, $S((\chi u)\odot_r (\chi v))$. For
example, if the coprojections are inclusion maps, as we usually assume, then
we must just require that $\theta_{u\odot_r v}=\theta_u \dot{\cup}
\,\theta_v$.
\end{remark}

\section{Free multicategories}\label{S:planguage}
We follow a path analogous to the one taken in section~\ref{S:clanguage}. We
will design a language that allows to specify arrows built from given
\emph{indeterminates} by means of multicompositions in a multicategory. Given
an object system $\Omega =(O,\dot{O},(-)^{\cdot})$, let $J$ be a set of
\emph{arrow}-indeterminates, together with source and target functions $S:J
\el Ob(O^{\#})$, $T:J \el \dot{O}$. The elements of J will be also called
a-\emph{indets} or, simply, \emph{indets}, and will denote arbitrary arrows
in a multicategory based on $\Omega $. We will define an equational language
$\stm=\stm(\Omega ,J,S,T)$. The symbols of $\stm$ will be the a-indets, the
\emph{multicomposition} symbols $\odot_r$, for $r\in \stn$, the
\emph{identity} symbols $1_x$ for $x\in O$, as well as left and right
parentheses, as auxiliary symbols.

\begin{definition}\label{D:pterms} The set $\stt(\stm)$ of $\stm$-terms
and the source and target functions \newline\mbox{$S:\stt(\stm) \el
Ob(O^{\#})$}, $T:\stt(\stm) \el \dot{O}$ are defined as follows:
\begin{enumerate} \item Each indet $f\in J$ is an $\stm$-term with
$Sf,\,Tf$ as specified by the given source and target functions. \item For
each $x\in I$, $1_x$ is an $\stm$-term with $S1_x=\oc{x}$ and $T1_x=\dot{x}$.
\item If $t,\,s$ are $\stm$-terms and $r\in |St|$, $Ts=(St(r))^{\cdot}$, then
$(t)\odot_r(s)$ is an $\stm$-term (usually written just as $t\odot_r s$),
with $T(t\odot_r s)=Tt$ and $S(t\odot_rs)$ being a coproduct, with
\emph{specified coprojections}, of $St\setminus r$ and $Ss$. We will follow
our simplifying practice and assume that $St,\,Ss$ have been so
reparametrized as to have $|S(t\odot_rs)|=(|St|-\{r\}) \dot{\cup} |Ss|$, with
the inclusion maps being the specified coprojections.
\item There are no $\stm$-terms besides those mentioned in 1-3.
\end{enumerate}
\end{definition}

The semantics of the $\stm$-terms is analogous to that of the $\stc$-terms of
section~\ref{S:clanguage}. For $\dcc$ a multicategory based on $\Omega $ and
an assignment $\varphi:J \el A(\dcc)$ which is \emph{correct}, in the sense
that $S\varphi f=Sf,\,T\varphi f=Tf$, one defines the value
$val(t)=val_{\varphi}(t)\in A(\dcc)$ of any term $t\in \stt(\stm)$, under the
assignment $\varphi$. More generally, if $\gamma: \Omega \el \Omega(\dcc)$ is
a morphism of object structures for any multicategory $\dcc$ and $\varphi: J
\el A(\dcc)$ an assignment that is \emph{consistent} with $\gamma$ (in the
sense that $S\varphi f=\gamma Sf,\,T\varphi f=\gamma Tf$) , we can evaluate
$t$ \emph{under} $\gamma,\varphi$ and get $val_{\gamma,\varphi}(t) \in
A(\dcc)$. The definition of the evaluation function $val_{\gamma,\varphi}$ is
most natural and similar to definition~\ref{D:val}, so that we do not present
it formally.

Next, we define the \emph{axioms} and \emph{rules} of the equational logic
$\stm$ as we did in definition~\ref{D:axioms}:

\begin{definition}\label{D:paxioms} The deductive system
$\stm$ has the following axioms and rules, where, $t,s,w$ are arbitrary
$\stm$-terms and all multicompositions are supposed to be well defined
(according to definition~\ref{D:pterms}).

\vspace{3pt}

\emph{\textbf{Axioms.}}

\vspace{-9pt}

\begin{enumerate}
\item $t=t$ (equality axioms).
\item $1_x\odot_rt=t$ and
$t\odot_r1_x=t$ (identity axioms).
\item $(t\odot_rs)\odot_qw=(t\odot_qw)\odot_rs$, if $r\neq q$ (commutativity
axioms).
\item $(t\odot_rs)\odot_qw=t\odot_r(s\odot_qw)$ (associativity axioms).
\end{enumerate}

\vspace{-9pt}

 \emph{\textbf{Rules.}}

\vspace{-3pt}

\begin{enumerate}
\item $\dfrac{t=s}{s=t} \qquad \dfrac{t=s \quad s=w}{t=w}$ (equality rules).

\item $\dfrac{t=s}{t\odot_r w=s\odot_r w} \qquad
\dfrac{t=s}{w\odot_r t=w\odot_r s}$ (congruence rules).

\end{enumerate}
\end{definition}

Again, we will write `$\vdash t=s$' or, sometimes, `$\vdash_{\stm} t=s$', to
indicate that $t=s$ is provable in system $\stm$.

As in section~\ref{S:clanguage}, we are now able to prove the existence of
\emph{free} multicategories.

\begin{theorem}\label{L:freemc} Given $\Omega ,\,J,\,S,\,T$, there exists a
multicategory $\Omega [J]$ based on $\Omega $, with \newline\mbox{$J \subset
A(\Omega [J])$}, such that for $f\in J$, $Sf,\,Tf$ are the source and target
of $f$ in $\Omega [J]$ and the following \emph{universal property} holds:

Whenever $\dcc$ is a multicategory based on $\Omega $ and $\varphi:J \el
A(\dcc)$ a function such that $S\varphi f=Sf,\, T\varphi f=Tf$ for all $f\in
J$, there is a \emph{unique} morphism $\chi:\Omega [J] \el \dcc$ which is the
identity on objects and object-types and satisfies $\chi f=\varphi f$ for $f
\in J$.

\medskip

Moreover, $\Omega [J]$ has also the following \emph{strong universal
property}: whenever $\dcc$ is any multicategory, $\gamma: \Omega \el
\Omega(\dcc)$ a morphism of object systems and $\varphi:J \el A(\dcc)$ a
function such that $S\varphi f=\gamma Sf,\,T\varphi f=\gamma Tf$, there is a
\emph{unique} morphism $\chi:\Omega [J] \el \dcc$ extending both, $\gamma$
and $\varphi$ in the sense that $\chi\sbs{\Omega}=\gamma$ and $\chi\sbs{a}
f=\varphi f$ for $f\in J$.
\end{theorem}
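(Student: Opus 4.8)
The plan is to mimic, almost verbatim, the construction of the free extension $\ba[I]$ in the proof of Theorem~\ref{L:freeext}, with $\stm$-terms in place of $\stc$-terms and the multicomposition operations $\odot_r$ in place of the $\omega$-categorical ones. The arrows of $\Omega[J]$ will be equivalence classes of $\stm$-terms under provable equality, and the object system of $\Omega[J]$ will be $\Omega$ itself.

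The first step is the multicategorical analogue of Claim~\ref{L:vdash}: \textup{(a)} the relation $t\approx s$ defined by $\vdash_{\stm} t=s$ is an equivalence relation (equality rules) which is a congruence for every $\odot_r$ (congruence rules); \textup{(b)} if $\vdash t=s$ then $St=Ss$ and $Tt=Ts$; \textup{(c)} if $\vdash t=s$ then $val_{\gamma,\varphi}(t)=val_{\gamma,\varphi}(s)$ for every object-system morphism $\gamma:\Omega\el\Omega(\dcc)$ and every consistent $\varphi$. Parts \textup{(a)} and \textup{(c)} are immediate: \textup{(c)} holds because axioms 2--4 of Definition~\ref{D:paxioms} are precisely the multicategory axioms \textup{(a)}, \textup{(b)}, \textup{(c)} of Definition~\ref{D:multcat}, the equality rules are trivial, and the congruence rules hold since $val_{\gamma,\varphi}$ commutes with $\odot_r$ by its recursive definition. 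For \textup{(b)} one checks each axiom and then propagates through the rules by induction on proofs; the only point needing attention is that, under the standing reparametrization convention, the two sides of the commutativity and associativity axioms have \emph{literally the same} source (both sides of $(t\odot_r s)\odot_q w=t\odot_r(s\odot_q w)$ carry index set $(|St|\setminus\{r\})\,\dot{\cup}\,(|Ss|\setminus\{q\})\,\dot{\cup}\,|Sw|$, with inclusion coprojections, and similarly for commutativity).

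Armed with \textup{(a)} and \textup{(b)}, I would define $\Omega[J]$: its objects, types and typing map are those of $\Omega$; its arrows are the classes $\eqv{t}$, $t\in\stt(\stm)$, with $S\eqv{t}=St$, $T\eqv{t}=Tt$, $\eqv{t}\odot_r\eqv{s}=\eqv{t\odot_r s}$ (defined exactly when $r\in|St|$ and $Ts=(St(r))^{\cdot}$, i.e.\ exactly when $t\odot_r s$ is a term), and identities $1_x=\eqv{1_x}$. Axioms 2--4 of Definition~\ref{D:paxioms} say precisely that conditions \textup{(a)}--\textup{(c)} of Definition~\ref{D:multcat} hold, so $\Omega[J]$ is a multicategory. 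As in Theorem~\ref{L:freeext}, one must see that distinct indets are not identified: it is clear when an indet $f\in J$ \emph{occurs} in a term, a straightforward induction on proofs (the analogue of Claim~\ref{L:xydistinct}) shows that $\vdash t=s$ implies $t$ and $s$ contain the same indets, hence $f\neq g$ gives $\not\vdash f=g$. This lets us identify $f$ with $\eqv{f}$, so that $J\subset A(\Omega[J])$ with the prescribed sources and targets.

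For the strong universal property, given $\dcc$, a morphism $\gamma:\Omega\el\Omega(\dcc)$, and $\varphi:J\el A(\dcc)$ with $S\varphi f=\gamma Sf$, $T\varphi f=\gamma Tf$, I would set $\chi\sbs{\Omega}=\gamma$ and $\chi\sbs{a}\eqv{t}=val_{\gamma,\varphi}(t)$; part \textup{(c)} makes this well defined, the recursive clauses for $val_{\gamma,\varphi}$ make $\chi$ a morphism of multicategories extending $\gamma$ and $\varphi$, and since any such morphism must send $\eqv{t}$ to $val_{\gamma,\varphi}(t)$ (induction on $t$), $\chi$ is unique. The plain universal property is the special case $\gamma=\mathrm{id}_{\Omega}$. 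The only mildly delicate point in the whole argument is the bookkeeping of index sets and specified coprojections, in item \textup{(b)} and in verifying that $\chi\sbs{a}$ respects $\odot_r$; everything else is a direct transcription of Section~\ref{S:clanguage}, and no genuinely new idea beyond the proof of Theorem~\ref{L:freeext} is needed.
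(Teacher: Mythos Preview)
Your proposal is correct and matches the paper's first proof essentially verbatim: the paper also takes $\eqv{t}$ for $\stm$-terms $t$ as the arrows of $\Omega[J]$, identifies $f$ with $\eqv{f}$, and defines $\chi(\eqv{t})=val_{\gamma,\varphi}(t)$, explicitly noting that the details are similar to those of Theorem~\ref{L:freeext}. The paper additionally supplies a second proof via a concrete term model in Polish notation (strings over $O\dot{\cup}J$), which yields the normal-form description used later, but your argument fully establishes the theorem as stated.
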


\begin{remark} Here we used abbreviated notations, that will be adopted in
the sequel. We wrote just $\chi$ for $\chi\sbs{a}$ and , likewise, $\gamma$
for $\gamma\sbs{o}$ or $\gamma\sbs{t}$, as the subscripts are understood for
the context. Also, when applying a function to a finite sequence (like in
$\gamma Sf$), we understand that the function is applied to each component of
the sequence. \end{remark}

\noindent\emph{First proof} (Sketch). As in the proof of~\ref{L:freeext}, we
define, for $\stm$-terms $t,\,s$, $t\approx s$ iff $\vdash t=s$, and take the
arrows of $\Omega [J]$ to be equivalence classes $\eqv{t}$, of $\stm$-terms,
identifying $f\in J$ with $\eqv{f}$. The details are similar to those of the
proof of~\ref{L:freeext}. In particular,
$\chi(\eqv{t})=val_{\gamma,\varphi}(t)$. \qed

The multicategory $\Omega [J]$ will be called \emph{free} or, more
specifically, \emph{freely generated by $J$ over $\Omega $}. This terminology
is justified, as both universal properties show that $\Omega [J]$ is a free
object with respect to suitable functors $U$, in the sense described in the
introduction.

\medskip

\emph{\textbf{An important example.}} Let $\Omega\sbs{0} = (\{0,1\})$ be the
simple object system having $\{0,1\}$ as set of objects and object types.
Given \emph{any} set $J$, make $J$ it into a set of a-indets over
$\Omega\sbs{0}$ by letting $Sx = \langle 0 \rangle$ and $Tx = 1$, for each $x
\in J$, and consider the multicategory $\Omega\sbs{0}[J]$. A moment of
thought will show that there are no non trivial arrow compositions in this
multicategory and hence its set of arrows will contain, besides the two
identity arrows $1\sbs{0}, 1\sbs{1}$, only the elements of $J$. We can,
therefore, identify the set $J$ with the free multicategory
$\Omega\sbs{0}[J]$.  Hence, \emph{any barren set can be viewed as a free
multicategory}.

\medskip

The notion of a-\emph{indet occurrence in an arrow} $u \in A(\Omega[J]$, can
be developed precisely as we did in section~\ref{S:occurrence} for the
similar notion of indet occurrence in an $n$-cell of an $n$-category which is
a free extension of its $(n-1)$th truncation. Thus, each $u$ as above has a
finite indexed set $\oc{u}: \doc{u} \el J$ of a-indet occurrences and $\oc{u
\odot_r v}$ is a coproduct of $\oc{u}$ and $\oc{v}$ with specified
\emph{appropriate} coprojections and we will always assume that the index
sets $\doc{u},\, \doc{v}$ were so chosen as to have $\doc{u \odot_r v} =
\doc{u} \dot{\cup} \doc{v}$, with the inclusion maps being the appropriate
coprojections.

As we mentioned in the introduction, there is, \emph{however}, a basic
difference between free extensions, on one hand, and free multicategories on
the other. The latter is \emph{simpler}, in the sense that the free
multicategory $\Omega [J]$ can also be described as a true \emph{term} model,
whose arrows are certain terms (and \emph{not} equivalence classes of terms)
in `Polish' notation. This is the way free multicategories are constructed
in~\cite{HMP2} and we reproduce the description here.

\medskip

\noindent\emph{Second proof of~\ref{L:freemc}} (Sketch). The arrows of
$\Omega [J]$ will be certain strings of elements of $O \dot{\cup} J$.
\emph{For the following construction only}, it will be useful to depart from
the convention adopted elsewhere in this paper and to assume, first, that the
index set $\stn$ is the set of natural numbers and, second, that for an arrow
$u$, the finite set $|Su|$ will always be of the form $[k]=\{0,..,k-1\}$, for
some natural number $k$ (thus, the objects of $O^{\#}$ will be strings of
symbols (i.e. elements) from $O$). We also assume that each $f\in J$ has a
uniquely specified source, with no reparametrizations allowed. By the way,
theses are the conventions adopted throughout \cite{HMP2}. As a result, the
specified coprojections associated with multicompositions will no longer be
assumed to be inclusion maps.

\medskip

\textbf{Definition} of $\sta=A(\Omega [J])$ and of the target function T:
\begin{enumerate}
\item If $x\in O$ then $x\in \sta$ and $Tx=\dot{x}$.
\item If $f\in J$, $|Sf|=[k]$, $u_r \in \sta$ and $Tu_r=(Sf(r))^{\cdot}$ for
$r<k$, then $u=fu\sbs{0}u\sbs{1}..u\sbs{k-1} \in \sta$ and $Tu=Tf$ (here, $u$
is the concatenation of the one symbol string $f$ and the strings
$u\sbs{0},\, u\sbs{1},\,..,u\sbs{k-1}$). \item There are no arrows in $\sta$
besides those mentioned in 1-2. \end{enumerate}

\medskip

The elements of $\sta$ will sometimes be called \emph{reduced $\stm$-terms}
or, simply, \emph{reduced terms}.

\medskip

\textbf{Definition} of the source function, multicomposition and identity
arrows:

For $u\in \sta$, $Su$ will be the \emph{substring} of $u$ consisting of the
$O$-symbols only.

If $u,v \in \sta$, $Su(r)=x\in O$ and $Tv=\dot{x}$, then the $r$th $O$-symbol
occurrence in the string $u$ is an occurrence of $x$ and $u\odot_rv$ will be
the string obtained from $u$ by substituting the said occurrence of $x$ by an
occurrence of $v$. Thus, if $u=u^{\tc{\prime}} x u^{\tc{\prime\prime}}$ with
$x$ indicating the said $O$-symbol occurrence, then
$u\odot_rv=u^{\tc{\prime}} v u^{\tc{\prime\prime}}$ (this explicit way of
writing, should be useful when checking that the multicategory laws are
fulfilled for this definition). The specified coprojections associated with
this multicomposition are obvious.

Finally, for $x\in O$, $1_x$ will be $x$ itself.

\medskip

We leave the reader the tedious but routine task of checking that we did,
indeed, construct a multicategory.

In order to have $J\subset \sta$, we have to identify $f\in J$ with
$fx\sbs{0}x\sbs{1}..x\sbs{k-1}$, where $x\sbs{r}=Sf(r)$ for $r<k$.

Finally, the universal property of $\Omega [J]$ is also routinely checked,
using the fact that
$fu\sbs{0}u\sbs{1}..u\sbs{k-1}=(..((f\odot\sbs{k-1}u\sbs{k-1})\odot\sbs{k-2}u\sbs{k-2})
..)\odot\sbs{0}u\sbs{0}$. \qed

\medskip

As an immediate corollary of this second proof of~\ref{L:freemc}, we conclude
a simple but important statement. We say that a multicategory $\dcc$ is a
\emph{submulticategory} of $\dcc^{\prime}$, $\dcc \subset \dcc^{\prime}$, iff
\mbox{$O(\dcc) \subset O(\dcc^{\prime})$},
$\dot{O}(\dcc)\subset\dot{O}(\dcc^{\prime})$, $A(\dcc) \subset
A(\dcc^{\prime})$ and the inclusion maps of the components of $\dcc$ into
those of $\dcc^{\prime}$ form a multicategory morphism $\chi:\dcc \el
\dcc^{\prime}$. We also say, in such a situation, that $\Omega(\dcc)$ is an
object \emph{subsystem} of $\Omega(\dcc^{\prime})$, $\Omega(\dcc) \subset
\Omega(\dcc^{\prime})$.

\begin{proposition}\label{L:submc} If $\Omega \subset \Omega^{\prime}$ and
$J,\, J^{\prime}$ are sets of a-indets over $\Omega ,\, \Omega^{\prime}$ such
that $J\subset J\p$ and  the source and target functions on $J$ are the
restrictions of those on $J^{\prime}$, then $\Omega[J] \subset
\Omega[J^{\prime}]$.
\end{proposition}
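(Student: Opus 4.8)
The plan is to work with the \emph{term-model} description of the free multicategory furnished by the second proof of Theorem~\ref{L:freemc}, rather than with the universal property: that construction is manifestly monotone in the data $(\Omega,J,S,T)$, since the arrows of $\Omega[J]$ are certain finite strings of symbols from $O\,\dot\cup\,J$, and enlarging $O$ and $J$ merely adjoins new strings. For the duration of the argument I would adopt, for \emph{both} $(\Omega,J)$ and $(\Omega',J')$, the conventions internal to that proof (index sets of the form $[k]$, fixed sources for indets, specified coprojections that need not be inclusions); this is harmless because the two constructions are then carried out verbatim on compatible data. Write $\Omega=(O,\dot O,(-)^\cdot)$ and $\Omega'=(O',\dot O',(-)^\cdot)$; the hypotheses say $O\subseteq O'$, $\dot O\subseteq\dot O'$, the type map of $\Omega'$ restricts to that of $\Omega$, $J\subseteq J'$, and the source and target functions on $J$ are the restrictions of those on $J'$ (note $Ob(O^{\#})\subseteq Ob((O')^{\#})$, so the latter makes sense).

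The core step is a single induction along the inductive definition of $\sta=A(\Omega[J])$ (clauses 1--3 of the ``Definition of $\sta$ and of the target function $T$'' in the second proof), proving simultaneously that every $u\in A(\Omega[J])$ belongs to $A(\Omega'[J'])$ and that its target computed in $\Omega'[J']$ agrees with its target computed in $\Omega[J]$. The base clause is immediate, since $O\subseteq O'\subseteq A(\Omega'[J'])$ and the type maps agree. For the inductive clause $u=f\,u_0\cdots u_{k-1}$ with $f\in J\subseteq J'$ and $|Sf|=[k]$: the side condition $Tu_r=(Sf(r))^\cdot$ needed to form $u$ in $\Omega'[J']$ is \emph{the same} condition as in $\Omega[J]$, because $Sf$ is literally the same function into $O\subseteq O'$ and $(-)^\cdot$ restricts, while $Tu_r$ agrees in the two multicategories by the induction hypothesis. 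Hence $u\in A(\Omega'[J'])$ with $Tu=Tf$ either way, and we obtain $A(\Omega[J])\subseteq A(\Omega'[J'])$.

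It then remains to check that the three inclusions $O\hookrightarrow O'$, $\dot O\hookrightarrow\dot O'$, $A(\Omega[J])\hookrightarrow A(\Omega'[J'])$ assemble into a multicategory morphism, i.e.\ that on $A(\Omega[J])$ the source, target, multicomposition and identity structure of $\Omega'[J']$ restrict to those of $\Omega[J]$. Target agreement was just established. For $u\in A(\Omega[J])$ the source $Su$ is the substring of $O$-symbol occurrences of the string $u$; since every symbol occurring in $u$ already lies in $O\,\dot\cup\,J$, this coincides with the substring of $O'$-symbol occurrences, so the source of $u$ as an element of $A(\Omega'[J'])$ is the same indexed set of objects. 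Multicomposition $u\odot_r v$ is defined by string substitution, which produces literally the same string in both multicategories, with the same evident coprojections, and lands back in $A(\Omega[J])$ by the closure expressed in clause~2; identity arrows are $1_x=x$ in both cases. Thus the inclusion is a multicategory morphism, which is exactly the assertion $\Omega[J]\subset\Omega[J']$.

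I do not anticipate a genuine obstacle. The one point worth flagging is that one should \emph{not} try to derive the statement from the universal property of Theorem~\ref{L:freemc} alone: that only supplies a canonical morphism $\Omega[J]\to\Omega[J']$ extending the inclusions and says nothing about its injectivity, whereas ``$\subset$'' (a submulticategory inclusion) really needs the concrete string picture. A secondary nuisance is the bookkeeping around the two incompatible sets of conventions — the paper's global ones versus those internal to the second proof — but applying the second-proof construction uniformly to both pairs of data makes all the relevant data match on the nose, so this never becomes a real difficulty.
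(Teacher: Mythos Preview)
Your proposal is correct and is precisely the approach the paper intends: the paper states this proposition as ``an immediate corollary of this second proof of~\ref{L:freemc}'' and gives no further argument, leaving implicit exactly the string-level verification you have written out. Your remark that the universal property alone would not suffice (it only yields a comparison morphism, not injectivity) is apt and explains why the concrete term model is invoked.
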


Strictly speaking, the $\sta$-terms are \emph{not} $\stm$-terms, but can be
easily \emph{translated} into terms of the latter kind. Indeed, the last
remark of the second proof of~\ref{L:freemc} implies that each $u \in \sta$
is the value $val_{\varphi}(u\sps{\star})$ of a recursively defined
$\stm$-term $u\sps{\star}$, where $\varphi: J \el \sta$ is the inclusion map
of $J$ into $\sta$ (actually, the map $u \mapsto u\sps{\star}$ is primitive
recursive).

The $\stm$-terms $u\sps{\star}$ are of a special form. Call an $\stm$-term
$t$ \emph{normal}, if $t$ is an identity term or else, is of the form
\[t=(..((f\odot\sbs{k-1}t\sbs{k-1})\odot\sbs{k-2}t\sbs{k-2})
..)\odot\sbs{0}t\sbs{0}\] with $f\in J$, $|Sf|=[k]$ and $t\sbs{0},\,
..,t\sbs{k-2},\,t\sbs{k-1}$ normal terms (we still cling to the convention of
the second proof of~\ref{L:freemc}, according to which the index sets are
initial segments of the natural numbers, and each a-indet has a uniquely
specified source). Obviously, $u\sps{\star}$ is a normal $\stm$-term for all
$u \in \sta$. Conversely, every normal $\stm$-term can be seen to be
$u\sps{\star}$ for a \emph{unique} $u \in \sta$. Thus, the free multicategory
$\Omega[J]$ can be described as a \emph{term} model whose arrows are the
normal $\stm$-terms.

It follows that every $\stm$-term $t$ is $\stm$-provably equivalent to a
unique normal term $\hat{t}$ (namely, the only normal term satisfying
$\hat{t} \in \eqv{t}$). It is not hard to establish this fact directly and to
show that the function $t \mapsto \hat{t}$ is primitive recursive.
Incidentally, this implies that we have a primitive recursive algorithm for
deciding whether $t=s$ is $\stm$-provable or not, for given $t,s$. This fact
is usually described as saying that \emph{the word problem for $\stm$} is
decidable.

These circumstances allow a simpler treatment of the notion of a-indet
occurrence, as we can define $\oc{u}$ \emph{canonically}, as the sequence of
a-indets arranged in the order in which they occur in the unique
\emph{normal} $\stm$-term that denotes $u$. Still, we prefer to think of
$\doc{u}$ as a finite indexed set with domain $\doc{u} \subset \stn$, which
can be reparametrized to our convenience.

We now return to the analogy that exists, nevertheless, between  free
extensions of \mbox{$(n-1)$-categories} on one hand, and free multicategories
on the other. Given an arrow $u \in A(\Omega[J])$ and $r \in \doc{u}$, with
$\oc{u}(r)=f \in J$, if $v$ is another arrow such that $Sv=Sf,\,Tv=Tf$, we
can \emph{replace} the $r$-occurrence of $f$ in $u$ by an occurrence of $v$
and get an arrow $u \drep_r v$. The precise definition is worked our
similarly to that of cell replacement, as done in section~\ref{S:placedcomp}.

\begin{theorem}\label{L:arrowrep} There is a unique system $\{ u \drep_r -: u
\in A(\Omega[J]),\, r\in \doc{u}\}$ of partial functions , satisfying the
following conditions:
\begin{enumerate} \item If $\oc{u}(r)=f \in J$ then $u \drep_r v$ is defined
iff $v \parallel f$, meaning that $Sv = Sf,\, Tv = Tf$. If this is the case,
then $u\drep_r v \in A(\Omega[J])$ and $S(u\drep_r v)=Su,\,T(u\drep_r v)=Tu$.
\item If $u=f \in J$ and $\doc{u}=\{r\}$, then $u \drep_r v=v$. \item If $u=
u\p \odot_j u\pp$  then \[ u\drep_r v= \begin{cases} (u\p\drep_r v)\odot_j
u\pp& \text{if } r\in \doc{u\p}\\ u\p\odot_j (u\pp \drep_r v)& \text{if } r
\in
\doc{u\pp} \end{cases}\] \end{enumerate} \end{theorem}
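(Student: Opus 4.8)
The plan is to imitate the proof of Theorem~\ref{L:reprexists}, using the (strong) universal property of the free multicategory $\Omega[J]$ from Theorem~\ref{L:freemc} in place of the universal property of $\ba[I]$. First I would build an auxiliary multicategory $\dcw$ over the same object system $\Omega$. Its arrows are pairs $U=(u,\langle H_r\rangle_{r\in\doc{u}})$ where $u\in A(\Omega[J])$ and, writing $f_r=\oc{u}(r)\in J$, each $H_r$ is a partial function sending every arrow $v$ of $\Omega[J]$ with $v\parallel f_r$ (that is, $Sv=Sf_r$ and $Tv=Tf_r$) to an arrow of $\Omega[J]$ with source $Su$ and target $Tu$. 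The source and target of $U$ are $Su$ and $Tu$; the identity on an object $x$ is $(1_x,\langle\,\rangle)$; and the multicomposition is $(u,\langle H_r\rangle)\odot_j(v,\langle K_r\rangle)=(u\odot_j v,\langle L_r\rangle_{r\in\doc{u\odot_jv}})$ with $L_r(-)=H_r(-)\odot_j v$ if $r\in\doc{u}$ and $L_r(-)=u\odot_j K_r(-)$ if $r\in\doc{v}$, mimicking clause 3 of the theorem; this is well typed because replacement leaves source and target unchanged. One then checks that $\dcw$ is a multicategory based on $\Omega$ (the one laborious point, discussed below).

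Next, for $f\in J$, viewed as an arrow of $\Omega[J]$ with $\doc{f}=\{r\}$, put $\varphi f=(f,\langle\mathrm{id}\rangle)$, where $\mathrm{id}$ is the identity function on the set of arrows parallel to $f$; then $S\varphi f=Sf$ and $T\varphi f=Tf$, so by Theorem~\ref{L:freemc} there is a unique morphism $G:\Omega[J]\el\dcw$ that is the identity on objects and object-types and has $Gf=\varphi f$ for $f\in J$. Exactly as in Claim~\ref{L:uingu}, the evident projection $\Pi:\dcw\el\Omega[J]$ (forget the $H_r$) satisfies $\Pi G=\mathbf{1}_{\Omega[J]}$ by the uniqueness half of the universal property, so the first component of $Gu$ is $u$ for every arrow $u$. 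I then define $u\drep_r - := H_r$ whenever $Gu=(u,\langle H_r\rangle_{r\in\doc{u}})$. That $G$ is a multicategory morphism preserving identities and sending each $f$ to $\varphi f$ forces precisely conditions 1--3, so the system exists.

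For uniqueness, suppose $\{u\ast_r -\}$ also satisfies 1--3 and argue $u\ast_r - = u\drep_r -$ by induction on the arrow $u$; this is legitimate because, by the term-model description in the second proof of~\ref{L:freemc}, $A(\Omega[J])$ is the least set containing $O$ and $J$ and closed under multicomposition. If $u\in O$ there is nothing to prove ($\doc{u}=\emptyset$); if $u=f\in J$ apply clause 2; if $u=u\p\odot_j u\pp$, clause 3 computes both $u\ast_r -$ and $u\drep_r -$ from the values on $u\p$ or $u\pp$, and the induction hypothesis finishes. I expect the only real work to be in the preliminary step: verifying that the pairing operation on $\dcw$ obeys the commutativity and associativity axioms of Definition~\ref{D:multcat}, i.e. that the two routes for computing the $L_r$'s through nested multicompositions coincide; this uses commutativity and associativity of $\odot$ in $\Omega[J]$ together with the compatibility of $\odot$ with the coproduct structure on sources, and is the exact analogue of checking that $\bw$ in Definition~\ref{D:W} is an $n$-category --- bookkeeping-heavy but routine. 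Alternatively one can bypass $\dcw$ and define $u\drep_r v$ by recursion directly on the reduced $\stm$-term $u$, trading this obstacle for the need to check independence of the recursion from the chosen multicomposition decomposition --- the same bookkeeping in another guise.
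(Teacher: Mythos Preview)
Your proposal is correct and follows essentially the same approach as the paper: the paper also constructs the auxiliary multicategory $\dcw$ of pairs $(u,\langle H_r\rangle)$ over $\Omega$, uses the universal property of $\Omega[J]$ to obtain a morphism into $\dcw$, invokes the projection argument (as in Claim~\ref{L:uingu}) to see that the first component of the image of $u$ is $u$ itself, and handles uniqueness by induction on arrows. The paper leaves the verification that $\dcw$ is a multicategory as a routine check, just as you anticipated.
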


\begin{proof} (Sketch) The uniqueness is easily seen by induction on $u$.

Let us use the following notations: $A=A(\Omega[J])$ and $A(Su,Tu)=\{v \in A:
v \parallel u\}$, for $u \in A$. We construe the function $u \mapsto \langle
u\drep_r - \rangle \sbs{r\in
\doc{u}}$ as a morphism into a multicategory $\dcw$, whose definition is
based on the idea that was used also in definition~\ref{D:W}:
\begin{enumerate}
\item The object system is $\Omega(\dcw)= \Omega$.\item  The arrows are
pairs $U=(u, \langle H_r \rangle \sbs{r \in \doc{u}})$, where $H_r: A(Sf_r,
Tf_r) \el A(Su,Tu)$, $f_r= \oc{u}(r)$. Also, $SU= Su,\, TU=Tu$.
\item If $x \in O$ then the identity arrow over $x$ in $\dcw$ is $(1_x,
\langle \rangle)$.
\item If $U$ is as above and $V= (v,\langle K_r \rangle\sbs{r\in
\doc{v}})$, $j \in |SU|=|Su|$ and $TV=Tv= (Su(j))^{\cdot}=(SU(j))^{\cdot}$ then $U
\odot_j V = (u\odot_j v, \langle L_r \rangle\sbs{r\in \doc{u\odot_j v}})$
where \[ L_r(-)=
\begin{cases} H_r(-)\odot_j v& \text{if } r \in \doc{u}\\ u\odot_j K_r(-)&
\text{if } r \in
\doc{v} \end{cases}\]
\end{enumerate}

It is easy to verify that $\dcw$ is, indeed, a multicategory. We can define
$\varphi:J \el A(\dcw)$ by letting $\varphi f =(f,\langle H_r \rangle)$,
where $\doc{f}=\{ r \}$ and $H_r= id\sbs{A(Sf,Tf)}$, the identity map of
$A(Sf,Tf)$ onto itself. By the universal property of $\Omega[J]$, there is a
unique morphism $\chi: \Omega[J] \el \dcw$ which is the identity on the
object system and extends $\varphi$. As in~\ref{L:uingu}, we see that for $u
\in A$, we have $\chi_a u= (u, \langle H_r \rangle\sbs{r \in \doc{u}})$ and
we define $u\drep_r-= H_r(-)$.
\end{proof}

Given $\Omega$ and $J$ as above, one can define a multicategory $\dcd=
\dcd\sbs{\Omega,J}$ of \emph{arrow replacement} as follows:

$\Omega(\dcd)=(O\sbs{\dcd},\dot{O}\sbs{\dcd},(-)^{\cdot}\sbs{\dcd})$, where
$O\sbs{\dcd}=J$, $\dot{O}\sbs{\dcd}= \{(Su,Tu):u \in A\}$ and $\dot{f} =
(Sf,Tf)$.

The arrows of $\dcd$ are those of $\Omega[J]$, while the source and target
functions are defined by $S\sbs{\dcd}u=\oc{u},\, T\sbs{\dcd}u= (Su,Tu)$. The
multicomposition operation at $r \in \doc{u}$ is $u\drep_r-$ and the identity
arrow over $f \in J$ is $f$ itself.

The proof that $\dcd$ is a multicategory is similar to that of
theorem~\ref{L:rep}.

\medskip

A morphism $\chi: \Omega[J] \el \Omega\p[J\p]$ between \emph{free}
multicategories is said to be \emph{indet preserving} if $\chi f \in J\p$
whenever $f \in J$.  If $\chi$ is such a morphism then it easy to see that,
for every $u \in A(\Omega[J])$ there is a bijection $\theta: \doc{u} \el
\doc{\chi u}$ such that $\chi(\oc{u}(r)) = \oc{\chi u}(\theta r)$. We will
assume that an appropriate reparametrization was made such that $\doc{u} =
\doc{\chi u}$ and $\theta$ is the identity. If so, then we have the following
useful statement:

\begin{proposition}\label{L:arrowreppres} If a morphism $\chi: \Omega[J] \el
\Omega\p[J\p]$ preserves indets, then it preserves also arrow replacement.
This means that for $u,v \in A(\Omega[J]$, if $u\drep_r v$ is defined the so
is $(\chi u) \drep_r (\chi v)$ and $\chi(u\drep_r v) = (\chi u) \drep_r (\chi
v)$. \end{proposition}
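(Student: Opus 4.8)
The plan is to prove both the definedness claim and the stated identity by induction on the arrow $u \in A(\Omega[J])$, using the inductive characterization of arrow replacement recorded in conditions 1--3 of Theorem~\ref{L:arrowrep}. Throughout I would keep in force the reparametrization convention adopted just before the statement: $\doc{u} = \doc{\chi u}$ for every arrow $u$, the attached bijection is the identity, and consequently $\oc{\chi u}(r) = \chi(\oc{u}(r))$ for every $r \in \doc{u}$; in particular, if $\oc{u}(r) = f \in J$ then $\oc{\chi u}(r) = \chi f \in J\p$, since $\chi$ preserves indets.

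First I would settle \emph{definedness}. Assume $u\drep_r v$ is defined, so $f := \oc{u}(r) \in J$ and $v \parallel f$, i.e. $Sv = Sf$ and $Tv = Tf$. Since $\chi$ is a morphism of multicategories, it carries sources and targets to sources and targets modulo the underlying object-system morphism $\chi\sbs{\Omega}$ (applied componentwise to source sequences, per the paper's convention); hence $S(\chi v) = \chi\sbs{\Omega}(Sv) = \chi\sbs{\Omega}(Sf) = S(\chi f)$ and likewise $T(\chi v) = T(\chi f)$. Because $\oc{\chi u}(r) = \chi f \in J\p$, this is exactly the statement that $\chi v \parallel \oc{\chi u}(r)$, so $(\chi u)\drep_r(\chi v)$ is defined. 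It then remains only to verify the equality whenever the left-hand side is defined.

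For the induction I would proceed as follows. If $\doc{u} = \emptyset$ — in particular if $u$ is an object or an identity $1_x$ — there is nothing to prove. If $u = f \in J$ is an indet, then $\doc{u} = \{r\}$ and $u\drep_r v = v$ by Theorem~\ref{L:arrowrep}(2), while $\chi u = \chi f \in J\p$ has $\doc{\chi u} = \{r\}$, so $(\chi u)\drep_r(\chi v) = \chi v = \chi(u\drep_r v)$, again by Theorem~\ref{L:arrowrep}(2). Finally, suppose $u = u\p \odot_j u\pp$ and, without loss of generality, $r \in \doc{u\p}$. By Theorem~\ref{L:arrowrep}(3), $u\drep_r v = (u\p\drep_r v)\odot_j u\pp$; applying $\chi$ and using that $\chi$ preserves multicomposition (stipulation \textbf{iii} of Definition~\ref{D:mcmorph}, with the $\theta$-bijections normalized to identities) gives $\chi(u\drep_r v) = \chi(u\p\drep_r v)\odot_j \chi(u\pp)$, which by the induction hypothesis equals $\bigl((\chi u\p)\drep_r(\chi v)\bigr)\odot_j(\chi u\pp)$. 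On the other hand $\chi u = (\chi u\p)\odot_j(\chi u\pp)$ for the same reason, and since $r \in \doc{u\p} = \doc{\chi u\p}$, a further application of Theorem~\ref{L:arrowrep}(3) yields $(\chi u)\drep_r(\chi v) = \bigl((\chi u\p)\drep_r(\chi v)\bigr)\odot_j(\chi u\pp)$. The two right-hand sides agree, so $\chi(u\drep_r v) = (\chi u)\drep_r(\chi v)$, closing the induction.

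The argument is essentially mechanical, and I do not expect a genuine obstacle. The one point needing attention is the bookkeeping forced by the reparametrization conventions: one must ensure that, after the chosen reparametrizations, $\doc{u}$ is literally identified with $\doc{\chi u}$, that the index $r$ labelling the occurrence in $u$ is the same index labelling the corresponding occurrence in $\chi u$, and that the specified coprojections attached to $u\p\odot_j u\pp$ and to $(\chi u\p)\odot_j(\chi u\pp)$ match up — so that the two invocations of Theorem~\ref{L:arrowrep}(3) really refer to the same data. Once those conventions are pinned down (as the paper arranges), the proof goes through without difficulty.
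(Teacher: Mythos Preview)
Your proof is correct and follows exactly the approach the paper indicates: a straightforward induction on the arrow $u$ using the characterization in Theorem~\ref{L:arrowrep}. The paper states only ``A straightforward induction on $u$,'' and your write-up simply unpacks that induction with the expected case analysis and bookkeeping.
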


\begin{proof} A straightforward induction on $u$. \end{proof}

We now return to the comparison between the languages of composition and
multicomposition. As we saw, $\stm$-terms have normal forms and two terms are
$\stm$-provably equal iff they have the same normal form. Is a similar result
true for $\stc$-terms? It does not seem to be so, especially in view of
\cite{M}. However, in the restricted \emph{many-to-one} situation, the $\stc$
and $\stm$ equational logics can be linked to each other in a beneficial way
that displays useful similarities. This is the subject of the next section.

\section{Comparing $\stm$ and $\stc$ in the many-to-one
case}\label{S:comparing} Consider, again, an $n$-category $\bb$ generated by
a set $I$ of $n$-indets. In this section we make the following

\medskip

\noindent\textbf{Assumption.} $J$ is a set of many-to-one indets over
$\bb=\ba[I]$. In other words, $J$ is a set together with domain and codomain
functions $d,c:J \el B_n$ such that $cf \in I$ for all $f \in J$ (and, of
course, $df \parallel cf$).

\medskip

Thus, the indets in $J$ denote arbitrary many-to-one cells in
$\omega$-categories extending $\bb$. Once we have such a $J$, we can
construct three distinct structures:

\emph{First}, there is the free $(n+1)$-category $\bx=\bb[J]$, which is the
$n$-category $\bb$ augmented by the set $X_{n+1}$ of the $(n+1)$-cells
generated from $J$.

\emph{Second}, we have the multicategory $\dcc_{\bx}$ based on the the simple
object system $\Omega $ with set of objects $O=\dot{O}=I$. The arrows of
$\dcc_{\bx}$ are, as we recall, the \emph{many-to-one} $(n+1)$-cells of of
$\bx$ and the source and target functions are $Su= \oc{du},\,Tu= cu$. In
particular, \emph{all} indets $f \in J$ are arrows of $\dcc_{\bx}$.

\emph{Finally}, we construct the free multicategory $\Omega [J]$ generated by
$J$ over the \emph{same} object system $\Omega $ on which $\dcc_{\bx}$ is
based. The arrows of $\Omega [J]$ can be construed either as equivalence
classes $\eqv{t}$ of $\stm$-terms or, else, as reduced $\stm$-terms $u \in
\sta$.

By the universal property of $\Omega [J]$, there is a unique morphism $\chi:
\Omega [J] \el \dcc_{\bx}$ which is the identity on both, the set of objects
(and object-types) $O$ and the set of indets $J$. This map deserves a closer
look. As remarked at the end of the proof of~\ref{L:freemc}, for any
$\stm$-term $t$, $\chi(\eqv{t})=val\sbs{i\sbs{J}}(t)$, where $i\sbs{J}$ is
the inclusion map of $J$ into the set of arrows of $\dcc_{\bx}$, which is
nothing but the set of many-to-one $(n+1)$-cells of $\bx$. Thus, $\chi$ maps
every arrow of $\Omega [J]$, which is described by an $\stm$-term, to a
many-to-one $(n+1)$-cell of $\bx$, which is described by a $\stc$-term.
Actually, by carefully following the proofs of~\ref{L:freeext}
and~\ref{L:freemc}, one can exhibit a primitive recursive function that sends
every term $t \in \stt(\stm)$ to a term $\tilde{t} \in \stt(\stc)$ such that
$\chi(\eqv{t}) = \eqv{\tilde{t}}$. The function $t \mapsto \tilde{t}$ is,
therefore, a \emph{translation} of $\stm$-terms into $\stc$-terms.

The considerations above point to the fact that the map $\chi$ is a very
important one. It deserves a special notation and name.

\begin{notation}  If $\chi:\Omega [J] \el \dcc_{\bx}$ is the unique morphism of
multicategories that is the identity on $O=I$ and on $J$, then we denote
$\chi=\val{-}$. This morphism will be referred to as \emph{the canonical
morphism of $\Omega[J]$ into $\dcc_{\bx}$}.
\end{notation}

Thus, we have $\chi u=\chi\sbs{a} u=\val{u}$ for $u \in A(\Omega [J])$ and
$\val{x} =x,\,\val{f}=f$ for $x \in I,\,f \in J$.

As we remarked in section~\ref{S:multicats}, if $n=0$ then the category $\bx$
is the same as the multicategory $\dcc_{\bx}$ and, as in the present case
$\bx$ is a \emph{free} category, it is also identical with the free
multicategory $\Omega[J]$. Moreover, the canonical morphism $\val{-}$ is the
identity map.

In the case $n>0$, however, the situation is much more complex and
interesting. Not every $(n+1)$-cell of $\bx$ is of the form $\val{u}$ for
some arrow $u$ of $\Omega [J]$, simply because the latter is always a
many-to-one cell. But are all many-to-one $(n+1)$-cells of $\bx$ of the form
$\val{u}$? Furthermore, is the $\val{-}$ map one-to-one? In other words, is
$\val{u} \neq \val{u\sps{\prime}}$ whenever $u \neq u\sps{\prime}$? The
answer to both these questions is positive, as it follows from the following
statement which is the main technical result of this paper:

\begin{theorem}\label{L:main} $\val{-}:\Omega[J] \el \dcc_{\bx}$ is an
isomorphism of multicategories. \end{theorem}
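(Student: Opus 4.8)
For $n=0$ the map $\val{-}$ is the identity (as noted in section~\ref{S:multicats}), so assume $n>0$. Since $\val{-}$ is a morphism of multicategories which is the identity on objects and object-types, it suffices to show that it is a bijection on arrows: a morphism of multicategories that is bijective on objects and on arrows is automatically an isomorphism, because it preserves sources and targets, hence the definedness of multicompositions, so the inverse assignment again satisfies conditions \textbf{i}--\textbf{iv} of definition~\ref{D:mcmorph}. Thus everything reduces to proving that $u\mapsto\val{u}$ carries $A(\Omega[J])$ bijectively onto the set of many-to-one $(n+1)$-cells of $\bx=\bb[J]$.

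For \emph{surjectivity} I would prove a \emph{generation lemma}: every many-to-one $(n+1)$-cell of $\bx$ is obtained from the indets $f\in J$ and the identity cells $1_x$, $x\in I$, by iterated placed composition $\circ_r$. Since $\val{-}$ sends $f$ to $f$, sends $1_x$ to $1_x$, and satisfies $\val{u\odot_r v}=\val{u}\circ_r\val{v}$, its image then contains all such cells, i.e.\ everything. The generation lemma is proved by induction on the $(n+1)$-cell $u$, i.e.\ along the recursive build-up of $X_{n+1}$ from indets and identity cells under the operations $\bullet_k$, $k\leqslant n$; the indet and identity base cases are immediate, and for a composite $u=u\p\bullet_k u\pp$ one rewrites a $\bullet_k$ of placed compositions back into placed compositions — which is exactly what proposition~\ref{L:bulletviacirc} provides. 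For the induction to go through the statement must be strengthened so that it also tracks the whiskering (identity) factors produced along the way; once the right formulation is chosen, the inductive step is a bookkeeping exercise using~\ref{L:bulletviacirc} and the identity axioms.

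For \emph{injectivity}, which is the heart of the matter, I would build an auxiliary $(n+1)$-category $\bz$ extending $\bb$, engineered — in the spirit of the category $\bw$ of definition~\ref{D:W} — so that each of its $(n+1)$-cells carries, as extra data, a ``decoding'' into an arrow of $\Omega[J]$. Concretely, an $(n+1)$-cell of $\bz$ records how it is assembled, via the $\bullet_k$ with $k<n$, out of many-to-one pieces, each such piece being an arrow of $\Omega[J]$; the $\bullet_k$-operations of $\bz$ are defined on these data by means of the rewriting of proposition~\ref{L:bulletviacirc}, and the identity $(n+1)$-cell over $x\in I$ decodes to the identity arrow $1_x\in A(\Omega[J])$. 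One then picks the obvious correct assignment $\varphi:J\el Z_{n+1}$ (the indet $f$ decoding to $f\in J\subset A(\Omega[J])$) and invokes the \emph{strong} universal property of $\bx=\bb[J]$ (theorem~\ref{L:freeext}) to obtain the unique $\omega$-functor $G:\bx\el\bz$ that is the identity on $\bb$ and sends $f$ to $\varphi f$. Tracing the construction, the decoding of $Gu$ applied to a many-to-one cell $u=\val{w}$ recovers $w$; hence $G$ followed by the decoding is a left inverse of $\val{-}$, so $\val{w}=\val{w\p}$ forces $w=w\p$. Together with surjectivity this yields that $\val{-}$ is a bijection on arrows, hence an isomorphism.

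The main obstacle is the construction of $\bz$ — equivalently, pinning down the correct placed-composition normal form for a \emph{general} $(n+1)$-cell of $\bb[J]$ — and the verification that $\bz$ really is an $(n+1)$-category: this is where the multicategory laws for $\Omega[J]$ (commutativity and associativity, cf.\ theorems~\ref{L:pcomp} and~\ref{L:rep}) and the exchange and identity axioms of $\bx$ have to be reconciled, and it is tightly coupled to the strengthened form of the generation lemma. The remaining ingredients — the base cases, the functoriality of $G$ and the routine inductions involved — are straightforward.
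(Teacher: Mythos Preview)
Your overall strategy --- construct an auxiliary $(n+1)$-category and invoke the universal property of $\bx=\bb[J]$ --- is exactly the paper's. The gap is in the construction of your $\bz$. You propose that an $(n+1)$-cell of $\bz$ ``records how it is assembled, via the $\bullet_k$ with $k<n$, out of many-to-one pieces, each such piece being an arrow of $\Omega[J]$.'' But a general $(n+1)$-cell of $\bx$ need not be many-to-one: the identity $1_w$ over a composite $n$-cell $w\in B_n\setminus I$ has codomain $w\notin I$, and it decodes to nothing in $\Omega[J]$. Nor does such a cell admit a canonical decomposition via $\bullet_k$ ($k<n$) into many-to-one factors; any such decomposition is neither obviously existent nor unique, so well-definedness of the $\bz$-operations on your ``decoding data'' would in effect presuppose the theorem. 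You flag this as ``the main obstacle,'' correctly, but your description gives no mechanism for resolving it.

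The paper's key move, which you are missing, is to \emph{enlarge} the free multicategory before building the auxiliary category: one adjoins \emph{predeterminates} $e_w$, one for each $w\in B_n\setminus I$, with $Se_w=\oc{w}$ and $Te_w=w$, obtaining a free multicategory $\Omega^+[J^+]$ over the non-simple object system $\Omega^+=(I,B_n,\text{incl})$. These predets are destined to denote the identity cells $1_w$. One then proves the stronger statement that the canonical map $\val{-}^+:\Omega^+[J^+]\to\dcc^+_{\bx}$ is an isomorphism (theorem~\ref{L:strongmain}); theorem~\ref{L:main} follows at once by restricting to arrows with target in $I$ (claim~\ref{L:Jarrows}). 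The auxiliary category $\by$ is built with $Y_{n+1}=A(\Omega^+[J^+])$ itself --- the cells \emph{are} the multicategory arrows, not cells-plus-decodings --- and the $\omega$-categorical operations $\hat\bullet_k$ are defined directly on $\stm^+$-terms (claims~\ref{L:ncomp},~\ref{L:kcomp}), verified to form an $(n+1)$-category (claim~\ref{L:cataxioms}), shown to make $\odot_r$ coincide with placed composition (claim~\ref{L:circriny}), and finally $\by$ is shown to be free on $J$ over $\bb$ (claim~\ref{L:yisfree}). This delivers surjectivity and injectivity simultaneously; your separate ``generation lemma'' is never proved --- the paper mentions the corresponding fact only as a guiding heuristic and explicitly declines to prove it.
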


Thus, if $\bx$ is an $(n+1)$-category \emph{freely generated} by a set $J$,
then $\dcc_{\bx}$ is a multicategory \emph{freely generated} by the same set
$J$. As a result, we have the following corollary that will be extremely
useful in the sequel.

\begin{corollary}\label{L:cxyieldsx} Assume that $\bx$, $\bb$ and $J$ are as
above. If $\bz$ is any other $(n+1)$-category extending $\bb$ and $\chi:
\dcc_{\bx} \el \dcc_{\bz}$ is a morphism of multicategories which is the
identity on objects and satisfies, for all $x\in J$, $d\chi\sbs{a}
x=dx,\,c\chi\sbs{a} x=cx$, then there is a \emph{unique} $\omega$-functor $F:
\bx \el \bz$ which is the identity on the cells of $\bb$ and extends $\chi$,
in the sense that $Fu= \chi\sbs{a} u$ whenever $u$ is a many-to-one
$(n+1)$-cell of $\bx$ (which means that $u$ is also an arrow of
$\dcc_{\bx}$). If $\bz$ is also a free extension of $\bb$ and $\chi$ is an
isomorphism, then $F$ is an isomorphism as well.
\end{corollary}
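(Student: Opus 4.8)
The plan is to obtain $F$ from the universal property of $\bx=\bb[J]$, to identify its action on many-to-one cells with $\chi$ by means of Theorem~\ref{L:main}, and to deduce the isomorphism statement by a symmetry argument.

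First I would note that every $x\in J$, having codomain $cx\in I$, is a many-to-one $(n+1)$-cell of $\bx$ and hence an arrow of $\dcc_{\bx}$; so $\varphi x:=\chi\sbs{a} x$ defines a function $\varphi:J\el Z_{n+1}$, and the hypotheses $d\chi\sbs{a} x=dx$, $c\chi\sbs{a} x=cx$ are exactly the compatibility conditions needed (recall that $dx,cx$ are $n$-cells of $\bb=\ba[I]=\bz_n$). By Theorem~\ref{L:freeext}(3) there is then a \emph{unique} $\omega$-functor $F:\bx\el\bz$ with $Fa=a$ for every cell $a$ of $\bb$ and $Fx=\chi\sbs{a} x$ for $x\in J$. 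This $F$ is also the \emph{only} $\omega$-functor fixing $\bb$ that extends $\chi$, since such a functor is forced to send each $x\in J$ (a many-to-one cell) to $\chi\sbs{a} x$, and uniqueness then follows from Theorem~\ref{L:freeext}(3) again.

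It remains to show that this $F$ does extend $\chi$, i.e. $Fu=\chi\sbs{a} u$ for every many-to-one $u\in X_{n+1}$. Since $F$ fixes $\bb$ it carries many-to-one cells to many-to-one cells, so it restricts to a map $\bar F$ on arrows; I would check that $\bar F:\dcc_{\bx}\el\dcc_{\bz}$ is a multicategory morphism, the identity on the common (simple) object system $\Omega=(I)$. Compatibility with sources, targets and identity arrows is immediate from the fact that $F$ fixes $\bb$; compatibility with placed composition reduces, via $u\circ_r v=u\bullet_n(du\repr v)$, to the identity $F(w\repr v)=w\repr Fv$ for $w$ an $n$-cell, proved by a short induction on $w$ using the recursive conditions 1--3 stated just before Definition~\ref{D:W} (the base case $w$ an indet being trivial, both sides being $Fv$). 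Now $\bar F\circ\val{-}$ and $\chi\circ\val{-}$ are both multicategory morphisms $\Omega[J]\el\dcc_{\bz}$ that are the identity on $\Omega$ and send each $f\in J$ to $\chi\sbs{a} f$; by the uniqueness clause in the universal property of the free multicategory (Theorem~\ref{L:freemc}) they are equal. As $\val{-}$ is surjective on arrows by Theorem~\ref{L:main}, writing a given many-to-one $u$ as $\val{w}$ gives $Fu=\bar F(\val{w})=\chi\sbs{a}(\val{w})=\chi\sbs{a} u$.

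For the last assertion, suppose $\chi$ is an isomorphism. Then $\dcc_{\bz}\cong\dcc_{\bx}\cong\Omega[J]$ is a free multicategory, generated by the many-to-one cells $\chi\sbs{a}(J)$; since $\bz$ is a free extension of $\bb$, its generating set may be taken to be $\chi\sbs{a}(J)$ (a free extension whose placed-composition multicategory is free is necessarily generated by many-to-one indets). The inverse $\chi^{-1}:\dcc_{\bz}\el\dcc_{\bx}$ is again the identity on objects and satisfies the compatibility conditions over $\chi\sbs{a}(J)$, so the first part of the statement, applied with $\bx$ and $\bz$ interchanged, gives an $\omega$-functor $G:\bz\el\bx$ fixing $\bb$ and extending $\chi^{-1}$. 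Then $G\circ F$ and $F\circ G$ fix $\bb$ and restrict to the identity on $J$ and on $\chi\sbs{a}(J)$ respectively, so by the uniqueness in Theorem~\ref{L:freeext}(3) they are $\mathbf{1}_{\bx}$ and $\mathbf{1}_{\bz}$; hence $F$ is an isomorphism. The only steps requiring real care are the verification that $\bar F$ preserves placed composition and, in the last paragraph, the observation that the generators of $\bz$ may be chosen many-to-one; the rest is bookkeeping with the two universal properties.
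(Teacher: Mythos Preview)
Your first two paragraphs are essentially the paper's proof: obtain $F$ from the universal property of $\bx=\bb[J]$, check that $F$ preserves generalized whiskering by induction on the $n$-cell $w$ (hence preserves placed composition), and then use the freeness of $\dcc_{\bx}$ supplied by Theorem~\ref{L:main} to conclude $Fu=\chi\sbs{a}u$ for every many-to-one $u$. Your packaging---compare $\bar F\circ\val{-}$ with $\chi\circ\val{-}$ via Theorem~\ref{L:freemc} and invoke surjectivity of $\val{-}$---is exactly the paper's ``induction on arrows of $\dcc_{\bx}$'' in different clothing.

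For the isomorphism clause your argument takes an unnecessary detour and leaves a gap. The paper does not try to identify the generating set of $\bz$; it simply applies the first part with the roles of $\bx$ and $\bz$ exchanged, obtaining $G:\bz\to\bx$ that fixes $\bb$ and extends $\chi\sbs{a}^{-1}$, and then notes that $GF$ and $FG$ fix $\bb$ and all many-to-one $(n+1)$-cells, which include the generating indets on each side, so both composites are identities. Two points are worth making explicit. First, the swapped hypothesis $d\chi\sbs{a}^{-1}y=dy$ for $y$ in the generating set of $\bz$ is not an extra assumption: once the first part gives $\chi\sbs{a}u=Fu$ for every many-to-one $u\in X_{n+1}$, one has $d\chi\sbs{a}u=dFu=F(du)=du$, and since $\chi\sbs{a}$ is bijective this yields $d\chi\sbs{a}^{-1}v=dv$ for every many-to-one $v\in Z_{n+1}$. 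Second, what \emph{is} being used is that $\bz$'s generating indets are many-to-one; this is the intended reading of ``$\bz$ is also a free extension of $\bb$'' in the context of the section's standing Assumption, and it is how the corollary is applied throughout. Your parenthetical assertion---that a free extension whose placed-composition multicategory is free must be generated by many-to-one indets---would derive this, but you do not prove it (and you acknowledge it ``requires real care''). You can simply drop that claim and take the many-to-one hypothesis on $\bz$'s indets as given; then your argument, like the paper's, goes through without needing to pin down the generators as $\chi\sbs{a}(J)$.
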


The significance of the last statement of this corollary is that in a free
extension $\bx$ of $\bb$ generated by many-to-one indets, the
\emph{many-to-one} $(n+1)$-cells of $\bx$ (i.e. the arrows of $\dcc_{\bx}$)
determine the entire $(n+1)$-cell structure of $\bx$.

\begin{proof} Due to the freeness of the $(n+1)$-category $\bx$, there is a
unique $\omega$-functor $F: \bx \el \bz$ which is the identity on the
$\bb$-cells and such that $Fx = \chi\sbs{a} x$ for $x \in J$. All we have to
show is that $F$ extends $\chi\sbs{a}$ on \emph{all} many-to-one $(n+1)$
cells of $\bx$. As these cells are also the arrows of $\dcc_{\bx}$ and,
by~\ref{L:main}, $\dcc_{\bx}$ is a free multicategory, we may prove that $Fu=
\chi\sbs{a} u$ by induction on the arrows of $\dcc_{\bx}$. If $u$ is an indet
or an identity, there is nothing to prove. To handle the induction step $u =
u\p \circ_r u\pp$, notice first that for any $n$-cell $w$ of $\bb$ and $r \in
\doc{w}$, $F$ preserves the generalized whiskering operation $w \repr -$.
This is seen by induction on $w$, using conditions 1-3 of~\ref{L:reprexists}
which, as stated by that theorem, characterize the generalized whiskering
operations. Once this is done, we infer
\[ Fu= F(u\p \circ_r u\pp) = F(u\p \bullet_n (du\p \repr u\pp)) = Fu\p \bullet_n
F(du\p \repr u\pp) = Fu\p \bullet_n (du\p \repr Fu\pp) \] By the induction
assumption, $F u\p= \chi\sbs{a} u\p,\, Fu\pp = \chi\sbs{a} u\pp$. Also, as
$F$ is the identity on $\bb$-cells, we have $d u\p= Fdu\p= dFu\p =d
\chi\sbs{a} u\p$, hence we can go on with our sequence of equalities  and
conclude
\[= \chi\sbs{a} u\p \bullet_n (d \chi\sbs{a}u\p \repr \chi\sbs{a} u\pp) = \chi\sbs{a} u\p
\circ_r \chi\sbs{a} u\pp = \chi\sbs{a} (u\p \circ_r u\pp) = \chi\sbs{a} u.\]

The last statement of the corollary now follows immediately. If $\bz$ is free
as well, then we have also a unique $\omega$-functor $G: \bz \el \bx$ which
is the identity on $\bb$-cells and extends $\chi\sbs{a}\sps{-1}$. Hence, both
$GF,\,FG$ are identity functors, as they are identities on the cells of $\bb$
as well as on the \emph{many-to-one} $(n+1)$-cells (which include the
$(n+1)$-indets).
\end{proof}

Before turning to the proof of~\ref{L:main}, let's point out the significance
of this theorem at the level of $\stm$-terms. If $t \in \stt(\stm)$ then
$\eqv{t}$ is an arrow of $\Omega(J)$. Let's denote $\val{\eqv{t}}=\val{t}$.
The significance of $\val{t}$ is clear: $t$ describes a way of constructing
an arrow from a-indets and identity arrows by means of repeated
multicomposition operations; $\val{t} \in A(\dcc_{\bx}) \subset X_{n+1}$ is
the $(n+1)$-cell described by $t$ when we interpret the a-indets as the
corresponding $(n+1)$-indets in $\bx$, while the multicomposition operations
$\odot_r$ are interpreted as the $(n+1)$-cell placed compositions $\circ_r$.
Theorem~\ref{L:main} states, first, that $t$ and $s$ denote \emph{distinct}
cells $\val{t}\neq \val{s}$, whenever $\nvdash_{\stm} t=s$.
Furthermore,~\ref{L:main} tells us that an $(n+1)$-cell $u\in X_{n+1}$ is of
the form $\val{t}$ for some $t\in\stt({\stm})$ \emph{iff} $u$ is a
many-to-one cell.

\medskip

Theorem~\ref{L:main} will follow from a stronger and somewhat surprising one
that will be stated after the preliminary discussion below.

The multicategory $\dcc_{\bx}$ has the extension $\dcc^+_{\bx}$ based on the
object system $\Omega^+= (I,B_n,i\sbs{I})$, where $i\sbs{I}$ is the inclusion
map of $I$ into the set $B_n$ of all $n$-cells of $\bb=\ba[I]$. If
$\Omega[J]$ is, indeed, isomorphic to $\dcc_{\bx}$, then it must have an
extension based on $\Omega^+$ which is isomorphic to $\dcc^+_{\bx}$ and we
now set out to identify such an extension. The set $A(\dcc^+_{\bx})$ of
arrows of $\dcc^+_{\bx}$ is also the set of \emph{all} $(n+1)$-cells of $\bx$
and has the following characterization that will assist us in our endeavor:

$A(\dcc^+_{\bx})$ is the least set of arrows containing the indets and the
$(n+1)$-\emph{identity cells} (of $\bx$) and closed under the placed
composition operations $\circ_r$.

(As we use this fact only as a guiding principle, we will not give a full
proof, but only indicate how a categorical composition $\bullet_k$ can be
expressed by means of multicategorical composition in a simple case: assuming
that $u$ and $v$ are many-to-one $(n+1)$-cells such that $u \bullet_k v$ is
defined for some $k<n$, then $u \bullet_k v=(1\sbs{x\bullet_ky}
\circ\sbs{2}v) \circ\sbs{1}u$, where $x=cu,\,y=cv$ and $1,\,2$ are the
indices indicating the occurrences of $x,\,y$ in $x\bullet_ky$.)

We conclude that the set of arrows of $\dcc_{\bx}$, i.e. the set of
many-to-one $(n+1)$-cells of $\bx$, fails to encompass all $(n+1)$-cells,
just because it lacks the identity cells $1_w$ for the $n$-cells $w \in B_n
\setminus I$ that are not $n$-indets. Likewise, the multicategory $\Omega[J]$
lacks arrows that would naturally correspond to the same identity cells. This
observations leads us to the idea of augmenting $J$ by adding \emph{new}
a-indets that will denote these missing items. To be more precise:

\medskip

We extend the set of a-indets $J$ over $\Omega$ to a set $J^+$ of a-indets
over $\Omega^+$ by letting $J^+=J \dot{\cup} \{e_w:\,w \in B_n \setminus I\}$
with the source and target functions extended by setting $Se_w=\oc{w}$ and
$Te_w=w$. The \emph{new} indets $e_w$ will be called, also,
\emph{predeterminates} or, in short, \emph{predets}. From a
\emph{syntactical} point of view, the predets are indets like all the others,
but \emph{semantically} they are predetermined to denote identity cells or
arrows.

\medskip

Consider the multicategory $\Omega^+[J^+]$ freely generated by $J^+$ over
$\Omega^+$. It extends the free multicategory $\Omega[J]$, cf.~\ref{L:submc}.
Let $\varphi: J^+ \el X_{n+1}=A(\dcc^+_{\bx})$ be defined by $\varphi f=f$
for $f\in J$ and $\varphi e_w=1_w$ for $w \in B_n \setminus I$. By the
universal property of free multicategories, there is a unique morphism $\chi:
\Omega^+[J^+] \el \dcc^+_{\bx}$ which is the identity on the object system
$\Omega^+$ and such that $\chi g= \varphi g$ for $g \in J^+$. We denote, for
any $u \in A(\Omega^+[J^+])$, $\chi u=\val{u}^+$. The main property of the
map $\val{-}^+$ is that $\val{u\odot_r v}^+=\val{u}^+\circ_r\val{v}^+$. Using
this, it is easy to infer that $\val{-}^+$ extends the canonical morphism
$\val{-}$. This means that $\val{u}^+=\val{u}$ whenever $u \in A(\Omega[J])$.

We can now state the stronger result to which we alluded above.

\begin{theorem}\label{L:strongmain} $\val{-}^+: \Omega^+[J^+] \el
\dcc^+_{\bx}$ is an isomorphism of multicategories. \end{theorem}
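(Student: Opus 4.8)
The plan is to recover, from the free multicategory $\dce:=\Omega^+[J^+]$, an $(n+1)$-category that must be (isomorphic to) $\bx$, and then to read off the theorem from the uniqueness of free extensions. Since $\val{-}^+$ is the identity on object systems, it is an isomorphism as soon as it is a bijection on arrows, and the construction below is meant to produce its inverse.

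First I would build an $(n+1)$-category $\bx\p$ extending $\bb$ whose $(n+1)$-cells are exactly the arrows of $\dce$. Using that an arrow $u$ of a free multicategory is (represented by) a reduced $\stm^+$-term, so that structural recursion on $u$ is legitimate, set $c\,u:=Tu\in B_n$ and define $d\,u\in B_n$ by $d\,f:=df$ for $f\in J$ (here $Sf=\oc{df}$ by the way the a-indet data on $J$ is set up), $d\,1_x:=x$, $d\,e_w:=w$, and $d(u\odot_r v):=(d\,u)\repr(d\,v)$ — the $n$-cell of $\bb$ obtained by replacing, in $d\,u$, the $r$-th occurrence of an $n$-indet (one checks inductively that $\oc{d\,u}=Su$) by $d\,v$. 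The $(n+1)$-identity over $w\in B_n$ is declared to be $1_w\in A(\dce)$ when $w\in I$ and $e_w$ otherwise. The composition operations $\bullet_k$, $k\le n$, on $A(\dce)$ are then defined so as to mirror the placed-composition calculus of section~\ref{S:placedcomp}: the generalized whiskerings $w\repr-$ are rebuilt by recursion on the $n$-cell $w$ exactly as in Theorem~\ref{L:reprexists}, $\bullet_n$ is forced by $u\bullet_n v=u\odot_r v$ when $d\,u$ is a single indet and propagated through the decompositions of $d\,u$, and the whiskerings $\bullet_k$ ($k<n$) are obtained from the many-to-one identity $u\bullet_k v=(1_{x\bullet_k y}\circ_2 v)\circ_1 u$ by induction on cells with the help of Proposition~\ref{L:bulletviacirc}. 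One then checks, invoking the multicategory axioms of Definition~\ref{D:multcat} and Theorems~\ref{L:rep}, \ref{L:pcomp} and~\ref{L:bulletviacirc}, that these operations are well defined and satisfy associativity, exchange and the identity laws, so that $\bx\p$ really is an $(n+1)$-category with $\bx\p_n=\bb$. I expect this verification to be the main obstacle: it carries the whole combinatorial weight of the equivalence, and it needs care precisely because an arrow of $\dce$ has a unique reduced-term presentation but many presentations as an iterated $\bullet_k$-composite, so one must show all of them are identified.

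Next I would show $\bx\p$ is a free extension of $\bb$ generated by $J$ (viewed as $(n+1)$-indets via $J\subset A(\dce)$). Every arrow of $\dce$ is built from $J^+$ using the $\odot_r$, which in $\bx\p$ have been identified with placed composition $\circ_r$ and hence expressed through the $\bullet_k$, while the predets $e_w$ have become the identities $1_w$; so the $(n+1)$-cells of $\bx\p$ are generated by $J$ and identities under the $\bullet_k$, which shows $\bx\p$ is generated by $J$. For the universal property, given an $\omega$-category $\bz$ extending $\bb$ and a correct $\varphi:J\el Z_{n+1}$, extend to $\varphi^+:J^+\el A(\dcc^+_{\bz})$ by $\varphi^+ e_w:=1_w$, apply the universal property of $\dce$ (Theorem~\ref{L:freemc}) to get the unique morphism $\dce\el\dcc^+_{\bz}$ that is the identity on $\Omega^+$ and extends $\varphi^+$, and let $G:\bx\p\el\bz$ be the inclusion on $\bb$-cells and this morphism on $(n+1)$-cells. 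Since the $\omega$-categorical operations of $\bx\p$ were \emph{built out of} the multicategorical ones, $G$ automatically preserves them, i.e.\ is an $\omega$-functor; and it is unique because the $(n+1)$-cells of $\bx\p$ are generated by $J$ under the $\bullet_k$.

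Finally I would conclude. By the uniqueness of free extensions (the remark after Theorem~\ref{L:freeext}), there is a unique isomorphism $\Phi:\bx\p\el\bx=\bb[J]$ that is the identity on the cells of $\bb$ and on $J$. Being an $\omega$-functor that is the identity below dimension $n{+}1$, $\Phi$ restricts to an isomorphism of multicategories $\dcc^+_{\bx\p}\el\dcc^+_{\bx}$ (both sides compute source, target, identities and placed composition from the $\omega$-categorical structure, which $\Phi$ respects). But by the very construction of $\bx\p$ its extended placed-composition multicategory $\dcc^+_{\bx\p}$ \emph{is} $\dce=\Omega^+[J^+]$ — same arrows $A(\dce)$, same $\circ_r=\odot_r$, same identities (the $1_x$ and the $e_w$). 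Hence $\dce\cong\dcc^+_{\bx}$ by a morphism that is the identity on $\Omega^+$ and sends $f\mapsto f$ ($f\in J$) and $e_w\mapsto 1_w$; by the uniqueness clause in the universal property of $\dce$ this morphism is precisely $\val{-}^+$. Thus $\val{-}^+$ is an isomorphism, proving Theorem~\ref{L:strongmain}; and restricting it along $\Omega[J]\subset\Omega^+[J^+]$ and $\dcc_{\bx}\subset\dcc^+_{\bx}$ — using that $\val{-}^+$ extends $\val{-}$ and carries the submulticategory generated by $J$ isomorphically onto the many-to-one part — yields Theorem~\ref{L:main} as a corollary.
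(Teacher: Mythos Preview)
Your proposal is correct and follows essentially the same strategy as the paper: build an $(n+1)$-category $\by$ (your $\bx'$) on the arrows of $\Omega^+[J^+]$, show it is freely generated over $\bb$ by $J$, and conclude via the uniqueness of free extensions that the canonical map $\val{-}^+$ is an isomorphism. The only notable implementation differences are that the paper defines the domain function slickly as $\hat d u := d\val{u}^+$ rather than by a separate recursion, and handles $\hat\bullet_n$ (Claim~\ref{L:ncomp}) and $\hat\bullet_k$ for $k<n$ (Claim~\ref{L:kcomp}) by recursion on the \emph{second} argument together with a use of the strong universal property of $\Omega^+[J^+]$, rather than ``propagating through decompositions of $d\,u$'' or relying on the many-to-one formula $(1_{x\bullet_k y}\circ_2 v)\circ_1 u$ --- a formula that only applies when both targets are indets and so needs the inductive scaffolding you allude to.
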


An unexpected feature of this statement is that $\dcc^+_{\bx}$ turns out to
be a free multicategory some of whose generating arrows are, at the same
time, identity cells in a related category.

\medskip

To get a better grasp of the significance of this result, it will be useful
to have a closer look at the structure of the arrows of $\Omega^+[J^+]$. To
shorten terminology, these arrows will be called $J^+$-arrows, while those of
$\Omega[J]$ will be referred to as $J$-arrows.

\begin{claim}\label{L:Jarrows} A $J^+$-arrow $u$ is a $J$-arrow iff $Tu \in
I$. Consequently, if $u=u\sps{\prime}\odot_r u\sps{\prime\prime}$ then
$u\sps{\prime\prime}$ is always a $J$-arrow.\end{claim}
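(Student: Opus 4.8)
The plan is to prove both directions by combining an easy observation about targets with an induction on the way a $J^+$-arrow is built.

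For the forward direction I would argue with no induction at all. Suppose $u$ is a $J$-arrow, i.e.\ $u\in A(\Omega[J])$. Since $\Omega$ is the \emph{simple} object system $(I)$, the target function of $\Omega[J]$ takes values in $\dot O(\Omega)=I$; and because the inclusion $\Omega[J]\subset\Omega^+[J^+]$ is a multicategory morphism (Proposition~\ref{L:submc}) that is the identity on objects and object-types, the target of $u$ computed in $\Omega^+[J^+]$ is the same element, still lying in $I$. Hence $Tu\in I$.

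For the converse I would induct on the generation of $u$ as a $J^+$-arrow (equivalently, on the reduced term representing it), using the generators $1_x$ for $x\in I$, the a-indets $g\in J^+$, and the operation $\odot_r$. If $u=1_x$ or $u=g$ with $g\in J$, then $u$ is already a $J$-arrow; if $u=e_w$ then $Tu=w\in B_n\setminus I$, so the hypothesis $Tu\in I$ is vacuous. Suppose $u=u'\odot_r u''$. By the defining clause of $\odot_r$ in Definition~\ref{D:multcat}, $Tu''=(Su'(r))^{\cdot}$, and since $Su'(r)\in O=I$ while $(-)^{\cdot}$ on $\Omega^+$ is the inclusion $I\hookrightarrow B_n$, we get $Tu''=Su'(r)\in I$; the induction hypothesis then makes $u''$ a $J$-arrow — which is already the ``Consequently'' clause. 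Moreover $Tu=Tu'$, so if $Tu\in I$ then $Tu'\in I$ and, by the induction hypothesis, $u'$ is a $J$-arrow as well. As both $u'$ and $u''$ are $J$-arrows and the compatibility condition $Tu''=(Su'(r))^{\cdot}$ reads exactly the same inside $\Omega[J]$ (every piece of data involved lies in $I$), the multicomposition $u'\odot_r u''$ is defined already in $\Omega[J]$, and it is carried by the inclusion morphism to the $\odot_r$ computed in $\Omega^+[J^+]$; hence $u\in A(\Omega[J])$ is a $J$-arrow.

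I do not expect a serious obstacle: the only thing demanding care is the bookkeeping about \emph{which} multicategory a given source, target, or multicomposition is being read in, together with the routine check that passing between $\Omega[J]$ and $\Omega^+[J^+]$ along Proposition~\ref{L:submc} alters nothing relevant. The conceptual content, which the reduced-term picture makes vivid, is simply that a predet $e_w$ can never occur as a proper sub-argument of a $J^+$-arrow — such an occurrence would force an argument whose target is $w\notin I$, which is impossible — so $e_w$ can only appear as the leading symbol of a top-level arrow, and those top-level arrows with no predet head are precisely the ones with target in $I$, which are exactly the $J$-arrows.
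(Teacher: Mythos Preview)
Your proof is correct and follows essentially the same approach as the paper: the forward direction is immediate from the object system of $\Omega[J]$ being $(I)$, and the converse is an induction on arrows with the key observation that any right factor $u''$ of a multicomposition has $Tu''=(Su'(r))^{\cdot}\in I$. Your write-up is actually a bit more careful than the paper's in that you explicitly invoke Proposition~\ref{L:submc} to justify that the composition $u'\odot_r u''$ computed in $\Omega[J]$ agrees with the one in $\Omega^+[J^+]$, a point the paper leaves implicit.
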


\begin{proof} The "only if" direction is immediate. For the "if" direction,
assume that $Tu \in I$ and prove by induction on arrows that $u$ is a
$J$-arrow. If $u$ is an indet, then it cannot be a predet, hence is a
$J$-arrow. If $u$ is an identity, it must be $1_x$, where $x=Tu$. If
$u=u\sps{\prime}\odot_r u\sps{\prime\prime}$ then $Tu=Tu\sps{\prime} \in I$
and $Tu\sps{\prime\prime} \in I$ as well since otherwise,
$u\sps{\prime\prime}$ could not possibly be composed into another arrow.
Therefore, both $u\sps{\prime}$ and $u\sps{\prime\prime}$ are $J$-arrows, by
the induction hypothesis, hence so is $u$. \end{proof}

We can now show that~\ref{L:strongmain} implies immediately our important
theorem~\ref{L:main}.

\noindent\emph{Proof of~\ref{L:main}}. All we have to show is that $\val{-}$
is a one-to-one mapping from the arrows of $\Omega[J]$, i.e. the $J$-arrows,
\emph{onto} those of $\dcc_{\bx}$. But this follows immediately from the fact
that, by~\ref{L:strongmain}, $\val{-}^+$ is bijective. As $\val{-}^+$ is the
identity on the object system $\Omega^+$, it will map bijectively the arrows
of $\Omega^+[J^+]$ whose targets belong to $I$ onto those of $\dcc^+_{\bx}$
with the same property. \qed

\medskip

\noindent\emph{Proof of~\ref{L:strongmain}}. The advantage of working with
the multicategory $\dcc^+_{\bx}$, rather than $\dcc_{\bx}$, is that its
arrows have an additional structure embodied by the partial categorical
composition operations. If $\val{-}^+$ is, indeed, an isomorphism then its
inverse map will induce a similar additional structure on the arrows of
$\Omega^+[J^+]$ and we ought to be able to identify it.

We will define a new $(n+1)$-category $\by$ such that $\by_n=\bb$ and
$Y_{n+1}=A(\Omega^+[J^+])$. Thus, in particular, $J \subset Y_{n+1}$ and we
will show that, on one hand, \emph{$\by$ is freely generated over $\bb$ by
$J$} and hence, $\by$ is isomorphic to $\bx=\bb[J]$, while, on the other
hand, $\Omega^+[J^+]$ is \emph{identical} with $\dcc^+_{\by}$. From this
follows that $\Omega^+[J^+]$ is isomorphic to $\dcc^+_{\bx}$ and it will be
very easy to show that the canonical morphism $\val{-}^+$ is the isomorphism
that we exhibited.

\medskip

By setting $\by_n=\bb$, we already defined the $\leqslant n$-dimensional
structure of $\by$. Also, as we decided that the $(n+1)$-dimensional cells of
$\by$ are the arrows of $\Omega^+[J^+]$, all that remains to be done is to
define the domain/codomain functions for $(n+1)$-cells, the $(n+1)$-
dimensional identity cells and the compositions of $(n+1)$-cells at all
dimensions$\leqslant n$.

\emph{The domain/codomain functions of $\by$} will be denoted
$\hat{d},\,\hat{c}$ and are defined simply by $\hat{d}u=d\val{u}^+$,
$\hat{c}u=c\val{u}^+$. Thus, we get $\hat{d}u,\hat{c}u \in B_n=Y_n$ and
$\hat{d}u
\parallel \hat{c}u$, as required. Also, for $k<n$, we have $\hat{d}^{(k)}u=
d^{(k)}\val{u}^+=d^{(k)}\hat{d}u,\quad
\hat{c}^{(k)}u=c^{(k)}\val{u}^+=c^{(k)}\hat{c}u$, where $d,\,c$ are the
domain/codomain functions in $\bb$.  Remember that $\val{-}^+$ is the
identity on object systems, hence it preserves sources and targets. As the
source and target of $\val{u}^+$, as an arrow of $\dcc^+_{\bx}$, are
$\oc{d\val{u}^+}$ and $c\val{u}^+$, we infer the following useful equalities:
$Su=\oc{\hat{d} u}$ and $Tu=\hat{c}u$, for all $u\in Y_{n+1}$. Also, $\hat{d}
(u\odot_rv)=\hat{d}u\repr\hat{d}v$ and $\hat{c}(u\odot_rv)= \hat{c}u$, as is
easily seen.

\emph{The identity cells} are easy to define: if $w=x\in I$, then the
identity over $w$ will be the identity arrow $1_x$ and if $w \in B_n
\setminus I$ then the identity cell over $w$ will be the predet $e_w$. We
introduce a helpful notation: for $w \in B_n$, we let $\eps_w=1_x$ if $w=x
\in I$ and $\eps_w=e_w$ when $w \notin I$. Thus, the identity cell over $w
\in B_n=Y_n$ will be, in any case, $\eps_w$.

Before going on, let us remark that, as a consequence of~\ref{L:Jarrows}, the
set of all $J^+$-arrows is the least set $P\subset A(\Omega^+[J^+])$ such
that: (a) $P$ contains all predets and identity arrows (in other words,
$\eps_w \in P$ for all $w \in B_n$) and (b) $u\odot_rv \in P$ whenever $u\in
P$ and $v$ is a $J$-arrow such that $u\odot_r v$ is defined. This observation
will allow us to prove statements by induction on $J^+$-arrows.

We now turn to the definition of the composition operations of $\by$, which
will be denoted $\hat{\bullet}_k$, for $k\leqslant n$. We have to define
these only for cells of dimension $n+1$. This is done through the following
two claims that are strongly suggested by proposition~\ref{L:bulletviacirc}.

\begin{claim}\label{L:ncomp} There is a unique partial binary operation
$\hat{\bullet}_n$ over $Y_{n+1}$, satisfying the following requirements:
\begin{enumerate} \item $u\hat{\bullet}_n v$ is defined iff $\hat{d}u=\hat{c}v$.
\item $\hat{d}(u\hat{\bullet}_n v)= \hat{d} v$ and $\hat{c}(u\hat{\bullet}_n v)= \hat{c}u$.
\item $u\hat{\bullet}_n \eps\sbs{\hat{d}u}=u$.
\item $u\hat{\bullet}_n (v\sps{\prime} \odot_r
v\sps{\prime\prime})= (u\hat{\bullet}_n v\sps{\prime})\odot_r
v\sps{\prime\prime}$.
\end{enumerate} \end{claim}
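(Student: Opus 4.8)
The plan is to establish existence and uniqueness of $\hat\bullet_n$ by recursion/induction on the second argument $v$, using the description of $J^+$-arrows recorded just before the claim: every $J^+$-arrow is either $\eps_w$ (for a unique $w\in B_n$) or of the form $v'\odot_r v''$ with $v''$ a $J$-arrow and $v'$ a structurally smaller $J^+$-arrow.

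I would treat uniqueness first, since it dictates the construction. Fix $u$ with $\hat d u=w_0$; by requirement 1, $u\hat\bullet_n-$ is defined exactly on the $v$ with $\hat c v=Tv=w_0$. Requirement 3 forces $u\hat\bullet_n\eps_{w_0}=u$, and requirement 4 propagates this along the ``left spine'': for $v=v'\odot_r v''$ one has $u\hat\bullet_n v=(u\hat\bullet_n v')\odot_r v''$, and since $\hat c v'=\hat c v=w_0$ (by the identity $\hat c(v'\odot_r v'')=\hat c v'$ already noted) the recursive call is legitimate and $v'$ is smaller. The case that superficially escapes this, namely $v=f$ a bare $J$-indet (which forces $w_0=cf\in I$), is still pinned down: because $cf=\hat c f=\hat d u$, the multicategory identity law gives $f=1_{cf}\odot_q f=\eps_{w_0}\odot_q f$, whence requirements 3 and 4 together yield $u\hat\bullet_n f=(u\hat\bullet_n\eps_{w_0})\odot_q f=u\odot_q f$. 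Running this recursion over the generation principle for $J^+$-arrows shows that any two operations satisfying 1--4 coincide.

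For existence I would promote the preceding analysis to a definition of $u\hat\bullet_n v$ by recursion on $v$: set $u\hat\bullet_n\eps_{w_0}:=u$; if the head of $v$ is an indet of $J$ set $u\hat\bullet_n v:=u\odot_q v$ where $\{q\}=|Su|$ (this subsumes the bare-indet case); and if the head of $v$ is a predet, peel off the outermost $\odot_r v''$ and recurse as in requirement 4. Parametrizing $v$ by its reduced term makes this recursion unambiguous. Requirements 1--3 are then routine inductions; the only mildly delicate point is that, in the ``head in $J$'' clause, $\hat d u\repr\hat d v=\hat d v$ holds because $\hat d u=\hat c v=cv$ is an $n$-indet occurring exactly once in itself, so that $\hat d(u\odot_q v)=\hat d u\repr\hat d v=\hat d v$ as required by requirement 2, while the target is handled by $\hat c(u\odot_r v)=\hat c u$.

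The real obstacle --- and the only substantial part of the argument --- is verifying requirement 4 for an \emph{arbitrary} legal decomposition $v=v'\odot_r v''$, not merely the one built into the recursion. This I would prove by induction on $v'$, unwinding the definition and invoking the commutativity and associativity axioms of $\Omega^+[J^+]$, together with the standing convention that all index sets in play are chosen disjoint with the inclusion maps as coprojections; the ``head in $J$'' clause additionally uses the multicategory identity law to identify $u\odot_q(v'\odot_r v'')$ with $(u\odot_q v')\odot_r v''$. The computation is tedious but routine --- it is the multicategorical mirror of part 1 of proposition~\ref{L:bulletviacirc}, which is precisely why this claim is ``strongly suggested'' by that proposition.
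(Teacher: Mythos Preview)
Your argument is correct, but the paper takes a cleaner route in the case $\hat d u=w_0\notin I$ (equivalently, when the head of $v$ is a predet). Rather than recursing on the reduced term of $v$ and then verifying requirement~4 by hand, the paper invokes the \emph{strong} universal property of $\Omega^+[J^+]$: it builds the object-system endomorphism $\gamma$ that is the identity except for $\gamma_t w_0=\hat c u$, and the assignment $\varphi$ that is the identity on $J^+$ except $\varphi(e_{w_0})=u$; the induced multicategory endomorphism $\chi$ then serves as $u\hat\bullet_n(-)$. Since $\chi$ is a morphism, $\chi(v'\odot_r v'')=\chi v'\odot_r\chi v''$, and since $v''$ is always a $J$-arrow (Claim~\ref{L:Jarrows}) on which $\chi$ is the identity, requirement~4 drops out immediately. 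In the case $w_0\in I$ the paper does exactly what you do: $u\hat\bullet_n v:=u\odot_q v$ with $\{q\}=|Su|$, and requirement~4 is just the associativity axiom (your phrase ``identity law'' in the last sentence is a slip---you mean associativity).

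What you buy with your more explicit recursion is uniformity with the paper's own treatment of $\hat\bullet_k$ for $k<n$ (Claim~\ref{L:kcomp}), where the authors remark that they could not find a universal-property argument and resort to term-level manipulation. What the paper buys is that the ``tedious but routine'' induction you flag as the real obstacle simply disappears: preservation of $\odot$ is exactly what a multicategory morphism gives you for free.
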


\begin{proof} The uniqueness of $u \hat{\bullet}_n v$ follows easily by induction
on $v$. We have to show, for every $u \in Y_{n+1}$, the existence of the
partial function $u \hat{\bullet}_n (-)$.

\emph{Case 1:} $\hat{d}u=x \in I$. In this case, $Su=\oc{\hat{d}u}=\oc{x}$
and $\hat{d}u=\hat{c}v$ iff $Tv=\hat{c}v=x$ and we can define $u
\hat{\bullet}_n v = u \odot_r v$, where, of course, $|Su|=\{r\}$. Conditions
\emph{2-4} are easily verified.

\emph{Case 2:} $\hat{d}u=w\sbs{0} \notin I$. We use the \emph{strong}
universal property of $\Omega^+[J^+]$. Let $\gamma: \Omega^+ \el \Omega^+$ be
such that $\gamma\sbs{o}$ is the identity and $\gamma\sbs{t} w=w$ for $w \neq
w\sbs{0}$, while $\gamma\sbs{t} w\sbs{0}=\hat{c} u=Tu$. It is easily seen
that this $\gamma$ is a morphism of object systems. Next, let $\varphi: J^+
\el A(\Omega^+[J^+])$ be defined as $\varphi g=g$ for $g \in J^+ \setminus
\{e\sbs{w\sbs{0}}\}$ and $\varphi e\sbs{w\sbs{0}}=u$. Then $\varphi$ is
\emph{consistent} with $\gamma$, in the sense that $S\varphi g=\gamma Sg$ and
$T\varphi g=\gamma Tg$, hence there is a \emph{unique} morphism $\chi:
\Omega^+[J^+] \el \Omega^+[J^+]$ extending $\gamma$ and $\varphi$. Obviously,
the restriction of $\chi$ to $\Omega[J]$ is the identity. We now define, for
$v$ such that $\hat{c}v=Tv=w\sbs{0}$, $u \hat{\bullet}_n v=\chi v$ and have
to show that conditions \emph{2-4} are met. \emph{3} and \emph{4} are easily
verified and condition~\emph{2} is proven by induction on $v$. We indicate
only the induction step for $\hat{d}$: if $v=v\p \odot_r v\pp$, then
$u\hat{\bullet_n}v= (u\hat{\bullet}_r v\p) \odot_r v\pp$, by
condition~\emph{4}. Hence, $\hat{d}(u \hat{\bullet_n} v)=\hat{d}(u
\hat{\bullet_n} v\p) \repr \hat{d}v\pp$ and, by the induction hypothesis this
equals $\hat{d}v\p \repr \hat{d}v\pp= \hat{d} (v\p \odot_r v\pp) =\hat{d}v$.
\end{proof}

\begin{claim}\label{L:kcomp} For every $k<n$, there is a unique partial
binary operation $\hat{\bullet}_k$ on $Y_{n+1}$, satisfying the following:
\begin{enumerate} \item $u\hat{\bullet}_k v$ is defined iff $\hat{d}^{(k)}u =
\hat{c}^{(k)}v$.
\item $\hat{d}(u
\hat{\bullet}_k v)= \hat{d}u \hat{\bullet}_k \hat{d}v$ and $\hat{c}(u
\hat{\bullet}_k v)= \hat{c}u \hat{\bullet}_k \hat{c}v$ (where, of course, the
composition $\hat{\bullet}_k$ of $n$-cells in $\by$ is the same as
$\bullet_k$ in $\bb$).
\item $\varepsilon\sbs{w} \hat{\bullet}_k \varepsilon\sbs{w
\sps{\prime}}= \varepsilon\sbs{w\hat{\bullet}_k w\sps{\prime}}$.
\item $u\hat{\bullet}_k (v\p \odot_r v\pp)=(u
\hat{\bullet}_k v\p)\odot_r v\pp$.
\item $(u\p \odot_r u\pp)\hat{\bullet}_k v=(u\p \hat{\bullet}_k v)\odot_r u\pp$.
\end{enumerate}\end{claim}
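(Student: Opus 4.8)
The plan is to handle existence and uniqueness separately, in close parallel with Claim~\ref{L:ncomp}, but now accounting for the fact that there are \emph{two} reduction clauses, (4) and (5), instead of one. For uniqueness I would argue by a double induction: the outer induction is on $u$ as a $J^+$-arrow, using the induction principle recorded just before Claim~\ref{L:ncomp} — every $J^+$-arrow is $\eps_w$ or $u\p\odot_r u\pp$ with $u\pp$ a $J$-arrow — together with clause (5) for the induction step; the inner induction, needed in the base case $u=\eps_w$, is on $v$ as a $J^+$-arrow, collapsing composites by (4) and identity arrows by (3). Along the way one checks, using $\hat d(u\p\odot_r u\pp)=\hat d u\p\repr\hat d u\pp$ (hence $\hat d^{(k)}(u\p\odot_r u\pp)=\hat d^{(k)}u\p$ for $k<n$) and $Su=\oc{\hat d u}$, that each $\odot_r$-composition appearing on the right of (4) and (5) is legal in $\Omega^+[J^+]$, so the recursion is well posed and any two operations satisfying (1)--(5) must coincide.

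For existence I would not try to transport $\bullet_k$ from $\bx$ — that is exactly what Theorem~\ref{L:strongmain} is meant to yield — but instead build $\hat\bullet_k$ from the operation $\hat\bullet_n$ of Claim~\ref{L:ncomp} together with two ``mixed whiskerings'': the right whiskering of an $(n+1)$-cell $u\in Y_{n+1}$ by an $n$-cell $b$ of $\bb$, and the left whiskering of $v\in Y_{n+1}$ by an $n$-cell. A suitable instance of the $\omega$-categorical interchange law writes $u\bullet_k v$ (for $k<n$) as a $\bullet_n$-composite of one right whiskering with one left whiskering, so once the mixed whiskerings are available one simply \emph{defines} $u\hat\bullet_k v$ by the corresponding formula and reads off (1) and (2) immediately. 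Each mixed whiskering is itself defined by recursion on the reduced $J^+$-term (recall from the second proof of~\ref{L:freemc} that $\Omega^+[J^+]$ is a genuine term model, so such recursion is legitimate): on a predet or identity $\eps_w$ the value is $\eps_{w\bullet_k b}$; on a composite $u\p\odot_r u\pp$ one pushes the whiskering inside the outermost $\odot_r$; and on a bare a-indet $f\in J$ the base value is the arrow $e_{z}\odot_q f$, where $z$ is the $n$-cell $cf\bullet_k b$ and $q$ is the index, in $\oc z$, of the occurrence of the $n$-indet $cf$ contributed by the factor $cf$. The scheme is consistent because $e_z\odot_q(-)$ commutes with $\odot_r$ on its remaining arguments by the associativity axiom of multicategories, and one verifies $\hat d(e_z\odot_q f)=z\repr_q df=cf\bullet_k df$ and the corresponding equation for $\hat c$ from $\hat d(e_z)=\hat c(e_z)=z$ and the identity rule $cf\repr_q df=df$ of Theorem~\ref{L:rep}. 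With the mixed whiskerings and $\hat\bullet_n$ in hand, clauses (3)--(5) are checked by the same inductions, using \ref{L:bulletviacirc}-style interchange manipulations and part~2 of Theorem~\ref{L:rep} to slide $\odot_r$ across $\bullet_n$ and across the whiskerings.

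A more uniform alternative, modeled on the $\bw$-construction used for Theorems~\ref{L:reprexists} and~\ref{L:arrowrep}, is to introduce an auxiliary multicategory $\dcw$ based on $\Omega^+$ whose arrows carry, besides the replacement data $\langle H_r\rangle$, the prospective partial functions $v\mapsto u\hat\bullet_k v$ ($k<n$); clauses (4) and (5) dictate what the multicomposition of $\dcw$ must be, the strong universal property of $\Omega^+[J^+]$ then produces a morphism $\chi$ whose first component is the identity just as in Claim~\ref{L:uingu}, and $u\hat\bullet_k(-)$ is read off the decorations. The coprojection bookkeeping is the same as in the earlier $\bw$-constructions.

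The main obstacle is that the short ``relabel the object types'' argument that sufficed for Case~2 of Claim~\ref{L:ncomp} is unavailable here. There the operation $u\hat\bullet_n(-)$ lives on the single-type slice $\{v:Tv=w_0\}$ with $w_0=\hat d u\notin I$, so it can be realized by a self-morphism of $\Omega^+[J^+]$ that merely relabels the type $w_0$ and sends the predet $e_{w_0}$ to $u$, and that morphism is never forced to send an identity arrow to a non-identity. For $k<n$ the operation $u\hat\bullet_k(-)$ is defined on all $v$ with $\hat c^{(k)}v=\hat d^{(k)}u$, a slice containing many types, among them possibly $n$-indets $x\in I$; and whiskering the identity $(n{+}1)$-cell $1_x=\eps_x$ by an $n$-cell at a level $k<n$ produces the \emph{non}-identity $\eps_{x\bullet_k b}$. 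No morphism of $\Omega^+$ can realize this relabelling, since such a morphism must fix every $x\in I$ on objects and types and a multicategory morphism must carry identity arrows to identity arrows. This is precisely why one must strip off the predet/identity ``skins'' by hand — via Claim~\ref{L:Jarrows} and the explicit base-case formulas $e_z\odot_q(-)$ — rather than obtaining the whole operation from a single application of the universal property; organizing all these reductions coherently, and verifying that clause (5) really holds for arbitrary decompositions $u\p\odot_r u\pp$ and not merely for the canonical one used to drive the recursion, is where the real work lies.
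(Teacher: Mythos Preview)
Your uniqueness argument and your diagnosis of why the Case~2 trick from Claim~\ref{L:ncomp} fails are both correct and match the paper's own observations. The existence argument, however, goes a genuinely different route from the paper's.

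The paper does \emph{not} factor $\hat\bullet_k$ through mixed whiskerings and $\hat\bullet_n$, nor does it find a $\dcw$-style construction: in fact it explicitly says ``It would be nice to produce an argument that uses solely the universal property of $\Omega^+[J^+]$, as we did in the proof of~\ref{L:ncomp}. Unfortunately, we did not find a such, yet.'' Instead the paper drops to the level of $\stm^+$-\emph{terms} (not reduced terms), defines $t\hat\bullet_k s$ by an outer recursion on the term $t$ --- the base case $t=\eps_w$ being itself a recursion on $s$, and the case $t=f\in J$ being reduced to the base via $f\hat\bullet_k s=(\eps_{cf}\hat\bullet_k s)\odot_r f$ --- so that conditions \emph{3}--\emph{5} hold \emph{by construction} on terms. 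The work is then pushed entirely into showing that $\approx$ is a congruence for this term-level operation, and that is done by induction on $\stm^+$-proofs; the paper singles out the left identity axiom $t=1_x\odot_r t$ as the only non-trivial case.

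Your approach trades the congruence check for a different difficulty. With $u\hat\bullet_k v:=\rho_{\hat c v}(u)\,\hat\bullet_n\,\lambda_{\hat d u}(v)$, clause~\emph{4} follows cleanly from $\hat c(v'\odot_r v'')=\hat c v'$ together with \ref{L:ncomp}\emph{(4)}, but clause~\emph{5} requires comparing $\lambda_{\hat d(u'\odot_r u'')}(v)=\lambda_{\hat d u'\repr\hat d u''}(v)$ with $\lambda_{\hat d u'}(v)$, i.e.\ a functoriality statement for the left whiskering $a\mapsto\lambda_a(v)$ in the $n$-cell variable $a$. That is real work --- roughly equivalent in weight to the paper's congruence check --- and your sketch (``\ref{L:bulletviacirc}-style interchange manipulations and part~2 of Theorem~\ref{L:rep}'') does not yet pin down this step. (There is also a small slip in your base-case computation: with $z=cf\bullet_k b$ and $q$ the $cf$-slot, one gets $z\repr_q df=df\bullet_k b$, not $cf\bullet_k df$.) So your route is plausible and conceptually attractive, but the paper's term-plus-congruence argument is what actually closes the gap.
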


\begin{proof} Again, the uniqueness of $\hat{\bullet}_k$ satisfying
\emph{1-5} is easily established by an induction on $u$ and $v$, so we have
to show only the existence.

It would be nice to produce an argument that uses solely the universal (or
strong universal) property of $\Omega^+[J^+]$, as we did in the proof
of~\ref{L:ncomp}. Unfortunately, we did not find a such, yet. The proof that
we are presenting uses the concrete description of the $J^+$-arrows as
equivalence classes of $\stm^+$-terms, where, of course, $\stm^+$ stands for
the multicomposition language $\stm(\Omega^+,J^+,S,T)$ which is appropriate
for $\Omega^+[J^+]$. Thus, we will define, first, $t \hat{\bullet}_k s$ for
$\stm^+$-\emph{terms} $t,\,s$ satisfying $\hat{d}^{(k)}t=\hat{c}^{(k)}s$,
such that conditions \emph{2-5} will be met (here and in the sequel, we abuse
notation slightly, by letting $\hat{d}t=\hat{d}(\eqv{t})$ and so on). Then we
will show that $\approx$ is a congruence relation with respect to
$\hat{\bullet}_k$ and conclude by setting $u\hat{\bullet}_k v=
t\hat{\bullet}_k s$ for $u=\eqv{t},\,v=\eqv{s}$.

We will define, by recursion on the $\stm^+$-term $t$, the partial function
$t \hat{\bullet}_k (-)$. Assume that $\hat{d}^{(k)}t=\hat{c}^{(k)}s$.

If  $t$ is an identity or a predet, i.e. $t=\varepsilon_w$ for $w\in B_n$, we
define $t\hat{\bullet}_k s$ by recursion on $s$:
\[t \hat{\bullet}_k s =\begin{cases} \varepsilon\sbs{w \hat{\bullet}_k
w\sps{\prime}} & \text{if } s=\varepsilon\sbs{w\sps{\prime}} \\
\varepsilon\sbs{w \hat{\bullet}_k x}\odot_r f & \text{if } s=f\in
J,\,\hat{c}f=Tf=x,\,
\doc{x}=\{r\}\\ (t\hat{\bullet}_k s\sps{\prime})\odot_r s\sps{\prime\prime} &
\text{if } s=s\sps{\prime} \odot_r s\sps{\prime\prime} \end{cases} \] (where,
in the middle case $s=f\in J$, $\doc{x}$ represents the second summand in
$\doc{w}\dot{\cup}\doc{x} = \doc{w \hat{\bullet}_k x} =|S\varepsilon\sbs{w
\hat{\bullet}_k x}|$).

As we proceed with this recursion, we prove by induction on $s$ that
condition \emph{2} is fulfilled, i.e. $\hat{d}(t\hat{\bullet}_k s) = \hat{d}t
\hat{\bullet}_k \hat{d}s$ and $\hat{c}(t\hat{\bullet}_k s) = \hat{c}t
\hat{\bullet}_k \hat{c}s$. The basis of this induction, i.e. the cases in
which $t$ is an identity or a predet or an indet, are easily handled using
the fact that $\val{\varepsilon_w}^+=1_w$, hence $\hat{d}\varepsilon_w=w$.
Let us turn to the case of $s$ being a multicomposition, which is the
induction step. We have: \[ \hat{d}(t\hat{\bullet}_k s)= \hat{d}(
(t\hat{\bullet}_k s\sps{\prime})\odot_r s\sps{\prime\prime})=
d\val{(t\hat{\bullet}_k s\sps{\prime})\odot_r s\sps{\prime\prime}}^+=
d(\val{(t\hat{\bullet}_k s\sps{\prime})}^+ \circ_r
\val{s\sps{\prime\prime}}^+)= d\val{t\hat{\bullet}_k s\sps{\prime}}^+ \repr
d\val{s\sps{\prime\prime}}^+
\] and the induction hypothesis tells us that $d\val{(t\hat{\bullet}_k s\sps{\prime})}^+
=\hat{d}(t\hat{\bullet}_k s\sps{\prime}) = \hat{d}t \hat{\bullet}_k \hat{d}
s\sps{\prime}= d\val{t}^+ \hat{\bullet}_k d\val{s\sps{\prime}}^+$, so that we
can continue the evaluation of $\hat{d}(t\hat{\bullet}_k s)$, keeping in mind
that, $\hat{\bullet}_k$ is the same as the ordinary $\bullet_k$ for
$\by$-cells of dimension $\leqslant n$ :
\[ \hat{d}(t\hat{\bullet}_k s)= (d\val{t}^+ \hat{\bullet}_k
d\val{s\sps{\prime}}^+)\repr d\val{s\sps{\prime\prime}}^+=d\val{t}^+
\hat{\bullet}_k (d\val{s\sps{\prime}}^+ \repr
d\val{s\sps{\prime\prime}}^+)=d\val{t}^+ \hat{\bullet}_k
d\val{s\sps{\prime}\odot_r s\sps{\prime\prime}}^+= \hat{d}t \hat{\bullet}_k
\hat{d}s.\]

The proof that the same is true for the codomain function$\hc$ is similar and
somewhat simpler. It uses the fact that $\hc s=Ts=T(s\p\odot_r s\pp)= Ts\p =
\hc s\p$.

This completes the definition of the $t\hat{\bullet}_k(-)$ function when $t$
is an identity or a predet.

\medskip

If $t$ is an indet $f\in J$, $Tf=\oc{x}$ then we know that
$\vdash\sbs{\stm^+} t=1_x\odot_r f= \varepsilon_x \odot_r t$ and, as we have
already defined the partial function $\varepsilon_x \hat{\bullet}_k (-)$, we
may let $t\hat{\bullet}_k s= (\varepsilon_x \hat{\bullet}_k s)\odot_r t$.

\medskip

Finally, if $t$ is a multicomposition, $t=t\sps{\prime} \odot_r
t\sps{\prime\prime}$ then we let $t\hat{\bullet}_k s= (t\sps{\prime}
\hat{\bullet}_k s)\odot_r t\sps{\prime\prime}$.

\medskip

We leave the reader the verification of condition \emph{2} in these other two
cases.

\medskip

Conditions \emph{3-5} are obviously met for the $\hat{\bullet}_k$ operation
thus defined for $\stm^+$-terms. It remains to show that $\approx$ is a
congruence relation with respect to this operation.

To show that $\vdash t=t\sbs{1}$ implies $\vdash t\hat{\bullet}_k s=t\sbs{1}
\hat{\bullet}_k s$, we proceed by induction on the proof of $t=t_1$.

If $t=t_1$ is an $\stm^+$-axiom, we have to examine five cases (as there are
two kinds of identity axioms). These cases range from trivial to very easy,
\emph{except} (somewhat surprisingly) for the left identity axioms of the
form $t=1_x \odot_r t$. We have to show that $\vdash t \hb_k s=(1_x \hb_k s)
\odot_r t$ and we do this by induction on $t$. Notice that, in this case, $t$
has to be a $J$-arrow, as $Tt=x \in I$. If $t$ is an indet $f \in J$, then we
have by definition that $t \hb_k s=(\eps_x \hb_k s) \odot t$, so there is
nothing to prove (remember that $\eps_x=1_x$). If $t$ is an identity, it has
to be $1_x$ and $t \hb_k s=(1_x \hb_k s) \odot_r t$ becomes an instance of a
right identity axiom. Finally, if $t= t\p \odot_q t\pp$, where $q \neq r$,
then we have: \[\vdash (1_x \hb_k s) \odot_r t=(1_x \hb_k s) \odot_r (t\p
\odot_q t\pp)= ((1_x \hb_k s) \odot_r t\p) \odot_q t\pp,\] by an instance of
the associativity axiom. However, by the induction hypothesis we also have
\[\vdash (1_x \hb_k s) \odot_r t\p =t\p \hb_k s,\]
from which we infer, using the congruence rule, \[\vdash ((1_x \hb_k s)
\odot_r t\p) \odot_q t\pp= (t\p \hb_k s) \odot_q t\pp = t \hb_k s, \] the
last equality holding by the definition of $t\hb_k s$ for the case of $t$
being a multicomposition.

If $t=t_1$ is the conclusion of an inference rule of $\stm^+$ then the
desired equality  follows immediately from the induction hypothesis.

\medskip

The proof that $\vdash s=s\sbs{1}$ implies $\vdash t \hb_k s= t \hb_k
s\sbs{1}$ is similar, once we established that, for $s=f \in J$, with
$Tf=x,\, \doc{x}=\{r\}$, we have $\vdash t \hb_k s = (t \hb_k \eps_x) \odot
_r s$. This is done by induction on $t$ and presents no difficulties.

The proof of the claim is now complete.
\end{proof}

\begin{claim}\label{L:cataxioms} The structure $\by$ that we just described,
is an $(n+1)$-category.
\end{claim}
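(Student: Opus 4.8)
The plan is to verify, one axiom at a time, that $\by$ meets the requirements of an $(n+1)$-category. Since $\by_n=\bb$ is already an $n$-category, every axiom not mentioning $(n+1)$-cells holds automatically, and the parallelism conditions for the new domain/codomain functions ($\hd u\parallel\hc u$, and the compatibility of $\hd^{(k)},\hc^{(k)}$ with $\hd,\hc$) were already recorded when $\hd,\hc$ were introduced, using $\hd u=d\val{u}^+$, $\hc u=c\val{u}^+$ and the fact that $\val{u}^+$ is an arrow of $\dcc^+_{\bx}$. So the remaining tasks are: (i) that $\hd(u\hb_k v)$ and $\hc(u\hb_k v)$ come out as required, which is exactly clause~2 of Claims~\ref{L:ncomp} and~\ref{L:kcomp}; (ii) the identity axioms $1^{(n+1)}_b\hb_k u=u=u\hb_k 1^{(n+1)}_a$ and $\eps_a\hb_k\eps_b=\eps_{a\bullet_k b}$; (iii) associativity of each $\hb_k$ for $k\leqslant n$; (iv) the exchange law between $\hb_l$ and $\hb_k$ for $l<k\leqslant n$; the mixed-dimensional instances of (ii)--(iv) then follow from the purely $(n+1)$-dimensional ones by the usual whiskering convention. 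All of (ii)--(iv) I would prove by induction on $J^+$-arrows, using: the inductive description of $Y_{n+1}=A(\Omega^+[J^+])$ recorded just after Claim~\ref{L:Jarrows} (every $J^+$-arrow is built from the cells $\eps_w$, $w\in B_n$, by operations $u\odot_r v$ with $v$ a $J$-arrow); the clauses of Claims~\ref{L:ncomp},~\ref{L:kcomp} that peel an outermost $\odot_r$ off the right argument (clause~4) or off the left argument (clause~5 of~\ref{L:kcomp}); the multicategory axioms of $\Omega^+[J^+]$; and the axioms of $\bb$. It is convenient to record first the auxiliary identity $\val{u\hb_k v}^+=\val{u}^+\bullet_k\val{v}^+$ --- a short induction on $v$, with the base case handled by a sub-induction on $u$ and with Proposition~\ref{L:bulletviacirc} supplying the step --- since it reduces the domain/codomain bookkeeping inside all later inductions to already known facts about $\bx$.

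For the identity axioms and for associativity of $\hb_n$ the inductions are short. For example, $1^{(n+1)}_b\hb_k u=u$ (with $b=\hc^{(k)}u$) follows by induction on $u$: if $u=\eps_{w_0}$ it is clause~3 of~\ref{L:kcomp} together with the identity axiom $1^{(n)}_b\bullet_k w_0=w_0$ of $\bb$; if $u=u\p\odot_r u\pp$ then $\hc^{(k)}u=\hc^{(k)}u\p$, one applies clause~4, and invokes the induction hypothesis. The dual statement, the case $k=n$ (which reduce to clause~3 of~\ref{L:ncomp}), and the axiom $\eps_a\hb_k\eps_b=\eps_{a\bullet_k b}$ (clause~3 of~\ref{L:kcomp}, or of~\ref{L:ncomp} when $k=n$) go the same way. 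Associativity $(u\hb_n v)\hb_n w=u\hb_n(v\hb_n w)$ is an induction on $w$: the base case $w=\eps_{\hd v}$ uses clauses~2 and~3 of~\ref{L:ncomp}, and the step $w=w\p\odot_r w\pp$ uses clause~4 twice plus the induction hypothesis, with no case split.

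The bulk of the work --- and the step I expect to be the \emph{main obstacle} --- is the associativity of $\hb_k$ for $k<n$ and, above all, the exchange law, where all four arguments interact. These I would prove by nested inductions with a case split according to which summand of an outermost $\doc{u\p\odot_r u\pp}=\doc{u\p}\,\dot{\cup}\,\doc{u\pp}$ the relevant index lands in --- precisely the pattern of the proofs of Theorem~\ref{L:rep}, Lemma~\ref{L:vdw-wdv} and Proposition~\ref{L:bulletviacirc}. Each subcase gets discharged by combining an instance of the exchange axiom of Definition~\ref{D:axioms} with the commutativity and associativity axioms for $\odot_r$ in $\Omega^+[J^+]$ and, in the base cases, the clause-3 identities for $\hb_k$ together with the identity axioms of $\bb$. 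Following the practice elsewhere in the paper, I would carry out one or two representative subcases of the exchange law in detail and leave the other subcases, and the routine sub-inductions for (i)--(iii), to the reader.
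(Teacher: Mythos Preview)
Your plan is correct and matches the paper's proof: each axiom is verified by induction on $J^+$-arrows, with the base case (all arguments are $\eps$'s) reducing to the corresponding axiom in $\bb$ via clause~3, and the induction step peeling off an outermost $\odot_r$ via clauses~4--5 of Claims~\ref{L:ncomp} and~\ref{L:kcomp}. Two small remarks: the auxiliary identity $\val{u\hb_k v}^+=\val{u}^+\bullet_k\val{v}^+$ is not needed here --- the paper defers it to the proof of Claim~\ref{L:yisfree} --- and the exchange and associativity inductions do \emph{not} involve any case split on ``which summand of $\doc{u\p\odot_r u\pp}$ the relevant index lands in'': unlike in Theorem~\ref{L:rep} or Lemma~\ref{L:vdw-wdv} there is no external place-index being tracked through the argument, so clauses~4 and~5 apply uniformly and the step goes through without branching.
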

\begin{proof} We have to verify the axioms for $(n+1)$-cells only.

\medskip

\emph{Verifying the exchange law} $(u\sbs{1}\p \hat{\bullet}_k u\sbs{1}\pp)
\hat{\bullet}_l (u\sbs{2}\p \hat{\bullet}_k u\sbs{2}\pp)= (u\sbs{1}\p
\hat{\bullet}_l u\sbs{2}\p) \hat{\bullet}_k (u\sbs{1}\pp \hat{\bullet}_l
u\sbs{2}\pp)$, when $l<k \leqslant n$ and the expression on the left is
defined (which implies that so is the one on the right).  We have to
distinguish two cases:

\emph{Case 1:} $k=n$. We reason by induction on $u\pp\sbs{1},\, u\pp\sbs{2}$.
If they are both $\eps$'s, i.e. $u\pp\sbs{i}=\eps_{\hd u\p\sbs{i}}$ for
$i=1,2$, then, by~\ref{L:ncomp}, part \emph{3}, the left side of the desired
equality is nothing but $u\p\sbs{1} \hb_l u\p\sbs{2}$; as to the right side,
it is $(u\p\sbs{1} \hb_l u\p\sbs{2}) \hb_n (\eps_{hd u\p\sbs{1}} \hb_l
\eps_{\hd u\p\sbs{2}})$ and is seen to be equal to the same, because
by~\ref{L:kcomp}, parts \emph{3} and \emph{2}, \[\eps_{\hd u\p\sbs{1}} \hb_l
\eps_{\hd u\p\sbs{2}}= \eps_{\hd u\p\sbs{1} \hb_l \hd u\p\sbs{2}}=
\eps_{\hd(u\p\sbs{1} \hb_l u\p\sbs{2})}. \] If any of the $u\pp$s is a
multicomposite, then the exchange axiom follows from the induction
hypothesis, using the connection between the $\hb$ and $\odot$ operations, as
displayed in~\ref{L:ncomp}, part \emph{4} and~\ref{L:kcomp}, parts
\emph{4,5}.

\emph{Case 2:} $k<n$. If all four cells are $\eps$'s, i.e.
$u\sbs{i}\p=\eps\sbs{w_i\p},\, u\sbs{i}\pp=\eps\sbs{w_i\pp}$ then, by part~
\emph{3} of~\ref{L:kcomp}, all we have to show is \[ \eps\sbs{(w\sbs{1}\p
\hat{\bullet}_k w\sbs{1}\pp) \hat{\bullet}_l (w\sbs{2}\p \hat{\bullet}_k
w\sbs{2}\pp)} = \eps\sbs{(w\sbs{1}\p \hat{\bullet}_l w\sbs{2}\p)
\hat{\bullet}_k (w\sbs{1}\pp \hat{\bullet}_l w\sbs{2}\pp)}\] and this follows
by the exchange law in $\bb=\by_n$. Otherwise, if any of the cells is a
$\odot$-composite, then the equality follows easily from the induction
hypothesis, using again the connections between $\hat{\bullet}$ and $\odot$.

\medskip

\emph{The verification of the associative law} is similar, and somewhat
simpler.

\medskip

\emph{The identity laws:}

To verify the left identity law for $\hb_n$, we have to show that $\eps_{\hc
v} \hb_n v=v$. We do this by induction on $v$. If $v$ is an $\eps$, we must
have $\hd v=\hc v$ and $v=\eps_{\hc v}$ and the desired conclusion follows
by~\ref{L:ncomp}, part \emph{3}. If $v=v\p \odot_r v\pp$, then $\hc
v=Tv=T(v\p \odot_r v\pp) = Tv\p =\hc v\p$ and using the induction hypothesis
as well as part \emph{4} of~\ref{L:ncomp}, we conclude that
\[\eps_{\hc v} \hb_n v= (\eps_{\hc v\p} \hb_n v\p) \odot_r v\pp= v\p \odot_r
v\pp =v.\]

The right identity law for $\hb_n$ is part \emph{3} of~\ref{L:ncomp}.

The left identity law for $\hb_k$, with $k<n$, is $\eps_w \hb_k v =v$,
provided that $w = 1\sps{(n)}_a$ where $a= c\sps{(k)} \hc v$. The proof is by
induction on $v$. If $v=\eps_{w\p}$, then $\eps_w \hb_k v = \eps_{w \hb_k
w\p}$ and as $w\p=\hc v$, we have that $a = d\sps{(k)} w\p$, hence $w \hb_k
w\p=w\p$, by the left identity law in $\bb=\by\sbs{n}$, and the desired law
follows. If $v=v\p \odot_r v\pp$, then the conclusion follows easily from the
induction hypothesis, once we notice that $\hc v= Tv=T(v\p \odot v\pp)=
Tv\p=\hc v\p$.

The right identity law is $u \hb_k \eps_{w\p} =u$, where $w\p = 1\sps{(n)}_a$
with $a=d\sps{(k)} \hd u$. The proof, by induction on $u$ is similar, except
that for the induction step $u= u\p \odot_r u\pp$, we have to notice that
$\hd u= d \val{u}^+= d(\val{u\p}^+ \circ_r \val{u\pp}^+) =\hd \val{u\p}^+
\repr \hd \val{u\pp}^+$ and hence, $d\sps{(k)} \hd u= d\sps{(k)} \hd u\p$.
\end{proof}

\medskip

As $\by$ is an $(n+1)$-category whose $n$th truncation is free over its
$(n-1)$th truncation, we may define in it generalized whiskering operations
$w \hrepr -$ for $w \in Y_n$, as described in section~\ref{S:placedcomp}.
Once we did that, we can also define partial placed composition operations
$\hat{\circ}_r$ by the formula \[ u \hat{\circ}_r v= u\hb_n (\hd u \hrepr v)
\text{ for } u,v \in Y_{n+1},\, r\in \doc{u},\, \hc v= \oc{\hd u}(r),\] as in
definition~\ref{D:placedcomp}. Not surprisingly, $\hat{\circ}_r$ turns out to
be the same with the multicomposition operation $\odot_r$ of $\Omega^+[J^+]$.

\begin{claim}\label{L:circriny} If $\by$ is the $(n+1)$-category described
above, then: \begin{enumerate} \item If $w\in Y_n$, $r\in \doc{w}$ and $v \in
Y_{n+1}$ are such that $w\hrepr v$ is defined, then $w\hrepr v= \eps_w
\odot_r v$.
\item For $u,v \in Y_{n+1}$ and $r\in
\doc{u}$ such that $u \odot_r v$ is defined, we have
$u\odot_r v =u \hb_n (\hd u \hrepr v)$. Hence, $\odot_r=\hat{\circ}_r$.
\end{enumerate}\end{claim}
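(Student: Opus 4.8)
The plan is to prove part 1 by induction on the $n$-cell $w$ of $\bb=\ba[I]$, using the characterization of the generalized whiskering operations by conditions 1--3 (Theorem~\ref{L:reprexists}, applied to $\by$, whose $n$-th truncation is the free extension $\bb$), and then to obtain part 2 almost immediately from part 1 together with Claim~\ref{L:ncomp}.

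For part 1, fix $v\in Y_{n+1}$ and $r\in\doc{w}$ with $\hc v=\oc{w}(r)$ — this is exactly the situation in which $\eps_w\odot_r v$, and hence the asserted equality, makes sense. I would argue by induction on $w$. If $w=x\in I$ then $\doc{w}=\{r\}$, so $w\hrepr v=v$ by condition 2, while $\eps_x\odot_r v=1_x\odot_r v=v$ by the left identity axiom of $\Omega^+[J^+]$ (recall $\eps_x=1_x$). If $w$ is an identity cell the claim is vacuous, since $\doc{w}=\emptyset$. If $w=w'\bullet_k w''$ with $k<n$, say $r\in\doc{w'}$ (so $\oc{w'}(r)=\oc{w}(r)=\hc v$ and $w'\hrepr v$ is defined); then by condition 3 and the inductive hypothesis,
\[ w\hrepr v=(w'\hrepr v)\bullet_k w''=(\eps_{w'}\odot_r v)\hb_k\eps_{w''}, \]
where the second equality uses that in $\by$ the whiskering of the $(n+1)$-cell $w'\hrepr v$ by the $n$-cell $w''$ is its $\hb_k$-composite with the $(n+1)$-dimensional identity $\eps_{w''}$ over $w''$. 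Now part 5 of Claim~\ref{L:kcomp} rewrites $(\eps_{w'}\odot_r v)\hb_k\eps_{w''}$ as $(\eps_{w'}\hb_k\eps_{w''})\odot_r v$, and part 3 of that claim together with part 2 of Claim~\ref{L:kcomp} gives $\eps_{w'}\hb_k\eps_{w''}=\eps_{w'\hb_k w''}=\eps_w$; hence $w\hrepr v=\eps_w\odot_r v$. The sub-case $r\in\doc{w''}$ is symmetric, using the other branch of condition 3 and part 4 of Claim~\ref{L:kcomp}.

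For part 2, suppose $u,v\in Y_{n+1}$, $r\in\doc{u}$ and $u\odot_r v$ is defined, i.e. $\hc v=\oc{\hd u}(r)$. Then part 1 applies with $w=\hd u$, giving $\hd u\hrepr v=\eps_{\hd u}\odot_r v$. Substituting this into the definition of $\hat{\circ}_r$, applying part 4 of Claim~\ref{L:ncomp} (with $v'=\eps_{\hd u}$, $v''=v$) and then part 3 of Claim~\ref{L:ncomp} (using $\hc\eps_{\hd u}=\hd u$, which also guarantees definedness at each step), I get
\[ u\,\hat{\circ}_r v=u\hb_n(\hd u\hrepr v)=u\hb_n(\eps_{\hd u}\odot_r v)=(u\hb_n\eps_{\hd u})\odot_r v=u\odot_r v. \]
Since $\hat{\circ}_r$ was defined to be $u\hb_n(\hd u\hrepr v)$, this is precisely the assertion $\odot_r=\hat{\circ}_r$.

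The argument is almost entirely bookkeeping, and I expect the only point needing care to be the matching of definedness conditions in part 1: the generalized whiskering $w\hrepr v$ (for an $(n+1)$-cell $v$) is defined as soon as $v$ has the correct $(n-1)$-dimensional boundary, whereas $\eps_w\odot_r v$ additionally requires $\hc v=\oc{w}(r)$, so part 1 should be read — and is only needed — under the hypothesis $\hc v=\oc{w}(r)$, which is exactly what occurs in the definition of $\hat{\circ}_r$. The other thing to keep straight is the translation between "$\bullet_k$-whiskering by an $n$-cell $w''$ in $\by$" and "$\hb_k$-composition with $\eps_{w''}$", since this is what allows parts 3--5 of Claim~\ref{L:kcomp} to be invoked in the inductive step; the fact that $\eps_w$ genuinely behaves as the $(n+1)$-dimensional identity over $w$ is built into the construction of $\by$ via $\val{\eps_w}^+=1_w$.
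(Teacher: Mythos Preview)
Your proof is correct and follows essentially the same approach as the paper: induction on the $n$-cell $w$ for part~1 (indet base case, identity vacuous, composite step via the whiskering-is-$\hb_k$-with-$\eps$ translation and parts~3 and~5 of Claim~\ref{L:kcomp}), and then the short chain $u\hb_n(\eps_{\hd u}\odot_r v)=(u\hb_n\eps_{\hd u})\odot_r v=u\odot_r v$ for part~2 via parts~3 and~4 of Claim~\ref{L:ncomp}. Your extra remarks on the definedness hypothesis $\hc v=\oc{w}(r)$ and on the identification of whiskering with $\hb_k$-composition by an $\eps$ are accurate and make explicit points the paper leaves implicit.
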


\begin{proof} Part \emph{1}: by induction on the $n$-\emph{cell} $w$.

If $w=x \in I$, then $w\hrep_r v=v= 1_x \odot_r v= \eps_w \odot_r v$.

$w$ cannot be an identity cell, as $\doc{w} \neq \emptyset$.

Finally, if $w=w\p \hb_k w\pp$, assume, e.g., that $r \in \doc{w\p}$. Then
$w\hrepr v = (w\p \hrepr v) \hb_k w\pp=(\eps\sbs{w\p} \odot_r v) \hb_k w\pp$,
where the last equality holds by the induction hypothesis. In these
equalities, $\hb_k$ represents a whiskering, which means that $w\pp$ is just
short for $\eps\sbs{w\pp}$ (which is the identity cell over $w\pp$ in $\by$).
Taking this into consideration, we can go on and conclude that $w\hrepr v=
(\eps\sbs{w\p} \odot_r v) \hb_k \eps\sbs{w\pp}=(\eps\sbs{w\p} \hb_k
\eps\sbs{w\pp}) \odot_r v= \eps\sbs{w\p \hb_k w\pp} \odot_r v = \eps_w
\odot_r v$.

Part \emph{2}: $u \hb_n (\hd u \hrepr v)= u \hb_n (\eps\sbs{\hd u} \odot_r
v)= (u \hb_n \eps\sbs{\hd u}) \odot_r v= u \odot_r v$. \end{proof}

\medskip

Following our plan for the proof of~\ref{L:main}, we now show the following.

\begin{claim}\label{L:yisfree} The $(n+1)$-category $\by$ is freely generated
by $J \subset Y_{n+1}= A (\Omega^+[J^+])$ over $\bb = \by_n$.
\end{claim}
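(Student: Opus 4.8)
The plan is to verify directly, for $\by$, the universal property that characterises $\bb[J]$ (cf.~\ref{L:freeext}); since that property pins down the free extension up to a unique isomorphism fixing $\bb$ and $J$, this identifies $\by$ with $\bx=\bb[J]$. By Claim~\ref{L:cataxioms}, $\by$ is an $(n+1)$-category with $\by_n=\bb$, and $J\subseteq Y_{n+1}=A(\Omega^{+}[J^{+}])$; moreover for $f\in J$ we have $\hd f=d\val{f}^{+}=df$ and $\hc f=c\val{f}^{+}=cf$, because $\val{f}^{+}=f$ is exactly the $(n+1)$-indet of $\bx$. So what remains is: given an $\omega$-category $\bz$ extending $\bb$ and a function $\varphi:J\el Z_{n+1}$ with $d\varphi f=df$, $c\varphi f=cf$, produce a \emph{unique} $\omega$-functor $G:\by\el\bz$ that is the identity on the cells of $\bb$ and satisfies $Gf=\varphi f$.

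For existence I would mimic the construction of $\val{-}^{+}$. The extended placed-composition multicategory $\dcc^{+}_{\bz}$ is based on the same object system $\Omega^{+}=(I,B_n,i_I)$ as $\dcc^{+}_{\bx}$. Define $\varphi^{+}:J^{+}\el A(\dcc^{+}_{\bz})=Z_{n+1}$ by $\varphi^{+}f=\varphi f$ for $f\in J$ and $\varphi^{+}e_w=1_w$ for $w\in B_n\setminus I$. One checks at once that $S\varphi^{+}g=Sg$ and $T\varphi^{+}g=Tg$ for all $g\in J^{+}$ (for $f\in J$ this is the hypothesis on $\varphi$; for $e_w$ it is $S1_w=\oc{w}=Se_w$ and $T1_w=w=Te_w$). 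By the universal property of the free multicategory $\Omega^{+}[J^{+}]$ (Theorem~\ref{L:freemc}) there is a unique morphism $\psi:\Omega^{+}[J^{+}]\el\dcc^{+}_{\bz}$ that is the identity on $\Omega^{+}$ and satisfies $\psi g=\varphi^{+}g$ for $g\in J^{+}$. I then set $G$ to be the identity on the cells of $\bb$ and $Gu=\psi u$ for $u\in Y_{n+1}$.

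The content is in checking that $G$ is an $\omega$-functor. \emph{Domains and codomains:} $\psi$ and $\val{-}^{+}$ are both morphisms that are the identity on $\Omega^{+}$ and that agree, up to the identification of cells of $\bb$, on every generator in $J^{+}$; so an induction on $J^{+}$-arrows — which by~\ref{L:Jarrows} are generated from predets and identity arrows under the $\odot_r$, and for which $d(a\circ_r b)=da\repr db$ and $c(a\circ_r b)=ca$ by Definition~\ref{D:placedcomp} — yields $d\psi u=d\val{u}^{+}=\hd u$ and $c\psi u=\hc u$ for every $u$; as $G$ is the identity on $B_n$ this is exactly $dGu=G\hd u$, $cGu=G\hc u$. \emph{Identities:} the identity $(n+1)$-cell over $w\in B_n$ in $\by$ is $\eps_w$, and $\psi\eps_w=1_w$ both when $w\in I$ (a morphism preserves identity arrows) and when $w\notin I$ (by the definition of $\varphi^{+}$), $1_w$ being the identity $(n+1)$-cell over $w$ in $\bz$. \emph{Compositions $\hb_k$, $k\leqslant n$:} this is the main obstacle. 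I would prove $\psi(u\hb_k v)=\psi u\bullet_k\psi v$ by induction following the recursive clauses defining $\hb_k$ in Claims~\ref{L:ncomp} and~\ref{L:kcomp}, using that $\psi$ turns $\odot_r$ into the placed composition $\circ_r$ of $\bz$ and $\eps_w$ into $1_w$, that Proposition~\ref{L:bulletviacirc} pulls a $\circ_r$ out of a $\bullet_k$ inside $\bz$, and the identity and replacement laws of~\ref{L:pcomp} and~\ref{L:rep}. For instance, for the clause $u\hb_n(v\p\odot_r v\pp)=(u\hb_n v\p)\odot_r v\pp$ one gets $\psi(u\hb_n v)=(\psi u\bullet_n\psi v\p)\circ_r\psi v\pp=\psi u\bullet_n(\psi v\p\circ_r\psi v\pp)=\psi u\bullet_n\psi v$ by the induction hypothesis and~\ref{L:bulletviacirc}(1); the fussy cases are the bases $\eps_w\hb_k\eps_{w\p}$, $\eps_w\hb_k f$ and $f\hb_k v$ for an indet $f$, which unwind via the defining conditions of generalized whiskering exactly as in the proof of~\ref{L:kcomp}. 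Nothing deep, but distinctly case-heavy.

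Finally $Gf=\psi f=\varphi f$ and $G$ is the identity on $\bb$, so $G$ has the required properties. For \emph{uniqueness}, any $\omega$-functor $G\p:\by\el\bz$ that is the identity on $\bb$ and sends $f\mapsto\varphi f$ is forced by induction on $J^{+}$-arrows: $G\p\eps_w=1_{G\p w}=1_w$ is determined, and for $u=u\p\odot_r u\pp=u\p\hat{\circ}_r u\pp=u\p\hb_n(\hd u\p\hrepr u\pp)$ (Claim~\ref{L:circriny}), $G\p$ preserves $\hb_n$ and — being the identity on $\bb$ — the generalized whiskering $\hd u\p\hrepr-$ (an induction on the $n$-cell $\hd u\p$ via conditions 1--3 of Theorem~\ref{L:reprexists}), so $G\p u$ is determined by $G\p u\p$ and $G\p u\pp$; hence $G\p=G$. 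This shows $\by$ is freely generated over $\bb$ by $J$, and therefore $\by\cong\bb[J]=\bx$, as claimed.
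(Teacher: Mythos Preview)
Your argument is correct and follows essentially the same approach as the paper: extend $\varphi$ to $\varphi^{+}$ on $J^{+}$ by sending predets to identity cells, invoke the universal property of $\Omega^{+}[J^{+}]$ to obtain the morphism into $\dcc^{+}_{\bz}$, and then verify that this morphism is an $\omega$-functor by inducting along the recursive clauses of Claims~\ref{L:ncomp} and~\ref{L:kcomp} with Proposition~\ref{L:bulletviacirc} supplying the compatibility of $\bullet_k$ with $\circ_r$. Your uniqueness argument via Claim~\ref{L:circriny} and preservation of generalized whiskering is likewise the paper's, only spelled out a bit more explicitly.
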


\begin{proof} Let $\bz$ be an $\omega$-category extending $\bb$ and $\varphi:
J \el Z_{n+1}$ a map such that $d\varphi f= \hd f$, $c\varphi f=\hc f$ for $f
\in J$ (here and in the sequel, $d$ and $c$ represent the domain/codomain
functions $d_{\bz},\,c_{\bz}$ of the $\omega$-category $\bz$). We have to
show the existence of a unique $\omega$-functor $G$ extending both, the
identity functor on $\bb$ and $\varphi$. This amounts to specifying the
function that sends each element $u \in Y_{n+1}=A(\Omega^+[J^+])$ to $Gu$,
which is an $(n+1)$-cell of $\bz$ and proving that there is just one such
function that makes $G$ into an $\omega$-functor.

At this point, it is useful to remember that the $(n+1)$-cells of $\bz$ are
the arrows of the extended multicategory $\dcc^+_{\bz}$ which is based on the
object system $\Omega^+$ as well. Our proof will proceed as follows.

\emph{First}, we extend the function $\varphi$ to $\varphi^+: J^+ \el
Z_{n+1}$, by sending the predets to the corresponding identity cells. By the
universal property of $\Omega^+[J^+]$, there is a unique morphism of
multicategories $\chi: \Omega^+[J^+] \el \dcc^+_{\bz}$, which is the identity
on $\Omega^+$ and extends $\varphi^+$.

\emph{Next}, we show that the function $\chi_a$, operating on arrows,
preserves domains, codomains, identity cells as well as $\omega$-categorical
compositions (i.e. $\chi_a (u \hb_k v)= \chi_au \bullet_k \chi_av$ for
$k\leqslant n$, where $\bullet_k$ is the composition in $\bz$). This last
fact follows readily from claims~\ref{L:ncomp},~\ref{L:kcomp},
proposition~\ref{L:bulletviacirc} and the fact that $\chi_a$ preserves
multicomposition. Hence, by setting $Gu= \chi_a u$ for $u \in Y_{n+1}$, we
get an $\omega$-functor as desired.

\emph{Finally}, claim~\ref{L:circriny} implies that any $G$ as above
preserves multicomposition, hence it originates from the unique morphism
$\chi$ that we just described. This proves the uniqueness of $G$.

In the rest of this claim's proof we are elaborating on these three steps.

If we define $\varphi^+ f= \varphi f$ for $f \in J$ and $\varphi e_w=1_w (\in
Z_{n+1})$ for $w \in B_n \setminus J$, we get a function that preserves
sources and targets. Indeed, $S\varphi^+ f= S\varphi f= \oc{d\varphi f}=
\oc{\hd f}=Sf$ for $f \in J$  and $S\varphi^+ e_w=S1_w=\oc{d1_w}=\oc{w}=Se_w$
for $w \in B_n \setminus J$ (notice the ambiguous use of $S$ as denoting
source in $\Omega^+[J^+]$ as well as in $\dcc^+_{\bz}$). A similar
computation shows that $\varphi^+$ preserves targets. The conclusion is that
we can apply the universal property of $\Omega^+[J^+]$ and infer the
existence of the morphism $\chi$ mentioned above.

We have to show that for $u \in A(\Omega^+[J^+])=Y_{n+1}$, $d\chi_a u= \hd u$
and $c\chi_a u = \hc u$. We do this by induction on the \emph{arrow} $u$. If
$u=f \in J$, then $\chi_a u= \varphi f$ and there is nothing to prove. If $u=
\eps_w$ for $w \in B_n$, then $\chi_a u= 1_w$ and $d\chi_a u=w = \hd \eps_w=
\hd u$ and similarly for codomains. As to the induction step: if $u= u\p
\odot_r u\pp$, then $\chi_a u=\chi_a u\p \circ_r \chi_a u\pp$ (where
$\circ_r$ is $(n+1)$-cell placed composition in $\bz$). The induction
hypothesis is that $d\chi_a u\p= \hd u\p,\, d\chi_a u\pp= \hd u\pp$, hence
$d\chi_a u= d\chi_a u\p \repr d\chi_a u\pp= \hd u\p \repr \hd u\pp= \hd (u\p
\odot_r u\pp) = \hd u$, where $\repr$ is cell replacement in \emph{both}
$\bz$ and $\bx$, as we have $\bz_n= \bx_n$. The preservation of codomains is
proven by a similar, but simpler, computation.

It is very easy to see that $\chi_a$ preserves identities. We still have the
task of proving that $\chi_a (u \hb_k v) =\chi_a u \bullet_k \chi_a v$, for
$k \leqslant n$.

For $k=n$, we prove this by induction on $v$. If $v$ is an identity or a
predet, then we must have $v= \eps\sbs{\hd u}$ and the equality is trivial.
If $v=v\p \odot_r v\pp$, then $\chi_a (u \hb_n v)= \chi_a((u \hb_n v\p)
\odot_r v\pp)= \chi_a (u \hb_n v\p) \circ_r \chi_a v\pp$. By using the
induction hypothesis $\chi_a (u \hb_n v\p)=\chi_a u \bullet_n \chi_a v\p$ and
then proposition~\ref{L:bulletviacirc}, we can go on and conclude that
$\chi_a (u \hb_n v)= (\chi_a u\bullet_n \chi_a v\p) \circ_r \chi_a v\pp=
\chi_a u \bullet_n (\chi_a v\p \circ_r \chi_a v\pp)= \chi_a u \bullet_n
\chi_a (v\p \odot_r v\pp)= \chi_a u \bullet_n \chi_a v$.

If $k<n$, then we show by induction on $u$ that $(u \hb_k v) = \chi_a u
\bullet_k \chi_a v$, for all $v$ for which the left hand side is defined (and
hence, so is the right). For $u=\eps_w$, this is done by induction on $v$,
much in the style of the calculation that we just completed  (the main
difference being that this time we use~\ref{L:kcomp}, rather
than~\ref{L:ncomp}). For $u=u\p \odot_r u\pp$, we use~\ref{L:kcomp} again, as
well as the induction hypothesis for $u\p$ and~\ref{L:bulletviacirc}.

By letting $Ga=a$ for $a$ a cell of $\bb=\by_n=\bz_n$ and $Gu= \chi_a u$ for
$u \in Y_{n+1}$, we complete the proof of the \emph{existence} of $G$.

To show \emph{uniqueness}, assume that $G: \by \el \bz$ is an
$\omega$-functor as desired. We have to prove that $G$ must be induced by the
morphism $\chi$ as described above. For this, suffices to show that $G$
preserves multicomposition, meaning that $G(u \odot_r v)= Gu \circ_r Gv$.
This is quite trivial, though: on one hand, we know from~\ref{L:circriny}
that the multicompositions $\odot_r$ are the same as the cell replacements
$\hat{\circ}_r$ in the $\omega$-category $\by$; on the other hand, any
$\omega$-functor like $G$, between two extensions of the $n$-category $\bb$
which extends the identity on $\bb$, clearly preserves placed compositions
between $(n+1)$-cells.

The proof of~\ref{L:yisfree} is now complete.\end{proof}

It follows that $\by$ is isomorphic to $\bx = \bb[J]$ by a unique isomorphism
that extends the identity functions on $\bb$ and $J$. We are now able to
infer immediately the following fact that we stated when outlining the proof
of~\ref{L:strongmain}.

\begin{claim}\label{L:omegaj+isc+y} The multicategories $\Omega^+[J^+]$ and
$\dcc^+_{\by}$ are identical.\end{claim}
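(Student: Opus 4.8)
The plan is to check, component by component, that the two multicategories $\Omega^+[J^+]$ and $\dcc^+_{\by}$ consist of literally the same data: the same object system, the same set of arrows, the same source and target functions, the same multicomposition operations, and the same identity arrows. Since a multicategory (definition~\ref{D:multcat}) is \emph{nothing but} this data, establishing coincidence of every component establishes that the two multicategories are identical. No new mathematical content is needed — all the ingredients have already been assembled in the construction of $\by$ and in claims~\ref{L:ncomp},~\ref{L:kcomp},~\ref{L:cataxioms} and~\ref{L:circriny}; what remains is only to match them up. (Note first that $\dcc^+_{\by}$ is well defined: by claim~\ref{L:cataxioms}, $\by$ is an $(n+1)$-category with $\by_n = \bb = \ba[I]$, which is exactly what the ``second example'' construction of section~\ref{S:multicats} requires.)

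I would begin with the object systems. By construction $\Omega^+[J^+]$ is based on $\Omega^+ = (I, B_n, i_I)$; and since $\by_n = \bb = \ba[I]$, the extended placed-composition multicategory $\dcc^+_{\by}$ is by definition also based on $\Omega^+$. For the arrows: in defining $\by$ we set $Y_{n+1} = A(\Omega^+[J^+])$, and $A(\dcc^+_{\by})$ is by definition the set of all $(n+1)$-cells of $\by$, i.e.\ $Y_{n+1}$; so the two arrow sets coincide on the nose. Next, source and target. When defining $\hat d,\hat c$ (in the discussion preceding claim~\ref{L:ncomp}) we recorded, using that $\val{-}^+$ is the identity on $\Omega^+$ and hence preserves sources and targets, that $Su = \oc{\hat d u}$ and $Tu = \hat c u$ for every $u \in Y_{n+1}$, where $S,T$ on the left are the source/target of $\Omega^+[J^+]$. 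But $\oc{\hat d u}$ and $\hat c u$ are precisely the source and target of $u$ as an arrow of $\dcc^+_{\by}$, so the source and target functions agree. For identities: the identity arrows of $\Omega^+[J^+]$ are the $1_x$ with $x \in I$ (definition~\ref{D:pterms}), while the identity arrows of $\dcc^+_{\by}$ are, by definition, the identity \emph{cells} of $\by$ over the objects $x \in I$, which by our definition of the identity cells of $\by$ are the $\eps_x = 1_x$; these match. (The predets $e_w$, $w\in B_n\setminus I$, are arrows of both structures but identity arrows of neither, since $w\notin I$, so no conflict arises.)

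It then remains to compare the multicomposition operations. In $\Omega^+[J^+]$ the operation at place $r$ is $\odot_r$; in $\dcc^+_{\by}$ it is the placed composition $\hat\circ_r$ of the $(n+1)$-category $\by$, given by $u\hat\circ_r v = u \hb_n (\hat d u \hrepr v)$. But claim~\ref{L:circriny}, part~2, asserts exactly $u \odot_r v = u \hb_n (\hat d u \hrepr v)$, i.e.\ $\odot_r = \hat\circ_r$. With this the last component matches, and the two multicategories are identical. I do not expect any genuine obstacle here; the only care required is in tracking which earlier claim supplies each equality — claim~\ref{L:circriny} for multicomposition, the remarks accompanying the definition of $\hat d,\hat c$ for source/target, and claim~\ref{L:cataxioms} (together with claim~\ref{L:yisfree}) to know $\by$ is the $(n+1)$-category one is taking $\dcc^+$ of — but all of that work is already done.
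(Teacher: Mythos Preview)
Your proposal is correct and follows essentially the same approach as the paper's proof: a component-by-component verification that the object system, arrow set, source/target functions, identity arrows, and multicomposition operations coincide, with claim~\ref{L:circriny} supplying the key identification $\odot_r = \hat\circ_r$. The paper's version is simply more terse, omitting the explicit mention of claim~\ref{L:cataxioms} and the discussion of predets, but the substance is the same.
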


\begin{proof} Obviously, the two multicategories have the same object system
$\Omega^+$, the same set of arrows $A(\Omega^+[J^+])=Y_{n+1}$ and the same
source and target functions $Su= \oc{\hd u},\, Tu= \hc u$. Further, they have
the same identity arrows $1_x$, for $x \in I$. By~\ref{L:circriny}, they also
have the same multicomposition operations $\odot_r=\hat{\circ}_r$.
\end{proof}

\emph{Concluding the proof of~\ref{L:strongmain}}: The unique
$\omega$-functor $K: \by \to \bx$ extending the identity maps on both $\bb$
and $J$ is an isomorphism that induces an isomorphism of multicategories
$\kappa: \dcc^+_{\by}=\Omega^+[J+] \to \dcc^+_{\bx}$. In addition, $\kappa$
maps the indets $e\sbs{w},\, w \in B_n \setminus I$, which are also identity
cells in $\by$, to the corresponding identity cells $1_w$ in $\bx$. Hence,
$\kappa$ must be the canonical morphism $\val{-}^+$. \qed

We now mention one more remarkable fact. The elements of the set $A(\Omega^+
[J^+] = Y_{n+1}$ are, at the same time, the arrows of the \emph{free}
multicategory $\Omega^+[J^+]$ and the $(n+1)$-cells of the \emph{free}
extension $\by$ of the $n$-category $\bb$. Therefore, we can define on this
set \emph{two} replacement operations, the multicategorical $\drep_r$
(cf.~\ref{L:arrowrep}) and the $(n+1)$-categorical $\hrep_r$
(cf.~\ref{L:reprexists}). Are these operations the same? Certainly not,
because we might encounter $u,v$ and $r \in \doc{u}$ such that, for $f=
\oc{u}(r)$, we have $Tv =Tf$ (which also means that $\hc v = \hc f$) and $Sv
=Sf$ (which is the same as $\doc{\hd v} =
\doc{\hd f}$), but $\hd v \neq \hd f$. In such a case, $u \drep_r v$ is
defined, while $u \hrep_r v$ is not. However, when both expressions are
defined, they are the same.

\begin{claim}\label{L:drepisrep} If $u,v \in Y_{n+1}$ and $r \in \doc{u}$
are such that $u \hrep_r v$ is defined, then $u \hrep_r v = u \drep_r v$.
\end{claim}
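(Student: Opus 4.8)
The plan is to prove $u \hrep_r v = u \drep_r v$ by induction on the $J^+$-arrow $u$, using the fact (from Claim~\ref{L:Jarrows}) that when $u\hrep_r v$ is defined we have $f = \oc{u}(r) \in J$, so $u$ is necessarily a $J$-arrow, and recalling from the observation after Claim~\ref{L:Jarrows} that the $J^+$-arrows are generated from the $\eps_w$ by the clause: $u'\odot_q v' \in P$ whenever $u' \in P$ and $v'$ is a $J$-arrow with $u'\odot_q v'$ defined. So the induction on $u$ has as base cases the indets $f \in J$ and the identity arrows $1_x = \eps_x$ (the predets $e_w$ are excluded since their target is not in $I$), and the induction step is $u = u' \odot_q u''$.

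First I would dispose of the base cases. If $u = f \in J$ with $\doc{u} = \{r\}$, then by clause~2 of Theorem~\ref{L:arrowrep}, $u \drep_r v = v$; on the other hand $u = f$ is an $(n+1)$-indet of $\by$, so by the identity rule for $\hrep$ (condition~2 before Definition~\ref{D:W}, which $\hrep$ satisfies by Theorem~\ref{L:reprexists}), $u \hrep_r v = v$ as well. If $u = \eps_x = 1_x$ is an identity cell, then $\doc{u} = \emptyset$, so there is no $r$ and nothing to prove (the case cannot arise). The induction step is where the real content lies: if $u = u' \odot_q u''$ and, say, $r \in \doc{u'}$ (the case $r \in \doc{u''}$ is symmetric, with $u''$ being a $J$-arrow by Claim~\ref{L:Jarrows}), then I want to compute both sides. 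For the multicategorical side, clause~3 of Theorem~\ref{L:arrowrep} gives $u \drep_r v = (u' \drep_r v) \odot_q u''$. For the $(n+1)$-categorical side, I would use Claim~\ref{L:circriny}(2) to rewrite $u = u' \odot_q u'' = u' \hb_n (\hd u' \hrepr u'')$, which is a $\bullet_n$-composition (or a $\hb_k$-composition if we unfold further — but note $\hd u' \hrepr u''$ involves the generalized whiskering, so really $u = u' \odot_q u''$ expresses $u$ as built by $\hb_n$ and whiskerings from $u'$ and $u''$). Then I apply the $(n+1)$-categorical replacement's recursion clause 3 (the one before Definition~\ref{D:W}): since $r \in \doc{u'} \subset \doc{u' \hb_n (\hd u' \hrepr u'')}$ lands in the left operand $u'$, we get $u \hrep_r v = (u' \hrep_r v) \hb_n (\hd u' \hrepr u'')$.

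The main obstacle — the step I expect to require genuine care — is reconciling these two expressions, i.e. showing $(u' \drep_r v) \odot_q u'' = (u' \hrep_r v) \hb_n (\hd u' \hrepr u'')$. By the induction hypothesis $u' \drep_r v = u' \hrep_r v$, so it suffices to show $(u' \hrep_r v) \odot_q u'' = (u' \hrep_r v) \hb_n (\hd (u' \hrep_r v) \hrepr u'')$, which is precisely an instance of Claim~\ref{L:circriny}(2) (placed composition $=$ $\hb_n$ with the whiskering) — \emph{provided} that $\hd (u' \hrep_r v) = \hd u'$, so that the $q$-indexed occurrence being substituted, namely $\oc{\hd u'}(q) = \oc{\hd(u' \hrep_r v)}(q)$, is genuinely the same object and the composite is legitimate. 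This domain equality is exactly clause~1 of Theorem~\ref{L:reprexists}: replacing the $r$-occurrence in the $n+1$-cell $u'$ by $v$ does not change the $(n-1)$-th domain/codomain, and more relevantly $\hd(u' \hrep_r v) = \hd u'$ because $v \parallel f$ and substituting a parallel cell leaves the domain $n$-cell's indet-occurrence structure at positions $\neq r$ intact — here I should invoke the coproduct description $\doc{u' \hrep_r v} = (\doc{u'} - \{r\}) \dot\cup \doc{v}$ from the discussion before Theorem~\ref{L:rep}, together with the hypothesis that $u \hrep_r v$ is defined (which by clause~1 of Theorem~\ref{L:reprexists} forces $\hd^{(n)} v = \oc{\hd u'}(r) = f$, in particular $\hd v = \hd f$, so indeed $\hd(u'\hrep_r v) = \hd u' \hrep_r \hd v$ equals... ). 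I would check carefully that $\hd u'$ is unchanged — this is where the hypothesis "$u\hrep_r v$ is defined" (as opposed to merely $u\drep_r v$ defined) does its work, since it guarantees $\hd v = \hd f = \oc{\hd u'}(r)$, hence $\hd u' \hrep_r \hd v = \hd u'$ by the identity rule Theorem~\ref{L:rep}(3). Once that domain bookkeeping is nailed down, the two sides coincide by Claim~\ref{L:circriny}(2) and the induction hypothesis, completing the proof.
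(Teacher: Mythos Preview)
Your overall strategy---induction on the $J^+$-arrow $u$, rewriting $u' \odot_q u''$ as $u' \hb_n (\hd u' \hrep_q u'')$ via Claim~\ref{L:circriny}(2), and then applying the recursion clause for $\hrep$---is exactly the paper's approach. Two points need correcting, however.

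First, your claim that ``$u$ is necessarily a $J$-arrow'' is false. Take $u = e_w \odot_q f$ with $f \in J$ and $w \notin I$: then $Tu = w \notin I$, so $u$ is not a $J$-arrow, yet $u$ (which equals $w \hrep_q f$ in $\by$) has an occurrence of the $(n+1)$-indet $f$, so $u \hrep_r v$ is defined for suitable $v$. This does no real damage: the predets $e_w$ are identity $(n{+}1)$-cells in $\by$, hence have empty $\doc{e_w}$ in the $(n{+}1)$-categorical sense, so they are vacuous base cases. Just include them and run the induction over all $J^+$-arrows.

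Second, and more substantively, the case $r \in \doc{u''}$ is \emph{not} symmetric to the one you wrote out. When $r \in \doc{u''}$, condition~3 for $\hrep$ gives $u \hrep_r v = u' \hb_n \bigl((\hd u' \hrep_q u'') \hrep_r v\bigr)$, and you need $(\hd u' \hrep_q u'') \hrep_r v = \hd u' \hrep_q (u'' \hrep_r v)$ to proceed; this is precisely Lemma~\ref{L:repqrepr}, mixing the $n$-level and $(n{+}1)$-level replacement operations. The paper spells out this case in detail and calls the $r \in \doc{u'}$ case (the one you did) ``similar and simpler,'' because there the only extra ingredient is the domain identity $\hd(u' \hrep_r v) = \hd u'$---which, incidentally, follows directly from condition~1 of Theorem~\ref{L:reprexists} at the $(n{+}1)$-level; your detour through ``$\hd u' \hrep_r \hd v$'' does not type-check (the index $r$ lives in $\doc{u'}$, not $\doc{\hd u'}$) and should be dropped.
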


\begin{proof} By induction on the \emph{arrow} $u$ of $\Omega^+[J^+]$. If $u$
is an indet, then both expressions equal $v$. If $u = u\p \odot_q u\pp$ then,
by part \emph{2} of~\ref{L:circriny}, $u= u\p \hb_n (\hd u\p \hrep_q u\pp)$.
If $r \in \doc{u\pp}$ then \[ u \hrep_r v= u\p \hb_n ((\hd u\p \hrep_q u\pp)
\hrep_r v) = u\p \hb_n (\hd u\p \hrep_q (u\pp \hrep_r v)) = u\p \hb_n
(\epsilon\sbs{\hd u\p} \odot_q (u\pp \drep_r v)) \] where the second equality
follows by~\ref{L:repqrepr}, while the third uses the induction hypothesis
for $u\pp$ as well as part \emph{1} of~\ref{L:circriny}. Employing parts
\emph{4,3} of~\ref{L:ncomp}, we go on and conclude \[ =(u\p \hb_n
\epsilon\sbs{\hd u\p}) \odot_q (u\pp \drep_r v) = u\p \odot_q (u\pp \drep_r
v) = (u\p \odot_q u\pp) \drep_r v = u \drep_r v.\] The case $r \in \doc{u\p}$
is similar and simpler. It uses the identity $\hd (u\p \hrep_r v) =\hd u\p$
(cf. condition \emph{1} of~\ref{L:reprexists}). \end{proof}

Of course, the same claim is true for the isomorphic $(n+1)$-category $\bx$
as well. By this we mean that the operation of $(n+1)$-cell replacement in
the free $(n+1)$-category $\bx$ is the \emph{same} with arrow replacement in
the free multicategory $\dcc_{\bx}$, whenever the former is defined.

\medskip

We stated in the introduction that our results imply that, \emph{under
certain conditions}, the free extension $\bx=\bb[J]$ can be construed as a
\emph{term model}. We conclude this section by outlining a proof of this
fact.

\begin{proposition}\label{L:termmodel} Under the assumptions of this section,
there is a primitive recursive function $(-)^{\nu}: \stt(\stc) \el
\stt(\stc)$ which associates with every $\stc$-term $t$ another $\stt$-term
$t^{\nu}$ such that for all $t,s \in \stt(\stc)$, we have that $\vdash t=s$
iff $t\sps{\nu} = s\sps{\nu}$. \end{proposition}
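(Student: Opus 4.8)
The plan is to build the reduction $(-)^\nu$ by routing $\stc$-terms through the multicategorical term model, using the isomorphism $\val{-}:\Omega[J]\el\dcc_{\bx}$ of theorem~\ref{L:main} (and, more delicately, its extension $\val{-}^+$ of theorem~\ref{L:strongmain}). The key observation is that every many-to-one $(n+1)$-cell of $\bx$ has a \emph{canonical} description: by~\ref{L:main}, $\dcc_{\bx}$ is a free multicategory, so by the second proof of~\ref{L:freemc} every arrow of $\dcc_{\bx}$ is $\val{u}$ for a \emph{unique} reduced $\stm$-term $u\in\sta$, equivalently for a unique normal $\stm$-term. This gives, on the many-to-one fragment, exactly the kind of unique normal form that $\stm$ enjoys but $\stc$ apparently lacks. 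So the first step is to make precise, via the translation $t\mapsto\tilde t$ of $\stm$-terms into $\stc$-terms (mentioned just before the Notation box, coming from tracking the proofs of~\ref{L:freeext} and~\ref{L:freemc}), a primitive recursive map $\mathrm{nf}:\stt(\stc)\el\stt(\stc)$ defined \emph{only on $\stc$-terms denoting many-to-one cells}: given such a $t$, compute the unique reduced $\stm$-term $u$ with $\val{u}=\eqv{t}$ (this is decidable and primitive recursive, since the word problem for $\stm$ is, and since one can primitive-recursively search for $u$ and verify $\eqv{\tilde u}=\eqv{t}$), and set $\mathrm{nf}(t)=\tilde u$. By~\ref{L:main}, $\vdash t=s$ iff $\val{u_t}=\val{u_s}$ iff $u_t=u_s$ iff $\mathrm{nf}(t)=\mathrm{nf}(s)$, for $t,s$ both denoting many-to-one cells.

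The second step is to handle \emph{arbitrary} $\stc$-terms, not just those denoting many-to-one cells. Here I would use theorem~\ref{L:strongmain}: the \emph{extended} multicategory $\dcc^+_{\bx}$, whose arrows are \emph{all} $(n+1)$-cells of $\bx$, is freely generated by $J^+=J\dot\cup\{e_w\}$, and so again has unique normal forms ($\stm^+$-terms modulo $\approx$), via $\val{-}^+$. The translation idea extends: there is a primitive recursive map sending each $\stm^+$-term to a $\stc$-term, realizing $\val{-}^+$ at the level of terms, provided we first fix, once and for all, a particular $\stc$-term $\tau_w$ denoting each $n$-cell $w\in B_n$ (so that the predet $e_w$ gets translated to $1_{\tau_w}$-style material — concretely, $e_w$ denotes the identity $(n+1)$-cell $1_w$, which as a $\stc$-term is just $1_{?}$ for an $(n-1)$-cell, or rather $1^{(n+1)}$ of an $n$-cell, so one must express $1_w$ by a $\stc$-term, using that $w$ is an equivalence class of $\stc$-terms over the $n$-indets $I$). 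With that in hand, define $t^\nu$ for an arbitrary $\stc$-term $t$ by: compute $\eqv{t}\in X_{n+1}$, find (primitive recursively, by search-and-verify) the unique $\stm^+$-normal-form $v$ with $\val{v}^+=\eqv{t}$, and let $t^\nu$ be the image of $v$ under the term-level translation. Then $\vdash_{\stc}t=s$ iff $\eqv{t}=\eqv{s}$ iff $v_t=v_s$ iff $t^\nu=s^\nu$.

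The main obstacle, and the place where genuine work is needed, is the \emph{primitive recursiveness} and, underneath it, the \emph{decidability} of the whole scheme — in particular constructing the two term-level translations ($\stm\to\stc$ and $\stm^+\to\stc$) as explicit primitive recursive functions and proving they correctly realize $\val{-}$ and $\val{-}^+$ on equivalence classes. The excerpt only asserts such a translation $t\mapsto\tilde t$ exists "by carefully following the proofs"; to get a genuinely primitive recursive $(-)^\nu$ one has to (i) exhibit it, by recursion on $\stm$-term (resp.\ $\stm^+$-term) structure mirroring the recursive clauses of $val_{\gamma,\varphi}$, (ii) check $\eqv{\tilde t}=\chi(\eqv t)$ by induction on the term, using the clauses of Definition~\ref{D:val} and Definition~\ref{D:placedcomp} ($\odot_r$ going to $\circ_r$, i.e.\ $u\bullet_n(du\repr v)$), and (iii) argue that the backward search "find the unique reduced $\stm$- (or $\stm^+$-)term $u$ with $\val{u}=\eqv t$" terminates, which reduces to the decidability of the $\stm$- and $\stm^+$-word problems (established in section~\ref{S:planguage}) together with the decidability of "$\eqv{\tilde u}=\eqv{t}$", i.e.\ of $\vdash_{\stc}\tilde u=t$ — but the latter is exactly the kind of $\stc$-provability we do \emph{not} a priori know to be decidable. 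The resolution is that we do not need to decide arbitrary $\stc$-equality: once we know (by~\ref{L:strongmain}) that $\val{-}^+$ is a bijection, for a given $t$ there is \emph{exactly one} $\stm^+$-normal-form $v$ with $\val{v}^+=\eqv t$, and whether a candidate $v$ works is decided by computing $\eqv{\tilde v}$ and $\eqv t$ as cells — which we can do because $\val{-}^+$ being an isomorphism lets us instead compare $v$ with the normal form of the $\stm^+$-term obtained from $t$ by the forward direction of the correspondence. I would organize the final write-up around this point: set up the bijections at the level of normal forms first, then derive the primitive recursive $(-)^\nu$ as "translate $t$ to its $\stm^+$-normal form, translate back", and relegate the two routine translation lemmas and their verifications to a remark, citing Definitions~\ref{D:val},~\ref{D:placedcomp} and theorems~\ref{L:main},~\ref{L:strongmain}.
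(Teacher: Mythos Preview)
Your overall strategy is right and matches the paper's: route through the free multicategory $\Omega^+[J^+]$ via theorem~\ref{L:strongmain}, use that $\stm^+$ has unique normal forms, and translate back. Where your proposal has a genuine gap is the \emph{forward} translation $\stt(\stc)\to\stt(\stm^+)$. You construct the backward translation $\stm^+\to\stc$ (the paper calls it $s\mapsto s^{\sharp}$) and then propose to find the $\stm^+$-normal form of a given $\stc$-term $t$ by \emph{search-and-verify}. As you yourself note, verification would require deciding $\vdash_{\stc}\tilde u=t$, which is circular. Your attempted resolution---``compare $v$ with the normal form of the $\stm^+$-term obtained from $t$ by the forward direction of the correspondence''---presupposes exactly the forward map you have not built; if you had it, the search would be superfluous.

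The paper supplies that missing map directly: define $t\mapsto t'$ from $\stt(\stc)$ to $\stt(\stm^+)$ by recursion on $t$, with the key clause $(t_1\bullet_k t_2)' = t_1'\,\hat{\bullet}_k\, t_2'$, where $\hat{\bullet}_k$ is the operation on $\stm^+$-terms defined in the proofs of claims~\ref{L:ncomp} and~\ref{L:kcomp}. Those claims already show $\hat{\bullet}_k$ is primitive recursive on terms and that $\val{t_1'\,\hat{\bullet}_k\,t_2'}^+ = \val{t_1'}^+\bullet_k\val{t_2'}^+$, so $\val{\eqv{t'}}^+=\eqv{t}$ follows by induction on $t$. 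Then $t^\nu = ((t')^{\star})^{\sharp}$, i.e.\ translate forward, normalize in $\stm^+$, translate back. No search is involved, and the correctness of the ``iff'' is immediate from~\ref{L:strongmain} together with the fact that $(-)^{\star}$ decides $\stm^+$-equality. The content you were missing is precisely that the $(n+1)$-category $\by$ of section~\ref{S:comparing} was built by giving explicit term-level recipes for the $\bullet_k$'s on $\stm^+$-terms, and those recipes \emph{are} the forward translation.
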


This means that, in the construction of the free extension $\bx=\bb[J]$, we
can substitute the term $t^{\nu}$ for the equivalence class $\eqv{t}$.

\begin{proof} (Sketch) By following our proofs of~\ref{L:ncomp}
and~\ref{L:kcomp}, it is not hard to see that there exists a primitive
recursive function $t \mapsto t\p$ that associates with any $\stc$-term $t$ a
$\stm^+$-term $t\p$, such that $\val{\eqv{t\p}}^+= \eqv{t}$ (hint: one clause
in the recursive definition of $(-)\p$ is $(t\sbs{1} \bullet_k t\sbs{2})\p =
t\sbs{1}\p \hb_k t\sbs{2}\p$, with $t\sbs{1}\p \hb_k -$ defined as in the
proof of~\ref{L:kcomp}).

Next, another primitive recursive function takes any $\stm^+$-term $s$ to a
$\stc$-term $s^{\sharp}$ such that $\val{s}^+= \eqv{s^{\sharp}}$.

Finally, take $t^{\nu}= ((t\p)\sps{\star})^{\sharp}$, where
$(t\p)\sps{\star}$ is the unique \emph{normal} $\stm^+$-term equivalent to
$t\p$ (cf. the discussion that follows proposition~\ref{L:submc}).
\end{proof}

\section{Computads and multitopic sets}\label{S:computads}

The notion of \emph{computad} that we are going to present, was first defined
by Street. A computad is a special kind of $\omega$-category which is
obtained by starting with a $0$-category, i.e. a barren set, taking a free
extension of it which is a $1$-category, i.e. an ordinary category, then
taking a free extension of it which is a $2$-category and so on, \emph{ad
infinitum}. The precise definition is very simply stated.

\begin{definition}\label{D:computad} An $\omega$-category $\ba$ is called a
\emph{computad} if for every $n < \omega$, $\ba_{n+1}$ is a free extension of
$\ba_n$. \end{definition}

Thus, if $\ba$ is a computad then there exists, for every $n < \omega$, a set
$I_{n+1} \subset A_{n+1}$ of $(n+1)$-indets, such that
$\ba_{n+1}=\ba_n[I_{n+1}]$. For the sake of uniformity, we also set
$I\sbs{0}=A\sbs{0}$ and refer, sometimes, to $0$-cells as \emph{$0$-indets}.
A simple proof by induction, using theorem~\ref{L:indetisindec}, shows that
for each $n > 0$, $\ba_n$ is well behaved (cf.
definition~\ref{D:wellbehaved}) and that an $n$-cell $u$ is an $n$-indet iff
it is a non-identity cell indecomposable in the sense of~\ref{D:indec}. Thus,
the sets of indets of a computad are \emph{uniquely} determined.

\begin{definition}\label{D:compfunct} An $\omega$-functor $F: \ba \el \ba\p$
between computads $\ba$ and $\ba\p$ is called a \emph{computad functor} iff
it preserves indets namely, $Fu$ is an indet whenever $u$ is. The category
$Comp$, whose objects are the computads and arrows the computad functors,
will be called the \emph{category of computads}. \end{definition}

Obviously, $Comp$ is a non-full subcategory of the category $\omega Cat$ of
$\omega$-categories.

It is not hard to see that a computad functor preserves not only
$\omega$-categorical, but also computad structure:

\begin{proposition}\label{L:compfunct} Assume that $F: \ba \el \ba\p$ is a
computad functor and $u$ an $n$-cell of $\ba$, $n>0$.
\begin{enumerate} \item There is a bijection $\theta: \doc{u} \el \doc{Fu}$
such that, for $r \in \doc{u}$, $F(\oc{u}(r))= \oc{Fu}(\theta r)$. We will
always assume, as we may, that due to an appropriate reparametrization,
$\theta$ is the identity.
\item $F$ preserves the generalized whiskering operations. This
means that $F(u \repr v)= (Fu) \repr (Fv)$ whenever $r \in \doc{u}$ and $u
\repr v$ is defined. \item $F$ preserves the placed composition operations,
meaning that $F(u \circ_r v)=(Fu) \circ_r (Fv)$, whenever $r \in \doc{u}$ and
$u \circ_r v$ is defined. \end{enumerate}
\end{proposition}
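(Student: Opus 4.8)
The plan is to prove the three assertions in turn, each by an induction on the cell $u$, using that $F$ is an $\omega$-functor (so it already commutes with $d$, $c$, the compositions $\bullet_k$ including whiskerings, and the identity operation $x\mapsto 1_x$) together with the single genuinely new input: $F$ carries indets to indets (Definition~\ref{D:compfunct}). The recursive characterizations that drive the inductions are: that $\oc{u'\bullet_k u''}$ is a coproduct of $\oc{u'}$ and $\oc{u''}$ (with our standing convention $\doc{u'\bullet_k u''}=\doc{u'}\,\dot{\cup}\,\doc{u''}$); conditions 1--3 preceding Definition~\ref{D:W}, which by Theorem~\ref{L:reprexists} pin down the generalized whiskerings $u\repr-$; and the identity $u\circ_r v=u\bullet_n(du\repr v)$ defining placed composition (Definition~\ref{D:placedcomp}).

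For part~1 I would induct on $u$, proving only the \emph{existence} of a bijection $\theta_u$ with $F(\oc{u}(r))=\oc{Fu}(\theta_u r)$. If $u$ is an indet then $Fu$ is again an indet and both index sets are singletons; if $u$ is an identity then so is $Fu$, and by Claims~\ref{L:constantterm} and~\ref{L:xydistinct} both $\doc{u}$ and $\doc{Fu}$ are empty. In the step $u=u'\bullet_k u''$ one has $Fu=Fu'\bullet_k Fu''$, and, with $\doc{u}=\doc{u'}\,\dot{\cup}\,\doc{u''}$ and likewise for $Fu$, the coproduct structure lets one take $\theta_u=\theta_{u'}\,\dot{\cup}\,\theta_{u''}$; since only existence is asserted, the non-uniqueness of the decomposition is harmless and this is a genuine induction, not a definition by recursion on cells. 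After this I would invoke the paper's reparametrization convention to assume each $\theta_u$ is the identity, so that henceforth $\doc{Fu}=\doc{u}$ and $\oc{Fu}(r)=F(\oc{u}(r))$.

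Part~2 is again an induction on $u$: the indet case is immediate (both sides equal $Fv$) and the identity case is vacuous ($\doc{u}=\emptyset$); for $u=u'\bullet_k u''$ with, say, $r\in\doc{u'}$, condition~3 gives $u\repr v=(u'\repr v)\bullet_k u''$, and applying $F$, then the inductive hypothesis for $u'$, then the same recursion for $Fu=Fu'\bullet_k Fu''$ (legitimate since part~1 forces $r\in\doc{Fu'}$ and the domains match), yields $F(u\repr v)=(Fu'\repr Fv)\bullet_k Fu''=(Fu)\repr(Fv)$. Part~3 then falls out at once: $F(u\circ_r v)=F\big(u\bullet_n(du\repr v)\big)=Fu\bullet_n F(du\repr v)=Fu\bullet_n\big(d(Fu)\repr Fv\big)=(Fu)\circ_r(Fv)$, the second equality by functoriality, the third by part~2 applied to the cell $du$ together with $d\circ F=F\circ d$; and $(Fu)\circ_r(Fv)$ is defined because $\oc{d(Fu)}(r)=F(\oc{du}(r))=F(cv)=c(Fv)$, again by part~1 and functoriality.

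The step I expect to be the real work is part~1 — not for any hard computation, but for keeping the bijections $\theta_u$ coherent under the coproduct decompositions and the reparametrization conventions, which is exactly what legitimizes writing "$\theta_u=\mathrm{id}$, $\oc{Fu}(r)=F(\oc{u}(r))$" in parts~2 and~3. A tidier alternative for part~1 is to transport the preferred specification $\theta_u\colon\doc{u}\el t_u$ of Section~\ref{S:occurrence}: since $F$ sends indets to indets, replacing each indet $x$ occurring in the $\stc$-term $t_u$ by $Fx$ produces a $\stc$-term that denotes $Fu$; the obvious position-to-position bijection sends the $r$-occurrence of $\oc{u}(r)$ to an occurrence of $F(\oc{u}(r))$, and composing with the relabelling induced by any $\stc$-proof that this term equals the preferred $t_{Fu}$ yields the required $\theta$. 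Once part~1 is secured, parts~2 and~3 are the routine inductions sketched above.
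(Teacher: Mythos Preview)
Your proposal is correct and follows essentially the same approach as the paper, which tersely says that parts~1 and~2 are ``easily seen by induction on $u$'' and that part~3 ``follows immediately, because $\circ_r$ is defined \ldots\ in terms of operations that are preserved by $F$, namely categorical composition, generalized whiskering and the domain function.'' You have filled in exactly the details the paper omits, including the careful distinction between induction and recursion on cells in part~1; your alternative via transported specifications is a pleasant extra but not needed.
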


\begin{proof} As $\ba_n$ is a free extension of $\ba_{n-1}$, we can prove
statements by induction on $n$-cells. Parts \emph{1,2} are easily seen by
induction on $u$ and then, part \emph{3} follows immediately, because
$\circ_r$ is defined, in~\ref{D:placedcomp}, in terms of operations that are
preserved by $F$, namely categorical composition, generalized whiskering and
the domain function.
\end{proof}

In view of the results of section~\ref{S:comparing}, we take a special
interest in the case in which all indets are many-to-one.

\begin{definition} A \emph{many-to-one} computad is one in which the codomain
of any $(n+1)$-indet is an $n$-indet, for all $n \in \omega$. The full
subcategory $m/1Comp$ of $Comp$, whose objects are the many-to-one computads,
will be called the \emph{category of many-to-one computads}. \end{definition}

As we  learned from corollary~\ref{L:cxyieldsx}, if $\ba$ is a many-to-one
computad then for each $n$, the many-to-one $(n+1)$-cells of $\ba$ determine
the structure of all $(n+1)$-cells. Let us pursue this line of thought and
take a closer look at the set of all many-to-one cells of $\ba$. Following
our practice, we consider all $0$-cells to be \emph{indets} and, for
convenience, we declare them to be many-to-one cells. All $1$-cells are
many-to-one, but for $n \geqslant 2$, only \emph{some} $n$-cells are many to
one.

The set of many-to-one cells of a many-to-one computad $\ba$ is \emph{not}
closed under the \mbox{$\omega$-categorical} composition operations
$\bullet_k$ and yet, this set enjoys remarkable closure properties. First of
all, if $u$ is a many-to-one cell, then so are its domain and codomain
(assuming, of course, that $u$ has positive dimension). Indeed, $cu$ is an
indet, hence is many-to-one, and $du$ is parallel to $cu$, hence is
many-to-one as well. Next, the many-to-one cells are closed under the
\emph{placed} composition operations. Thus, the many-to-one cells form a
complex structure that deserves a special name. We arrive thus, in a natural
way, to the notion of \emph{multitopic set} that was introduced
in~\cite{HMP3}.

Given a many-to-one computad $\ba$ define, for $n>0$, $\dcc_n=\dcc_{\ba_n}$.
In other words, $\dcc_n$ is the multicategory whose arrows are the
many-to-one $n$-cells of $\ba$, and whose objects (and object types) are the
$(n-1)$-indets. By~\ref{L:main}, $\dcc_n$ is a free multicategory generated
by the $n$-indets. For the sake of completeness, we also let $\dcc_0$ be the
barren set $A_0$ of $0$-cells, viewed as a free multicategory (as indicated
in the ``important example'' following~\ref{L:arrowrep}). Thus, we have a
sequence $(\dcc_n)\sbs{n \in \omega}$, of free multicategories, such that the
generating a-indets of $\dcc_n$ are at the same time the objects (and object
types) of $\dcc_{n+1}$. There is an additional structural item that links
these multicategories, as we have the domain and codomain functions $d,c:
A(\dcc_{n+1}) \el A(\dcc_n)$. The structure $S=S_{\ba}$ consisting of the
sequence $(\dcc_n)\sbs{n\in \omega}$ and the functions $d,\,c$, will be
called the \emph{multitopic set associated with the many-to-one computad
$\ba$}.

We now reproduce the definition of the abstract notion of multitopic set
from~\cite{HMP3}.

We start with a preliminary definition that will describe the connection
between the multicategories $\dcc_n$ and $\dcc_{n+1}$ mentioned above.

\begin{definition}\label{D:freemcext} Given a free multicategory $\dcc= \Omega [J]$, we say that
$\widehat{\dcc}$ is a \emph{free extension} of $\dcc$ via the functions $d$
and $c$ iff the following conditions are met: \begin{enumerate} \item
$\widehat{\Omega} =\Omega (\widehat{\dcc}) = (J)$. In other words,
$\widehat{\dcc}$ is based on the simple object system whose objects are the
a-indets that generate $\dcc$.
\item $\widehat{\dcc}= \widehat{\Omega}[\widehat{J}]$, meaning that $\widehat{\dcc}$ is freely
generated by a set of a-indets $\widehat{J} \subset A(\widehat{\dcc})$. \item
$d$ and $c$ are functions $d: A(\widehat{\dcc}) \el A(\dcc),\,c:
A(\widehat{\dcc}) \el J$, such that for $u \in A(\widehat{\dcc})$, $Su=
\oc{du}$ and $Tu= cu$. Furthermore, $du \parallel cu$, meaning that $Sdu =
Scu,\,
 Tdu = Tcu$. Also, for $x \in J$, $d1_x=c1_x = x$. \item For $u,v
\in A(\widehat{\dcc})$ and $r \in |Su|$ such that the multicomposition $u
\widehat{\odot}_r v$ is defined in $\widehat{\dcc}$, we have $d (u
\widehat{\odot}_r v)= du \drep_r dv$ and $c(u \widehat{\odot}_r v) = cu$
(where $\drep_r$ is the replacement operation in $\dcc$ as defined by
theorem~\ref{L:arrowrep}).
\end{enumerate}
\end{definition}

We are now ready to define:

\begin{definition}\label{D:ms} A \emph{multitopic set} $S$ consists of
sequences $\dcc_n=\dcc_n (S)$ of multicategories and $d_n = d_n(S),\,c_n=
c_n(S)$ of functions, $n \in \omega$, such that the following conditions are
met:
\begin{enumerate} \item $\dcc_0$ is a barren set viewed as a free
multicategory. \item $\dcc_{n+1}$ is a free extension of $\dcc_n$ via the
functions $d_n,\,c_n$, for all $n \in \omega$. \item For $n \in \omega$, we
have $d_n d_{n+1} = d_n c_{n+1},\,c_n d_{n+1} = c_n c_{n+1}$
(\emph{globularity conditions}). \end{enumerate}
\end{definition}

\begin{remark} If $S$ is a multitopic set, then each
$\dcc_n=\dcc_n(S)$ is a multicategory based on a \emph{simple} object system,
as it follows from definition~\ref{D:freemcext}.
\end{remark}

If $\ba$ is a many-to-one computad, then the structure $S_{\ba}$ is a
multitopic set in the sense of this definition, when $d_n,\,c_n$ are the
domain/codomain functions of the $\omega$-category $\ba$ restricted to the
set $A(\dcc_{n+1})$ of the many-to-one $(n+1)$-cells of $\ba$. This is easily
seen, thanks to the remark following claim~\ref{L:drepisrep} (applied to the
free $(n+1)$-category $\bx=\ba_{n+1}$). As we shall see in the next section,
\emph{every} multitopic set is (isomorphic to) some $S_{\ba}$.

Following the notation of~\cite{HMP3}, we shall write $d=d_n(S),\,c=c_n(S)$,
as the subscripts are understood from the context. Thus, the globularity
conditions become \mbox{$dd=dc$} and $cd=cc$.

Other notations and terminology from~\cite{HMP3} that we will use are as
follows. The set of generating a-indets of $\dcc_n(S)$ will be $C_n=C_n(S)$
(its elements are called ``\emph{$n$-cells}'' in~\cite{HMP3}, but we shall
not adopt this terminology here, as it would be confusing in our context,
that mentions so often $n$-cells in $\omega$-categories). The set of arrows
of $\dcc_n$ is $P_n=P_n(S)$ and its members are called \emph{$n$-pasting
diagrams}, because they can be naturally given a diagrammatic representation
(cf.~\cite{HMP1}). Notice that $P_0=C_0$.

\medskip

A multitopic set $S$ is called \emph{$n$-dimensional} iff $C_k(S)= \emptyset$
for all $k > n$; this condition implies that all pasting diagrams of
dimension $>n$ are \emph{identities}. An $n$-dimensional multitopic set is
determined by the \emph{finite} sequence $\langle \dcc_k \rangle_{k \leqslant
n}$ of its first $n+1$ components. If $S$ is any multitopic set, its $n$th
truncation will be $n$-dimensional multitopic set $S_n$ with $\dcc_k(S_n) =
\dcc_k(S)$, for $k \leqslant n$. Obviously, for a many-to-one computad $\ba$,
the $n$th truncation of $S_{\ba}$ is $S_{\ba_n}$.

\medskip

Next, we define the obvious notion of \emph{morphism} of multitopic sets.

\begin{definition}\label{D:msmorphism} A morphism $\Phi: S \el S\p$ between
multitopic sets $S$ and $S\p$ is a sequence $\langle \phi_n \rangle_{n<
\omega}$ of maps $\phi_n:P_n \el P\p_n$ (where here and in the sequel,
unprimed notations, like $P_n$ refer to components of $S$, while their primed
counterparts, like $P_n\p$, refer to $S\p$), that preserve the multitopic
structure, meaning that for each $n<\omega$:
\begin{enumerate}
\item $\phi_n$ maps indets to indets, i.e., $\phi_n x \in C\p_n$
whenever $x \in C_n$. \item If $\tilde{\phi}_n$ is the restriction of
$\phi_n$ to $C_n$, then the pair $\chi=(\tilde{\phi}_n, \phi_{n+1})$ is a
morphism of multicategories from $\dcc_{n+1}$ to $\dcc\p_{n+1}$. \item For $u
\in P_{n+1}$, we have $d \phi_{n+1} u= \phi_n du$ and $c \phi_{n+1} u =
\phi_n cu$ (notice the context sensitivity of the notation for the
domain/codomain functions: $d,c$ refer to $S\p$ on the left sides of the
equations, and to $S$ on the right).
\end{enumerate}
\end{definition}

\begin{notation} For a morphism $\Phi$ as above and for $u \in P_n$, we
denote $\Phi u = \phi_n u$. Thus, $\Phi$ can be viewed as one single,
dimension preserving, function from the pasting diagrams of $S$ to those of
$S\p$. \end{notation}

Obviously, if $S$ is an $n$-dimensional multitopic set and $\Phi:S \el S\p$
is a morphism then the components $\phi_k$ of $\Phi$ for $k>n$ are trivial,
and $\Phi$ is determined by its first $n+1$ components and we write $\Phi =
\langle \phi_k \rangle_{k \leqslant n}$. One useful instance of this is the
following: if $\Phi = \langle \phi_n \rangle_{n< \omega}:S \el S\p$ is a
morphism of multitopic sets, the so is $\Phi_n:S_n \el S\p_n$, where $\Phi_n
= \langle \phi_k \rangle_{k \leqslant n}$. $\Phi_n$ will be called the
\emph{$n$th truncation} of $\Phi$.

\medskip

\begin{remark} Morphisms of multitopic sets are determined by their values on indets.
These values can be chosen arbitrarily, subject to certain restrictions that
insure the preservation of domains/codomains. More explicitly, a stepwise
process of building a multitopic morphism goes as follows. We start by
choosing $\phi_0: C_0 \el C\p_0$ \emph{arbitrarily}. Assuming that we have
already constructed $\phi_k$, for $k \leqslant n$ such that $\langle \phi_k
\rangle_{k \leqslant n}$ is a morphism from $S$ to $S\p$, we start the
construction of $\phi_{n+1}$ by choosing a function $\tilde{\phi}_{n+1}:
C_{n+1} \el C\p_{n+1}$ \emph{arbitrarily}, subject to the restriction that
$d\p \tilde{\phi}_{n+1} f= \phi_n df$ and similarly for the codomain
function. There is a \emph{unique} morphism $\chi: \dcc_{n+1} \el
\dcc\p_{n+1}$ such that $\chi x = \phi_n x$ and $\chi f = \tilde{\phi}_{n+1}
f$ for $x \in C_n,\, f \in C_{n+1}$. We define $\phi_{k+1} u= \chi u$ for $u
\in P_{k+1}$. Then $\phi_{k+1}$ extends $\tilde{\phi}_{n+1}$, and we know
that it satisfies condition 3 of~\ref{D:msmorphism} for $u \in C_{n+1}$.
Using~\ref{L:arrowreppres}, we can show that the same condition is fulfilled
for \emph{all} $u \in P_{n+1}$.\end{remark}

\medskip

The composition of morphisms of multitopic sets is again such a morphism.
Also, for a multitopic set S, the sequence of identity maps $id_n:P_n(S) \el
P_n(S)$ is a morphism from $S$ to itself. Hence we may define a new category:

\begin{definition}\label{D:mltSet} The category $mltSet$, whose objects are the
multitopic sets and arrows their morphisms, is called the \emph{category of
multitopic sets}. \end{definition}

Can we extend the function $\ba \mapsto S_{\ba}$ to a functor? We can, and
actually, much more is true.

\begin{theorem}\label{L:sAisfff} The function that associates the multitopic set $S_{\ba}$
to any many-to-one computad $\ba$ can be extended to a functor $S_{-}:
m/1Comp \el mltSet$ which is full and faithful. \end{theorem}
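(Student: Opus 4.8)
The plan is to define the functor on arrows, then check functoriality, faithfulness, and fullness in turn. Given a many-to-one computad morphism $F: \ba \el \ba\p$, i.e. an $\omega$-functor preserving indets, I would set $S_F = \langle \phi_n \rangle_{n<\omega}$ where $\phi_n$ is the restriction of (the cell-map of) $F$ to the set $P_n = A(\dcc_n) = A(\dcc_{\ba_n})$ of many-to-one $n$-cells of $\ba$. First I would check this is well-defined: since $F$ preserves indets, it maps many-to-one cells to many-to-one cells (the codomain of a many-to-one cell is an indet, and $F$ sends indets to indets and commutes with $c$), so $\phi_n$ does land in $P\p_n$. The three conditions of Definition~\ref{D:msmorphism} then follow from Proposition~\ref{L:compfunct}: condition~1 is indet-preservation; condition~2 says $(\tilde\phi_n, \phi_{n+1})$ is a multicategory morphism $\dcc_{n+1} \el \dcc\p_{n+1}$, which holds because $F$ preserves placed composition (part~3 of~\ref{L:compfunct}) and sends the identity arrow $1_x$ (for $x$ an indet) to $1_{Fx}$; condition~3 is just that $F$ commutes with the domain/codomain functions, which it does as an $\omega$-functor. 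The globularity conditions are inherited, so $S_F$ is a genuine morphism of multitopic sets. Functoriality ($S_{G\circ F} = S_G \circ S_F$ and $S_{\mathrm{id}} = \mathrm{id}$) is immediate since everything is defined by restriction of the underlying cell-maps.

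For \textbf{faithfulness}, suppose $F, F\p: \ba \el \ba\p$ are computad functors with $S_F = S_{F\p}$, i.e. $F$ and $F\p$ agree on all many-to-one cells of $\ba$, in particular on all indets. I would show $F = F\p$ by induction on dimension: they agree on $0$-cells (these are $0$-indets), and assuming they agree on $\ba_n$, the fact that $\ba_{n+1} = \ba_n[I_{n+1}]$ is a free extension together with the universal property of Theorem~\ref{L:freeext} forces $F$ and $F\p$ to agree on all of $\ba_{n+1}$ once they agree on $\ba_n$ and on the $(n+1)$-indets $I_{n+1} \subseteq P_{n+1}$. Hence $F = F\p$.

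For \textbf{fullness}, let $\Phi = \langle \phi_n \rangle : S_{\ba} \el S_{\ba\p}$ be a morphism of multitopic sets; I must produce a computad functor $F: \ba \el \ba\p$ with $S_F = \Phi$. I would build the cell-map of $F$ dimension by dimension. On $0$-cells, set $F = \phi_0$. Inductively, suppose $F$ is already defined as an $\omega$-functor on $\ba_n$ agreeing with $\Phi$ on many-to-one cells up to dimension $n$. The $(n+1)$-indets $I_{n+1}$ are (certain) many-to-one cells, so $\phi_{n+1}$ is already defined on them and maps them to many-to-one cells, hence to $(n+1)$-indets $I\p_{n+1}$ of $\ba\p$ (condition~1 of~\ref{D:msmorphism}); moreover condition~3 guarantees that $d\phi_{n+1}x = \phi_n dx = F dx$ and likewise for codomains, so $\varphi := \phi_{n+1}\!\restriction\! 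I_{n+1}$ is a correct assignment $I_{n+1} \el (\ba\p_{n+1})_{n+1}$ relative to $F: \ba_n \el \ba\p_{n+1}$. By the strong universal property of $\ba_{n+1} = \ba_n[I_{n+1}]$ (Theorem~\ref{L:freeext}, part~3), there is a unique $\omega$-functor extending $F$ on $\ba_n$ and $\varphi$ on $I_{n+1}$; call it $F$ again. It preserves $(n+1)$-indets by construction, so it is a computad functor on $\ba_{n+1}$. The remaining point is that this $F$ agrees with $\phi_{n+1}$ on \emph{all} many-to-one $(n+1)$-cells, not merely on indets. This is where I would invoke the main results: by Theorem~\ref{L:main}, $\dcc_{\ba_{n+1}} = \dcc_{n+1}$ is the free multicategory on $I_{n+1}$, so both $u \mapsto Fu$ and $u \mapsto \phi_{n+1}u$, restricted to many-to-one cells, are multicategory morphisms $\dcc_{n+1} \el \dcc\p_{n+1}$ (for $F$ this uses part~3 of~\ref{L:compfunct}) that are the identity-agreeing on objects and send each indet $f \in I_{n+1}$ to $\varphi f = \phi_{n+1}f$; by the universal property of the free multicategory they coincide. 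Thus $S_F = \Phi$, completing fullness.

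The \textbf{main obstacle} is precisely this last point in the fullness argument: the functor $F$ is manufactured from its behaviour on indets via the freeness of the $\omega$-category extension, whereas $\Phi$ is a priori only known to behave well on the \emph{multicategory} of many-to-one cells; reconciling the two requires that the $\omega$-categorical free extension $\ba_{n+1}$ and the free multicategory $\dcc_{n+1}$ have "the same" generators and that a map determined on one is determined on the other. Theorem~\ref{L:main} (and the fact, recorded after~\ref{L:drepisrep}, that arrow replacement in $\dcc_{\bx}$ agrees with $(n+1)$-cell replacement in $\bx$) is exactly what bridges this gap, so the proof hinges on applying it correctly at each inductive step; the rest is routine bookkeeping with the universal properties.
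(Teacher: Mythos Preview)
Your proposal is correct and follows essentially the same route as the paper: define $S_F$ by restriction, verify Definition~\ref{D:msmorphism} via Proposition~\ref{L:compfunct}, prove faithfulness by induction using the universal property of free extensions, and prove fullness by building $F$ level by level from $\phi_{n+1}\!\restriction\! I_{n+1}$ via Theorem~\ref{L:freeext}. Your treatment of the one nontrivial point in fullness---why the extension $F$ agrees with $\phi_{n+1}$ on \emph{all} many-to-one $(n+1)$-cells, not just indets---is actually more explicit than the paper's: the paper cites only~\ref{L:compfunct} there, whereas you spell out that one needs the freeness of $\dcc_{n+1}$ (Theorem~\ref{L:main}) so that the two multicategory morphisms agreeing on generators must coincide; this is the correct justification and is implicit in the paper's setup of $S_\ba$.
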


\begin{proof}Given a computad functor $F: \ba \el \ba\p$ between many-to-one computads
$\ba$ and $\ba\p$, we have to define a morphism $S_F: S_{\ba} \el S_{\ba\p}$
of multitopic sets. We set $S_F = \langle \phi_n \rangle_{n < \omega}$ where
$\phi_n$ is the restriction of $F$ to the set of many-to-one $n$-cells of
$\ba$, which is the same with the set $P_n=P_n(S_{\ba})$ of the $n$-pasting
diagrams of $\dcc_n=\dcc_{\ba_n}$. As $F$ is a computad map, it maps indets
to indets and, therefore, condition \emph{1} of~\ref{D:msmorphism} is
fulfilled. Conditions \emph{2-3} are also satisfied, as it follows
by~\ref{L:compfunct}. Thus, $S_F = \langle \phi_n \rangle_{n< \omega}$ is,
indeed, a morphism of multitopic sets, according to~\ref{D:msmorphism}. The
functoriality of $F \mapsto S_F$ is readily verified.

The functor $S_-$ is \emph{faithful}. Indeed, if $S_F= S_G$ then we show by
induction on $n$ that $F_n= G_n$, where $F_n,G_n: \ba_n \el \ba\p_n$ are the
restrictions of $F,G$ to the $n$th truncation of $\ba$. The case $n=0$ is
trivial, because $F_0,G_0$ are both the $0$th component of $S_F=S_G$. If
$n>0$, then $F_n,G_n$ extend $F_{n-1},G_{n-1}$ respectively and by the
induction hypothesis, $F_{n-1}=G_{n-1}$. Thus, as $\ba_n$ is a free extension
of $\ba_{n-1}$, to infer that $F_n$ and $G_n$ are equal, we have only to show
that they are equal on the set  of \emph{$n$-indets}, which equals $C_n
\subset P_n$. This is clear, however, as the restrictions of $F_n, G_n$ to
$P_n$ are, both, equal to the $n$th component of $S_F=S_G$.

Finally, we can show that $S_-$ is \emph{full}. Given a morphism $\Phi:
S_{\ba} \el S_{\ba\p}$ we define by induction the sequence $\langle F_n
\rangle_{n<\omega}$ of truncations of an $\omega$-functor $F: \ba \el \ba\p$
such that $S_F=\Phi$. We start by letting $F_0=\phi_0$. Once we have $F_n$,
we let $F_{n+1}$ be the unique $(n+1)$-functor $H: \ba_{n+1} \el \ba\p_{n+1}$
that extends $F_n$ and satisfies $Hf = \phi_{n+1}f$ for $f$ an $(n+1)$-indet
(by~\ref{L:compfunct}, it follows that $Hu = \phi_{n+1} u$ for $u$ any
many-to-one $(n+1)$-cell of $\ba$).
\end{proof}

\section{Multitopic sets are equivalent to many-to-one computads}
\label{S:equivalence}

\begin{definition}\label{D:assignment} We say that $\Sigma$ is an
\emph{assignment} of a multitopic set $S$ into a many-to-one computad $\ba$,
and denote this as $\Sigma: S \el \ba$, iff $\Sigma: S \el S_{\ba}$ is a
morphism of multitopic sets.
\end{definition}

\begin{remark} If $\Sigma: S \el \ba$ is an assignment and $F: \ba \el \ba\p$
is a computad functor in $m/1Comp$ then the composite function $\Theta =
F\Sigma$ is an assignment $\Theta: S \el \ba\p$. \end{remark}

\medskip

Roughly speaking, an assignment $\Sigma$ is determined by its values on the
a-indets that generate $S$. By this we mean that once we know the $n$th
component $\sigma_n$, $\sigma_{n+1}$ is uniquely determined by the values
$\sigma_{n+1}f \in A_{n+1}$ for $f \in C_{n+1}(S)$. These values can be
chosen arbitrarily, apart from the conditions that domains and codomain
should be preserved (i.e. $d \sigma_{n+1}f=\sigma_n df$ and similarly for
codomains).

As we shall see, theorem~\ref{L:main} implies that every multitopic set is
(isomorphic to) $S_{\ba}$, for some many-to-one computad $\ba$. Actually, we
prove somewhat more:

\begin{proposition}\label{L:SisSA} For every multitopic set $S$ there is a
many-to-one computad $\fr{S}$ and an assignment $\ff{S}: S \el \fr{S}$ such
that:
\begin{enumerate} \item $\ff{S}$ is an isomorphism of multitopic sets. \item
For any assignment $\Sigma: S \el \bb$ into a many-to-one computad $\bb$,
there is a \emph{unique} computad functor $F: \fr{S} \el \bb$ such that
$\Sigma = F\ff{S}$.
\end{enumerate}
\end{proposition}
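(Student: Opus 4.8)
The plan is to build $\fr{S}$ by recursion on dimension, taking a free extension at each stage (theorem~\ref{L:freeext}), and to verify the two properties by parallel inductions. Set $\fr{S}_0 = C_0(S) = P_0(S)$ as a barren $0$-category. Suppose inductively that $\fr{S}_n$ has been constructed together with an isomorphism of multitopic sets $\ff{S}_n : S_n \el S_{\fr{S}_n}$ between $n$th truncations. Via $\ff{S}_n$ the set $P_n(S)$ of $n$-pasting diagrams is identified with the set of many-to-one $n$-cells of $\fr{S}_n$, and $C_n(S)$ with the set of $n$-indets of $\fr{S}_n$; composing the domain/codomain functions $d,c : C_{n+1}(S) \el P_n(S)$ of $S$ with this identification yields a pre-extension $(C_{n+1}(S),d,c)$ of $\fr{S}_n$ — one has $dx \parallel cx$ by the globularity conditions of $S$ — and I put $\fr{S}_{n+1} := \fr{S}_n[C_{n+1}(S)]$. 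Since $cf \in C_n(S)$ is an $n$-indet of $\fr{S}_n$ for every $f \in C_{n+1}(S)$, the resulting $\omega$-category $\fr{S}$ is a many-to-one computad, and so $S_{\fr{S}}$ is a multitopic set (by the remark following claim~\ref{L:drepisrep}).

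For property~1 I would show that $\ff{S}_n$ extends to an isomorphism $\ff{S}_{n+1}: S_{n+1} \el S_{\fr{S}_{n+1}}$, which closes the recursion and, in the limit, gives the isomorphism $\ff{S}$. On the one hand, $\dcc_{n+1}(S)$ is, by the very definition of multitopic set (definitions~\ref{D:ms} and~\ref{D:freemcext}), the free multicategory generated by $C_{n+1}(S)$ over the simple object system on $C_n(S)$; on the other hand, since $\fr{S}_{n+1} = \fr{S}_n[C_{n+1}(S)]$ is a free extension, theorem~\ref{L:main} says $\dcc_{\fr{S}_{n+1}}$ is the free multicategory generated by $C_{n+1}(S)$ over the simple object system on the $n$-indets of $\fr{S}_n$. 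The identification of object systems induced by $\ff{S}_n$ carries the source/target data $Sf = \oc{df},\ Tf = cf$ of each generator $f$ of the first onto that of the second, precisely because $d,c$ on $\fr{S}_{n+1}$ were defined through $\ff{S}_n$ on indets and are pinned down on all of $\dcc_{\fr{S}_{n+1}}$ by their values on indets (theorem~\ref{L:arrowrep}, definition~\ref{D:freemcext}). Hence the universal property of free multicategories gives a unique isomorphism $\dcc_{n+1}(S) \cong \dcc_{\fr{S}_{n+1}}$ fixing the generators and the identified object systems, which commutes with $d$ and $c$ by the same "determined on indets" principle; together with $\ff{S}_n$ it assembles into $\ff{S}_{n+1}$, an isomorphism because each component is.

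For property~2, given an assignment $\Sigma = \langle \sigma_n \rangle : S \el S_{\bb}$ I would produce the computad functor $F : \fr{S} \el \bb$ with $F\ff{S} = \Sigma$ by recursion, using the universal property of the free extension $\fr{S}_{n+1} = \fr{S}_n[C_{n+1}(S)]$ (theorem~\ref{L:freeext}). Put $F_0 = \sigma_0$; given $F_n : \fr{S}_n \el \bb_n$ extending $F_{n-1}$ and agreeing with $\Sigma$ on indets of dimension $\leqslant n$, the assignment $f \mapsto \sigma_{n+1}f \in B_{n+1}$ on $C_{n+1}(S)$ is correct over $F_n$, i.e. $d(\sigma_{n+1}f) = F_n(df)$ and $c(\sigma_{n+1}f) = F_n(cf)$, because $\Sigma$ preserves domains and codomains (condition~3 of definition~\ref{D:msmorphism}) and $F_n$ agrees with the lower $\sigma$'s; so theorem~\ref{L:freeext} yields a unique $\omega$-functor $F_{n+1}: \fr{S}_{n+1} \el \bb$ extending $F_n$ with $F_{n+1}f = \sigma_{n+1}f$. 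The $F_{n+1}$ are coherent and send indets to indets (condition~1 of definition~\ref{D:msmorphism}), hence define a computad functor $F$; that $F\ff{S} = \Sigma$ holds since both sides agree on all indets and a multitopic morphism is determined by its values on indets (passing from indets to all many-to-one cells by proposition~\ref{L:compfunct}). Uniqueness of $F$ is the same observation: a computad functor is determined by its action on indets, which $F\ff{S} = \Sigma$ forces.

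The step I expect to be the main obstacle is property~1 — concretely, the inductive identification of $\dcc_{n+1}(S)$ with $\dcc_{\fr{S}_{n+1}}$ and the verification that it respects the globular structure $d,c$. Everything there is forced, but making precise the slogan "two free multicategories with matching generators and matching source/target data are canonically isomorphic, and the isomorphism respects any further structure specified on the generators" — while keeping track of the reparametrization conventions for the indexed sets $\oc{-}$ — is where the real work lies; by comparison, the universal-property construction of $F$ for property~2 is routine once property~1 is available.
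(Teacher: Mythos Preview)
Your proposal is correct and follows essentially the same route as the paper: build $\fr{S}$ by iterated free extension $\fr{S}_{n+1}=\fr{S}_n[C_{n+1}(S)]$, invoke theorem~\ref{L:main} to identify $\dcc_{\fr{S}_{n+1}}$ with the free multicategory $\dcc_{n+1}(S)$, and obtain property~2 from the universal property of free extensions. The paper handles the step you flag as the main obstacle---checking that the canonical isomorphism $\phi_{n+1}=\val{-}$ respects $d$ and $c$---by an explicit induction on pasting diagrams using proposition~\ref{L:arrowreppres}, which is exactly the ``determined on indets'' argument you sketch.
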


Before proving~\ref{L:SisSA}, let us state two important corollaries. The
first one is the main result of this article.

\begin{theorem}\label{L:mainresult} The categories $m/1Comp$ and $mltSet$ are
equivalent. Actually, the functor $S_-: m/1Comp \el mltSet$ is an equivalence
of categories. \end{theorem}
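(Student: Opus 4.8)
The plan is to invoke the standard characterization of an equivalence of categories: a functor is an equivalence precisely when it is full, faithful, and essentially surjective on objects. Two of these three properties are already available. Theorem~\ref{L:sAisfff} states that $S_-\colon m/1Comp \el mltSet$ is full and faithful, so the only remaining point is essential surjectivity, that is, that every multitopic set is isomorphic in $mltSet$ to one of the form $S_{\ba}$ for some many-to-one computad $\ba$.

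That point is delivered directly by Proposition~\ref{L:SisSA}: for an arbitrary multitopic set $S$ it produces a many-to-one computad $\fr{S}$ together with an assignment $\ff{S}\colon S \el \fr{S}$, which by Definition~\ref{D:assignment} is exactly a morphism $\ff{S}\colon S \el S_{\fr{S}}$ of multitopic sets, $S_{\fr{S}}$ being the value of the functor $S_-$ at $\fr{S}$; and part~(1) of the proposition asserts that this morphism is an \emph{isomorphism}. Hence every object of $mltSet$ lies in the essential image of $S_-$, and together with fullness and faithfulness this shows that $S_-$ is an equivalence of categories; in particular $m/1Comp$ and $mltSet$ are equivalent.

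If one wants the quasi-inverse written out rather than appealing to the abstract characterization, I would promote the object assignment $S \mapsto \fr{S}$ to a functor using part~(2) of Proposition~\ref{L:SisSA}. Given $\Phi\colon S \el S'$ in $mltSet$, the composite $\ff{S'}\,\Phi$ is an assignment of $S$ into $\fr{S'}$, so there is a unique computad functor $G\colon \fr{S} \el \fr{S'}$ with $S_G\,\ff{S} = \ff{S'}\,\Phi$; taking $G$ as the value of the new functor on $\Phi$ makes it functorial, the isomorphisms $\ff{S}$ then constitute a natural isomorphism from the identity of $mltSet$ to the composite of $S_-$ with this functor, and faithfulness of $S_-$ together with the same universal property yields the natural isomorphism on the $m/1Comp$ side.

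The substantive mathematics lies not in this theorem but in Proposition~\ref{L:SisSA}, which I am permitted to assume here, and that is where I expect the real obstacle to be: its proof has to build $\fr{S}$ dimension by dimension, at each stage forming the free extension whose many-to-one cells realize the prescribed free multicategory $\dcc_n(S)$ — this is precisely where Theorem~\ref{L:main}, identifying $\dcc_{\bx}$ with a free multicategory, is indispensable — and then verify that the tautological comparison morphism $\ff{S}$ is a bijection on pasting diagrams in every dimension. Granted that, the proof of Theorem~\ref{L:mainresult} is pure formal category theory.
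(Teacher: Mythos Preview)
Your proof is correct and follows essentially the same route as the paper: invoke Theorem~\ref{L:sAisfff} for full and faithful, Proposition~\ref{L:SisSA} part~(1) for essential surjectivity, and conclude via the standard characterization of equivalences. Your second paragraph on the explicit quasi-inverse and your closing remarks are accurate extras (and in fact anticipate Theorem~\ref{L:frisfunct}), but the core argument is identical to the paper's.
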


\begin{proof} By~\ref{L:sAisfff}, $S_-$ is full and faithful.
By~\ref{L:SisSA} part~\emph{1}, $S_-$ is essentially surjective on objects,
i.e., for every object $S$ of $mltSet$, there is an object $\ba$ of $m/1Comp$
 such that $S$ is isomorphic to $S_{\ba}$ (we mean, of course, that
$\ba=\fr{S}$). These conditions mean that $S_-$ is an equivalence of
categories. \end{proof}

The second corollary states that $\fr{-}$ and $\ff{-}$ are functorial. To
explain the functoriality of the second of these functions, we have to define
one more category.

\begin{definition}\label{Ass} The category $Ass$ of \emph{assignments} is
defined as follows. The \emph{objects} are the assignments $\Sigma: S \el
\ba$ from multitopic sets to many-to-one computads. An \emph{arrow} with
domain $\Sigma: S \el \ba$ and codomain $\Sigma\p: S\p \el \ba\p$ will be a
pair $(\Phi, F)$ consisting of a morphism $\Phi: S \el S\p$ and a computad
functor $F: \ba \el \ba\p$, such that the following diagram commutes: $$\bfig
\square[{S}`{\ba}`{S\p}`{\ba\p};{\Sigma}`{\Phi}`{F}`{\Sigma\p}] \efig$$
\end{definition}

Thus, if $S$ is a multitopic set, then $\ff{S}: S \el \fr{S}$ is an object of
the category $Ass$.

\begin{theorem}\label{L:frisfunct} $\fr{-}$ and $\ff{-}$ can
be expanded to functors $\fr{-}: mltSet \el m/1Comp$ and $\ff{-}: mltSet \el
Ass$ such that, for any morphism $\Phi: S \el S\p$ in $mltSet$, we have
$\ff{\Phi} = (\Phi, \fr{\Phi})$.
\end{theorem}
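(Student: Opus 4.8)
The plan is to run the standard ``free object'' argument using the universal property of $\ff{S}\colon S \el \fr{S}$ recorded in Proposition~\ref{L:SisSA}, which exhibits $\fr{S}$ as the free many-to-one computad on the multitopic set $S$. Given a morphism $\Phi\colon S \el S\p$ of multitopic sets, I first note that the composite $\ff{S\p}\,\Phi$, being a composite of morphisms of multitopic sets, is itself a morphism $S \el S_{\fr{S\p}}$, i.e.\ an assignment $S \el \fr{S\p}$ in the sense of Definition~\ref{D:assignment}. Applying part~\emph{2} of Proposition~\ref{L:SisSA} with $\bb = \fr{S\p}$ and $\Sigma = \ff{S\p}\,\Phi$, there is a \emph{unique} computad functor $F\colon \fr{S} \el \fr{S\p}$ with $\ff{S\p}\,\Phi = F\,\ff{S}$; I set $\fr{\Phi} := F$. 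The defining equation $\ff{S\p}\,\Phi = \fr{\Phi}\,\ff{S}$ is precisely the statement that the square with sides $\ff{S},\,\Phi,\,\fr{\Phi},\,\ff{S\p}$ commutes, so $(\Phi,\fr{\Phi})$ is an arrow of $Ass$ from $\ff{S}$ to $\ff{S\p}$, and I define $\ff{\Phi} := (\Phi,\fr{\Phi})$, which visibly satisfies the required formula.

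Next I would check functoriality of $\fr{-}$, using nothing but the uniqueness clause of Proposition~\ref{L:SisSA}. For $id_S\colon S \el S$, the computad functor $id_{\fr{S}}$ satisfies $\ff{S}\,id_S = id_{\fr{S}}\,\ff{S}$, hence $\fr{id_S} = id_{\fr{S}}$ by uniqueness. For a composable pair $\Phi\colon S \el S\p$, $\Psi\colon S\p \el S\pp$, the computad functor $\fr{\Psi}\,\fr{\Phi}$ satisfies $(\fr{\Psi}\,\fr{\Phi})\,\ff{S} = \fr{\Psi}\,(\ff{S\p}\,\Phi) = (\ff{S\pp}\,\Psi)\,\Phi = \ff{S\pp}\,(\Psi\Phi)$, using in turn the defining equations of $\fr{\Phi}$ and of $\fr{\Psi}$; since $\fr{\Psi\Phi}$ is by definition the unique computad functor with this property, $\fr{\Psi\Phi} = \fr{\Psi}\,\fr{\Phi}$. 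Hence $\fr{-}\colon mltSet \el m/1Comp$ is a functor.

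Functoriality of $\ff{-}\colon mltSet \el Ass$ is then immediate, since composition and identities in $Ass$ are componentwise: $\ff{\Psi}\circ\ff{\Phi} = (\Psi,\fr{\Psi})\circ(\Phi,\fr{\Phi}) = (\Psi\Phi,\,\fr{\Psi}\,\fr{\Phi}) = (\Psi\Phi,\,\fr{\Psi\Phi}) = \ff{\Psi\Phi}$, and $\ff{id_S} = (id_S, \fr{id_S}) = (id_S, id_{\fr{S}})$ is the identity of the object $\ff{S}$ in $Ass$. The identity $\ff{\Phi} = (\Phi,\fr{\Phi})$ holds by construction.

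I do not expect a genuine obstacle here: the entire proof is the routine bookkeeping by which a universal arrow at each object assembles into a functor, and all the substantive content has already been invested in Proposition~\ref{L:SisSA}. The only two points requiring a word of care are (i) recognising that $\ff{S\p}\,\Phi$ qualifies as an assignment, so that Proposition~\ref{L:SisSA}(2) is applicable --- this is just closure of $mltSet$-morphisms under composition together with the fact that an assignment into $\bb$ is by definition a morphism into $S_{\bb}$ --- and (ii) observing that $(\Phi,\fr{\Phi})$ indeed satisfies the commuting-square condition that defines arrows of $Ass$, which is literally the equation determining $\fr{\Phi}$. Everything else is forced by the uniqueness half of the universal property.
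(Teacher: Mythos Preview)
Your proposal is correct and follows essentially the same approach as the paper: form the assignment $\ff{S\p}\Phi$, invoke part~\emph{2} of Proposition~\ref{L:SisSA} to obtain the unique computad functor $F=\fr{\Phi}$, and set $\ff{\Phi}=(\Phi,\fr{\Phi})$. The only difference is that the paper dismisses the functoriality verification with ``It is easy to verify that we have thus defined the desired functors,'' whereas you spell out the standard uniqueness argument for identities and composites; your added detail is sound and does not diverge from the paper's line of reasoning.
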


\begin{remark} The last condition means that the following diagram commutes: $$\bfig
\square[{S}`{\fr{S}}`{S\p}`{\fr{S\p}};{\ff{S}}`{\Phi}`{\fr{\Phi}}`{\ff{S\p}}]
\efig$$ \end{remark}

\begin{proof} We have to define arrows $\fr{\Phi},\,\ff{\Phi}$ in $m/1Comp$
and $Ass$, respectively. The composite function $\Sigma= \ff{S\p} \Phi$ is an
assignment from $S$ to the many-to-one computad $\fr{S\p}$. By~\ref{L:SisSA}
part~\emph{2}, there is a \emph{unique} computad functor $F: \fr{S} \el
\fr{S\p}$ such that $\Sigma = F \ff{S}$. We now define the arrows
$\fr{\Phi}=F: \fr{S} \el \fr{S\p}$ of $m/1Comp$ and $\ff{\Phi}= (\Phi,F):
\ff{S} \el \ff{S\p}$ of $Ass$. It is easy to verify that we have thus defined
the desired functors. \end{proof}

\noindent\emph{Proof of~\ref{L:SisSA}.} We define, by induction, the
truncations of $\fr{S}= \ba$ and of $\ff{S}= \Phi$. To be more precise, we
will define sequences $\langle \ba_n \rangle_{n < \omega}$ and $\langle
\phi_n \rangle_{n<\omega}$ such that the following conditions are fulfilled:
\begin{description}
\item[a.] $\ba_n$ is an $n$-dimensional many-to-one computad.
\item[b.] $\ba_{n+1} = \ba_n[C_{n+1}]$, which means that $\ba_{n+1}$ is a
free extension of $\ba_n$ generated by a set of many-to-one $(n+1)$-indets
which is identical with the set $C_{n+1}=C_{n+1}(S)$ of a-indets that
generate $\dcc_{n+1} = \dcc_{n+1}(S)$ over $\dcc_n$, as indicated in
definitions~\ref{D:freemcext} and~\ref{D:ms}.
\item[c.] $\phi_n: P_n \el A_n$ and $\Phi_n= \langle \phi_k \rangle_{k
\leqslant n}$ is an isomorphism $\Phi_n: S_n \el S_{\ba_n}$ of
$n$-dimensional multitopic sets such that $\phi_n x =x$ for $x \in C_n$.
\item[d.] Condition \emph{2} of~\ref{L:SisSA} is fulfilled with
$S_n,\,\ba_n$ and $\Phi_n$ replacing $S,\, \fr{S}$ and $\ff{S}$,
respectively.
\end{description}

Once this is done, we will take $\fr{S}$ and $\ff{S}$ having $\langle \ba_n
\rangle_{n<\omega}$ and $\langle \Phi_n \rangle_{n< \omega}$ as sequences of
truncations.

As the basis of the induction, we set $\ba_0 = P_0=C_0$ and take $\phi_0$ to
be the identity function.

Assume that we defined already $\ba_n$ and $\Phi_n= \langle \phi_k \rangle_{k
\leqslant n}$.

\emph{Defining $\ba_{n+1}$.} Let us define functions $d\p, c\p: C_{n+1} \el
A_n$ by letting $d\p f= \phi_n df$ and $c\p f= \phi_n cf$, for $f \in
C_{n+1}$. The functions $d,c: C_{n+1} \el P_n$ are closely related to their
primed counterparts. Indeed, as $\phi_n$ is the identity on indets, we have
$\oc{d\p f} = \oc{df}$; moreover, $c\p f= cf$, as $cf \in C_n$ and hence,
$\phi_n cf =cf$. Because of these considerations, \emph{we shall denote these
newly defined functions by $d,c$, rather than $d\p, c\p$}. Using the fact
that, by induction hypothesis, $\Phi_n$ is an isomorphism between the
multitopic sets $S_n$ and $S_{\ba_n}$, we infer that $df,\,cf$ are
\emph{parallel} as $n$-cells of $\ba_n$ and therefore, $C_{n+1}$ together
with the functions $d,c:C_{n+1} \el A_n$ becomes a set of
\emph{$(n+1)$-indets} over $\ba_n$. We now define $\ba_{n+1}= \ba[C_{n+1}]$,
and thus fulfill condition \textbf{b.} above.

\emph{Defining $\phi_{n+1}$.} By~\ref{D:freemcext} and~\ref{D:ms}, we have
$\dcc_{n+1} = \Omega[C_{n+1}]$, where $\Omega$ is the \emph{simple} object
system with set of objects $C_n$. The same $\Omega$ is also the object system
of the multicategory $\dcc_{\ba_{n+1}}$ whose arrows are the many-to-one
$(n+1)$-cells of $\ba_{n+1}$. The indets $f \in C_{n+1}$ are arrows of
$\dcc_{n+1}$ as well as of $\dcc_{\ba_{n+1}}$, and have the \emph{same}
source and target, $Sf = \oc{df}$ and $Tf = cf$, in both multicategories. At
this point of the proof, we use our main technical result~\ref{L:main} and
conclude that the \emph{canonical} morphism $\val{-}: \Omega[C_{n+1}] \el
\dcc_{\ba{n+1}}$ (i.e. the unique morphism that is the identity on both,
$C_n$ and $C_{n+1}$) is an \emph{isomorphism}. We \emph{define} $\phi_{n+1}:
P_{n+1} \el A_{n+1}$ by $\phi_{n+1} u= \val{u}$.

\emph{Verifying condition} \textbf{c.} The pair $(\tilde{\phi_n},
\phi_{n+1})$ (cf. the notation used in~\ref{D:msmorphism}) is the same with
$\val{-}$, hence it is an isomorphism of multicategories. We have to prove,
in addition, that $d\phi_{n+1}u= \phi_n du$, for \emph{all} $u \in P_{n+1}$.
We show this by induction on $(n+1)$ pasting diagrams. To begin with, this is
given for $u \in C_{n+1}$ and immediate for identities. For the induction
step, we use the fact that $\phi_{n+1}$ preserves multicomposition and infer:
\[d \phi_{n+1} (u \odot_r v) = d(\phi_{n+1} u \circ_r \phi_{n+1} v)) = d
\phi_{n+1} u \repr d\phi_{n+1} v= \] Using the induction hypothesis as well
as the fact that, by proposition~\ref{L:arrowreppres}, $\phi_n$ preserves
arrow replacement, we go on and conclude  \[= \phi_n du \repr \phi_n dv =
\phi_n(du \drep_r dv) = \phi_n d(u \odot_r v).\]

\emph{Verifying condition} \textbf{d.} Given an assignment $\Sigma: S_{n+1}
\el \bb$, let $\Sigma\p: S_n \el \bb$ be its restriction to $S_n$. By the
induction hypothesis, we have a computad functor $F\p: \ba_n \el \bb$ such
that $\Sigma\p = F\p \Phi_n$. $F\p$ has a unique extension $F: \ba_{n+1} =
\ba_n[C_{n+1}] \el \bb$ such that $F f= \Sigma f$ for $f \in C_{n+1}$. To
show that the assignments $\Sigma$ and $F \Phi_{n+1}$ from $S_{n+1}$ to $\bb$
are equal, we have only to show that they induce the same multicategory
morphism from $\dcc_{n+1}=\Omega[C_{n+1}]$ to $\dcc_{\bb_{n+1}}$. To this
end, it suffices to show that they are equal on the indets in $C_n$ and
$C_{n+1}$ and this is readily seen. Indeed, for $x \in C_n$, this follows
from $\Sigma\p =F\p \Phi_n$, while for $f \in C_{n+1}$, $\Sigma f = F f = F
\phi_{n+1} f$. Thus, condition \emph{2} of~\ref{L:SisSA} is established. \qed
\section{Concluding remarks}\label{S:concluding} A noteworthy result
of~\cite{HMP3} says that the category $mltSet$ of multitopic sets is a
\emph{presheaf} category, i.e. it is equivalent to the category $Set
^{Mlt\sps{op}}$ of the \emph{contravariant} functors from a certain category
$Mlt$, called the category of \emph{multitopes}, into the category of sets
$Set$. Thus, from our main result~\ref{L:mainresult}, we infer that \emph{the
category $m/1Comp$ of many-to-one multitopic sets is a presheaf category} as
well. This is a remarkable fact, since it is known that the category $Comp$
of all computads is \emph{not} a presheaf category, as shown in~\cite{MZ}.

The objects of $Mlt$, as described in~\cite{HMP3}, are the same as the
pasting diagrams of the \emph{terminal} multitopic set. An alternative
description of $Mlt$ was given recently by the third named author of this
paper, cf.~\cite{Z}.

\medskip

As a corollary of our proposition~\ref{L:termmodel}, we infer that \emph{the
word problem for many-to-one computads is solvable}. The meaning of this
statement is, roughly, as follows. A computad $\ba$ is determined by the
sequence $(I_n)\sbs{n \in \omega}$ of sets of indets of the various
dimensions. One can set up a large language which has terms denoting the
cells of $\ba$. This language has a hierarchical structure, being built in
consecutive stages. In the initial stage we have a language $\stc_0$ whose
terms are the indets $x \in I_0$. Once the $n$th stage language $\stc_n$ is
defined, we take the next one to be $\stc_{n+1} = \stc(\ba_n, I_{n+1}, d,c)$
whose terms are defined as in definition~\ref{D:cterms}, with one difference:
the values of the domain/codomain functions $dt, ct$ of a $\stc_{n+1}$-term
$t$ are \emph{$\stc_n$-terms}, rather than $n$-cells of $\ba$. The meaning of
$\stc_{n+1}$-terms is clear, once the semantics of $\stc_n$ is understood.
Each $\stc_n$ comes with its deduction system, similar to the one defined
in~\ref{D:axioms}. The word problem for $\sta$ is to find an algorithm for
deciding whether $t=s$ is $\stc_n$-provable or not, for given $\stc_n$ terms
$t,s$. As we mentioned already, \ref{L:termmodel} implies that we have such
an algorithm, actually a \emph{primitive recursive} one, for $\ba$ a
many-to-one computad.

After a first draft of the present work has been completed, the second named
author proved that \emph{the word problem for arbitrary computads is
solvable} as well., cf.~\cite{M}. His algorithm is very different from the
present one. It is \emph{not} based on the existence of term models and
actually, we do not know if a result similar to~\ref{L:termmodel} is true for
arbitrary, not necessarily many-to-one, free extensions.

\medskip

\textbf{Acknowledgement.} We thank Michael Barr for creating his new diagram
package, which we used for drawing the few diagrams of this work.

\end{document}